\documentclass[12pt,a4paper,reqno]{amsart}
\usepackage{amscd}
\usepackage{amssymb}
\usepackage[centering,text={14.5cm,22cm}]{geometry}
\usepackage{graphicx}
\usepackage{color}
\usepackage[all]{xy}
\usepackage{mathrsfs}
\usepackage{marvosym}
\usepackage{stmaryrd}
\usepackage{srcltx}
\usepackage{hyperref}

\usepackage{floatrow}
\newfloatcommand{capbtabbox}{table}[][\FBwidth]

\definecolor{shadecolor}{rgb}{1,0.9,0.7}

\textheight 22.5cm

\newcommand\restr[2]{{
  \left.\kern-\nulldelimiterspace 
  #1 
  \vphantom{\big|} 
  \right|_{#2} 
  }}

\setlength{\marginparwidth}{10ex}
\setcounter{tocdepth}{1}

 
\let\oldtocsection=\tocsection
 
\let\oldtocsubsection=\tocsubsection
 
\let\oldtocsubsubsection=\tocsubsubsection
 
\renewcommand{\tocsection}[2]{\hspace{0em}\oldtocsection{#1}{#2}}
\renewcommand{\tocsubsection}[2]{\hspace{1em}\oldtocsubsection{#1}{#2}}
\renewcommand{\tocsubsubsection}[2]{\hspace{2em}\oldtocsubsubsection{#1}{#2}}

\newtheorem{theorem}{Theorem}[section]
\newtheorem{lemma}[theorem]{Lemma}
\newtheorem{proposition}[theorem]{Proposition}
\newtheorem{corollary}[theorem]{Corollary}

\theoremstyle{definition}
\newtheorem{definition}[theorem]{Definition}

\newtheorem{construction}[theorem]{Construction}

\newtheorem{example}[theorem]{Example}

\theoremstyle{remark}
\newtheorem{remark}[theorem]{Remark}
\newtheorem{remarks}[theorem]{Remarks}

\numberwithin{equation}{section}
\numberwithin{figure}{section}



\newcommand{\NN} {\mathbb{N}}
\newcommand{\ZZ} {\mathbb{Z}}

\newcommand{\RR} {\mathbb{R}}

\newcommand{\PP} {\mathbb{P}}
\renewcommand{\AA} {\mathbb{A}}
\newcommand{\GG} {\mathbb{G}}

\newcommand{\VV} {\mathbb{V}}

\newcommand {\shE}  {\mathcal{E}}

\newcommand {\shM}  {\mathcal{M}}

\newcommand {\shP}  {\mathcal{P}}

\newcommand {\shX}  {\mathcal{X}}

\newcommand {\bsigma} {\boldsymbol{\sigma}}

\newcommand {\foD}  {\mathfrak{D}}

\newcommand {\foM}  {\mathfrak{M}}

\newcommand {\foT}  {\mathfrak{T}}

\newcommand {\foc}  {\mathfrak{c}}
\newcommand {\fod}  {\mathfrak{d}}

\newcommand {\foj}  {\mathfrak{j}}

\newcommand {\fom}  {\mathfrak{m}}

\newcommand {\fov}  {\mathfrak{v}}


\newcommand {\Aff}  {\operatorname{Aff}}

\newcommand {\as}  {\mathrm{as}}

\newcommand {\Aut}  {\operatorname{Aut}}

\newcommand {\codim} {\operatorname{codim}}

\newcommand {\coker} {\operatorname{coker}}

\newcommand {\Div}  {\operatorname{Div}}

\newcommand {\GL}  {\operatorname{GL}}

\newcommand {\gp}  {{\operatorname{gp}}}

\newcommand {\Hom}  {\operatorname{Hom}}

\newcommand {\id}  {\operatorname{id}}

\newcommand {\inc}  {\mathrm{in}}
\newcommand {\ind}  {\operatorname{ind}}
\newcommand {\Int}  {\operatorname{Int}}

\newcommand {\Interstices}  {\operatorname{Interstices}}

\newcommand {\Joints} {\operatorname{Joints}}

\renewcommand {\ker } {\operatorname{ker}}
\newcommand {\kk} {\Bbbk}

\newcommand {\Log} {\mathcal{L}og\hspace{1pt}}

\newcommand {\lra}  {\longrightarrow}

\newcommand {\M} {\mathcal{M}}

\renewcommand {\max} {{\operatorname{max}}}

\renewcommand{\O}  {\mathcal{O}}

\newcommand {\ol} {\overline}

\newcommand {\out}  {\mathrm{out}}
\newcommand{\trop}{\mathrm{trop}}

\renewcommand{\P}  {\mathscr{P}}

\newcommand {\Pic}  {\operatorname{Pic}}

\newcommand {\pr}  {\operatorname{pr}}

\newcommand {\scrM}  {\mathscr{M}}

\newcommand {\scrH}  {\mathscr{H}}
\newcommand {\scrP}  {\mathscr{P}}

\newcommand {\Sing} {\operatorname{Sing}}

\newcommand {\Spec} {\operatorname{Spec}}

\newcommand {\Supp} {\operatorname{Supp}}

\newcommand {\Scatter} {\operatorname{{\mathsf{S}}}}

\newcommand {\tors}  {\mathrm{tors}}

\newcommand {\ul} {\underline}

\newcommand {\virt} {\mathrm{virt}}

\newcommand {\T} {\mathfrak T}

\newcommand{\Bl} {\mathrm{Bl}}

\def\mydate{\ifcase\month \or January\or February\or March\or
April\or May\or June\or July\or August\or September\or October\or 
November\or December\fi \space\number\day,\space\number\year}



\begin{document}

\title[The Higher Dimensional Tropical Vertex]{The Higher Dimensional Tropical Vertex}
\author{H\"ulya Arg\"uz}
\address{Institute of Science and Technology Austria, Klosterneuburg \\3400, Austria}
\email{huelya.arguez@ist.ac.at}

\author{Mark Gross}
\address{Department of Mathematics\\University of Cambridge\\Cambridge, CB3 0WB\\United Kingdom}
\email{mgross@dpmms.cam.ac.uk }

\date{\today}

\begin{abstract}
We study log Calabi-Yau varieties obtained as a blow-up of a toric variety along hypersurfaces in its toric boundary. Mirrors to such varieties are constructed by Gross-Siebert from a canonical scattering diagram built by using punctured  Gromov-Witten invariants of Abramovich-Chen-Gross-Siebert. We show that there is a piecewise linear isomorphism between the canonical scattering diagram and a scattering diagram defined algorithmically, following a higher dimensional generalisation of the Kontsevich-Soibelman construction. We deduce that the punctured  Gromov-Witten invariants of the log Calabi-Yau variety can be captured from this algorithmic construction. This generalizes previous results of Gross-Pandharipande-Siebert on ``The Tropical Vertex'' to higher dimensions. As a particular example, we compute these invariants for a non-toric blow-up of the three dimensional projective space along two lines.
\end{abstract}
\maketitle

\tableofcontents
\section{Introduction}
\label{sec:intro}
\subsection{Background}
A log Calabi-Yau pair $(X, D)$ is an $n$-dimensional smooth 
variety $X$ over an algebraically closed field $\kk$ of characteristic zero together with a reduced simple normal crossing divisor $D$ in $X$ with $K_X+D=0$. In this article we study log Calabi--Yau pairs $(X,D)$
where $X$ is a projective variety
obtained by a blow-up
\begin{equation}
\label{Eq: blow up}
    X \longrightarrow X_{\Sigma}
\end{equation}
of a toric variety $X_{\Sigma}$ associated to a complete fan $\Sigma$ in $\RR^n$, and $D\subset X$ is given by the strict transform of the toric boundary divisor. We assume that the center of the blow-up is a union of disjoint general smooth hypersurfaces
\begin{equation}
    \label{Eq: hypersurfaces H}
  H=H_1\cup\cdots\cup H_s 
\end{equation}
where $H_i \subset D_{\rho_i}$, for the toric divisor $D_{\rho_i}$ corresponding to a choice of ray $\rho_i\in\Sigma$.

Log Calabi--Yau pairs are very widely studied in the mathematics literature, and they attract broad interest, especially from the point of view of mirror symmetry, see e.g., \cite{gross2015mirror,donagi2012weak,kollar2016dual,yu2016enumeration,pascaleff2019symplectic,takahashi2001log,GHS}. 
In particular, they play significant role in the Gross--Siebert program \cite{GS2}, which is aimed at an algebro-geometric understanding
of mirrory symmetry motivated
by the Strominger--Yau--Zaslow conjecture \cite{SYZ}.
This conjecture, roughly, suggests that mirror pairs of Calabi--Yau varieties admit dual (singular) torus fibrations. The dual to a torus fibration on a Calabi--Yau is obtained by a compactification of a semi-flat mirror that is constructed by dualizing the non-singular torus fibers. To obtain this compactification one considers appropriate corrections to the complex structure. It was shown by Kontsevich and Soibelman \cite{KS} in dimension two and by Gross and Siebert \cite{GS2} in higher dimensions  that the problem to determine these corrections could be reduced to a combinatorial algorithm encoded in a scattering diagram. In dimension two, Gross, Pandharipande and Siebert \cite{GPS} then showed that this 
combinatorial algorithm, producing a \emph{scattering diagram},
could be interpreted in terms of relative invariants of pairs $(X,D)$
as described above, with $X$ the blow-up of a toric surface.  

In the two-dimensional case, Gross, Hacking and Keel \cite{GHK}
then used \cite{GPS} to construct
the homogeneous coordinate ring for the mirror to a log Calabi--Yau surface $X\setminus D$.
This led Gross and Siebert to a general algebro-geometric mirror construction
in all dimensions \cite{GSUtah, GSAssoc, GSCanScat}. 

Before elaborating on the notion of a scattering diagram associated to pairs
$(X,D)$, we will describe more general versions of these relative invariants which are needed in higher dimension. These are punctured 
Gromov--Witten invariants constructed by Abramovich, Chen, Gross and Siebert \cite{ACGSII}.

\subsubsection{Punctured Gromov--Witten invariants.}
It is expected that the necessary corrections of the complex structure describing the mirror to $(X,D)$ can be obtained by counts of certain holomorphic discs with boundary on a fiber of the SYZ fibration \cite{SYZ,fukayamultivalued}. A key motivation in the approach to mirror symmetry we are interested in is to interpret analogues of such holomorphic disks algebro-geometrically. One of the main contributions in this direction is provided in \cite{GPS} showing that scattering diagrams carry enumerative information, and capture \emph{log Gromov--Witten invariants}. These invariants are obtained as counts of rational algebraic curves with prescribed tangency conditions relative to a (not necessarily smooth) divisor. This is in some sense a generalisation of relative Gromov--Witten theory developed in \cite{JunLi,li1998symplectic,ionel2003relative}, where tangency conditions are imposed relative to smooth divisors. Although in the two-dimensional case these invariants captured by scattering diagrams suffice to provide the necessary geometric data to construct mirrors to log Calabi--Yau surfaces \cite{GHK}, in higher dimensions, due to the more complicated nature of the structure in scattering diagrams, one needs to use a generalization of these invariants provided by punctured  Gromov-Witten theory \cite{ACGSII}. In this set-up, one allows negative orders of tangency 
at certain marked points, referred to as punctured points. For a precise
review of the definition of a punctured point, see 
\S\ref{Subsec: punctured}. Roughly, if a punctured map
$f:C\rightarrow X$ has negative order of tangency with a divisor
$D\subseteq X$ at a punctured point of $C$, then the irreducible
component of $C$ containing this punctured point must map into $D$, and the negative
order of tangency is encoded in the data of the logarithmic structure.

In general, to define a punctured invariant,
one considers a combinatorial type $\tau$ of tropical map to
the tropicalization of $X$, and a curve class $\ul{\beta}$.
With $\tilde\tau=(\tau,\ul{\beta})$, one can define a 
moduli space $\scrM(X,\tilde\tau)$ of punctured maps of class
$\ul{\beta}$ whose type is a degeneration of $\tau$.
The type $\tau$ in particular specifies the genus, the number of punctured
points and their contact orders with the components of $D$. This moduli space 
carries a virtual fundamental class. The precise definitions are
via the technology of \cite{ACGSII} and punctured maps to the
associated Artin fan \cite{abramovich2016skeletons}. 

The special case we are interested in here, still with $(X,D)$ a
log Calabi-Yau pair, is when $\tau$ is the type of a genus zero
tropical map with one puncture with fixed contact order.  When $\tau$ is
the type of a $(\dim X-2)$-dimensional family of tropical curves, 
the moduli space $\scrM(X,\tilde\tau)$ is virtually zero dimensional
and hence carries a virtual degree, which we write as $N_{\tilde\tau}$,
following the notation of \cite{GSUtah}. (We note that \cite{GSCanScat}
instead uses the notation $W_{\tilde\tau}$.) This provides the main ingredient to define the canonical scattering diagram, which was defined for higher dimensional log Calabi-Yau pairs in \cite{GSUtah} and whose properties were
determined in \cite{GSCanScat}. Although it is generally a challenging task to compute these virtual moduli spaces  $\scrM(X,\tilde\tau)$ and extract the numbers $N_{\tilde\tau}$ corresponding to punctured invariants, the key point in this paper is that these numbers can be captured from the combinatorics of scattering diagrams purely
algorithmically. 
\subsubsection{The canonical scattering diagram}
Associated to a log Calabi--Yau pair $(X,D)$ is its tropicalization $(B,\P)$, a polyhedral complex defined similarly as in the two dimensional case in \cite[\S1.2]{GHK}. This polyhedral complex carries the structure of an integral affine manifold with singularities, with singular locus $\Delta\subset B$. 
We fix a submonoid $Q\subset N_1(X)$ containing all effective curve classes, where $N_1(X)$ denotes the abelian group generated by projective irreducible curves in $X$ modulo numerical equivalence \cite[Defn 1.8]{GHS}. The canonical scattering diagram associated to $(X,D)$ is then given by pairs
\[ \foD_{(X,D)} := \{(\fod,f_{\fod})\} \]
of walls $\fod \subset B$, along with attached functions $f_{\fod}$ that are elements of the completion of $\kk[\shP_x^+]$ at the ideal generated by $Q\setminus\{0\}$, where $\shP_x^+=\Lambda_x\times Q$, 
$x\in \Int{\fod}$ is a general point and $\Lambda$ is the local
system of integral vector fields on $B\setminus\Delta$. These functions $f_{\fod}$ are concretely given by
\[ f_{\fod}= \exp(k_{\tau}N_{\tilde\tau}t^{\ul{\beta}} z^{-u}) \] 
where $\tilde\tau=(\tau,\ul{\beta})$ ranges over types of $\dim X-2$-dimensional families
of tropical maps associated to punctured curves with only one puncture, as described previously. Here $t^{\ul{\beta}}z^{-u}$ denotes
the monomial in $\kk[\shP_x^+]$ associated to $(-u,\ul{\beta})$.
The wall $\fod$ is swept out by the image of the leg of the
type $\tau$ corresponding to the punctured point, hence is determined
purely combinatorially from $\tau$. The contact order of the punctured point is recorded in the tangent vector $u\in \Lambda_x$ and $k_{\tau}$ is a positive integer depending
only on the tropical type $\tau$ \cite[\S2.4]{GSUtah} or 
\cite[(3.10)]{GSCanScat}.

\subsection{Main results}
In this article we show that punctured log invariants can be captured combinatorially from scattering diagrams. For this, we connect the canonical scattering diagram of $(X,D)$, defined using punctured  Gromov-Witten invariants of $X$, where $X$ is as in \eqref{Eq: blow up}, to an algorithmically constructed toric scattering diagram associated to $X_{\Sigma}$ and $H$. Here $H$ is given as in \eqref{Eq: hypersurfaces H} by hypersurfaces $H_i \subset D_{\rho_i}$. We denote the primitive generator of $\rho_i$ by $m_i$, and write the decomposition of $H_i$ into connected components as \[H_i=\bigcup_{j=1}^{s_i} H_{ij}.\] 

We define the toric scattering diagram $\foD_{(X_{\Sigma},H)}$ consisting of codimension one closed subsets $\fod \subset \RR^n$, 
referred to as walls, along with attached functions $f_{\fod}$. 
Here $\RR^n$ is
the vector space in which the fan $\Sigma$ lives.
To define these functions, first take
\begin{equation}
\nonumber
    P = M\oplus \bigoplus_{i=1}^s\NN^{s_i},
\end{equation}
where $M$ is the cocharacter lattice associated with $X_{\Sigma}$.
Let $P^{\times}$ be the group of units of $P$, and $\fom_P=P\setminus P^{\times}$. Denote by $\widehat{\kk[P]}$ the completion of $\kk[P]$ with respect to
$\fom_P$. We
write $e_{i1},\ldots,e_{is_i}$ for the generators of $\NN^{s_i}$, and $t_{ij}:=z^{e_{ij}}\in \widehat{\kk[P]}$. We describe the initial scattering diagram $\foD_{(X_{\Sigma},H),\mathrm{in}}$ whose walls are given by the tropical hypersurfaces associated to $H_{ij}$, forming the walls. We then set the attached functions to be powers of
\[  f_{ij} := 1+t_{ij}z^{m_i} \]
determined by the weights of the tropical hypersurface corresponding
to $H_{ij}$. See \S\ref{Subsec: scattering in MR} for more details.

For each wall $\fod$, we specify a primitive $m_0\in M\setminus\{0\}$ tangent to $\fod$,
called the direction of the wall. We say the wall is \emph{incoming} if $\fod=\fod-\RR_{\ge 0} m_0$. Moreover, for a sufficiently general path, we define the standard notion of a path ordered product by considering the composition of all the automorphisms attached to the walls $\fod$ which are crossed by this path. These automorphisms are determined from the attached functions $f_{\fod}$. A scattering diagram is said to be \emph{consistent} if all path ordered products around loops are the identity. We prove the higher dimensional analogue of the Kontsevich--Soibelman Lemma \cite{KS} in Theorem \ref{thm:higher dim KS}:
\begin{theorem}
There is a consistent scattering diagram
$\Scatter(\foD)=\foD_{(X_{\Sigma},H)}$ containing $\foD_{(X_{\Sigma},H),\mathrm{in}}$ such that $\foD_{(X_{\Sigma},H)}\setminus
\foD_{(X_{\Sigma},H),\mathrm{in}}$ consists only of non-incoming walls. Further, this scattering diagram is unique up to equivalence.
\end{theorem}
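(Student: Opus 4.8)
The plan is to build $\Scatter(\foD)$ by the usual perturbative, order-by-order procedure, proving existence, the non-incoming property, and uniqueness simultaneously modulo successive powers of $\fom_P$. Write $\mathfrak{g}_x$ for the $\fom_P$-adically complete Lie algebra in which the wall-crossing automorphisms attached to walls through a general point $x\in\RR^n$ live --- a sub-Lie-algebra of $\Lambda_x\otimes_{\ZZ}\widehat{\kk[P]}$ supported in positive $\fom_P$-adic order, the higher-dimensional analogue of the tropical vertex algebra --- graded so that $\mathfrak{g}_x=\prod_{k\ge 1}\mathfrak{g}_x^{(k)}$ with $\mathfrak{g}_x^{(k)}$ the degree-$k$ part in the $t_{ij}$-variables, and express the automorphisms as elements of $\exp(\mathfrak{g}_x)$. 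Call a scattering diagram \emph{consistent to order $k$} if every path-ordered product around a loop is congruent to $\id$ modulo $\fom_P^{k+1}$. Since each initial function $f_{ij}=1+t_{ij}z^{m_i}$ has leading $\fom_P$-adic order $1$, the diagram $\foD_{(X_{\Sigma},H),\mathrm{in}}$ is consistent to order $1$ and carries no walls of higher order; this is the base case. The theorem then reduces to one inductive step: given a scattering diagram $\foD_k\supseteq\foD_{(X_{\Sigma},H),\mathrm{in}}$ consistent to order $k$, with $\foD_k\setminus\foD_{(X_{\Sigma},H),\mathrm{in}}$ locally finite and consisting of non-incoming walls, produce $\foD_{k+1}\supseteq\foD_k$ consistent to order $k+1$, again with non-incoming complement, by adjoining finitely many walls whose attached functions are congruent to $1$ modulo $\fom_P^{k+1}$, and show this enlargement is unique up to equivalence at that order. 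Passing to the inverse limit over $k$ of the wall functions then yields $\foD_{(X_{\Sigma},H)}:=\Scatter(\foD)$, locally finite modulo every $\fom_P^{j}$ and hence a genuine scattering diagram, consistent since it is consistent to all orders; and the order-by-order uniqueness assembles into the uniqueness clause.

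For the inductive step I would first reduce consistency to order $k+1$ to a condition at the codimension-two joints of $\foD_k$: a general loop in the complement of the walls can be homotoped, within the complement of the codimension-$\ge 3$ skeleton, into a product of small loops encircling joints, so its path-ordered product is the product of the local ones $\theta_{\foj}$. Consistency of $\foD_k$ to order $k$ forces $\theta_{\foj}\equiv\id\pmod{\fom_P^{k+1}}$, hence $\log\theta_{\foj}\in\mathfrak{g}_x^{(k+1)}$ modulo $\fom_P^{k+2}$, and to this order $\foj\mapsto\log\theta_{\foj}$ is additive in the walls through $\foj$ and blind to commutators: the defect is abelian at order $k+1$. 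I would then apply the two-dimensional Kontsevich--Soibelman lemma \cite{KS,GPS} transverse to each joint: cutting by a $2$-plane transverse to $\foj$ turns the walls through $\foj$ into finitely many rays from the origin, and $\log\theta_{\foj}$ is cancelled --- uniquely up to equivalence --- by adjoining finitely many new outgoing rays with functions congruent to $1$ modulo $\fom_P^{k+1}$; these sweep out walls of $\RR^n$ emanating outward from $\foj$, hence non-incoming. To give literal meaning to ``transverse to $\foj$'' when walls bend and joints are non-compact, I would first perturb $\foD_k$ by generic translations of its walls into general position, run the construction there, and pass back to the limit --- the standard device of \cite{GPS} --- which also guarantees that within any bounded region only finitely many joints are relevant at each order and that the newly created walls meet later joints transversally.

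The step I expect to be the main obstacle is globalising these joint-by-joint corrections: a wall introduced at $\foj$ to kill $\log\theta_{\foj}$ will in general pass through other joints $\foj'$ and perturb $\log\theta_{\foj'}$ at the very same order $k+1$, so joints cannot be treated independently. I would handle this as in the two-dimensional tropical vertex: after the generic perturbation every newly added wall is a ray with a well-defined outgoing direction, which induces a partial order on the locally finite set of relevant joints in which a ray leaving $\foj$ can only reach joints lying ``ahead'' of it; resolving joints in a compatible order, adjoining finitely many order-$(k+1)$ rays at each step, terminates inside any bounded region by general position and finiteness, and the accumulated order-$(k+1)$ correction makes every joint's defect vanish. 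Abstractly this is the statement that the order-$(k+1)$ defect, viewed as a single obstruction class, lies in the image of the ``adjoin-a-wall'' map, for which the local two-dimensional lemma supplies surjectivity --- isolating and establishing this is the technical heart of the argument. Uniqueness at order $k+1$ then follows from the uniqueness clause of the two-dimensional lemma applied joint by joint: two admissible enlargements differ by a diagram that is consistent to order $k+1$, has non-incoming complement, and has trivial local scattering at every joint, and such a diagram is equivalent to the one with no added walls. Feeding this back into the induction completes the proof.
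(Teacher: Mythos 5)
Your inductive skeleton --- order by order, kill the joint defects by adjoining new non-incoming walls, pass to the limit --- matches Step~I of the paper's proof, and your observation that the order-$(k+1)$ defect at a joint lives in an abelian quotient is also right. But the central higher-dimensional difficulty, which the paper isolates in Steps~II--IV and which your proposal does not address, is the \emph{parallel} component of the defect. After adjoining every admissible wall $(\foj-\RR_{\ge 0}r(m),\,1\pm cz^m)$ with $r(m)\notin\Lambda_\foj$, the residual group element $\theta_{\gamma_\foj,\foD_k}$ lies in ${}^{\|}\VV_{\fom_P^{k+1},\foj}$, i.e.\ is a product of $\exp(z^m\partial_n)$ with $r(m)$ \emph{tangent to} $\foj$. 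Such a term cannot be cancelled by a wall emanating from $\foj$: a wall with $\foj$ as facet and direction $-r(m)$ would be $\foj-\RR_{\ge 0}r(m)$, but this equals $\foj$ itself when $r(m)\in\Lambda_\foj$, so the proposed ``wall'' collapses to a codimension-two set. When you slice by a $2$-plane transverse to $\foj$ and invoke the two-dimensional Kontsevich--Soibelman lemma, this parallel piece has projected direction $0$ and thus simply falls outside the two-dimensional vertex group; the $2$D lemma has no analogue of it because in dimension two $\Lambda_\foj=0$. Nor is this a non-generic accident that perturbation removes: widgets $\foD_{m_i}$ and $\foD_{m_j}$ have $m_i$, resp.\ $m_j$, tangent to all their walls, so any joint on $\fod_i\cap\fod_j$ has $m_i,m_j\in\Lambda_\foj$, and the scattered monomials $z^m$ with $r(m)\in\ZZ_{<0}m_i+\ZZ_{<0}m_j$ are exactly the parallel ones. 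Your perturbation-plus-partial-order device transports perpendicular defects outward along outgoing rays; it gives you nothing on the parallel part.

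The paper's treatment of the parallel defect is genuinely global, not a local slice argument. Step~III shows that for fixed $m$ the assignment $\foj\mapsto n(m,\foj)$ is an $(n-2)$-cycle for the polyhedral complex $\Sing(\foD_k)$, the input being that the composition of small loops around all joints meeting an \emph{interstice} (a codimension-three cell) is null-homotopic in a small $S^2$. Step~IV then kills this cycle by following the line $L_x=x+\RR\, r(m)$: the value of $n(m,\cdot)$ propagates unchanged from joint to joint along $L_x$, and at the unbounded end it must vanish because non-incoming walls carrying $z^m$ are bounded in the direction $r(m)$ and Lemma~\ref{widgetlemma} neutralizes the incoming (widget) walls. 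That unboundedness step is precisely where the incoming/non-incoming dichotomy does real work, and it is invisible in a transverse $2$-plane. To complete your argument you would need to either import this interstice/cycle/line-at-infinity argument or supply an independent reason why the parallel defect at every joint vanishes at every order.
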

We note that in theory, this statement can be extracted from \cite{GS2},
but because of the complexity of the global arguments used in \cite{GS2},
it is essentially impossible to extract a precise reference, and we
find it better to present an independent proof here.

We then show that the associated consistent scattering diagram $\foD_{(X_{\Sigma},H)}$ indeed captures punctured log invariants. For this we compare $\foD_{(X_{\Sigma},H)}$ with the canonical scattering diagram associated to $(X,D)$. It follows essentially from the definition of the tropicalization of
$(X,D)$ that there is a natural piecewise-linear isomorphism
\[
\Upsilon:(M_{\RR},\Sigma)\rightarrow (B,\P).
\]
This allows us to compare the scattering diagrams $\foD_{(X_{\Sigma},H)}$
and $\foD_{(X,D)}$ as follows. 
We first introduce some additional notation.
Let $E_i^j$ denote an exceptional
curve of the blow-up \eqref{Eq: blow up} over the hypersurface $H_{ij}$. There is a natural splitting
$N_1(X)=N_1(X_{\Sigma})\oplus\bigoplus_{ij} \ZZ E_i^j$ in which
$N_1(X_{\Sigma})$ is identified with the set of curve classes
in $N_1(X)$ with intersection number zero with all exceptional
divisors.

A wall of $\foD_{(X_{\Sigma},H)}$ is necessarily of the form
\[
\left(\fod, f_{\fod}\bigg(\prod_{ij} (t_{ij}z^{m_i})^{a_{ij}}\bigg)\right)
\]
for some non-negative integers $a_{ij}$, where the notation $ f_{\fod}\bigg(\prod_{ij} (t_{ij}z^{m_i})^{a_{ij}}\bigg)$ indicates that $f_{\mathfrak{d}}$ is a power-series in the 
expression $\prod_{i,j}(t_{ij}z^{m_i})^{a_{ij}}$,
for some collection of exponents $a_{ij}\in\NN$.
We will define $\Upsilon(\fod,f_{\fod})$, a wall on $B$. The definition
depends on whether $(\fod,f_{\fod})$ is incoming or not. 
In any event,
we may assume, after refining the walls of $\foD_{(X_{\Sigma},H)}$,
that each wall of this scattering diagram is contained in some cone
$\sigma\in\Sigma$.

If the wall is incoming, then by construction of $\foD_{(X_{\Sigma},H)}$
it is of the form
$(\fod,(1+t_{ij}z^{m_i})^{w_{ij}})$ for some positive integer
$w_{ij}$. As $m_i$ is tangent to the cone of $\Sigma$ containing
$\fod$ and $\Upsilon$ is piecewise linear with respect to $\Sigma$,
$\Upsilon_*(m_i)$ makes sense as a tangent vector to $B$.
We then define
\[
\Upsilon(\fod,(1+t_{ij}z^{m_i})^{w_{ij}})
=(\Upsilon(\fod),(1+t^{E_i^j}z^{-\Upsilon_*(m_i)})^{w_{ij}}).
\]
If the wall is not incoming, with $\fod\subseteq\sigma\in\Sigma$, then
the data $\mathbf{A}=\{a_{ij}\}$ and $\sigma$ determine a curve class 
$\bar\beta_{\mathbf{A},\sigma}\in N_1(X_{\Sigma})$ -- see \S\ref{sec: at infinity} for the precise details of this curve
class. Under the inclusion $N_1(X_{\Sigma}) \hookrightarrow N_1(X)$ 
given by the above mentioned splitting, we may view
$\bar\beta_{\mathbf{A},\sigma}$ as a curve class in $N_1(X)$, which we also denote by $\bar\beta_{\mathbf{A},\sigma}$. We then obtain a curve class
\[
\beta_{\mathbf{A},\sigma}=\bar\beta_{\mathbf{A},\sigma} - \sum_{ij} a_{ij} E_i^j.
\]
Further, as $m_{\out}:=-\sum_{ij} a_{ij}m_i$ is tangent to the
cone of $\Sigma$ containing $\fod$, as before $\Upsilon_*(m_{\out})$ makes sense
as a tangent vector to $B$. We may thus define the wall
\begin{equation}
\label{Eq: wall upsilon canonical}
\Upsilon(\fod,f_{\fod})=(\Upsilon(\fod),
f_{\fod}(t^{\beta_{\mathbf{A},\sigma}}z^{-\Upsilon_*(m_{\out})})).
\end{equation}
We then define 
\[
\Upsilon(\foD_{(X_{\Sigma},H)}) :=
\{\Upsilon(\fod,f_{\fod})\,|\, (\fod,f_{\fod})\in \foD_{(X_{\Sigma},H)}\}.
\]
To compare this scattering diagram with the canonical scattering diagram $\foD_{(X,D)}$ we consider a degeneration $(\widetilde X, \widetilde D)$ obtained from a blow-up of the degeneration to the normal cone of $X_{\Sigma}$, with general fiber $(X,D)$. We view the canonical scattering diagram $\foD_{(X,D)}$ as a scattering diagram that is embedded into $\foD_{(\widetilde X, \widetilde D)}$, and denote it by $\iota(\foD_{(X,D)})$ in \eqref{Eq: ioata applied to canonical scattering}. 

Our main result, Theorem \ref{Thm: HDTV}, then states:

\begin{theorem}
$\Upsilon(\foD_{(X_{\Sigma},H)})$ is equivalent to $\foD_{(X,D)}$.
\end{theorem}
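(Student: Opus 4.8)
The plan is to realise both $\Upsilon(\foD_{(X_{\Sigma},H)})$ and $\foD_{(X,D)}$ as consistent scattering diagrams on the integral affine manifold with singularities $(B,\P)$, to check that they have the same incoming walls, and then to invoke the uniqueness clause of Theorem \ref{thm:higher dim KS} (whose proof is local around walls and joints, hence applies on $(B,\P)$ as well, the singular locus $\Delta$ having codimension $\ge 2$): two consistent scattering diagrams with the same incoming walls and only non-incoming walls otherwise are equivalent. So the real work is the two inputs ``consistency'' and ``matching incoming walls''.

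First I would verify that $\Upsilon(\foD_{(X_{\Sigma},H)})$ is a consistent scattering diagram on $(B,\P)$. By Theorem \ref{thm:higher dim KS}, $\foD_{(X_{\Sigma},H)}$ is consistent on $M_{\RR}$. The wall-by-wall recipe defining $\Upsilon$ on attached functions --- the substitution $t_{ij}z^{m_i}\mapsto t^{E_i^j}z^{-\Upsilon_*(m_i)}$ on incoming walls and, on a non-incoming wall contained in a cone $\sigma\in\Sigma$, the substitution $\prod_{ij}(t_{ij}z^{m_i})^{a_{ij}}\mapsto t^{\beta_{\mathbf{A},\sigma}}z^{-\Upsilon_*(m_{\out})}$ of \eqref{Eq: wall upsilon canonical} --- has to be shown to assemble into a change of variables compatible with path-ordered products: for any loop in $B\setminus\Delta$, the path-ordered product of $\Upsilon(\foD_{(X_{\Sigma},H)})$ is the image, under these substitutions, of the path-ordered product of $\foD_{(X_{\Sigma},H)}$ around the corresponding loop in $M_{\RR}$. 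Since $\Upsilon$ is piecewise linear for $\Sigma$, the pushforwards $\Upsilon_*(m_i)$ and $\Upsilon_*(m_{\out})$ of vectors tangent to cones make sense, and the transition across adjacent cones of $\Sigma$ is controlled precisely because the curve classes $\bar\beta_{\mathbf{A},\sigma}$ of \S\ref{sec: at infinity} are built to agree on overlaps. Granting this compatibility, triviality of the path-ordered products of $\foD_{(X_{\Sigma},H)}$ gives the same for $\Upsilon(\foD_{(X_{\Sigma},H)})$; together with the support and asymptotic-finiteness conditions this yields consistency, and by construction $\Upsilon(\foD_{(X_{\Sigma},H)})$ contains $\Upsilon(\foD_{(X_{\Sigma},H),\mathrm{in}})$ with all remaining walls non-incoming.

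It remains to identify the incoming walls of $\foD_{(X,D)}$; $\foD_{(X,D)}$ is itself consistent by \cite{GSCanScat}, and it is here --- in order to track exceptional classes $E_i^j\in N_1(X)$ alongside the toric classes $\bar\beta_{\mathbf{A},\sigma}$ within one geometry --- that we pass to the degeneration $(\widetilde X,\widetilde D)$ and the embedding $\iota(\foD_{(X,D)})\subset\foD_{(\widetilde X,\widetilde D)}$. The identification has two parts. (i) A tropical rigidity argument: if a type $\tau$ of a one-punctured genus-zero tropical map contributing a wall to $\foD_{(X,D)}$ sweeps out an incoming wall, then $\tau$ must be a $k$-fold cover of the elementary type whose single vertex lies on a facet of $\trop(H_{ij})$ with leg pointing in direction $-\Upsilon_*(m_i)$; hence every incoming wall of $\foD_{(X,D)}$ is supported on $\Upsilon(\trop(H_{ij}))$ for some $i,j$, matching the support of $\Upsilon(\foD_{(X_{\Sigma},H),\mathrm{in}})$. (ii) A local punctured Gromov--Witten computation: the moduli spaces of the $k$-fold cover types over $H_{ij}$ are concentrated near the exceptional curve $E_i^j$, a neighbourhood of which is the blow-up of a toric variety along a smooth hypersurface in a boundary divisor; an Aspinwall--Morrison-type multiple-cover computation there gives $k_{\tau}N_{\tilde\tau}=(-1)^{k-1}/k$, whence the wall function is $\exp\big(\sum_{k\ge1}\tfrac{(-1)^{k-1}}{k}(t^{E_i^j}z^{-\Upsilon_*(m_i)})^{k}\big)=1+t^{E_i^j}z^{-\Upsilon_*(m_i)}$, and taking into account the weight $w_{ij}$ of the relevant facet of $\trop(H_{ij})$ one gets $(1+t^{E_i^j}z^{-\Upsilon_*(m_i)})^{w_{ij}}$. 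Comparing with the definition of $\Upsilon$ on the incoming wall $(\fod,(1+t_{ij}z^{m_i})^{w_{ij}})$, this is exactly the set of incoming walls of $\Upsilon(\foD_{(X_{\Sigma},H)})$. Since both $\Upsilon(\foD_{(X_{\Sigma},H)})$ and $\foD_{(X,D)}$ are consistent on $(B,\P)$ with the same incoming walls and only non-incoming walls otherwise, uniqueness gives the equivalence.

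The main obstacle is part (ii): evaluating the punctured log Gromov--Witten invariants of the covers of $E_i^j$ in closed form. Although this is morally a multiple-cover formula, making it rigorous requires identifying the virtual fundamental class of the punctured moduli space near $E_i^j$, controlling the effect of the negative contact orders and of the nontrivial normal bundle $N_{H_{ij}/D_{\rho_i}}$, and extracting the coefficients $(-1)^{k-1}/k$; this is precisely where the decomposition and gluing formalism for punctured invariants on the degeneration $(\widetilde X,\widetilde D)$ is needed. A secondary difficulty, in the first step, is verifying that the cone-dependent substitution through $\bar\beta_{\mathbf{A},\sigma}$ genuinely preserves path-ordered products, which rests on the compatibility of these curve classes across the cones of $\Sigma$ established in \S\ref{sec: at infinity}.
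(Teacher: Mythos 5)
Your overall strategy --- realise both diagrams as consistent scattering diagrams on $(B,\P)$ with matching incoming walls and conclude by the uniqueness clause of Theorem \ref{thm:higher dim KS} --- is genuinely different from the paper's route, but it has two gaps that are not merely technical.

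First, the uniqueness clause of Theorem \ref{thm:higher dim KS} is proved only for scattering diagrams in $M_{\RR}$ with the trivial polyhedral decomposition and no singularities in the affine structure. Its proof is not local: Step IV propagates the obstruction $n(m,\foj)$ along the entire ray $x+\RR r(m)$ out to infinity and uses that a non-incoming wall cannot be unbounded in its direction. On $(B,\P)$ such rays cross the discriminant locus $\Delta$, where the monodromy of $\Lambda$ and $\shP$ is nontrivial, and consistency at codimension-two joints is defined via theta functions and broken lines rather than by triviality of a path-ordered product; no analogue of the uniqueness lemma is established in that setting. Overcoming exactly this is why the paper passes to the degeneration $(\widetilde X,\widetilde D)$: the singularities of $\widetilde B_1$ are pulled away from the origin, radiance (Theorem \ref{Thm: radiant scatter}) together with Theorem \ref{thm:radial directions} shows every wall is determined by its germ at $0$, and uniqueness is then invoked on the honest vector space $T_0\widetilde B_1\cong M_{\RR}$ in Theorem \ref{thm:key theorem}, after which Proposition \ref{Prop: Asymptotic equivalence} transports the result back to $B$.

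Second, your step (ii) --- identifying the incoming wall functions of $\foD_{(X,D)}$ as $(1+t^{E_i^j}z^{-\Upsilon_*(m_i)})^{w_{ij}}$ --- is deferred to an ``Aspinwall--Morrison-type'' virtual computation for punctured maps near $E_i^j$, which you correctly flag as the main obstacle but do not supply, and which is not available: the relevant exceptional curves sit over deep toric strata and carry a puncture, and no localization or gluing formula for such punctured virtual classes has been established. The paper deliberately avoids this computation. It first shows (Steps III--IV of Lemma \ref{lem: multiple covers}) that on the degeneration the only curves contributing to walls through the discriminant locus are multiple covers of $E^j_{\ul{\rho}}$ or $F_i-E^j_{\ul{\rho}}$, and then pins down the wall functions without evaluating any virtual degree, by combining the monodromy-induced functional equation of Lemma \ref{Lem:functional equation} with the constraint that both functions are polynomials of degree $\kappa^i_{\ul{\rho}}$ with constant term $1$. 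So while your outline of what must be true is correct, the two load-bearing steps --- uniqueness on the singular $B$ and the incoming wall functions --- are exactly where the degeneration machinery does the work, and as written your argument closes neither of them.
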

 Here two scattering diagrams are equivalent if they induce the same
wall-crossing automorphisms. To prove Theorem \ref{Thm: HDTV}, we consider a degeneration $(\widetilde X, \widetilde D)$ obtained from a blow-up of the degeneration to the normal cone of $X_{\Sigma}$, with general fiber $(X,D)$. This generalizes a degeneration used in \cite{GPS}.
We investigate the canonical scattering diagram associated to $(\widetilde X, \widetilde D)$, which has support in the tropicalization $\widetilde B$
of $(\widetilde X,\widetilde D)$. This tropicalization comes naturally
with a projection map
\[ \widetilde p: \widetilde B \to \RR_{\geq 0}.  \]
Hence, we obtain a scattering diagram $\foD^1_{(\widetilde X, \widetilde D)}$ supported on $\widetilde B_1:= \widetilde p^{-1}(1)$, which is an integral affine manifold with singularities away from the origin. 
Effectively, this process pulls apart the singular locus of
$B$ into pieces of singular locus which take a standard form.
As first hinted at in \cite[\S4]{GSICM} but carried out rigorously for
the first time here, 
the monodromy around these standard pieces of singular locus go a long
way to determining the structure of the canonical scattering diagram, see Lemmas \ref{Lem:functional equation}, \ref{lem: multiple covers}.
Localizing to the origin $0\in \widetilde B_1$ we obtain a scattering diagram 
\begin{equation}
T_0\foD^{1}_{(\widetilde X,\widetilde D)}:=\{(T_0\fod, f_{\fod})\,|\,
(\fod,f_{\fod})\in \foD^{1}_{(\widetilde X,\widetilde D)}, \quad 0\in \fod\} 
\end{equation}
defined in \eqref{Eq: asymptotic canonical }, where $T_0\widetilde B_1$ denotes the tangent space to $0\in \widetilde B_1$. 
We then define a scattering diagram $\nu(\foD_{(X_{\Sigma},H)})$ in \eqref{Eq: nu of scattering diagram}, where $\nu$ is induced by a linear isomorphism 
$M_{\RR} \to T_0\widetilde B_1$.  One of the key results is Theorem \ref{thm:key theorem}, which states that the scattering diagram $T_0\foD_{(\widetilde X,\widetilde D)}^1$ is equivalent to 
$\nu(\foD_{(X_{\Sigma},H)})$.

At a second step we consider the asymptotic scattering diagram $\foD_{(\widetilde X,\widetilde D)}^{1,\as}$, given by Definition \ref{def:asymptotic scattering}. Proposition \ref{Prop: Asymptotic equivalence} shows that $\foD_{(\widetilde X,\widetilde D)}^{1,\as}$ is equivalent to $\iota(\foD_{(X,D)})$. In \S \ref{sec: at infinity} we then define a piecewise-linear isomorphism 
\[ \mu: M_{\RR} \lra \widetilde B_0= \widetilde{p}^{-1}(0) \cong B\]
which induces the equivalence of scattering diagrams $\mu(T_0\foD^{1}_{(\widetilde X,\widetilde D)})$ and $\foD^{1,\as}_{(\widetilde X, \widetilde D)}$, and hence $\iota(\foD_{(X,D)})$. The map $\Upsilon$ is then given by the composition $\Upsilon= \mu \circ \nu$.

Using Theorem \ref{Thm: HDTV}, we capture the punctured  Gromov-Witten invariants of $X$ appearing in the canonical scattering diagram $\foD_{(X,D)}$, combinatorially from the toric scattering diagram $\foD_{(X_{\Sigma},H)}$. As a concrete example we analyse the punctured invariants of the log Calabi--Yau obtained from a non-toric blow-up of $\PP^3$ with center the union of two disjoint lines. The associated canonical scattering diagram in this case has infinitely many walls. However, after appropriate cancellations, we obtain a minimal scattering diagram equivalent to it. By the aid of computer algebra, in joint work with Tom Coates \cite{ACG} we obtain punctured log invariants of log Calabi--Yau varieties as well as explicit equations of the coordinate ring to Fano varieties in higher dimensions in greater generality. 
\subsection{Related work} 
In \cite{keel2019frobenius}, Keel and Yu obtain some analogous results
using non-Archimedean techniques. In particular, 
there the authors work with log Calabi-Yau pairs $(X,D)$ such that
$X\setminus D$ is affine and contains a copy of a $\dim X$-dimensional
torus. They construct a scattering diagram, not using punctured invariants,
but by detecting the influence of walls on broken lines. In the cluster variety case, when $(X,D)$ is obtained
as in our situation by blowing up a collection of hypertori in irreducible
components of $D$, it follows from \cite[\S 22]{keel2019frobenius} that this scattering diagram agrees with the canonical scattering diagram after pulling singularities to infinity as explained in \S \ref{Sec: Pulling singularities out}. In the cluster case if the power series expansion of the function $f_{\fod}$ attached to a wall $\fod$ of the minimal canonical scattering diagram associated to a log Calabi--Yau $(X,D)$ is given by
\[ f_{\fod} = \sum_m c_m z^m \]
as in \eqref{Eq: canonical fncs}, then in \cite{keel2019frobenius} it is shown that the coefficients $c_m$ are always non-negative integers which are counts of non-Archimedean cylinders. In our paper we give a direct enumerative interpretation of the coefficients of $\mathrm{log}(f_{\fod})$ in terms of punctured log invariants which are in general rational numbers (see \eqref{Eq: walls of canonical}). So, comparing these two interpretations, in the cluster case we obtain non-trivial relations between the counts of non-Archimedean cylinders and punctured log invariants. The integrality of the coefficients $c_m$ however is not geometrically clear from the point of view of punctured log invariants.

In \cite{bousseau2018quantum}, Bousseau shows that in the two dimensional case a refined version of the toric scattering diagram captures higher genus log Gromov-Witten
invariants of log Calabi-Yau surfaces as well. It is a natural question to ask if there is a higher genus version of our genus $0$ results.

Earlier work of Mandel \cite{mandelscattering} explored the
tropical enumerative interpretation of higher dimensional scattering diagrams
in the case arising from cluster varieties (corresponding here to only
blowing up hypertori). His interpretation was analogous to that given
in \cite[\S2]{GPS}. This description has connections with DT theories
\cite{arguz2021flow,mandelcheung}, and thus combining our results with
theirs should give a description of certain punctured invariants in
terms of a sum over certain tropical curves which in some cases also correspond to DT invariants.
Also related is the work of \cite{mandel2019theta}, where Mandel  
makes use, in
the cluster case, to the same type of degeneration used here to analyze
the Gromov-Witten invariants arising in the product rule for
theta functions.
\medskip

\emph{Acknowledgements:} This project had its genesis in discussions
with Rahul Pandharipande and Bernd Siebert in 2011 aiming to generalize the
results of \cite{GPS} to higher dimension. However, it became clear
at that time that the necessary Gromov-Witten technology had not yet been
developed, and the project became dormant.
The authors of this paper returned to that project in many useful conversations
with Tom Coates, who provided important help with the computational tools
that were necessary to understand concrete examples. We thank all of these collaborators, as well as Paul Hacking and Sean Keel, whose collaboration
with the second author led to the search for a canonical scattering
diagram in all dimensions.
In addition, we would like to thank Pierrick Bousseau for several insightful discussions. During the preparation of this project
H.A. received funding
from the European Research Council (ERC) under the European Union’s Horizon 2020
research and innovation programme (grant agreement No. 682603) and from Fondation
Math\'ematique Jacques Hadamard, and M.G.\ would like to acknowledge support of EPSRC grant
EP/N03189X/1 and a Royal Society Wolfson Research Merit Award. 


\section{Canonical scattering for log Calabi--Yau varieties}
\label{Sec: canonical scattering}
In this section we review the construction of the canonical scattering diagram,
as announced in \cite{GSUtah}, and developed in \cite{GSCanScat},
where it is called the canonical wall structure.
 This is a combinatorial gadget which provides a recipe for the construction of the coordinate ring of the mirror of a log Calabi-Yau pair $(X, D)$. 
\subsection{Tropical log Calabi--Yau spaces}

\subsubsection{The affine manifold $(B,\P)$}
\label{subsubsection:affine manifold}
For this discussion, we fix an $n$-dimensional log Calabi--Yau pair $(X,D)$.
We distinguish
between two cases, the \emph{absolute case}, where $X$ is assumed
to be projective over $\kk$, and the \emph{relative case},
where we are given a projective morphism 
\[ p:X \longrightarrow \AA^1 \]
with $p^{-1}(0)
\subset D$. We further assume in this case that $p$ is
a normal crossings morphism, that is, that \'etale locally on $X$, $p$ is given
by 
\begin{eqnarray}
\nonumber
\AA^n & \longrightarrow & \AA^1 \\
\nonumber
(x_1,\ldots,x_n) & \longmapsto & \prod_i x_i^{a_i}
\end{eqnarray}
with $a_i\ge 0$. Further, in this local description,
there exists an index set $J\subseteq\{1,\ldots,n\}$ with
\[D=V(\prod_{j\in J} x_j)\] 
and $p^{-1}(0)\subseteq D$. 
In the language of logarithmic geometry
used later, this makes $p$ a log smooth morphism.

We define the \emph{tropicalization} $(B,\P)$ of $(X,D)$, which we refer to as the \emph{tropical space associated to $(X,D)$}, as a polyhedral cone
complex as follows. Let $\Div(X)$ denote the group of divisors on $X$, and $\Div_D(X)\subseteq \Div(X)$ be the subgroup of divisors supported
on $D$, and
\[\Div_D(X)_{\RR}=\Div_D(X)\otimes_{\ZZ}\RR.\] 
Let $D=\bigcup_{i=1}^m D_i$ be the decomposition of $D$ into
irreducible components, and write $\{D_i^*\}$ for the dual basis of
$\Div_D(X)_{\RR}^*$. We assume throughout that for any index subset
$I\subseteq \{1,\ldots,m\}$, if
non-empty, $\bigcap_{i\in I} D_i$ is connected. Define $\P$ to be the collection of cones
\begin{equation}
    \label{Eq: polyhedral decomposition}
    \P:=\left
\{\sum_{i\in I} \RR_{\ge 0} D_i^*\,|\, \hbox{$I\subseteq \{1,\ldots,m\}$
such that $\bigcap_{i\in I}D_i\not=\emptyset$}\right\}.
\end{equation}
Then we have
\[
B:=\bigcup_{\tau\in\P}\tau \subseteq \Div_D(X)^*_{\RR}.
\]
In what follows, we only consider pairs $(X,D)$ which are
\emph{maximally degenerate}, that is, for which $\dim_{\RR} B=\dim X=n$.
This means there exists an index set $I$ such that $\bigcap_{i\in I} D_i \neq \emptyset$ 
and $\dim\bigcap_{i\in I} D_i=0$. 

Given any cone $\rho\in\P$, we can write $\rho=\sum_{i\in I} \RR_{\ge 0}
D_i^*$ for some index set $I$, and then 
\begin{equation}
\label{eq:Drho def}
D_{\rho}:=\bigcap_{i\in I} D_i
\end{equation}
is the stratum of $(X,D)$ corresponding to $\rho$. Hence, the polyhedral decomposition $\P$ induces a natural stratification on $X$, given by all the strata corresponding to cones in $\P$.

We continue with the notational conventions of \cite{GHS} from now on.
More generally in the sequel, we will be working with pairs $(B,\P)$
where $B$ is a topological space of dimension $n$ 
and $\P$ is a polyhedral decomposition of $B$.
We refer to cells of dimensions $0$, $1$ and $n$ as
\emph{vertices}, \emph{edges} and \emph{maximal cells}. The set of $k$-cells are denoted by $\P^{[k]}$ and we write $\P^\max:=\P^{[n]}$ for the set of maximal cells.
A cell
$\rho\in\P^{[n-1]}$ only contained in one maximal cell is said to lie on the \emph{boundary} of $B$, and we let $\partial B$ be the union of all $(n-1)$-cells lying on the boundary of $B$. Any cell of
$\P$ contained in $\partial B$ is called a \emph{boundary cell}.
Cells not contained in $\partial B$ are called \emph{interior},
defining $\mathring{\P}\subseteq \P$. Thus $\P_\partial:=\P\setminus\mathring{\P}$
is the induced polyhedral decomposition of $\partial B$. If $\tau\in\P$,
we denote the \emph{(open) star} of $\tau$ to be
\begin{equation}
\label{eq:star def}
\mathrm{Star}(\tau):=\bigcup_{\tau\subseteq\rho\in\P}\Int(\rho),
\end{equation}
where $\Int(\rho)$ denotes the interior of the cell $\rho$. 

For $\sigma\in\P$, we denote by $\Lambda_{\sigma}$ the lattice of
integral tangent vectors to $\sigma$. We do so similarly for any
rational polyhedral subset of $\sigma\in\P$. 

 The tuple $(B,\P)$, consisting of $B$ endowed with the polyhedral decomposition $\P$ defined in \eqref{Eq: polyhedral decomposition}, can be given
the structure of an affine manifold with singularities, following
\cite[\S2.4]{GSUtah}, \cite[\S1.3]{GSCanScat} and \cite{nicaise2019non}.
Recall that an \emph{affine manifold} is a topological manifold together with an atlas whose transition maps lie in the group of affine transformations  
\[
\Aff(M_\RR) := M_\RR \rtimes GL_n(\RR),
\] 
of $M_{\RR}=M\otimes_{\ZZ}\RR$ for a fixed lattice $M\cong\ZZ^n$. We refer to such an atlas as an \emph{affine structure}. We call an affine manifold \emph{integral} if the change of coordinate transformations lie in 
\[
\Aff(M) := M \rtimes GL_n(\ZZ).
\] 
An (integral) \emph{affine manifold with singularities} is a topological manifold $B$ which admits an (integral) affine structure on a subset $B\setminus \Delta$,
where $\Delta \subset B$ is a union of submanifolds of $B$ of codimension at least $2$. The union of these submanifolds is called the {\it discriminant locus} of $B$. 

In our situation, when $(B,\P)$ is the tropicalization of $(X,D)$, 
we take the discriminant locus
$\Delta\subseteq B$ to be the union of codimension $\ge 2$ cones of $\P$.
We describe an integral affine structure on $B\setminus
\Delta$ as follows. For every codimension
one cone $\rho$ contained in two maximal cones $\sigma$ and $\sigma'$,
we give
an affine coordinate chart on $\Int(\sigma\cup\sigma')=\mathrm{Star}(\rho)$,
induced by a piecewise linear embedding
\[
\psi_{\rho}:\sigma\cup\sigma'\rightarrow \RR^n.
\]
This embedding is chosen to satisfy the following conditions. 
Let $m_1,\ldots,m_{n-1}$ be the primitive generators of the
edges $\tau_1,\ldots,\tau_{n-1}$ of $\rho$, and let
$m_{n}, m_{n}'$ be the
primitive generators of the additional edges $\tau_{n},
\tau_{n}'$ of $\sigma$
and $\sigma'$ respectively. 
We then require that $\psi_{\rho}$ identifies
$\sigma\cup\sigma'$ with the support of a fan
$\Sigma_{\rho}$ in $\RR^n$, consisting of cones $\psi_{\rho}(\sigma),
\psi_{\rho}(\sigma')$ and their faces. Further, we require that
$\psi_{\rho}(\sigma),\psi_{\rho}(\sigma')$ are standard cones with integral
generators
\[  \{ \psi_{\rho}(m_1),\ldots,\psi_{\rho}(m_{n}) \} \,\ \,\ \mathrm{and} \,\ \,\ \{ \psi_{\rho}(m_1),
\ldots,\psi_{\rho}(m_{n-1}),\psi_{\rho}(m_{n}') \} \]
respectively. Finally, we require that
\begin{equation}
\label{eq:chart relation general}
\psi_{\rho}(m_{n})+\psi_{\rho}(m_{n}')=
- \sum_{j=1}^{n-1} \left( D_{\tau_j} \cdot D_{\rho}\right) \psi_{\rho}(m_j).
\end{equation}
Here $\psi_{\rho}$ is determined by \eqref{eq:chart relation general}
up to elements of $\GL_n(\ZZ)$. 

\begin{remark}
\label{Rem: Motuvation for toricy definition}
The motivation for \eqref{eq:chart relation general} 
is that the fan $\Sigma_{\rho}$ has the property that
in the toric variety $X_{\Sigma_{\rho}}$, 
\[
D_{\psi_{\rho}(\tau_j)}\cdot D_{\psi_{\rho}(\rho)}=
D_{\tau_j}\cdot D_{\rho},
\]
where on the left we write $D_{\omega}$ for the closed stratum
of $X_{\Sigma_{\rho}}$ corresponding to $\omega\in \Sigma_{\rho}$.
This follows from \cite{Oda}, pg.\ 52.
\end{remark}

\begin{remark}
\label{rem:relative case}
In the relative case, the morphism $p:X\rightarrow\AA^1$ induces a canonical map 
\[p_{\mathrm{trop}}: B\rightarrow
\RR_{\ge 0}.\] This arises functorially  from the logarithmic point of view introduced later. However, for now, we may view this as arising from the description
of $B$ as a subset of $\Div_D(X)^*_{\RR}$. We view the fibre
$X_0$ as a divisor on $X$ supported on $D$, hence
defining a functional on $\Div_D(X)^*_{\RR}$. Its restriction
to $B$ gives the map $p_{\mathrm{trop}}$. It is easy to see that this is an affine submersion where the affine structure of $B$ is defined, 
see \cite[Prop.~1.14]{GSCanScat}.

If $t\in \AA^1$ is general, then $(X_t,D_t)$ is also a log Calabi-Yau
variety with dual complex denoted as $(B_0,\P_0)$. 
It is easy to see that
$B_0=p_{\mathrm{trop}}^{-1}(0)$ and $\P_0=\{\sigma\cap p_{\mathrm{trop}}^{-1}(0)\,|\, \sigma\in\P\}$. 
Further, if $(X_t,D_t)$ is maximally degenerate, i.e., 
$\dim X_t=\dim B_0$, then $B_0=\partial B$,
and the affine structure on $B_0$ is
the natural affine structure inherited from the affine structure
on $B$.
See \cite[Prop.~1.18]{GSCanScat}.
\end{remark}

\medskip

While in the construction of the affine structure, we took
$\Delta$ to be the union of all codimension two cones of $\P$,
it will be an important point for us that sometimes the affine structure
on $B\setminus\Delta$ extends to a larger subset of $B$, i.e.,
that we only need to take $\Delta$ to be a union of \emph{some}
set of codimension $\ge 2$ cones of $\P$. Note in this case that if $\tau\in\P$
with $\tau\not\subseteq \Delta$, then $\mathrm{Star}(\tau)\cap\Delta=\emptyset$.  The following
will be useful for analyzing the structure of $B$ around $\tau$:
\begin{proposition}
\label{prop:local affine submersion}
Let $\tau\in \P$ be a cone satisfying the following conditions:
\begin{itemize}
    \item The corresponding stratum $D_{\tau}$ is toric with fan
$\Sigma_{\tau} \subseteq \RR^{d}$, where $d=\codim \tau$.
    \item The restriction of the stratification on $X$ induced by $\P$ to $D_{\tau}$ agrees with the stratification of $D_{\tau}$ into its toric strata. 
\end{itemize}
Then the affine structure on $\mathrm{Star}(\tau)\setminus\Delta$
extends to an affine structure on $\mathrm{Star}(\tau)$. Further,
there exists an affine submersion $\upsilon:\mathrm{Star}(\tau)\rightarrow \RR^d$
such that for each $\omega \in \Sigma_{\tau}$,  
\[ \upsilon^{-1}(\Int (\omega)) = \Int (\rho)  \]
for some element $\rho\in\P$ with $\tau\subseteq\rho$.
\end{proposition}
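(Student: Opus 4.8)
The plan is to reduce everything to the explicit local model around $\tau$ and the compatibility relation \eqref{eq:chart relation general}. First I would set up coordinates: let $d = \codim\tau$, so that $\tau = \sum_{i\in I}\RR_{\ge 0}D_i^*$ with $|I| = d$, and the stratum $D_\tau = \bigcap_{i\in I}D_i$ is by hypothesis toric with fan $\Sigma_\tau\subseteq\RR^d$. The cones $\rho\in\P$ with $\tau\subseteq\rho$ are in bijection with the cones of $\Sigma_\tau$: explicitly, $\rho\supseteq\tau$ corresponds to an index set $J\supseteq I$ with $\bigcap_{j\in J}D_j\ne\emptyset$, and by the second hypothesis $\bigcap_{j\in J}D_j$ is exactly the toric stratum of $D_\tau$ cut out by the boundary divisors $D_j\cap D_\tau$ for $j\in J\setminus I$; these in turn correspond to rays of $\Sigma_\tau$, and the intersection pattern makes $\omega\mapsto\rho$ a bijection of $\Sigma_\tau$ with $\{\rho\in\P\mid\tau\subseteq\rho\}$ preserving the face relations. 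This already produces the set-theoretic map we want: define $\upsilon$ on each $\Int(\rho)$ so that $\upsilon^{-1}(\Int\omega) = \Int\rho$; the content is to show it can be chosen affine and a submersion, and that the affine structure extends over $\Delta\cap\mathrm{Star}(\tau)$.

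Second, I would build $\upsilon$ chart by chart and check the charts agree. The natural guess is: $\mathrm{Star}(\tau)$ should look, affinely, like $\mathrm{Star}(\tau_0)$ for the cone $\tau_0$ in the fan $\Sigma'$ obtained by taking the ``cone over $\Sigma_\tau$'' glued appropriately — more precisely, we want an affine identification of a neighborhood of $\mathrm{Star}(\tau)$ with an open subset of $\RR^{n}$ under which $\tau$ maps to a linear subspace and the projection $\RR^n\to\RR^n/(\text{span }\tau)\cong\RR^d$ restricts to $\upsilon$. Concretely, pick a maximal cone $\sigma\supseteq\tau$; it gives a chart $\psi$ near $\sigma$. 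For a codimension-one cone $\rho$ with $\tau\subseteq\rho\subseteq\sigma\cup\sigma'$, the gluing relation \eqref{eq:chart relation general} expresses $\psi_\rho(m_n)+\psi_\rho(m_n')$ in terms of the intersection numbers $D_{\tau_j}\cdot D_\rho$. The key observation is that when $\tau\subseteq\rho$ and both hypotheses hold, these intersection numbers $D_{\tau_j}\cdot D_\rho$ computed in $X$ equal the analogous toric intersection numbers $\bar D_{\bar\tau_j}\cdot\bar D_{\bar\rho}$ computed inside $X_{\Sigma_\tau}$ (via $D_\rho = D_{\bar\rho}$ the corresponding toric stratum of $D_\tau$, and adjunction/restriction of the $D_{\tau_j}$ to $D_\tau$) — this is exactly the same principle as Remark \ref{Rem: Motuvation for toricy definition}. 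Hence the gluing data of the charts $\{\psi_\rho\}$ over the various $\rho\supseteq\tau$ matches the gluing data of the standard toric charts of a fan refining/relating to $\Sigma_\tau$, so the charts are mutually compatible and patch to a single affine chart on all of $\mathrm{Star}(\tau)$ — in particular the affine structure extends across $\Delta\cap\mathrm{Star}(\tau)$, since the obstruction to extending is precisely monodromy in these transition maps, which now vanishes because they come from an honest fan.

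Third, with a global affine chart $\Psi:\mathrm{Star}(\tau)\hookrightarrow\RR^n$ in hand, under which $\tau$ spans a linear subspace $L\cong\RR^{n-d}$ of $\RR^n$, I would define $\upsilon$ as the composition of $\Psi$ with the linear projection $\RR^n\to\RR^n/L\cong\RR^d$. This is manifestly an affine submersion. The equality $\upsilon^{-1}(\Int\omega) = \Int\rho$ follows because $\Psi$ carries the polyhedral decomposition $\{\rho\mid\tau\subseteq\rho\}$ of $\mathrm{Star}(\tau)$ to (the interiors of) cones of a fan which, modulo $L$, is exactly $\Sigma_\tau$ — this is built into the construction in the previous step.

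The main obstacle is the second step: verifying that the intersection numbers appearing in \eqref{eq:chart relation general} for cones $\rho\supseteq\tau$ are genuinely the toric ones for $D_\tau$, and hence that the collection of local charts is globally consistent around $\tau$ rather than merely pairwise consistent along codimension-one walls. Pairwise consistency is automatic from the definition of the affine structure; the new input is that the monodromy around the lower-dimensional cells of $\Delta$ meeting $\mathrm{Star}(\tau)$ is trivial, and this needs the toricity of $D_\tau$ together with the hypothesis that $\P$ restricts to the toric stratification on $D_\tau$ — without the latter the strata $D_\rho$ need not be the expected toric orbits and the intersection-number bookkeeping fails. I would handle this by an induction on $\codim\tau$, or equivalently by directly comparing, for each $\rho\supseteq\tau$ of codimension one in $\mathrm{Star}(\tau)$, the fan $\Sigma_\rho$ of \eqref{eq:chart relation general} with the corresponding two-cone piece of the cone over $\Sigma_\tau$, using adjunction $(D_{\tau_j}\cdot D_\rho)_X = (D_{\tau_j}|_{D_\tau}\cdot D_\rho)_{D_\tau}$ and the toric computation of the latter from \cite{Oda}, pg.\ 52, as in Remark \ref{Rem: Motuvation for toricy definition}.
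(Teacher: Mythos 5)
Your overall architecture matches the paper's: establish the bijection between cones $\rho\supseteq\tau$ and cones of $\Sigma_\tau$, build a global affine chart on $\mathrm{Star}(\tau)$, verify compatibility with \eqref{eq:chart relation general}, and take $\upsilon$ to be a linear projection. You also correctly flag the real obstacle — that pairwise compatibility of the $\psi_\rho$ across codimension-one walls is automatic, and the new content is that these charts glue consistently around the codimension-$\ge 2$ cells. However, your proposed resolution of that obstacle has a genuine gap.

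The claim that the intersection numbers $D_{\tau_j}\cdot D_\rho$ in \eqref{eq:chart relation general} equal ``the analogous toric intersection numbers computed inside $X_{\Sigma_\tau}$'' is only correct when $\tau_j$ is \emph{not} an edge of $\tau$. Write $D_\tau=\bigcap_{j=1}^{n-d}D_{i_j}$. For a codimension-one $\rho\supseteq\tau$, the sum in \eqref{eq:chart relation general} runs over all edges $\tau_1,\dots,\tau_{n-1}$ of $\rho$, and for the $n-d$ indices with $\tau_j\subseteq\tau$ the divisor $D_{\tau_j}=D_{i_j}$ contains $D_\tau$ rather than meeting it in a boundary stratum; there is no corresponding boundary divisor of $D_\tau=X_{\Sigma_\tau}$. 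For those indices, $D_{\tau_j}\cdot D_\rho=\deg\bigl(\O_X(D_{i_j})|_{D_\rho}\bigr)$, which is \emph{not} data encoded in the fan $\Sigma_\tau$ — it depends on the ``normal bundle'' information of how $D_\tau$ sits inside $X$. Consequently, the assertion that the gluing data matches that of an honest fan (hence has trivial monodromy) is not something you have actually established; the construction of the global chart $\Psi$ is left unspecified precisely where the extra input is needed.

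The paper closes this gap by making the normal-bundle data explicit: it chooses piecewise-linear functions $\psi_j:\RR^d\to\RR$ on $\Sigma_\tau$ representing the line bundles $\O_X(D_{i_j})|_{D_\tau}$, and defines the chart $\psi_\tau$ by sending $D_{i_j}^*\mapsto -e_j$ and $D_k^*\mapsto \sum_j\psi_j(m_\rho)e_j+(0,m_\rho)$ in the splitting $\RR^n=\RR^{n-d}\oplus\RR^d$. The compatibility check with \eqref{eq:chart relation general} then separates into the ``toric'' part in $\RR^d$ (where your reasoning works) and the $\RR^{n-d}$ part, which is handled by the identity $\psi_j(\upsilon(D_{i_n}^*))+\psi_j(\upsilon(D_{i_n'}^*))+\sum_k(D_{i_k}\cdot D_\rho)\psi_j(\upsilon(D_{i_k}^*))=D_{i_j}\cdot D_\rho$ — i.e., by standard toric computation of $\deg\O(D_{i_j})|_{D_\rho}$ from the PL function $\psi_j$. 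Without introducing the $\psi_j$ (or equivalent normal-bundle data), your step asserting that the local charts assemble into a single affine chart is not justified, and the monodromy claim is circular.
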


\begin{proof}
We will give an affine chart $\psi_{\tau}:\mathrm{Star}(\tau)
\rightarrow \RR^n$ as follows. First, with $n=\dim X$,
using \eqref{eq:Drho def}, we can write $D_{\tau}$ as the intersection of divisors
\[ D_{\tau}=\bigcap_{j=1}^{n-d} D_{i_j} \]
for some distinct indices $i_1,\ldots,i_{n-d}$. 
If $\tau\subseteq\rho\in\P$, then
by assumption $D_{\rho}$ is a toric strata of $D_{\tau}$, and there
is a corresponding cone $\rho_{\tau}\in\Sigma_{\tau}$. 
In particular, if $D_{\rho}$ is a divisor on $D_{\tau}$, let 
$m_{\rho}\in \ZZ^d$ be the primitive generator of the ray $\rho_{\tau}$.

Now as $\O_X(D_{i_j})|_{D_{\tau}}$ is a line bundle on $D_{\tau}$,
it is determined, up to a linear function, by 
a function $\psi_j:\RR^d\rightarrow \RR$ piecewise linear with
respect to $\Sigma_{\tau}$ and with integral slopes. 
Choose such a function for
each $j$. We may then define an embedding
\[
\psi_{\tau}:\bigcup_{\tau\subseteq\rho\in\P} \rho\rightarrow \RR^n
\]
as follows. Split $\RR^n=\RR^{n-d}\oplus\RR^d$, and
let $e_1,\ldots,e_{n-d}$ be the standard basis for $\RR^{n-d}$.
Then take
\[
\psi_{\tau}(D_{i_j}^*)=-e_j, \quad 1\le j\le n-d.
\]
Further, if $k$ is an index such that $D_k\cap D_{\tau}$ is a 
non-empty divisor $D_{\rho}$ on $D_{\tau}$, then define
\[
\psi_{\tau}(D_k^*)=\sum_{j=1}^{n-d} \psi_j(m_{\rho})e_j+(0,m_\rho),
\]
where for the last term, we are using the splitting of $\RR^n$.
We then extend $\psi_{\tau}$ linearly on each cone containing 
$\rho$.

It is easy to check that this map is injective and 
thus defines an affine structure on $\mathrm{Star}(\tau)$.
Restricting to $\mathrm{Star}(\tau)$ the 
composition
of $\psi_{\tau}$ with the projection onto the $\RR^d$ factor gives an affine submersion. We claim that this is the desired map $\upsilon$. For this it suffices to check that the affine structure defined by $\psi_{\tau}$
is compatible with the affine structure on $B$. To do this, fix cones $\sigma,\sigma'\in\P^{\max}$ with
$\tau\subseteq \rho=\sigma\cap\sigma'$, and $\rho\in\P^{[n-1]}$.
We have corresponding cones $\sigma_{\tau},\sigma'_{\tau},\rho_{\tau}
\in\Sigma_{\tau}$. Let $i_{n-d+1},\ldots,i_n$ be indices so that
$D_{i_1}^*,\ldots,D_{i_n}^*$ span $\sigma$, with
$D_{i_n}^*\not\in\sigma'$ and let $i_n'$ be an index
so that $D_{i_1}^*,\ldots,D_{i_{n-1}}^*,D_{i_n'}^*$
span $\sigma'$. Then by \eqref{eq:chart relation general},
we need to show that 
\[
\psi_{\tau}(D_{i_n}^*)+\psi_{\tau}(D_{i_n'}^*)
=-\sum_{j=1}^{n-1} (D_{i_j} \cdot D_{\rho})\psi_{\tau}(D_{i_j}^*).
\]

The right-hand side is 
\begin{equation}
    \label{Eq: RHS}
\sum_{j=1}^{n-d}\left(D_{i_j}\cdot D_{\rho} - \sum_{k=n-d+1}^{n-1}
(D_{i_k}\cdot D_{\rho})\psi_j(\upsilon(D_{i_k}^*))\right) e_j
-\sum_{j=n-d+1}^{n-1}(D_{i_j}\cdot D_{\rho})(0, \upsilon(D_{i_j}^*)).
\end{equation}
The left-hand side is 
\begin{equation}
    \label{Eq: LHS}
    \sum_{j=1}^{n-d} \big(\psi_j(\upsilon(D_{i_n}^*))+\psi_j(\upsilon(D_{i_n'}^*))
\big)e_j
+(0,\upsilon(D_{i_n}^*)+\upsilon(D_{i_n'}^*)).
\end{equation}
In the fan $\Sigma_{\tau}$ we have the equation, 
as in Remark \ref{Rem: Motuvation for toricy definition},
\[\upsilon(D_{i_n}^*)+\upsilon(D_{i_n'}^*)+\sum_{j=n-d+1}^{n-1}(D_{i_j}\cdot D_{\rho})\upsilon(D_{i_j}^*)=0,\] so the equality of the last terms in \eqref{Eq: RHS} and \eqref{Eq: LHS} follows immediately. The result then follows since
\[
\psi_j(\upsilon(D_{i_n}^*))+\psi_j(\upsilon(D_{i_n'}^*))+\sum_{k=n-d+1}^{n-1}
(D_{i_k}\cdot D_{\rho})\psi_j(\upsilon(D_{i_k}^*)) = \deg\O(D_{i_j})|_{D_{\rho}} = D_{i_j} \cdot D_{\rho} \]
by standard toric geometry.
\end{proof}

\begin{remark}
\label{rem:normal bundles}
The chart constructed in the proof of Proposition \ref{prop:local affine submersion} identifies the collection
of cones 
\[
\scrP_{\tau}:=\{\rho\in\P\,|\, \hbox{there exists $\omega\in\P$ with 
$\tau,\rho\subseteq\omega$}\}
\]
with a fan $\Sigma$ in $\RR^n$. For a cone $\rho\in\P_{\tau}$, write
$\psi_{\tau}(\rho)$ for the corresponding cone in $\Sigma$.
The relationship between $X$ and the toric variety $X_{\Sigma}$ is then
as follows. First, $D_{\tau}\cong D_{\psi_{\tau}(\tau)}$, the
stratum of $X_{\Sigma}$ corresponding to the cone $\psi_{\tau}(\tau)$.
Second, if $\rho\in\P_\tau$ is an edge, then
$\O_X(D_{\rho})|_{D_{\tau}}\cong 
\O_{X_{\Sigma}}(D_{\psi_{\tau}(\rho)})|_{D_{\psi_{\tau}(\tau)}}$.
This can be checked easily via the construction and standard facts
of toric geometry.
\end{remark}

\subsubsection{The MVPL function $\varphi$} \label{Subsec: the MVPL fnc }

Let us define $N_1(X)$ to be the abelian group generated by projective
irreducible curves in $X$ modulo numerical equivalence.
We fix a finitely generated, saturated submonoid $Q \subset N_1(X)$ 
containing all effective curve classes, such that 
\[Q^{\times}:= Q\cap (-Q)=\{0\}.\]
Let $\fom\subseteq \kk[Q]$ be the 
monomial ideal generated by monomials in $Q\setminus Q^{\times}$, 
and fix an ideal $I\subseteq\kk[Q]$ with $\sqrt{I}=\fom$.
We write $A_I:=\kk[Q]/I$. For $\beta \in Q$, we write $t^{\beta}\in
\kk[Q]$ for the corresponding monomial.

Another key piece of data is a \emph{multivalued
piecewise linear} (MVPL) function $\varphi$ on $B\setminus\Delta$ with values in $Q^{\gp}_{\RR}$, see \cite[Def.\ 1.8]{GHS}. This is determined by specifying a piecewise
linear function $\varphi_{\rho}$ on $\mathrm{Star}(\rho)$ for
each $\rho\in\mathring\P$ of codimension one, well-defined up to
linear functions.
Such a function is determined by specifying its kinks $\kappa_{\rho}
\in Q^{\gp}$ for each codimension one cone $\rho\in \mathring{\P}$, defined as follows \cite[Def.\ 1.6, Prop.\ 1.9]{GHS}. 
\begin{definition}
\label{def:kinky}
Let $\rho\in \mathring{\P}$ be a codimension one cone and let $\sigma,\sigma'$ be the two maximal cells
containing $\rho$, and let $\varphi_{\rho}$ be a piecewise linear
function on $\mathrm{Star}(\rho)\subset
B\setminus\Delta$. An affine chart at $x\in\Int\rho$ thus
provides an identification $\Lambda_\sigma=
\Lambda_{\sigma'}=:\Lambda_x$. Let $\delta:\Lambda_x\to\ZZ$ be the quotient by
$\Lambda_\rho \subseteq\Lambda_x$. Fix signs by requiring that $\delta$
is non-negative on tangent vectors pointing from $\rho$ into
$\sigma'$. Let $n,n'\in \check\Lambda_x\otimes Q^\gp$ be the slopes of
$\varphi_{\rho}|_\sigma$, $\varphi_{\rho}|_{\sigma'}$, respectively. Then
$(n'-n)(\Lambda_\rho)=0$ and hence there exists $\kappa_{\rho}\in Q^\gp$
with
\begin{equation}
\label{Eqn: kink}
n' -n =\delta \cdot\kappa_{\rho}.
\end{equation}
We refer to $\kappa_{\rho}$ as the \emph{kink} of $\varphi_{\rho}$ along $\rho$.
Thus if $\varphi$ is an MVPL function, it has a well-defined kink
$\kappa_{\rho}$ for each such $\rho$, and these kinks determine $\varphi$.
\end{definition}
We assume in what
follows that $\varphi$ is \emph{convex}, i.e., $\kappa_\rho
\in Q\setminus \{0\}$ for all $\rho$. The choice of $\varphi$ gives rise (see \cite[Def.\ 1.15]{GHS})
to a local system $\shP$ on $B\setminus\Delta$ fitting into an exact sequence
\begin{equation}
\label{eq:Q exact}
0\rightarrow \ul{Q}^{\gp}\rightarrow \shP\rightarrow \Lambda\rightarrow 0.
\end{equation}
Here $\ul{Q}^{\gp}$ is the constant sheaf with stalk $Q^{\gp}$, while
$\Lambda$ is the sheaf of integral tangent vectors on $B\setminus\Delta$.
For an element $m\in \shP_x$, we write $\bar m\in\Lambda_x$ for its image
under the projection of \eqref{eq:Q exact}. 
We may give an alternative description of $\shP$ as in \cite[Const.\ 2.2]{GHK}.
If $\rho\in\mathring{\scrP}$ is codimension one, 
choose a local representative $\varphi_{\rho}:\mathrm{Star}(\rho)\rightarrow
Q^{\gp}_{\RR}$ for $\varphi$, that is, $\varphi_{\rho}$ is a single-valued 
piecewise linear function with kink $\kappa_{\rho}$.
If $\rho,\sigma\in\mathring{\P}$ with $\rho$ codimension one and
$\sigma$ maximal, 
then there exists trivializations 
\begin{equation}
\label{Eq:trivial one}
\chi_{\rho}:\shP|_{\mathrm{Star}(\rho)}\cong 
\Lambda|_{\mathrm{Star}(\rho)}\times Q^{\gp}
\end{equation}
and 
\begin{equation}
\label{Eq:trivial two}
\chi_{\sigma}:\shP|_{\mathrm{Star}(\sigma)}\cong
\Lambda|_{\mathrm{Star}(\sigma)}\times Q^{\gp}.
\end{equation}
If $\rho\subseteq\sigma$,
so that $\Int(\sigma)=\mathrm{Star}(\sigma)\subseteq\mathrm{Star}(\rho)$,
we have on $\Int(\sigma)$ that
\[
\chi_\rho\circ\chi_{\sigma}^{-1}(m, q) = \big(m, 
q+(d\varphi_{\rho}|_{\sigma})(m)\big).
\]
Alternately, this identification can be viewed as describing parallel
transport between stalks of $\shP$:
if $x\in\rho$ a codimension one cell and $y\in\sigma$
an adjacent maximal cell, then parallel transport from $\shP_y$
to $\shP_x$ along a path contained in $\sigma$ is given by
\[
(m,q)\mapsto \big(m,(d\varphi_{\rho}|_{\sigma})(m)+q\big).
\]
We will make frequent use of this particular description of parallel transport to compare germs of $\P$ at nearby points.

The sheaf $\shP$ contains via \cite[Def.\ 1.16]{GHS},
a subsheaf $\shP^+\subseteq\shP$. We can describe the
stalk of $\shP^+$ at a point $x\in B$ lying in the interior of a maximal
cell, using the trivalization \eqref{Eq:trivial two}, as
\begin{equation}
\label{eq:P+x max}
\shP^+_x=\Lambda_x\times Q.
\end{equation}
If $x$ lies in the interior of a codimension one cell
$\rho$ which is not a boundary cell, we may, using the trivialization
\eqref{Eq:trivial one},
write $\shP^+_x$ as
\begin{equation}
\label{Eq: Pplus description}
\shP^+_x=\big\{\big(m,(d\varphi_{\rho}|_{\sigma})(m)+q\big)\,\big |\,\rho\subseteq\sigma
\in\scrP^{\max},\, m\in T_x\sigma\cap\Lambda_x,\,q\in Q\big\}.
\end{equation}
Here $T_x\sigma$ denotes the tangent wedge to $\sigma$ at $x$. Alternatively, $\shP^+_x$ has the following description.
Let $\rho\subseteq \sigma,\sigma'$, and let $\xi\in\Lambda_x$
be a choice of vector pointing into $\sigma$ such that its image in 
$\Lambda_{\sigma}/\Lambda_{\rho}\cong\ZZ$ is a generator. Then
\begin{equation}
\label{eq:P+x codim one}
\shP^+_x=(\Lambda_{\rho}\oplus\NN Z_+\oplus \NN Z_-\oplus Q)/\langle Z_+ + Z_-
= \kappa_{\rho}\rangle.
\end{equation}
For this, see the discussion in \cite[\S2.2]{GHS}. Here, the projections
to $\Lambda_x$ in \eqref{eq:Q exact} 
of $Z_+$ and $Z_-$ are $\xi$ and
$-\xi$ respectively. 
Under this description,
if $y\in\Int(\sigma)$, $y'\in\Int(\sigma')$,
then the parallel transport from $\shP^+_x$ to $\shP_y^+$ and 
$\shP_{y'}^+$ takes the form
\begin{equation}
\label{eq:parallel transport}
(\lambda_{\rho},a Z_+, bZ_-,q)\mapsto \begin{cases}
(\lambda_{\rho}+(a-b)\xi, b\kappa_{\rho}+q)\in \shP_y^+,&\\
(\lambda_{\rho}+(a-b)\xi, a\kappa_{\rho}+q)\in \shP_{y'}^+.&
\end{cases}
\end{equation}

In \S\ref{subsubsection:affine manifold}, we took $\Delta\subseteq B$
to be the union of codimension two cells of $\P$. However, it
is frequently the case, as in the sequel, that the affine structure
on $B\setminus\Delta$ extends to a larger subset of $B$. Thus
in general we will only assume that $\Delta$ is a union of \emph{some}
of the codimension two cells of $\P$. In this case, $\varphi$ may
also have a single-valued representative in larger open sets.

In detail, suppose that $\tau\in\scrP$ is a cell
with $\Int(\tau)\cap\Delta=\emptyset$,
and $\varphi$ can be expressed as a single-valued function on
$\mathrm{Star}(\tau)$. 
This means that there exists a piecewise affine function
$\varphi_{\tau}:\mathrm{Star}(\tau)\rightarrow Q^{\gp}_{\RR}$ whose kink
along any codimension one cells $\rho\supseteq\tau$ agrees with the
kink of $\varphi$. In this case, the sheaf $\shP$ extends to a sheaf
on $\mathrm{Star}(\tau)$ via a trivialization on $\mathrm{Star}(\tau)$
given again by $\chi_{\tau}:\shP|_{\mathrm{Star}(\tau)}\rightarrow
\Lambda|_{\mathrm{Star}(\tau)}\times Q$, and if $\tau\subseteq\sigma
\in\P^{\max}$, then
\begin{equation}
\label{eq:psi tau sigma}
\chi_{\tau}\circ \chi_{\sigma}^{-1}(\lambda,q)
= \big(\lambda, q+(d\varphi_{\tau}|_{\sigma})(\lambda)\big).
\end{equation}
As before, for $x\in\Int(\tau)$, we may define $\shP^+_x \subseteq \shP_x$
as:
\begin{equation}
\label{eq:P+x final}
\shP^+_x:=\big\{\big(m,(d\varphi_{\tau}|_{\sigma})(m)+q\big)\,\big |\,\tau\subseteq\sigma
\in\scrP^{\max},\, m\in T_x\sigma\cap\Lambda_x,\,q\in Q\big\}.
\end{equation}
Here $T_x\sigma$ denotes the tangent wedge to $\sigma$ at the point $x$.

Alternatively, for each $y\in\Int(\sigma)$ for $\sigma$
a maximal cell containing $\tau$, there is a canonical identification
of $\shP_y$ with $\shP_x$ via parallel transport inside $\mathrm{Star}(\tau)$.
We then may define
\[
\shP^+_x=\bigcap_{\tau\subseteq\sigma\in\scrP^{\max}} \shP^+_y.
\]
With this notation,
given a monoid ideal $I\subseteq Q$,
we obtain an ideal $I_y\subseteq \kk[\shP_y^+]=\kk[Q][\Lambda_y]$
generated by $I$ under the inclusion of $\kk[Q]$ in $\kk[\shP_y^+]$.
Since $\shP_x^+\subseteq \shP^+_y$ by the above equality, we may then
define 
\begin{equation}
\label{eq:Ix def}
I_x = \sum_{\tau\subseteq\sigma\in\scrP^{\max}} I_y\cap\kk[\shP^+_x].
\end{equation}

We finally record here for future use a key observation which underpins \cite{GS2}, analogous to \cite[Prop.\ 2.6]{GS2}.

\begin{definition}
Let $x\in B$ lie in the interior of a maximal cell, $m\in \shP_x^+$.
If $m=(\bar m,q)\in \Lambda_x\oplus Q$ under the splitting 
\eqref{eq:P+x max}, then we
define the \emph{$Q$-order} of $m$ at $x$ to be $\mathrm{ord}_x(m):=q$.
\end{definition}

\begin{proposition}
\label{prop:order increasing}
Let $\tau\in\mathring\P$ with $\tau\not\subseteq\Delta$
and suppose that $B\setminus\Delta$ carries an MVPL function
$\varphi$ with a single-valued representative on $\mathrm{Star}(\tau)$. Let
$\sigma, \sigma'\in\P^{\max}$ be two cells containing $\tau$. Let $y\in \Int(\sigma)$, $y'\in\Int(\sigma')$,
$x\in \Int(\tau)$,
and let $m\in \shP^+_y$ be such that the parallel transport of $\bar m
\in\Lambda_y$ to $\Lambda_x$ is a tangent vector pointing into $\sigma$
and the parallel transport of $-\bar m$ to $\Lambda_x$ is a tangent
vector pointing into $\sigma'$.
Let $m'$ be the parallel transport of $m$ to $\shP_{y'}$ along a path
in $\mathrm{Star}(\tau)$. Then $m'\in \shP^+_{y'}$, and there
exists a non-zero $q\in Q$ such that $\mathrm{ord}_{y'}(m')
= \mathrm{ord}_{y}(m)+q$. 
\end{proposition}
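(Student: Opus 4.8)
The plan is to break the passage from $\sigma$ to $\sigma'$ into single wall-crossings and invoke convexity of $\varphi$ at each one. Since $x\notin\Delta$, fix a small ball $U\subseteq\mathrm{Star}(\tau)$ around $x$ together with the affine chart $\psi:U\hookrightarrow\RR^n$ it carries; as $U$ is simply connected, $\Lambda|_U$ is constant, and using the single-valued representative $\varphi_\tau$ and the trivialization $\chi_\tau$ so is $\shP|_U$, so parallel transport inside $U$ is canonical. Likewise parallel transport inside $\Int\sigma$ and $\Int\sigma'$ is canonical and preserves both $\mathrm{ord}$ and membership in $\shP^+$, because $\chi_\sigma$, $\chi_{\sigma'}$ trivialize there. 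Hence it is enough to compute along a single path from $y$ to $y'$ lying in $\Int\sigma\cup U\cup\Int\sigma'$. Write $x_0=\psi(x)$ and let $v\in\RR^n$ be the $\psi$-image of the parallel transport of $\bar m$ to $x$; by hypothesis $v$ points into the interior of the tangent wedge of $\psi(\sigma)$ at $x_0$ and $-v$ into that of $\psi(\sigma')$, which already forces $\sigma\neq\sigma'$ since these wedges are pointed cones.

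Next I would exhibit the path. On the line $x_0+\RR v$ the points $x_0+\epsilon_0 v$ and $x_0-\epsilon_1 v$ lie in $\Int\psi(\sigma)$ and $\Int\psi(\sigma')$ respectively, for $\epsilon_0,\epsilon_1>0$ small; translating the line by a small generic vector $\eta$ gives a line $\ell'$ meeting only codimension $\le 1$ cells and crossing each of those transversally at an interior point, while (for $\epsilon_0,\epsilon_1,\|\eta\|$ small) the segment of $\ell'$ between $x_0+\eta+\epsilon_0 v$ and $x_0+\eta-\epsilon_1 v$ still joins $\Int\psi(\sigma)$ to $\Int\psi(\sigma')$ and stays inside $U$. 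Concatenating this segment with arcs in $\Int\sigma$ and $\Int\sigma'$ produces a path from $y$ to $y'$ crossing a finite sequence of interior codimension-one cells $\rho_1,\dots,\rho_k$—each necessarily containing $\tau$, since an interior point of a codimension-one cell lies in $\mathrm{Star}(\tau)$ only if that cell contains $\tau$—and passing through maximal cells $\sigma=\sigma_0,\sigma_1,\dots,\sigma_k=\sigma'$ with $\rho_i=\sigma_{i-1}\cap\sigma_i$. Crucially, since we traverse $\ell'$ in the direction $-v$, at each crossing of $\rho_i$ the constant vector $v$ points from $\rho_i$ into the cell $\sigma_{i-1}$ being left; and $k\ge 1$ because $\sigma\neq\sigma'$.

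The single-crossing analysis is then the codimension-one case. Transporting the germ of $m$ to a point of $\Int\rho_i$ and expanding it via \eqref{eq:P+x codim one} as $(\lambda_{\rho_i},aZ_+,bZ_-,q)$ with $Z_+$ mapping to the generator of $\Lambda_x/\Lambda_{\rho_i}$ pointing into $\sigma_{i-1}$, the positivity of the $Z_+$-component of $v$ lets one take $b=0$ and $a=a_i>0$; then $q$ is the $Q$-order of $m$ on the $\sigma_{i-1}$-side, which lies in $Q$ by induction, so the germ lies in $\shP^+$ along $\Int\rho_i$, and \eqref{eq:parallel transport} shows that crossing into $\sigma_i$ yields a germ in $\shP^+$ whose $Q$-order has grown by exactly $a_i\kappa_{\rho_i}$. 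By convexity $\kappa_{\rho_i}\in Q\setminus\{0\}$, and $Q^{\gp}$ is torsion free since $Q$ is saturated with $Q^\times=\{0\}$, so $a_i\kappa_{\rho_i}\in Q\setminus\{0\}$. Summing the increments over $i=1,\dots,k$ gives $m'\in\shP^+_{y'}$ and
\[
\mathrm{ord}_{y'}(m')-\mathrm{ord}_y(m)=\sum_{i=1}^{k}a_i\kappa_{\rho_i}=:q,
\]
which lies in $Q$ and is nonzero, being a sum of $k\ge 1$ nonzero elements of the monoid $Q$ with $Q\cap(-Q)=\{0\}$. (As a consistency check, comparing the $\chi_\sigma$- and $\chi_{\sigma'}$-trivializations with $\chi_\tau$ through \eqref{eq:psi tau sigma} identifies this $q$ with $(d\varphi_\tau|_\sigma-d\varphi_\tau|_{\sigma'})(\bar m)$.)

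I expect the main obstacle to be the construction and sign-analysis of the path when $\codim\tau\ge 2$: the naive straight segment through $x$ does not cross codimension-one cells transversally, so one must perturb to a generic parallel line and then verify that at every one of its crossings the transported tangent vector points \emph{backwards}, into the cell being left—this is exactly what forces each kink $\kappa_{\rho_i}$ to enter with a positive integer coefficient, and hence makes the total increment a sum of elements of $Q$ rather than merely of $Q^{\gp}$. The remaining ingredients (smallness of $\epsilon_0,\epsilon_1,\eta$, genericity of $\ell'$, and keeping the orientation conventions of \eqref{eq:parallel transport} straight) are routine.
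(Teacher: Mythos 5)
Your argument is correct and follows essentially the same route as the paper: choose a nearly straight path from $y$ to $y'$ with tangent direction $-\bar m$ that only crosses codimension-zero and codimension-one cells, observe that at each crossing the transported $\bar m$ points backwards into the cell being left, and read off the increment $a_i\kappa_{\rho_i}\in Q\setminus\{0\}$ from \eqref{eq:parallel transport} at each wall. You spell out the perturbation/genericity step and the summation over multiple crossings more explicitly than the paper, which compresses this into ``choose $y,y'$ close to a line through $\Int(\tau)$ with tangent $\bar m$ so the segment meets only codimension-zero and one cells, reducing to the codimension-one case,'' but the substance is identical.
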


\begin{proof}
We may choose $y$ and $y'$ to lie close to a line passing through
$\Int(\tau)$ with tangent direction $\bar m$. In particular, these
may be chosen so that the straight line path between $y$ and
$y'$ still has tangent vector $\bar m$ but 
only passes through codimension zero and one cells of $\P$. Thus
using parallel transport along this straight line path,
it is sufficient to show the claim when $\tau$ is codimension one.
The result then follows from \eqref{eq:parallel transport}. Indeed, 
if $m=(\bar m,q')\in \shP_y^+$, we may write $\bar m=\lambda_{\tau}+a\xi$
for some $\lambda_{\tau}\in\Lambda_{\tau}$ and $a>0$ by the assumption
on $\bar m$. Then the parallel transport of $m$ to $\shP^+_x$ can
be written as $(\lambda_{\tau},aZ_+,0,q')\in \shP^+_x$, and
the further transport to $\shP^+_{y'}$ then takes the form
$(\bar m,a\kappa_{\tau}+q')$, giving the claim with $q=a\kappa_{\tau}$.
\end{proof}

\subsubsection{The canonical choice of MVPL function}

Here we will make use of a canonical choice of MVPL function
$\varphi$:

\begin{construction}
\label{constr:phi}
Following \cite{GHK}, \cite{GSCanScat}, we define
a canonical choice of MVPL function $\varphi$ 
from the log Calabi-Yau pair $(X,D)$ as follows.
We simply take,
for $\rho\in\mathring{\P}$ a codimension one cell,
the kink of $\varphi$ to be $\kappa_{\rho}=[D_{\rho}]$. Note that
$\kappa_{\rho}\in Q$ by the assumption that $Q$ contains all effective
curve classes, and that $\varphi$
is a convex MVPL function provided $D_{\rho}$ is not numerically
trivial for any such $\rho$.
\end{construction}

As we saw in Proposition \ref{prop:local affine submersion},
under certain situations, the affine structure extends across 
$\mathrm{Star}(\tau)$. Similarly, a single-valued representative
for $\varphi$ as constructed in Construction \ref{constr:phi}
may be found on this open subset of $B$. Before doing so, we review
for this purpose and future use later in the paper certain toric facts.

If $\Sigma$ is a
complete, non-singular fan in $M_{\RR}$, there is a standard
description of $N_1(X_{\Sigma})$.
Set $T_{\Sigma}:=\ZZ^{\Sigma(1)}$, the lattice with basis $t_\rho$, for $\rho\in\Sigma(1)$.
There is a canonical map
\begin{eqnarray}
\label{Eq: s}
s:T_{\Sigma} & \longrightarrow & M \\
\nonumber
t_{\rho} &   \longmapsto & m_{\rho}
\end{eqnarray}
where $m_\rho$ denotes the primitive generator of $\rho$. Because $\Sigma$ is
assumed to be non-singular, this map is surjective, and there is a canonical
isomorphism $N_1(X_{\Sigma})\cong \ker s$, with the isomorphism given
by
\[N_1(X_{\Sigma})\ni\beta \longmapsto \sum_{\rho\in\Sigma(1)} (D_{\rho}\cdot \beta)t_{\rho}.\]

More generally, suppose given a collection of 
cones $\sigma_1,\ldots,\sigma_p\in\Sigma$ and elements $m_1,\ldots,m_p\in M$
with $m_i$ tangent to $\sigma_i$. Suppose further that the $m_i$
satisfy the balancing condition $\sum_i m_i=0$. We then also obtain
a curve class $\beta\in N_1(X_{\Sigma})$ as follows. 
Let $\rho_{i1},\ldots,\rho_{in_i}$ be the one-dimensional faces of
$\sigma_i$. Thus we may write
$m_i=\sum_j a_{ij}m_{\rho_{ij}}$ with the $a_{ij}$ (possibly negative)
integers. Take
\[
a_{\rho}:=\sum_{i,j\,\,\mathrm{s.t.}\, \rho=\rho_{ij}} a_{ij}.
\]
Then $\sum a_{\rho}t_{\rho}\in\ker s$, hence represents a curve class
$\beta$. Explicitly, this is characterized by the condition
\begin{equation}
\label{eq:Drho dot beta}
D_{\rho}\cdot \beta = 
\sum_{i,j\,\,\mathrm{s.t.}\, \rho=\rho_{ij}} a_{ij}.
\end{equation}

\begin{lemma}
\label{lem:beta recovery} 
Let $\Sigma$, $X_{\Sigma}$ be as above. Then:
\begin{enumerate}
\item There exists a piecewise
linear function $\psi:M_{\RR}\rightarrow N_1(X_{\Sigma})\otimes_{\ZZ}\RR$
with kink along a codimension one cone $\rho\in\Sigma$ being the
class of $D_{\rho}$, the corresponding one-dimensional stratum of
$X_{\Sigma}$.
\item This function $\psi$ is universal,
in the following sense. If $D$ is a divisor
on $X_{\Sigma}$, let $\psi_D$ be the composition of $\psi$ with
the map $N_1(X_{\Sigma})\otimes\RR\rightarrow \RR$ given by
$\beta\mapsto \beta\cdot D$. Then up to linear functions, 
$\psi_D$ is the piecewise linear function determined by the divisor $D$.
\item
If $\sigma_1,\ldots,\sigma_p\in\Sigma$
and $m_1,\ldots,m_p\in M$ satisfy $m_i$ tangent to $\sigma_i$ and
$\sum_i m_i=0$, then the
corresponding class $\beta\in N_1(X_{\Sigma})$ is
\[
\beta=\sum_{i=1}^p (d\psi|_{\sigma_i})(m_i).
\]
\end{enumerate}
\end{lemma}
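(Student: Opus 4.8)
The plan is to prove the three parts in sequence, building each on the previous one, and exploiting the standard toric dictionary between divisors, piecewise linear functions, and curve classes.

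\textbf{Part (1).} First I would construct $\psi$ by prescribing its kinks. For each codimension one cone $\rho \in \Sigma$, set the kink $\kappa_\rho := [D_\rho] \in N_1(X_\Sigma)$. Since $\Sigma$ is complete and non-singular, the cells of $\Sigma$ form a fan whose combinatorics guarantees (via \cite[Def.\ 1.6, Prop.\ 1.9]{GHS}, exactly as for the MVPL functions $\varphi$ discussed above, but here with no discriminant locus because the toric affine structure is globally defined on $M_\RR$) that a piecewise linear function with these prescribed kinks exists and is unique up to a global linear function. One must check the compatibility/cocycle condition: going around a codimension two cone, the accumulated change of slopes is zero. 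This follows because the analogous sum $\sum (D_{\tau_j}\cdot D_\rho)$ relation among the rays meeting a given codimension two cone is precisely the toric relation recording the normal bundle of the corresponding curve, i.e.\ $N_1(X_\Sigma) \cong \ker(s: T_\Sigma \to M)$; the kinks live in $\ker s$ by construction, so the obstruction vanishes.

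\textbf{Part (2).} Given a divisor $D = \sum_\rho b_\rho D_\rho$ on $X_\Sigma$, the associated piecewise linear function (up to linear functions) is the one whose kink along $\rho$ is the integer $-b_\rho$ (sign conventions aside), by standard toric geometry. On the other hand, $\psi_D = \langle \psi, -\cap D\rangle$ has kink along $\rho$ equal to $[D_\rho]\cdot D = \deg \O_{X_\Sigma}(D)|_{D_\rho}$, which by the projection/intersection formula on the toric curve $D_\rho$ equals the same integer $b_\rho$ (again up to the sign convention already fixed in Definition \ref{def:kinky}). Since two piecewise linear functions with the same kinks along all codimension one cones differ by a linear function, $\psi_D$ agrees with the piecewise linear function determined by $D$ up to a linear function, as claimed.

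\textbf{Part (3).} This is the computational heart. Given $\sigma_1,\dots,\sigma_p \in \Sigma$ and $m_i$ tangent to $\sigma_i$ with $\sum_i m_i = 0$, I would compute $\beta' := \sum_{i=1}^p (d\psi|_{\sigma_i})(m_i) \in N_1(X_\Sigma)$ and compare it with the class $\beta$ characterized by \eqref{eq:Drho dot beta}. It suffices to check $D_\rho \cdot \beta' = D_\rho \cdot \beta$ for every ray $\rho \in \Sigma(1)$, since the intersection pairing with the $D_\rho$'s is injective on $N_1(X_\Sigma)$ (because $N_1(X_\Sigma) \hookrightarrow \ker(s) \subseteq T_\Sigma = \ZZ^{\Sigma(1)}$ via $\beta \mapsto (D_\rho \cdot \beta)_\rho$). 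By part (2), pairing $\psi$ with $D_\rho$ gives (up to linear functions) the piecewise linear function $\psi_{D_\rho}$ attached to the divisor $D_\rho$, and so $D_\rho \cdot \beta' = \sum_i (d\psi_{D_\rho}|_{\sigma_i})(m_i)$. Now write each $m_i = \sum_j a_{ij} m_{\rho_{ij}}$ over the rays $\rho_{ij}$ of $\sigma_i$; since $\psi_{D_\rho}$ is linear on $\sigma_i$, $(d\psi_{D_\rho}|_{\sigma_i})(m_i) = \sum_j a_{ij}\, \psi_{D_\rho}(m_{\rho_{ij}})$. The linear terms ambiguity in $\psi_{D_\rho}$ contributes $\sum_i (\text{linear})(m_i) = (\text{linear})(\sum_i m_i) = 0$ by the balancing condition — this is where $\sum_i m_i = 0$ is used and it is essential. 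Finally, $\psi_{D_\rho}$ can be normalized so that $\psi_{D_\rho}(m_{\rho'}) = \delta_{\rho\rho'}$ on rays (it is the standard support function of $D_\rho$, taking value $-1$ or $1$ on $m_\rho$ and vanishing on the other rays of any cone containing $\rho$ — and one checks any ray $\rho'\neq\rho$ lies in such a cone or else $\psi_{D_\rho}$ vanishes there too); hence $(d\psi_{D_\rho}|_{\sigma_i})(m_i) = \sum_{j: \rho_{ij}=\rho} a_{ij}$, and summing over $i$ gives exactly the right-hand side of \eqref{eq:Drho dot beta}. Therefore $D_\rho\cdot\beta' = D_\rho\cdot\beta$ for all $\rho$, so $\beta' = \beta$.

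\textbf{Main obstacle.} The delicate point is bookkeeping the sign/linear-function ambiguities consistently: $\psi$ is only defined up to a global linear function, each $\psi_{D_\rho}$ inherits this ambiguity, and the kink conventions of Definition \ref{def:kinky} fix signs via the chosen co-orientation $\delta$. The balancing condition $\sum_i m_i = 0$ is exactly what kills the ambiguity when we evaluate $d\psi$ summed over the $\sigma_i$, so the statement is robust — but verifying that every normalization choice cancels correctly, and that the normalized support function $\psi_{D_\rho}$ really takes the claimed values on rays (including rays not adjacent to $\rho$), is the part that requires care rather than cleverness.
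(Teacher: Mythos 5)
Your overall strategy --- construct $\psi$ by prescribing its kinks, verify the cocycle condition, then reduce (3) to (2) by pairing with boundary divisors and killing the linear ambiguity via the balancing condition --- is the same route the paper takes implicitly through the citations to \cite[Lem.~1.13, Lem.~3.32]{GHK}, just spelled out in detail. Part (3) in particular is a clean, self-contained calculation, and you correctly identify where $\sum_i m_i=0$ is essential.

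There is, however, a concrete error in part (2) that conflates rays with codimension one cones. You assert that the kink of the support function of $D=\sum_\tau b_\tau D_\tau$ along a codimension one cone $\rho$ is $\pm b_\rho$, and that $\deg\O_{X_\Sigma}(D)|_{D_\rho}$ equals $b_\rho$. Neither of these is right: $b_\tau$ is the value of the support function at the primitive generator of a \emph{ray} $\tau$ (one-dimensional cone, giving a divisor), whereas the kink is attached to a \emph{codimension one cone} $\rho$ (giving a curve), and that kink equals the intersection number $D\cdot D_\rho$, a nontrivial $\ZZ$-linear combination of the $b_\tau$ over all rays $\tau$ spanning a maximal cone together with $\rho$ (cf.\ \eqref{eq:chart relation general}). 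The corrected version of your argument is shorter: by the defining property of $\psi$, the kink of $\psi_D$ along $\rho$ is $D_\rho\cdot D$, while the kink of the support function of $D$ along $\rho$ is also $D\cdot D_\rho$ by standard toric geometry --- this is exactly what the paper records --- so the two functions have equal kinks and hence agree up to a linear function. One should simply delete the intermediate identification with $b_\rho$. I would also tighten the cocycle verification in part (1): rather than gesturing at $\ker s$, note that the obstruction around a codimension two cone $\omega$ lies in $\Lambda_\omega^\perp\otimes N_1(X_\Sigma)_\RR$, and pairing it with any divisor $D$ yields the cocycle obstruction for the ordinary $\RR$-valued support function $\varphi_D$, which vanishes since $\varphi_D$ exists; since the intersection pairing with divisors is injective on $N_1(X_\Sigma)$, the $N_1$-valued obstruction vanishes as well.
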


\begin{proof}
(1) is \cite[Lem.\ 1.13]{GHK}, while (3)
follows immediately from the argument of the proof of
\cite[Lem.\ 1.13]{GHK}, generalizing \cite[Lem.\ 3.32]{GHK}.
Finally, (2) follows 
as it is standard toric geometry that the kink of the piecewise
linear function corresponding to a divisor $D$
at a codimension one cone $\rho$ is $D\cdot D_{\rho}$.
\end{proof}

\begin{proposition}
\label{prop:phi tau}
If $\tau\in\P$ is such that $D_{\tau}$ satisfies the hypotheses
of Proposition \ref{prop:local affine submersion}, then
the MVPL function $\varphi$ has a single-valued representative
$\varphi_{\tau}$ on $\mathrm{Star}(\tau)$.
\end{proposition}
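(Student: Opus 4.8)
The plan is to directly extend the chart $\psi_{\tau}: \mathrm{Star}(\tau) \to \RR^n$ constructed in the proof of Proposition~\ref{prop:local affine submersion}, together with the decomposition $\RR^n = \RR^{n-d} \oplus \RR^d$, to produce a single-valued representative of $\varphi$ on $\mathrm{Star}(\tau)$ with the correct kinks. Recall from Construction~\ref{constr:phi} that $\varphi$ is characterized by having kink $[D_{\rho}] \in Q^{\gp}$ along each codimension one cone $\rho$. The point is that Remark~\ref{rem:normal bundles} already identifies $\scrP_{\tau}$ with a fan $\Sigma$ in $\RR^n$ via $\psi_{\tau}$, in such a way that for each edge $\rho \in \P_{\tau}$ we have $\O_X(D_{\rho})|_{D_{\tau}} \cong \O_{X_{\Sigma}}(D_{\psi_{\tau}(\rho)})|_{D_{\psi_{\tau}(\tau)}}$; more to the point, the construction of $\psi_{\tau}$ was rigged so that \eqref{eq:chart relation general} holds, which is exactly the normal-bundle/self-intersection relation. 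So the combinatorics of $\Sigma$ mirrors that of $\scrP_{\tau}$, and I expect the kinks of a natural piecewise linear $N_1(X_{\Sigma})$-valued function on $\Sigma$ to match the kinks $[D_{\rho}]$ on the nose once curve classes are transported along the splitting $N_1(X) = N_1(X_{\Sigma}) \oplus \bigoplus_{ij} \ZZ E_i^j$ — or, in the purely local statement here, along whatever map sends the toric curve classes on $X_{\Sigma}$ to the corresponding classes on $X$.

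Concretely, I would apply Lemma~\ref{lem:beta recovery}(1) to the complete nonsingular fan $\Sigma$ (or rather a completion of it — one needs $\Sigma$ complete, so first complete the fan $\psi_{\tau}(\scrP_{\tau})$ arbitrarily, which does not affect the cones meeting $\psi_{\tau}(\tau)$) to obtain a piecewise linear function $\psi: \RR^n \to N_1(X_{\Sigma})_{\RR}$ whose kink along a codimension one cone $\omega \in \Sigma$ is $[D_{\omega}]$. Pulling back by $\psi_{\tau}$ and post-composing with the natural map $N_1(X_{\Sigma}) \to N_1(X) \hookrightarrow Q^{\gp}_{\RR}$ (sending a toric curve class to its class in $X$, which factors through $Q^{\gp}$ since effective classes land in $Q$), I get a candidate single-valued function $\varphi_{\tau} := (\text{that map}) \circ \psi \circ \psi_{\tau}$ on $\mathrm{Star}(\tau)$. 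Its kink along a codimension one cone $\rho \supseteq \tau$ is then the image in $Q^{\gp}$ of $[D_{\psi_{\tau}(\rho)}] \in N_1(X_{\Sigma})$, and I must check this equals $[D_{\rho}] \in Q^{\gp}$. This is where Remark~\ref{rem:normal bundles} is used: the identification of strata and normal bundles guarantees that the curve class of the one-dimensional stratum $D_{\psi_{\tau}(\rho)}$ of $X_{\Sigma}$, pushed to $N_1(X)$, agrees with $[D_{\rho}]$ — one verifies this by intersecting against divisors, using that intersection numbers $D_{\tau_j} \cdot D_{\rho}$ are preserved (Remark~\ref{Rem: Motuvation for toricy definition}) and the toric computation of $\psi_D$ in Lemma~\ref{lem:beta recovery}(2).

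The main obstacle, I expect, is bookkeeping the relationship between $N_1(X_{\Sigma_{\tau}})$ (curve classes of the genuinely toric local model, really a fan in $\RR^d$), the auxiliary $N_1(X_{\Sigma})$ with $\Sigma \subseteq \RR^n$ coming from the completed $\psi_{\tau}(\scrP_{\tau})$, and $N_1(X)$ — and making sure the completion of the fan introduces no spurious kink contributions along the cones we care about (it cannot, since kinks are local to a codimension one cone and its two neighbors, all of which lie in $\scrP_{\tau}$). A secondary subtlety is that $\psi$ in Lemma~\ref{lem:beta recovery} is only defined up to linear functions, but this is harmless: an MVPL function is precisely an equivalence class of local PL functions modulo linear functions, so any choice works and the resulting kinks are well-defined. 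Once the kink matching is confirmed, $\varphi_{\tau}$ is by definition a single-valued representative of $\varphi$ on $\mathrm{Star}(\tau)$, and the proof is complete.
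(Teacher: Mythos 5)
Your plan differs from the paper's in one key respect, and that difference introduces a gap. The paper applies Lemma~\ref{lem:beta recovery}(1) not to a completion of $\psi_{\tau}(\scrP_{\tau})$ inside $\RR^n$, but to the $d$-dimensional fan $\Sigma_{\tau}$ of the toric variety $D_{\tau}$ itself, producing a PL function $\psi:\RR^d\to N_1(D_{\tau})_{\RR}$, and then takes $\varphi_{\tau}=\iota\circ\psi\circ\upsilon$ where $\upsilon$ is the affine submersion of Proposition~\ref{prop:local affine submersion} and $\iota:N_1(D_{\tau})\to N_1(X)$ is the genuine push-forward along the closed embedding $D_{\tau}\hookrightarrow X$. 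This is where your version runs into trouble: the ``natural map $N_1(X_{\Sigma})\to N_1(X)$'' you invoke does not exist. The $X_{\Sigma}$ in your construction is an auxiliary $n$-dimensional toric variety built from an arbitrarily completed fan; it is neither a subvariety of $X$ nor a variety to which $X$ maps, so there is no push-forward or pull-back of curve classes. (You appear at one point to conflate this local auxiliary $X_{\Sigma}$ with the global $X_{\Sigma}$ of the introduction whose blow-up is $X$ and for which the splitting $N_1(X)=N_1(X_{\Sigma})\oplus\bigoplus\ZZ E_i^j$ holds --- but these are different objects, and Proposition~\ref{prop:phi tau} must hold for general $(X,D)$, not only the blow-up setup.) What you \emph{do} have is the isomorphism $D_{\tau}\cong D_{\psi_{\tau}(\tau)}$ from Remark~\ref{rem:normal bundles}; one could try to use it to send a one-dimensional stratum of $D_{\psi_{\tau}(\tau)}$ to the corresponding curve in $X$, but this only defines a map out of $N_1(D_{\psi_{\tau}(\tau)})$, not out of $N_1(X_{\Sigma})$, and one would then have to argue that $\psi$ (mod linear functions) factors through the image of $N_1(D_{\psi_{\tau}(\tau)})\to N_1(X_{\Sigma})$, i.e.\ that this map is injective --- an unnecessary extra claim.

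There is also a secondary technical issue you flag but underestimate: Lemma~\ref{lem:beta recovery}(1) needs a \emph{complete nonsingular} fan, and it is not automatic that $\psi_{\tau}(\scrP_{\tau})$ admits such a completion without subdividing the cones adjacent to $\psi_{\tau}(\tau)$ (subdivision elsewhere is harmless, but one must argue this). The paper's route avoids both problems at once: $\Sigma_{\tau}$ is already a complete nonsingular fan in $\RR^d$ because $D_{\tau}$ is a smooth projective toric variety, $\iota:N_1(D_{\tau})\to N_1(X)$ exists for free, and the kink of $\psi\circ\upsilon$ along $\rho\supseteq\tau$ is immediately the class of the one-dimensional stratum $D_{\rho}\subseteq D_{\tau}\subseteq X$, which is exactly $\kappa_{\rho}=[D_{\rho}]$. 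In short: drop the ambient $\RR^n$ and the auxiliary $X_{\Sigma}$ entirely, work with $\Sigma_{\tau}$ in $\RR^d$ and compose with $\upsilon$ rather than $\psi_{\tau}$, and your argument becomes the paper's.
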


\begin{proof}
By Lemma \ref{lem:beta recovery}, there is a
piecewise linear function 
$\psi:\RR^d\rightarrow N_1(D_{\tau}) \otimes \RR$ with respect to the fan 
$\Sigma_{\tau}$ whose kink
along a codimension one $\rho\in\Sigma_{\tau}$ is the class of
the corresponding one-dimensional stratum of $D_{\tau}$. 
Now we have a canonical map $\iota:N_1(D_{\tau})\rightarrow N_1(X)$
induced by the inclusion $D_{\tau}\hookrightarrow X$.
Thus, using Proposition \ref{prop:local affine submersion}, we may
take $\varphi_{\tau}=\iota \circ \psi\circ\upsilon$.
\end{proof}

\subsection{Wall structures on $(B,\P)$}
\label{Sec: wall structures}
The main ingredient of the construction of the family mirror to $(X,D)$ is a \emph{scattering diagram}, or \emph{wall structure}, which lives on $(B,\P)$. 
We will review 
this data from \cite[\S2.3]{GHS} and its consequences in the remaining part of 
this section. While \cite{GHS} and \cite{GSCanScat} use the term 
``wall structure,'' we
will instead use the term ``scattering diagram" here.

We fix a general situation of a polyhedral affine manifold $(B,\P)$
as in \cite[Constr.\ 1.1]{GHS}. For our purposes, the reader may think
of $(B,\P)$ as having come from a log Calabi-Yau variety $(X,D)$,
or, in the relative case $X\rightarrow\AA^1$, as a fibre $p_{\mathrm{trop}}^{-1}(1)$ of the
induced map of Remark \ref{rem:relative case}. We also fix
a monoid $Q$ with $Q^{\times}=\{0\}$, an MVPL function $\varphi$ 
with values in $Q^{\gp}_{\RR}$, and a monoid ideal $I\subseteq Q$
with $\sqrt{I}=\fom=Q\setminus \{0\}$.

\begin{definition}
\label{Def: canonical scattering walls}
A \emph{wall} on $B$ is a codimension one rational polyhedral cone
$\fod\not\subseteq\partial B$ contained in some maximal cone $\sigma$ of $\P$, 
along with an element
\begin{equation}
\label{Eq: canonical fncs}
    f_{\fod}=\sum_{m\in\shP^+_x, \bar m\in \Lambda_{\fod}} c_m z^m 
\in \kk[\shP^+_x]/I_x,
\end{equation}
where $I_x$ is the ideal defined in \eqref{eq:Ix def}.
Here $x\in \Int(\fod)$ and
$\Lambda_{\fod}$ is the lattice of integral tangent vectors to
$\fod$. We further require that for every $m\in\shP^+_x$ with
$c_m\not=0$ and every $y\in \fod\setminus\Delta$, we have 
$m\in\shP_y$ lying in $\shP_y^+$ under parallel transport under a path in 
$\fod$. Finally, we require that $f_{\fod}\equiv 1\mod \fom_x$.

We say a wall $\fod$ has \emph{direction} $v \in \Lambda_{\fod}$ if the attached function $f_\fod$, given as in \eqref{Eq: canonical fncs}, satisfies 
$\bar m=-kv$ for some $k\in \NN$ whenever $c_m\not=0$. In this case we denote $f_\fod$ by $f_{\fod}(z^{-v})$ 

A \emph{scattering diagram} on $(B,\P)$ is a finite set $\foD$ of walls.

If $\foD$ is a scattering diagram, we define
\begin{eqnarray}
\nonumber
\Supp(\foD) & := & \bigcup_{\fod\in \foD} \fod, \\
\nonumber
\Sing(\foD) & := & \Delta\cup \bigcup_{\fod\in\foD} \partial\fod
\cup \bigcup_{\fod,\fod'\in\foD} (\fod\cap\fod'),
\end{eqnarray}
where the last union is over all pairs of walls $\fod,\fod'$ with
$\fod\cap\fod'$ codimension at least two. In particular,
$\Sing(\foD)$ is a codimension at least two subset of $B$.

A \emph{joint} $\foj$ of $\foD$ is a codimension two polyhedral
subset of $B$ contained in the intersection of $\Sing(\foD)$ with some $\sigma\in\P^{\max}$,
such that for $x\in\Int(\foj)$, 
the set of walls $\{\fod\in\foD\,|\,x\in \fod\}$ is independent of
$x$. Further, a joint must be a maximal subset with this property.

If $x\in B\setminus \Sing(\foD)$, we define
\begin{equation}
\label{eq: fx def}
f_x:=\prod_{x\in\fod\in\foD} f_{\fod}.
\end{equation}
We say that two scattering diagram $\foD$, $\foD'$ are
\emph{equivalent} if $f_x=f'_x$ for all $x\in B\setminus (\Sing(\foD)
\cup\Sing(\foD'))$.
\end{definition}
\begin{remark}
\label{Rem: minimal D}
Every scattering diagram $\foD$ is equivalent to a \emph{minimal} scattering diagram
which does not have two walls with the same support and contains no trivial walls $(\fod, f_{\fod})$ with $f_{\fod} = 1$. Indeed, if $(\fod_1, f_1),(\fod_2, f_2) \in \foD$ with
$\fod_1 = \fod_2$, then we can replace these two walls with the single wall $(\fod, f_1\cdot f_2)$
without affecting the wall crossing automorphisms. In addition we can remove any trivial wall. 
\end{remark}

\begin{remark}
\label{rem:equivalence}
The definition of a scattering diagram depends on the choice of an ideal
$I$. If $\foD$, $\foD'$ are scattering diagram defined using ideals
$I, I'$ and $I''\supseteq I,I'$, then we will say $\foD$ and $\foD'$
are \emph{equivalent modulo $I''$} if $f_x=f'_x\mod I_x''$
for all $x\in B\setminus (\Sing(\foD)\cup\Sing(\foD'))$. This allows us to compare scattering diagrams with different ideals.

The reader should bear in mind that in general we will be dealing
with a compatible system of scattering diagrams, i.e., a scattering diagram
$\foD_I$ for each ideal $I$, such that $\foD_I$ and $\foD_{I'}$
are equivalent modulo $I'$ whenever $I\subseteq I'$. Alternatively,
one may pass to the completion by taking the limit over all $I$, but this 
tends to cause additional notational complexities.
\end{remark}

\begin{remark}
The definition of scattering diagram \cite[Def.\ 2.11]{GHS} is somewhat
more restrictive on the set of walls. In particular, it includes an
additional condition that the underlying
polyhedral sets of $\foD$ are the codimension one cells of a rational
polyhedral decomposition $\P_\foD$ of $B$ refining $\P$. Further,
$\P_{\foD}$ needs to satisfy some properties we do not enumerate here. 
These conditions make for easier technical definitions and constructions
in \cite{GHS}, and given any scattering diagram, it is easy to construct
an equivalent scattering diagram which does satisfy these properties.
However, the scattering diagrams constructed naturally in this paper
do not satisfy these conditions, and it is convenient not to impose
them.

The definition of wall we give here is also slightly more restrictive
than that in \cite{GHS}, which does not insist on $f_{\fod}\equiv 1$ mod $\fom_x$ 
if $\fod$ is contained in codimension one cell of $\P$.
Allowing such a possibility is useful to incorporate the
type of discriminant locus $\Delta$ allowed in \cite{GS2}, where $\Delta$
does not lie in the union of codimension two cells. However, that is not
the case here.
\end{remark}

In the remaining part of this subsection we will describe the notion of \emph{consistency} of a wall-structure.
 Roughly put, one uses consistent scattering diagrams to glue together some standard open charts to build the coordinate ring for the mirror to a log Calabi--Yau \cite[\S2.4]{GHS}. 
This definition involves testing a property for each joint,
and we break into cases based
on the codimension of the joint:

\begin{definition}
\label{Def: joints}
The codimension $k \in \{0, 1, 2 \}$ of a joint $\foj$ is the codimension of 
the smallest cell of $\P$ containing $\foj$.
\end{definition}

We would like to emphasize that the terminology ``codimension of a joint'' in Definition \ref{Def: joints}, which is commonly used in the literature, does not refer to the codimenson of the joint itself, viewed as a subset of $B$ (as in that case, a joint is always of codimension two in $B$ by definition).

We now give slight variants of the definition of consistency
in \cite{GHS}; it is easy to check the definitions given here
are equivalent to the definitions given there, but the form
given here is more suited for our purposes.

\subsubsection{Consistency around codimension zero joints } 
\label{Subsubsect: Consistency around codim zero joints}
For $\sigma\in\P^{\max}$, set, with $x\in\Int(\sigma)$,
\begin{equation}
\label{Eqn: R_fou}
R_\sigma:= (\kk[Q]/I)[\Lambda_\sigma]=\kk[\shP^+_x]/I_x.
\end{equation}
Let $\gamma:[0,1]\rightarrow \Int(\sigma)$ be a piecewise smooth path
whose image is disjoint from $\Sing(\foD)$. Further, assume that
$\gamma$ is transversal to $\Supp(\foD)$, in the sense that
if $\gamma(t_0)\in\fod\in\foD$, then there is an $\epsilon>0$ such
that $\gamma((t_0-\epsilon,t_0))$ lies on one side of $\fod$
and $\gamma((t_0,t_0+\epsilon))$ lies on the other. 

Assuming that $\gamma(t_0)\in \fod$, we associate a \emph{wall-crossing}
homomorphism $\theta_{\gamma,\mathfrak{d}}$ as follows.
Let $n_\fod$ be a generator of $\Lambda_\fod^\perp
\subseteq\check\Lambda_x=\Hom(\Lambda_x,\ZZ)$ for some $x\in\Int\fod$, with
$n_\fod$ positive on $\gamma((t_0-\epsilon,t_0))$ as a 
function on $\sigma$ in an affine chart mapping $x$ to the origin.
Then define 
\begin{equation}
\label{Eqn: theta_fop}
\theta_{\gamma,\mathfrak{d}}: R_\sigma \lra R_\sigma,\quad
z^m\longmapsto f_\fod^{\langle n_\fod,\ol m\rangle} z^m.
\end{equation}
As $f_\fod\equiv 1\mod \fom_x$, we see that $f_{\fod}$ is
an element of $R_\sigma^\times$, so that this formula makes sense. 
We refer to $\theta_{\gamma,\mathfrak{d}}$
as the automorphism associated to \emph{crossing the wall $\fod$}.

We may now define the \emph{path-ordered product}
\begin{equation}
    \label{Eq: path ordered product}
    \theta_{\gamma,\foD}:=\theta_{\gamma,\fod_s}\circ\cdots\circ
\theta_{\gamma,\fod_1},
\end{equation}
where $\fod_1,\ldots,\fod_s$ is a complete list of walls traversed
by $\gamma$, in the order traversed. Note that if $\gamma(t_0)\in
\fod,\fod'$, then since $\gamma(t_0)\not\in \Sing(\foD)$,
necessarily $\fod\cap\fod'$ is codimension one so that $\Lambda_{\fod}
=\Lambda_{\fod'}$. From their definitions,
$\theta_{\gamma,\mathfrak{d}}$ and $\theta_{\gamma,\fod'}$ commute, so the
ordering of $\fod, \fod'$ in the path-ordered product is irrelevant.

\begin{definition}
\label{Def: consistency in codim zero}
Let $\foj$ be a codimension zero joint contained in a maximal
cell $\sigma$, and let $\gamma$ be a small loop in $\sigma$ around
$\foj$. Then $\foD$ is said to be \emph{consistent at $\foj$} if
\[
\theta_{\gamma,\foD}=\id
\]
as an automorphism of $R_\sigma$.
\end{definition}

A scattering diagram $\foD$ on $(B,\P)$ is
\emph{consistent in codimension zero} if it is consistent at any
codimension zero joint $\foj$.

\subsubsection{Consistency around codimension one joints}
\label{Subsubsec: Consistency around codim one joints}
There is no condition for a codimension one joint contained
in $\partial B$. Otherwise, let $\rho$ be a codimension one cell of $\P$
not contained in the boundary of $B$. Let $x\in\Int(\rho)$.
We define the ring
\[
R_{\rho}:= \kk[\shP^+_x]/I_x.
\]
Note this is independent of the choice of $x$.
It follows from the definition of wall that if $\fod\in\foD$ is a wall 
with $\fod\cap\Int(\rho)\not=\emptyset$, then $f_{\fod}$ may
be intepreted as an element of $R_{\rho}$. 

Let $\gamma:[0,1]\rightarrow \mathrm{Star}(\tau)$ be path as
in the previous subsection, transversal to $\Supp(\foD)$ and with
image disjoint from $\Sing(\foD)$. Assume further that 
if $\gamma(t)\in\fod$ for some $\fod\in \foD$, then $\fod\cap
\Int(\rho)\not=\emptyset$. Then necessarily $f_{\fod}\in R_{\rho}$.
Thus, we may define $\theta_{\gamma,\mathfrak{d}}:R_\rho\rightarrow R_{\rho}$
using the formula \eqref{Eqn: theta_fop}, and hence
define the path-ordered product $\theta_{\gamma,\foD}:R_{\rho}\rightarrow
R_{\rho}$ as in the codimension zero case.

\begin{definition}
\label{Defn: consistency along codim one}
We say a codimension one joint $\foj$ contained in a codimension
one cell $\rho$ is
\emph{consistent} if, for a sufficiently small loop $\gamma$ in
$\mathrm{Star}(\rho)$ around $\foj$, we have
\[
\theta_{\gamma,\foD}=\id
\]
as an automorphism of $R_{\rho}$.
\end{definition}

\begin{remark}
This definition may appear to be quite different from the one
given in \cite[Def.\ 2.14]{GHS}. The definition given there
was designed to include a more complicated case where $\Delta
\cap\Int(\rho)\not=\emptyset$ and $f_{\fod}\not\equiv 1\mod \fom$
if $\fod\subseteq\rho$. The reader may verify that in our
simpler case, the definition given here implies \cite[Def.\ 2.14]{GHS}.
\end{remark}

\subsubsection{Consistency around codimension two joints}
\label{Subsubsec: Consistency around codim two joints}
To check consistency in codimension two we first will review the theory of broken lines. For details we refer to \cite[\S3.1]{GHS}.
For a given scattering diagram $\foD$ a
\emph{broken line} is a piecewise linear continuous directed
path 
\begin{equation}
    \label{Eq:  broken line}
    \beta \colon (-\infty,0] \lra B \setminus \Sing(\foD)
\end{equation}
with $\beta(0)\not\in\Supp(\foD)$ and
whose image consists of finitely many line segments $L_1, L_2, \ldots , L_N$, such that 
$\dim L_i\cap\fod =0$ for any wall $\fod\in\foD$,
and each $L_i$ is compact except $L_1$. Further, we require that each $L_i\subseteq\sigma_i$
for some $\sigma_i\in\P^{\max}$. To each such segment we assign a monomial 
\[ m_i :=  \alpha_iz^{(v_i ,q_i)} \in \kk [\Lambda_{L_i} \oplus Q^{\gp}]. \]
Here $\Lambda_{L_i}$, as usual, denotes the group of integral tangent vectors
to $L_i$ and is hence a rank one free abelian group.
Each $v_i$ is non-zero and tangent to $L_i$, with $\beta'(t)=-v_i$
for $t\in (-\infty,0]$ mapping to $L_i$.
We require $\alpha_1=1$ and set $m_1 = z^{(v_1,0)}$. We refer to $v_1$ 
as the \emph{asymptotic direction} of the broken line.
Roughly put, each time $\beta$ crosses a wall of $\foD$ it possibly changes
direction and monomial in a specific way that respects the structure. 
Again, roughly, $\beta$ bends in the direction given by linear combinations of $\bar m$ which appear in \eqref{Eq: canonical fncs}.

In detail, given $L_i$ and its attached monomial $m_i$, we determine
$L_{i+1}$ and $m_{i+1}$ as follows. Let $L_i$ be the image under
$\beta$ of an interval $[t_{i-1},t_i]\subset (-\infty,0]$.
Let $I=[t_i-\epsilon,t_i+\epsilon]$ be an interval with $\epsilon$
chosen sufficiently small so that $\beta([t_i-\epsilon,t_i))$
and $\beta((t_i,t_i+\epsilon])$ are disjoint from $\Supp(\foD)$.
There are two cases:
\begin{itemize}
    \item $\beta(t_i)\in\Int(\sigma_i)$ for $\sigma\in\P^{\max}$.
Then we obtain a wall-crossing map $\theta_{\beta|_I,\foD}:
R_{\sigma_i}\rightarrow R_{\sigma_i}$, and $m_i$ may be viewed as an element
of $R_{\sigma_i}$ via the inclusion $\Lambda_{L_i}\subseteq\Lambda_{\sigma_i}$. 
We expand $\theta_{\beta|_I,\foD}(m_i)$ as
a sum of monomials with distinct exponents, and require that
$m_{i+1}$ be one of the terms in this sum.
\item $\beta(t_i)\in\Int(\rho)$ for $\rho\in\P$ a codimension one
cell. If $y=\beta(t_i-\epsilon)$, $y'=\beta(t_i+\epsilon)$, $x=\beta(t_i)$,
we may view $(v_i,q_i)\in\shP^+_y$. By parallel transport to
$x$ along $\beta$, we may view $(v_i,q_i)\in \shP_x$. In fact,
$(v_i,q_i)\in \shP_x^+$ by the assumption that $\beta'(t_i-\epsilon)=-v_i$
and Proposition \ref{prop:order increasing}. Thus we may view
$m_i\in R_{\rho}$, and then $m_{i+1}$ is required to be a term
in $\theta_{\beta|_I,\foD}(m_i)$. A priori, $m_{i+1}\in R_{\rho}$, but
it may be viewed as a monomial in $R_{\sigma_{i+1}}$ by parallel
transport to $y'$.
\end{itemize}

Having defined broken lines, we proceed to the definition of consistency
for a joint $\foj$ of codimension two. Let $\omega\in
\P^{[n-2]}$ be the smallest cell containing $\foj$. Build a new
affine manifold $(B_\foj,\P_\foj)$ by replacing any $\tau\in\P$
with $\tau\supseteq\foj$ by the tangent wedge of $\omega$ in $\tau$.
Note that the inclusion $\tau\subseteq\tau'$ of faces induces an
inclusion of the respective tangent wedges. So $B_\foj$ is a local
model for $(B,\P)$ near $\foj$ all of whose cells are cones.
The scattering diagram $\foD$ induces a
scattering diagram $\foD_\foj$ by considering only the walls containing
$\foj$ and replacing them with tangent wedges based at $\omega$ for the
underlying polyhedral subsets of codimension one. Since the only
joint of $\foD_\foj$ is the codimension two cell
$\Lambda_{\foj,\RR}$ this scattering diagram is trivially consistent in
codimensions zero and one. Now let $m$ be an asymptotic monomial on
$(B_\foj,\P_\foj)$. For a general point $p\in B_\foj$, say contained
in $\sigma\in\P^{\max}_{\foj}$, define
\begin{equation}
\label{Def: vartheta^foj_m(p)}
\vartheta^\foj_m(p):=\sum_\beta a_\beta z^{m_\beta}\in R_\sigma.
\end{equation}
The sum runs over all broken lines $\beta$ on $(B_\foj,\P_\foj)$ with
asymptotic monomial $m$ and endpoint $p$. Furthermore, 
$a_{\beta}z^{m_{\beta}}$ denotes the monomial associated with the
last domain of linearity of $\beta$.

\begin{definition}
\label{Def: Consistency in codim two}
The scattering diagram $\foD$ is \emph{consistent along the codimension
two joint $\foj$} if the $\vartheta^\foj_m(p)$ satisfy the following
properties:
\begin{enumerate}
\item For $p,p'\in \Int(\sigma)$ with $\sigma\in\P_{\foj}^{\max}$ 
and any path $\gamma$
from $p$ to $p'$ for which $\theta_{\gamma,\foD_{\foj}}$ is
defined, we have 
\[
\theta_{\gamma,\foD_{\foj}}(\vartheta^{\foj}_m(p))=
\vartheta^{\foj}_m(p')
\]
in $R_{\sigma}$.
\item If $\rho\in\scrP_{\foj}$ is codimension one with $\rho\subseteq
\sigma,\sigma'\in\scrP_{\foj}^{\max}$, and $p\in\sigma$, $p'\in\sigma'$,
suppose there is in addition a path
$\gamma$ joining $p$ and $p'$ not crossing any walls of $\foD_{\foj}$
not contained in $\rho$. Then $\theta^{\foj}_m(p)$ and
$\theta^{\foj}_m(p')$ make sense as elements of $R_{\rho}$ and
\[
\theta_{\gamma,\foD_{\foj}}(\vartheta^{\foj}_m(p))=
\vartheta^{\foj}_m(p')
\]
in $R_{\rho}$.
\end{enumerate}
A scattering diagram $\foD$ is \emph{consistent} if it is consistent in
codimensions zero, one and along each
codimension two joint. \end{definition}

\medskip

We now need to explain how consistent scattering diagrams may be
constructred using punctured  Gromov-Witten theory.

\subsection{Punctured Gromov--Witten theory}
\label{Subsec: punctured}
We give here a quick review of punctured Gromov--Witten theory as 
set up in \cite{ACGSII} and as used in \cite{GSCanScat}. We may also
send the reader to \cite{GSUtah} for a brief survey, but the reader
should bear in mind that the definition of punctures evolved since
the latter paper was written. 
\subsubsection{Log schemes and their tropicalizations}
We assume the reader is familiar with the basic language of log
geometry, but we review definitions to establish notation. 
A log structure on a scheme $X$ is a sheaf of (commutative) monoids $\shM_X$ together with a homomorphism of sheaves of multiplicative monoids
$\alpha_X : \shM_X \to \O_X$ inducing an isomorphism $\alpha_X^{-1}(\O_X^{\times})\rightarrow \O_X^{\times}$, allowing us to identify $\O_X^{\times}$ as a subsheaf
of $\shM_X$. The standard notation we use for a log scheme is $X:=(\ul{X},\shM_X,\alpha_X)$, where by $\ul X$ we denote the underlying scheme. Throughout this paper all log schemes are fine and saturated (fs) 
\cite[I,\S1.3]{ogus2018lectures}, except when explicitly noted. 
A morphism of log schemes $f:X \rightarrow Y$ consists of an
ordinary morphism $\ul{f}:\ul{X}\rightarrow \ul{Y}$ of schemes along with
a map $f^{\flat}:f^{-1}\shM_Y\rightarrow \shM_X$ compatible with $f^\#:f^{-1}\O_Y\rightarrow \O_X$ via the structure homomorphisms
$\alpha_X$ and $\alpha_Y$.

The \emph{ghost sheaf} is defined by
$\overline{\shM}_{X}:=\shM_X/\O_X^{\times}$, and captures the key combinatorial
information about the log structure. In particular, it leads
to the \emph{tropicalization} $\Sigma(X)$ of $X$, an abstract polyhedral
cone complex, see \cite{ACGSI}, \S2.1 for details. In brief,
$\Sigma(X)$ is a collection of cones along with face maps between
them. There is one cone $\sigma_{\bar x}:=
\Hom(\overline{\shM}_{X,\bar x},\RR_{\ge 0})$ for every 
geometric point $\bar x\rightarrow X$,
and if $\bar x$ specializes to $\bar y$, there is a generization map
$\overline\shM_{X,\bar y}\rightarrow \overline\shM_{X,\bar x}$
which leads dually to a map
$\sigma_{\bar x}\rightarrow\sigma_{\bar y}$.
The condition of being fine and saturated implies this is an inclusion
of faces. Note that each cone $\sigma_{\bar x}\in \Sigma(X)$ comes with a
tangent space of integral tangent vectors $$\Lambda_{\sigma_{\bar x}}:=\Hom(\overline\shM_{X,\bar x},\ZZ)$$
and a set of integral points
\[
\sigma_{\bar x,\ZZ}:=\Hom(\overline\shM_{X,\bar x},\NN).
\]
Tropicalization is functorial, with $f:X\rightarrow Y$ inducing
$f_{\mathrm{trop}}:\Sigma(X)\rightarrow\Sigma(Y)$, with a map of cones
$\sigma_{\bar x}\rightarrow\sigma_{f(\bar x)}$ induced by
$\bar f^{\flat}:\overline\shM_{Y,f(\bar x)}\rightarrow \overline\shM_{X,x}$. In cases we consider in this article, after identifying $\sigma_{\bar x}$ and
$\sigma_{\bar y}$ whenever $\sigma_{\bar x}\rightarrow\sigma_{\bar y}$
is an isomorphism, we obtain an ordinary polyhedral cone complex. In particular, for us the most important example is the following:
\begin{example}
\label{Ex: divisorial}
Let $(X,D)$ be a pair satisfying the conditions of 
\S\ref{subsubsection:affine manifold}. The divisorial log
structure on $X$ coming from $D$ is given by taking $\shM_X$ to be 
the subsheaf of $\O_X$ consisting of functions invertible on $X\setminus
D$. Then the abstract polyhedral cone complex $\Sigma(X)$ agrees
with $(B,\P)$.
\end{example}

\subsubsection{The Artin fan}
\label{subsubsect:the artin fan}
Given an fs log scheme $X$, we have its associated \emph{Artin fan}
$\shX$, as constructed in \cite[Prop.\ 3.1.1]{ACMW17}. There is
a factorization 
\[
X\rightarrow\shX\rightarrow \Log_{\kk},
\] 
of the tautological morphism $X\rightarrow\Log_{\kk}$, where
$\Log_{\kk}$ is Olsson's stack parameterizing all fs log structures,
\cite{Olsson03}. The morphism $X\rightarrow\shX$ is strict and
the morphism $\shX\rightarrow\Log_{\kk}$ is \'etale and representable
by algebraic spaces. 

In this paper, we will only need the Artin fan
for the log Calabi-Yau pair $(X,D)$ being considered here. In particular,
if $X$ is a toric variety and $D$ is its toric boundary, then 
$\shX=[X/\GG_m^n]$, where $\GG_m^n$ is the big torus acting on $X$.
Further, in this article, all log Calabi-Yau pairs being considered
are obtained by blowing up a locus on the boundary of a toric pair,
and this does not change the Artin fan.

The Artin fan $\shX$ encodes the tropicalization of $X$ as an algebraic
stack. In particular, a useful way of thinking about $\shX$
is given in \cite[Prop.\ 2.10]{ACGSI}, which applies in particular
to the log Calabi-Yau pairs $(X,D)$ being considered here. In this
case, if $T$ is an fs log scheme, then the set of morphisms
of log stacks over $\Spec\kk$, $\Hom(T,\shX)$, coincides with the set
$\Hom_{\mathbf{Cones}}(\Sigma(T),\Sigma(X))$ of morphisms of abstract
cone complexes.

\subsubsection{Stable punctured maps}
\label{subsubsec:stable punctured}
The theory of stable log maps was developed in \cite{GSlog,AbramovichChen}, as a theory of curve counting invariants with target
space a log scheme, as a special case, $X=(X,D)$ as in Example \ref{Ex: divisorial}. A stable log map with
target $X/\Spec\kk$ is a commutative diagram
\[
\xymatrix@C=30pt
{
C\ar[r]^f\ar[d]_{\pi} & X\ar[d]\\
W\ar[r]&\Spec \kk
}
\]
where $\pi$ is a log smooth family of curves, which is required to be an integral morphism all of whose geometric fibres
are reduced curves. Further, $C/W$ comes with
a set of disjoint sections $p_1,\ldots,p_n:\ul{W}\rightarrow
\ul{C}$, referred to as the \emph{marked points}, disjoint from the nodal locus of $C$. Away from the nodal locus, \[\overline{\shM}_C=\pi^*\overline{\shM}_W
\oplus\bigoplus_{i=1}^n p_{i*}\ul{\NN}.\] 
\begin{remark}
\label{Remark: contact orders log map}
Crucially, such a stable log map records \emph{contact orders} at marked and nodal points.
If $p\in C_{\bar w}$ is a marked point of a geometric fibre of
$\pi$, we have
\[
\bar f^{\flat}:P_p:=\overline{\shM}_{X,f(p)}\longrightarrow
\overline{\shM}_{C,p}=\overline{\shM}_{W,\pi(p)}\oplus\NN
\stackrel{\pr_2}{\longrightarrow}\NN,
\]
which can be viewed as an element $u_p\in P_p^{\vee}:=\Hom(P_p,\NN)
\subseteq \sigma_{f(p)}$, called the \emph{contact order at $p$}. Similarly, if $x=q$ is a node,
there exists a homomorphism 
\begin{equation}
\label{eq:node contact order}
u_q:P_q :=\overline{\shM}_{X,f(q)} \lra \ZZ,
\end{equation}
called \emph{contact order at $q$}, see \cite[(1.8)]{GSlog} or \cite[\S2.3.4]{ACGSI}.
In the case the target space is $(X,D)$,
the contact order records tangency information with the irreducible
components of $D$. 
\end{remark}
Punctured invariants, introduced in \cite{ACGSII},
allow negative orders of tangency at particular marked points by enlarging
the monoid $\overline{\shM}_{C,p}$. This is done as follows.
\begin{definition}
Let $(Y,\M_Y)$ be an fs log scheme
with a decomposition $\M_Y = \M \oplus_{\O_Y^\times} \mathcal{P}$. Denote  $\mathcal{E} = \M \oplus_{\O_Y^\times} \shP^{\gp}$. A \emph{puncturing} of $Y$ along $\mathcal{P} \subset \M_Y$ is a \emph{fine} sub-sheaf of monoids $\M_{Y^{\circ}} \subset \mathcal{E}$ containing $\M_Y$ with a structure map $\alpha_{Y^{\circ}}:  \M_{Y^{\circ}} \to \O_Y$ such that
\begin{itemize}
    \item The inclusion $\M_Y \to \M_{Y^\circ} $ is a morphism of fine logarithmic structures
on $Y$.
\item  For any geometric point $\bar{x}$ of $Y$ let $s_{\bar{x}}  \in \M_{Y^\circ}$ be such that $ s_{\bar{x}} \not\in \M_{\bar{x}} \oplus_{\O_Y^{\times}}
\shP_{\bar{x}}$. Representing $ s_{\bar{x}}  = (m_{\bar{x}}, p_{\bar{x}}) \in \M_{\bar{x}} \oplus_{\O_{Y}^{\times}} \shP^{\gp}$, we have $\alpha_{\M_{Y^{\circ}}} (s_{\bar{x}}) =
\alpha_{\shM}(m_{\bar{x}}) = 0$ in $\O_{Y,{\bar{x}}}$.
\end{itemize}
\end{definition}
We will also call the induced morphism of logarithmic schemes $Y^{\circ} \to Y$ a \emph{puncturing of $Y$ along $\mathcal{P}$}, or call $Y^{\circ}$ a \emph{puncturing of $Y$}. Although there may not be a unique choice of puncturing, once given a morphism
to another log scheme, there is a canonical minimal choice:

\begin{definition}
A morphism $f:Y^{\circ}\rightarrow X$ from a puncturing $Y^{\circ}$ of $Y$ 
is \emph{pre-stable} if $\shM_{Y^{\circ}}$ is the fine submonoid
of $\shE$ generated by $\shM_Y$ and $f^{\flat}(f^{-1}(\shM_X))$. 
\end{definition}

\begin{definition}
A punctured curve over an fs log scheme $W$ is given by the data $ C^{\circ} \rightarrow C \to W$ where:
\begin{itemize}
    \item $C \to W$ is a log smooth curve with marked points $\mathbf{p}=p_1, \ldots, p_n$.
In particular, $\shM_C=\shM\oplus_{\O_C^{\times}}\shP$ where 
$\mathcal{P}$ is the divisorial logarithmic
structure on $C$ induced by the divisor $\bigcup_{i=1}^n p_i(W)$.
\item $C^{\circ} \rightarrow C$ is a puncturing of $C$ along $\mathcal{P}$.
\end{itemize}
\end{definition}
We now fix the target space $X=(X,D)$ along with a log smooth
morphism $X\rightarrow S$. In this paper, either $S=\Spec\kk$
with the trivial log structure, or $S=\AA^1$ with the divisorial
log structure coming from $0\in\AA^1$.
\begin{definition}
A punctured map to $X/S$ consists of a punctured curve  
$C^{\circ} \rightarrow C \to W$ and a
morphism $f$ fitting into a commutative diagram
\begin{equation} 
\xymatrix@C=30pt
{
{C}^{\circ} \ar[r]^{f}\ar[d]_{\pi}
&X\ar[d]\\
W \ar[r] & S
}
\nonumber
\end{equation}
Further, we require that
$f$ is pre-stable and defines an ordinary stable map on underlying schemes. We use the notation $(C^{\circ}/W,\mathbf{p},f)$ for a punctured map.
\end{definition}

The main point of increasing the monoid at a punctured point is
that it now allows contact orders $u_p\in P_p^*:=\Hom(P_p,\ZZ)$,
rather than just in $P_p^{\vee}$. Here the contact order $u_p$
is now given as a composition
\begin{equation}
\label{Eq: contact order at punctured points}
    \bar f^{\flat}:P_p\longrightarrow \overline\shM_{C^{\circ},p}
\subset\overline{\shM}_{W,\pi(p)}\oplus\ZZ\stackrel{\pr_2}{\longrightarrow}
\ZZ.
\end{equation}
\medskip

A key point in log Gromov-Witten theory is the tropical interpretation.
Suppose $W=(\Spec\kappa, Q\oplus \kappa^{\times})$ for $\kappa$
an algebraically closed field. Then by functoriality of tropicalization, 
we obtain a diagram
\begin{equation}
\label{eq:tropical diagram}
\xymatrix@C=45pt
{
\Gamma:=\Sigma(C^{\circ})\ar[r]^>>>>>>{h=f_{\mathrm{trop}}}\ar[d]_{\pi_{\mathrm{trop}}} & \Sigma(X)\ar[d]\\
\Sigma(W)\ar[r]& \Sigma(S)
}
\end{equation}
Here $\Sigma(W)=\Hom(Q,\RR_{\ge 0})=Q^{\vee}_{\RR}$ is a rational
polyhedral cone, and for $q\in \Int(Q^{\vee}_{\RR})$, 
$\pi_{\mathrm{trop}}^{-1}(q)$ can be identified with the dual graph
of the curve $C^{\circ}$ obtained as follows. 
\begin{construction}
\label{Const: dual graph}
The dual graph $G$
of the curve $C^{\circ}$ is a graph with sets of
vertices $V(G)$, edges $E(G)$, and legs (or half-edges) $L(G)$, with appropriate incidence relations between vertices and edges, and
between vertices and legs. Each vertex corresponds to an irreducible component of $C^{\circ}$. The edges correspond to nodes of $C$, with vertices of
a given edge indexing
the two branches of $C$ through the node. A leg corresponds to either a marked point or a punctured point. In the marked point case, 
a leg is an unbounded ray.
In the punctured case a leg is a compact interval, with 
one endpoint a vertex corresponding
to the irreducible component containing the punctured point. The
other endpoint of this compact interval is not viewed as a vertex of $G$.
By abuse of notation we denote the topological realization of this graph also by $G$. We denote the topological realization of $G$ obtained by removing the endpoints of compact legs which do not correspond to vertices by $G^{\circ}$.
\end{construction}
As explained in \cite[\S2.5]{ACGSI} and \cite[\S2.2]{ACGSII}, 
$\pi_{\mathrm{trop}}$ can be viewed
as determining a family of tropical curves $(G,{\mathbf g},\ell)$
where ${\mathbf g}:V(G)\rightarrow \NN$ is the genus function,
with ${\mathbf g}(v)$ the genus of the irreducible component
corresponding to $v$, and
$\ell:E(G)\rightarrow Q$ a length function, so that
in the fibre of $\pi_{\mathrm{trop}}$ over $q\in Q^{\vee}_{\RR}$,
the edge $E\in E(G)$ has length $\langle q,\ell(E)\rangle$.
See \cite[Def.\ 2.19, Constr.\ 2.20]{ACGSI}.
As all curves in this paper are genus $0$, we omit the genus
function in the sequel.

Further, $h$ now defines a family of tropical maps to
$\Sigma(X)$ as defined in \cite[Def.\ 2.21]{ACGSI}. For
$s\in \Int(Q^{\vee}_{\RR})$, write
\[
h_s:G\rightarrow\Sigma(X)
\]
for the restriction of $h$ to $G=\pi_{\mathrm{trop}}^{-1}(s)$. We remark that this family of tropical maps is abstract in the sense
that it does not yet necessarily satisfy any reasonable balancing
condition. 

Associated to any family of tropical maps to $\Sigma(X)$ is the type,
recording which cones of $\Sigma(X)$ vertices, edges and legs of
$G$ are mapped to, and tangent vectors to the images of edges and legs:

\begin{definition}
\label{Def: type of the tropical curve}
A \emph{type} of tropical map to $\Sigma(X)$ is data of a triple
$\tau=(G,\boldsymbol{\sigma}, \mathbf{u})$ where $\boldsymbol{\sigma}$
is a map
\[
\boldsymbol{\sigma}:V(G)\cup E(G)\cup L(G)\rightarrow\Sigma(X)
\]
with the property that if $v$ is a vertex of an edge or leg $E$,
then $\boldsymbol{\sigma}(v)\subseteq \boldsymbol{\sigma}(E)$.
Next, $\mathbf{u}$ associates to each oriented edge $E\in E(G)$
a tangent vector $\mathbf{u}(E)\in \Lambda_{\boldsymbol{\sigma}(E)}$
and to each leg $L\in L(G)$ a tangent vector $\mathbf{u}(L)\in
\Lambda_{\boldsymbol{\sigma}(L)}$.
\end{definition}

\begin{definition}
We say a type $\tau$ is \emph{realizable} if there exists a tropical
map $h:G\rightarrow \Sigma(X)$ with $h(v)\in\Int(\bsigma(v))$ for each
vertex $v\in V(G)$, $h(\Int(E))\subseteq \Int(\bsigma(E))$ for each
$E\in E(G)\cup L(G)$, and ${\bf u}(E)$ is tangent to $h(\Int(E))$
again for each $E\in E(G)\cup L(G)$.
\end{definition}

Associated to a realizable type is a moduli space of tropical maps of the
given type, and this dually defines a monoid called the \emph{basic monoid}:

\begin{definition}
\label{def:basic monoid}
Given a realizable type $\tau=(G,\boldsymbol{\sigma}, \mathbf{u})$, 
we define the
\emph{basic monoid} $Q_{\tau}=\Hom(Q_{\tau}^{\vee},\NN)$ of 
$\tau$ by defining its dual:
\begin{equation}
\label{eq:basic dual}
Q^{\vee}_{\tau}:=\big\{\big((p_v)_{v\in V(G)},(\ell_E)_{E\in E(G)}\big)\,|\,
\hbox{$p_{v'}-p_{v}=\ell_E \mathbf{u}(E)$ for all $E\in E(G)$}\big\},
\end{equation}
a submonoid of
\[
\prod_{v\in V(G)} \boldsymbol{\sigma}(v)_\ZZ \times \prod_{E\in E(G)}\NN.
\]
Here $\boldsymbol{\sigma}(v)_{\ZZ}$ denotes the set of integral points
of the cone $\boldsymbol{\sigma}(v)$, and
$v'$, $v$ are taken to be the endpoints of $E$ consistent
with the chosen orientation of the edge.
\end{definition}

Note that the corresponding real cone $Q_{\tau,\RR}^{\vee}$ naturally
parameterizes a universal family of tropical maps of type $\tau$,
defining for $s=\big((p_v),(\ell_E)\big)\in Q^{\vee}_{\tau,\RR}$
a tropical map $h_s:G\rightarrow \Sigma(X)$ given by
$h_x(v)=p_v$, taking an edge $E$ with endpoints $v,v'$ to the
straight line segment joining $p_v$ and $p_{v'}$ inside
$\boldsymbol{\sigma}(E)$, and taking a leg $L$ to the line
segment or ray
\[
h_s(L_p)=(h_s(v)+\RR_{\ge 0}\mathbf{u}(L_p))\cap \boldsymbol{\sigma}(L_p)
\]
inside $\boldsymbol{\sigma}(L_p)^{\gp}$. The notion of type of a tropical map leads to the notion of type
of a punctured map:

\begin{definition}
\label{Def: typr of a punctured curve}
The \emph{type} of a family of punctured curves $C^{\circ} \to W$ for
$W=(\Spec\kappa,\kappa^{\times}\oplus Q)$ a log point with $\kappa$
algebraically closed, is the type $\tau$ of tropical map
defined as follows:
\begin{enumerate}
\item Let $x\in C^{\circ}$ be a generic point, node or punctured
point, with $\omega_x\in \Gamma=\Sigma(C^{\circ})$ the corresponding cone.
Define
$\boldsymbol{\sigma}$
by mapping the vertex, edge or leg of $G$ corresponding to $x$ 
to the minimal cone
$\tau\in\Sigma(X)$ containing $h(\omega_x)$.
\item \emph{Contact orders at edges.} Let $E_q\in E(G)$
be an edge with a chosen order of vertices $v, v'$ (hence an orientation
on $E_q$). Then there is an integral tangent vector $u_q$
to $\boldsymbol{\sigma}$ such that if we consider the
tropical map $h_s:G\rightarrow \Sigma(X)$
for $s\in \Int(Q^{\vee}_{\RR})$, we have
\begin{equation}
\label{eq:punctured legs}
h_s(v')-h_s(v)= \langle s, \ell(E_q)\rangle u_q.
\end{equation}
We note that this
agrees with the $u_q$ of \eqref{eq:node contact order}.
We define $\mathbf{u}(E_q)=u_q$, noting this depends, up
to sign, on the choice of orientation of $E_q$. 
\item \emph{Contact orders at punctures}. For a leg $L_p\in L(G)$
corresponding to a puncture $p$ and vertex $v$, we set $\mathbf{u}(L_p)=u_p$,
as defined in \eqref{Eq: contact order at punctured points}.
\end{enumerate}
\end{definition}

\begin{remark}
A crucial notion of the theory is that of basicness.
See \cite[Def.\ 1.20]{GSlog} and \cite[Def.\ 2.36]{ACGSII} for the precise
definition. But roughly, the punctured map $f:C^{\circ}/W\rightarrow
X$ of the previous remark is basic provided the corresponding
family of tropical maps is universal. In particular, if
$W=(\Spec\kappa,\kappa^{\times}\oplus Q)$ is a log point and the
type of $f$ is $\tau$, then $f$ is basic if $Q=Q_{\tau}$.
A more general
punctured map $f:C^{\circ}/W\rightarrow X$ is basic if 
$f_{\bar w}:C^{\circ}_{\bar w}/\bar w\rightarrow X$ is basic for
each strict geometric point $\bar w$ of $W$.
It is the case that any punctured log map is obtained by
base-change from a basic punctured log map.
\end{remark}

\begin{remark}
We may also consider punctured maps $f:C^{\circ}/W\rightarrow\shX$
with target the Artin fan. We note that here we must remove the
assumption of stability, as stability does not make sense for maps
to the Artin fan. So we only insist on pre-stability. Note by the
discussion of \S\ref{subsubsect:the artin fan}, if $W$ is a log
point, then giving a morphism $C^{\circ}/W\rightarrow\shX$
is the same thing as giving a family of tropical maps $h:\Gamma\rightarrow
\Sigma(X)$. Thus maps to the Artin fan are purely tropical information.
\end{remark}
\subsubsection{Moduli spaces and virtual cycles}
\label{Subsec: moduli spaces}
In \cite{ACGSII}, it is proved that the moduli space
$\scrM(X)$ of basic stable punctured maps defined over $\Spec\kk$ 
is a Deligne-Mumford 
stack. Further, the moduli space $\foM(\shX)$ of basic pre-stable
punctured maps to the Artin fan $\shX$ is an algebraic stack.
There is a natural morphism $\varepsilon:\scrM(X)\rightarrow \foM(\shX)$
taking a punctured map $C^{\circ}\rightarrow X$ to the composition
$C^{\circ}\rightarrow X \rightarrow\shX$, and a relative perfect obstruction
theory for $\varepsilon$ is constructed. At a geometric point of
$\scrM(X)$ represented by a stable punctured map $f:C^{\circ}\rightarrow X$,
the relative virtual dimension of $\varepsilon$ at this point is
$\chi(f^*\Theta_{X/\kk})$, where $\Theta_{X/\kk}$ denotes the logarithmic
tangent bundle of $X$.

In general, when negative orders of tangency are allowed, $\foM(\shX)$
may be quite unpleasant. In the usual stable log map case, $\foM(\shX)$
is in fact log smooth over $\Spec\kk$, hence is toric locally in the 
smooth topology.
However,
once one allows punctures, $\foM(\shX)$ becomes only idealized log
smooth. This means that smooth locally, it looks like
a subscheme of a toric variety defined by a monomial ideal. So 
in particular, it may not be reduced or  equi-dimensional. Hence, to
get useful invariants, it is helpful to restrict to strata, as follows.

One may decompose the reduction of $\foM(\shX)$ into locally closed 
strata on which the corresponding type of map to $\shX$ is constant,
in the sense of Definition \ref{Def: typr of a punctured curve}.
Thus given a type $\tau$, one defines $\foM_{\tau}(\shX)$ as
the closure of the union of strata of type $\tau$, with the reduced
induced stack structure.\footnote{In \cite{GSCanScat}, this is written
as $\foM_{\tau}(\shX,\beta)$, however the type $\beta$ is redundant
information and we omit it here.} 
Note that in general $\foM_{\tau}(\shX)$ may be empty.
Setting
$\scrM_{\tau}(X)=\foM_{\tau}(\shX)\times_{\foM(X)} \scrM(X)$, we
may write a disjoint union
\[
\scrM_{\tau}(X)=\bigcup_{\tilde\tau=(\tau,\ul{\beta})}
\scrM_{\tilde\tau}(X)
\]
where $\scrM_{\tilde\tau}(X)$ is the open and closed substack of
$\scrM_{\tau}(X)$ parametrizing punctured curves which represent the
curve class $\ul{\beta}$. Then \cite{ACGSII} shows that $\scrM_{\tilde\tau}(X)$
is proper over $\Spec\kk$. Thus we have by restriction a perfect
obstruction theory for the map
\[
\varepsilon:\scrM_{\tilde\tau}(X)\rightarrow \foM_{\tau}(\shX),
\]
and we denote by 
\[
\varepsilon^!:A_*(\foM_{\tau}(\shX))\rightarrow A_*(\scrM_{\tilde\tau}(X))
\]
the virtual pull-back defined by Manolache \cite{Manolache}.

\subsection{The canonical scattering diagram}

Given a log Calabi-Yau pair $(X,D)$ as in \S\ref{subsubsection:affine manifold}
with tropicalization $(B,\P)$ with affine structure on $B$
as previously constructed, we review the construction of
the canonical scattering diagram on $B$ as announced in 
\cite{GSUtah} and developed in \cite{GSCanScat}. As we will be constructing a scattering diagram on $B$
using the choice of MVPL function $\varphi$ given by 
Construction \ref{constr:phi},
we need to fix the additional data of the monoid $Q\subseteq N_1(X)$,
and a monomial ideal $I\subseteq Q$ with $\sqrt{I}=\fom$.
Thus we obtain a
$Q^{\gp}_{\RR}$-valued MVPL function $\varphi$.

Fix a curve class $\ul{\beta}\in N_1(X)$, a cone $\sigma\in\P$ which
is of codimension $0$ or $1$, and a non-zero tangent vector 
$u \in \Lambda_{\sigma}$. Further, choose a realizable type 
$\tau=(G,\boldsymbol{\sigma},
\mathbf{u})$ with basic monoid $Q_{\tau}$ (Definition \ref{def:basic monoid}),
and with dual cone
$Q_{\tau,\RR}^{\vee}=\Hom(Q_{\tau},\RR_{\ge 0})$. 
We require that $\tau$ be a \emph{wall type} in the sense of
\cite[Def.~3.6]{GSCanScat}, i.e., 
\begin{itemize}
\item
$G$ is a genus zero graph with $L(G)=\{L_{\out}\}$ with
$u_{\tau}:={\bf u}(L_{\out})\not=0$.
\item $\tau$ is balanced. In other words,
for each vertex $v\in V(G)$ 
with $\bsigma(v)\in\P$ of codimension zero or one, and $x\in\Int(\bsigma(v))$
any point,
suppose we have edges or legs $E_1,\ldots,
E_m$ adjacent to $v$ oriented away from $v$. Then ${\bf u}(E_1),
\ldots,{\bf u}(E_m)$ may be viewed as elements of $\Lambda_x$,
and the equality
\[
\sum_{i=1}^m {\bf u}(E_i)=0
\]
holds in $\Lambda_{x}$.
\item Let $h:\Gamma\rightarrow \Sigma(X)$ be the corresponding
universal family of tropical maps parametrized by $Q_{\tau,\RR}^{\vee}$, 
and $\tau_{\out}\in \Gamma$
the cone corresponding to $L_{\out}$. (See discussion in the following
paragraph.) Then $\dim Q_{\tau,\RR}^{\vee}=n-2$ and
$\dim h(\tau_{\out})=n-1$. Further, $h(\tau_{\out})\not\subseteq
\partial B$.
\end{itemize}


In this case, along with $\tilde\tau=(\tau,\ul{\beta})$ we have a map
$\varepsilon:\scrM_{\tilde\tau}(X)\rightarrow \foM_{\tau}(\shX)$.
It is proved in \cite[Lem.~3.9]{GSCanScat} that 
\[
[\scrM_{\tilde\tau}(X)]^{\virt}:=\varepsilon^![\foM_{\tau}(\shX)]
\]
is a zero-dimensional cycle. Hence we may define
\begin{equation}
    \label{Eq: N-tau}
N_{\tilde\tau}:=\deg [\scrM_{\tilde\tau}(X)]^{\virt}.    
\end{equation}
Now the wall type $\tau$ gives rise to universal family of 
tropical maps of type $\tau$:
\[
\xymatrix@C=30pt
{
\Gamma\ar[r]^>>>>{h}\ar[d]_{\pi_{\mathrm{trop}}}&\Sigma(X)=B\\
Q_{\tau,\RR}^{\vee}&
}
\]
Here we are viewing $\Gamma$ as a polyhedral complex, containing a cone
corresponding to each vertex, edge, or leg of $G$.
In particular, as already mentioned in the definition of wall
type, $\Gamma$ contains an $n-1$-dimensional
cell $\tau_{\out}$, such that for $s\in \Int(Q^{\vee}_{\tau,\RR})$,
$\pi^{-1}_{\trop}(s)\cap \tau_{\out}$ is the leg $L_{\out}$ of $G$. Note also
that the vertex $v\in V(G)$ adjacent to $L_{\out}$ then gives
a cell $\tau_v\in \Gamma$ such that $\pi_{\trop}|_{\tau_v}$ is an isomorphism
of $\tau_v$ with $Q_{\tau,\RR}^{\vee}$, 
and $\tau_v$ is a face of $\tau_{\out}$.
Consider the wall
\begin{equation}
    \label{Eq: walls of canonical}
    (\fod_{\tilde\tau},f_{\tilde\tau}):=
\big( h(\tau_{\out}), \exp(k_{\tau}N_{\tilde\tau}t^{\ul{\beta}} z^{-u})\big).
\end{equation}
Here $k_{\tau}$ is defined as follows.
The map $h$ is affine linear when restricted to $\tau_{\out}$,
inducing a map of integral tangent spaces $h_*:\Lambda_{\tau_{\out}}
\rightarrow \Lambda_{\sigma}$. We then set
\begin{equation}
    \label{Eq: k_tau}
    k_{\tau} := |(\Lambda_{\sigma}/h_*(\Lambda_{\tau_{\out}}))_{\tors}|.
\end{equation}
By \eqref{eq:punctured legs}, we may also write 
\begin{equation}
\label{eq:tauout tauv}
h(\tau_{\out})=(h(\tau_v)+\RR_{\ge 0}u)\cap \sigma.
\end{equation}

Note also that $\exp(k_{\tau}N_{\tilde\tau}t^{\ul{\beta}}z^{-u})$
makes sense as an element of $(\kk[Q]/I)[\Lambda_{\sigma}]
\subseteq \kk[\shP_x^+]/I_x$ for $x\in\Int(\sigma)$.

\begin{definition}
We define the \emph{canonical scattering diagram} associated to $(X,D)$ 
modulo the ideal $I$ to be
\[
\foD_{(X,D)}:=\{(\fod_{\tilde\tau},f_{\tilde\tau})\}
\]
where $\tilde\tau=(\tau,\ul{\beta})$ runs over all wall types and
curve classes $\ul{\beta}\in Q\setminus I$.
\end{definition}

One of the main results of \cite{GSCanScat}, stated as
Theorem 3.12 in that reference, is then:

\begin{theorem} 
$\foD_{(X,D)}$ is a consistent scattering diagram.
\end{theorem}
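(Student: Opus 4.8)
The plan is to appeal directly to \cite[Thm.~3.12]{GSCanScat}: the scattering diagram $\foD_{(X,D)}$ constructed above is, verbatim, the canonical wall structure of that reference, so its consistency is exactly what is proved there. It is nonetheless worth recalling the shape of that argument, since it motivates the comparison results of the present paper.

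Consistency must be checked at each joint $\foj$, so one proceeds case by case according to whether $\foj$ has codimension $0$, $1$ or $2$. In every case one passes to the local model $(B_\foj,\P_\foj)$ and reinterprets the object whose triviality is being asserted — the path-ordered product $\theta_{\gamma,\foD_\foj}$ around a small loop encircling $\foj$ when $\foj$ has codimension $0$ or $1$, and the theta functions $\vartheta^\foj_m(p)$ together with their claimed independence of the basepoint $p$ when $\foj$ has codimension $2$ — as an identity among generating series of the punctured invariants $N_{\tilde\tau}$ attached to the wall types $\tau$ whose walls contain $\foj$. The mechanism producing this identity is a degeneration/gluing formula for punctured Gromov--Witten invariants in the spirit of \cite{ACGSII}: a punctured curve whose tropicalization meets a neighborhood of $\foj$ breaks, once its tropicalization is sufficiently stretched, into subcurves each governed by a single wall type, and the combinatorics of reassembling these subcurves matches exactly the combinatorics of composing the corresponding wall-crossing automorphisms around $\foj$. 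To convert this into a genuine vanishing statement one broadens a wall type $\tau$ to a type $\omega$ carrying one extra modulus, so that $\scrM_{\tilde\omega}(X)$ is virtually one-dimensional and proper; the degree of the $0$-cycle it cuts out on its own boundary must then vanish, and unwinding this boundary relation yields precisely the consistency relation at $\foj$, with the integers $k_\tau$ of \eqref{Eq: k_tau} and the appearance of $\exp$ in \eqref{Eq: walls of canonical} being exactly the normalization forced by the automorphism bookkeeping. This generalizes to arbitrary dimension the surface argument of \cite{GPS}.

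The hard part — and the reason we cite rather than reprove — is the gluing formula itself, together with the matching of virtual classes it demands: one must identify the boundary contributions to $[\scrM_{\tilde\omega}(X)]^{\virt}$ with the products $\prod_i k_{\tau_i} N_{\tilde\tau_i}$ running over the wall types appearing in each degenerate configuration, with the correct multiplicities, and, in the codimension-two case, one must further reconcile the broken-line description of $\vartheta^\foj_m$ with its enumerative description via two-pointed punctured maps, one puncture recording the asymptotic direction and the other carrying a point constraint. All of this is carried out in \cite{GSCanScat}, resting on the properness, obstruction-theory and functoriality inputs of \cite{ACGSII} recalled in \S\ref{Subsec: moduli spaces}.
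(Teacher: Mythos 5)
Your proof takes the same route as the paper: the paper states this result verbatim as Theorem~3.12 of \cite{GSCanScat} and cites it without reproducing the argument, and you do exactly the same, appealing directly to that reference. The expository recap you add of the degeneration/gluing strategy and the passage to one-parameter families of wall types is an accurate summary of the cited proof but is not, either here or in the paper, part of what must be supplied.
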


\begin{remark}
\label{rem:system of consistent diagrams}
The definition of $\foD_{(X,D)}$ depends on $I$, but it is obvious
from the definition that if $I\subseteq I'$, then the scattering diagrams 
defined using $I$ and $I'$ are equivalent modulo $I'$. Thus we tend
to view $\foD_{(X,D)}$ as a compatible system of scattering diagrams,
as in Remark \ref{rem:equivalence}. For the most part, when we discuss
the canonical scattering diagram, generally we take a sufficiently
small ideal $I$ as needed without comment.
\end{remark}

As we will also apply this consistency result in the relative case as described
in \S\ref{subsubsection:affine manifold}, we quote another
result from \cite{GSCanScat}, using the notation in Remark \ref{rem:relative case}. Let $p:X\rightarrow\AA^1$ be in that remark. Here $\AA^1$ now carries
the divisorial log structure induced by $0\in\AA^1$, so that $p$
is a log smooth morphism.
Assume the
fibre $(X_t,D_t)$ over general $t\in \AA^1$ is maximally degenerate,
so that $(B_0,\P_0)$ is the boundary of $(B,\P)$. We thus have 
two scattering diagrams, $\foD_{(X_t,D_t)}$ and $\foD_{(X,D)}$, which we compare in what follows. 

We first note from \cite[Prop.~3.7]{GSCanScat}:

\begin{theorem}
\label{thm:relative diagram}
Let $p:X\rightarrow \AA^1$ be as above, and let 
$p_{\mathrm{trop}}:B\rightarrow \Sigma(\AA^1)=\RR_{\ge 0}$ 
be the induced affine submersion.
Then any punctured log map $f:C^{\circ}/W\rightarrow X$ 
lying in a moduli space contributing to the canonical scattering diagram,
with output leg contact order $u$,
is in fact a punctured log map defined over $S=\AA^1$. In particular,
if $W=(\Spec \kappa,\kappa^{\times}\oplus Q)$ is a log point and $f$
is of type $\tau$,
we have the commutative diagram \eqref{eq:tropical diagram}. 
If $s\in Q_{\RR}^{\vee}$ maps to $1\in \Sigma(S)=\RR_{\ge 0}$,
then the image of $h_s$ lies in $p_{\trop}^{-1}(1)$. Thus we have
$(p_{\mathrm{trop}})_*(u)=0$.
\end{theorem}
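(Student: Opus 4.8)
The plan is to prove the statement in two stages which, somewhat unusually, turn out to be logically intertwined: first, that the composition $p\circ f$ upgrades $f$ to a punctured log map over $S=\AA^1$, which is a question about descending a log structure; and second, the tropical consequences, the heart of which is the horizontality $(p_{\mathrm{trop}})_*(u)=0$ of the output leg. The point is that the log-theoretic descent is obstructed precisely by the contact orders of $f$ with $X_0$, so controlling those contact orders tropically is what makes both stages work at once.

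For the first stage, since $p$ is log smooth and $p^{-1}(0)\subseteq D$, the coordinate $t$ on $\AA^1$ pulls back to a section $p^{\flat}(t)\in\Gamma(X,\shM_X)$ of the divisorial log structure, so $p\circ f\colon C^{\circ}\to\AA^1$ is automatically a morphism of log schemes. On underlying schemes, $\ul{C}^{\circ}\to\AA^1$ factors uniquely through $\ul W$, because $\ul\pi\colon\ul C\to\ul W$ is proper with reduced, geometrically connected fibres, hence $\ul\pi_*\O_C=\O_W$ and $\Hom(\ul W,\AA^1)=\Hom(\ul C,\AA^1)$; this produces the needed $\ul W\to\AA^1$. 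By the standard base-change behaviour of basic punctured maps it suffices to work with $W$ a log point, and then the only remaining point is that the log morphism descends, i.e.\ that $f^{\flat}(p^{\flat}(t))$ lies in $\pi^{*}\shM_W$ rather than merely in $\shM_{C^{\circ}}$. The discrepancy is measured by the contact orders of $f$ with $X_0$ at the marked, punctured and nodal points of $C$; since a wall type has a single puncture $L_{\out}$ and no other legs, this reduces to showing that the contact order of the output leg with $X_0$ — which, $X_0$ being a divisor supported on $D$ with associated linear functional $p_{\mathrm{trop}}$, is exactly $(p_{\mathrm{trop}})_*(u)$ — vanishes, together with the analogous statement along the (finitely many) edges, i.e.\ that the whole tropical curve sits inside a single fibre of $p_{\mathrm{trop}}$.

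For the second stage I would argue purely tropically. Let $h\colon\Gamma\to B$ be the universal family of tropical maps of the wall type $\tau$ over $Q^{\vee}_{\tau,\RR}$, and set $\bar h:=p_{\mathrm{trop}}\circ h$, a piecewise linear map to $\RR_{\ge0}$; the goal is that $\bar h$ factors through $\pi_{\mathrm{trop}}$ via a linear map $Q^{\vee}_{\tau,\RR}\to\RR_{\ge0}$, equivalently that for each parameter $s$ the function $\bar h_s$ on the genus-zero tree $G$ is constant. The available ingredients are: $\bar h_s\ge0$ since $p_{\mathrm{trop}}\ge0$ on $B$; at each vertex $v$ with $\bsigma(v)$ of codimension $\le1$, the balancing condition in the definition of wall type forces the outgoing slopes of $\bar h_s$ at $v$ to sum to zero; at a vertex $v$ with $\bsigma(v)$ of codimension $\ge2$ the component $C_v$ maps into the stratum $D_{\bsigma(v)}$, and the outgoing slopes of $\bar h_s$ at $v$ sum to the degree on $C_v$ of $\O_X(X_0)|_{D_{\bsigma(v)}}$, which is $\ge0$ because $X_0$ is effective (and $0$ if $C_v\subseteq X_0$); and there is a single leg $L_{\out}$, with outgoing slope $(p_{\mathrm{trop}})_*(u)$. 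Summing these relations over all vertices, the bounded edges cancel in pairs and one obtains $(p_{\mathrm{trop}})_*(u)=\sum_{\codim\bsigma(v)\ge2}\deg\bigl(\O_X(X_0)|_{D_{\bsigma(v)}}\cdot C_v\bigr)\ge0$. For the reverse inequality I would propagate inward from the locus where $\bar h_s$ attains its minimum: balancing forbids that locus from being exited through a codimension-$\le1$ vertex, and the rigidity constraints built into a wall type — $\dim Q^{\vee}_{\tau,\RR}=n-2$, $\dim h(\tau_{\out})=n-1$, and $h(\tau_{\out})\not\subseteq\partial B$ — are what one uses to exclude escape through the deeper strata and to pin down the behaviour at the vertex $v_{\out}$. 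I expect the main obstacle to be exactly this last point: at vertices mapping to deep strata the naive balancing fails, and one genuinely has to exploit the rigidity of wall types, rather than positivity alone, to prevent $\bar h_s$ from climbing.

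Once $\bar h_s$ is shown to be constant, equal to some $c_s$ linear in $s$, we get $h_s(G)\subseteq p_{\mathrm{trop}}^{-1}(c_s)$; normalizing $s$ so that it maps to $1\in\Sigma(S)=\RR_{\ge0}$ gives $c_s=1$ and hence $h_s(G)\subseteq p_{\mathrm{trop}}^{-1}(1)$, and the leg direction $u$, being tangent to this level set, satisfies $(p_{\mathrm{trop}})_*(u)=0$. With all contact orders of $f$ with $X_0$ now vanishing, $f^{\flat}(p^{\flat}(t))$ descends to $\shM_W$, the log morphism $W\to\AA^1$ exists and is compatible with $p\circ f$, so $f$ is a punctured log map over $\AA^1$; functoriality of tropicalization then yields the commutative square \eqref{eq:tropical diagram}, completing the argument.
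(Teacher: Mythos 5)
Note first that the paper does not prove this statement: it is quoted directly from \cite[Prop.~3.7]{GSCanScat}, so there is no internal argument to compare against. Your general plan --- scheme-theoretic descent via $\pi_*\O_C=\O_W$, log descent controlled by contact orders with $X_0$, and tropical balancing to force those contact orders to vanish --- is the right framework, but the tropical step contains a gap that you yourself flag.

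The issue is at vertices $v$ with $\codim\bsigma(v)\ge 2$. You claim the sum of outgoing slopes of $\bar h_s$ at $v$ equals $\deg\O_X(X_0)|_{C_v}$ and that this is ``$\ge 0$ because $X_0$ is effective (and $0$ if $C_v\subseteq X_0$).'' But effectivity only gives $\ge 0$ when $C_v$ meets $X_0$ properly; if $C_v\subseteq\Supp X_0$ you need a separate reason, which the parenthetical merely asserts. You then concede that the reverse inequality --- the part that actually yields $(p_{\mathrm{trop}})_*(u)=0$ rather than $\ge 0$ --- is the ``main obstacle,'' and propose a minimum-propagation argument via wall-type rigidity that you do not carry out. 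The missing observation is that $X_0=p^*(0)$ and $\Pic(\AA^1)=0$, so $\O_X(X_0)\cong p^*\O_{\AA^1}\cong\O_X$ is trivial; hence $\deg\O_X(X_0)|_{C_v}=0$ at \emph{every} vertex, with no case distinction. (Equivalently: $C_v$ is proper, so $p\circ f|_{C_v}$ is constant and $C_v$ lies in a single fibre of $p$, giving $C_v\cdot X_0=C_v\cdot X_t=0$ for $t$ general.) With that input, your summation over vertices --- bounded edges cancelling in pairs, the single leg surviving --- produces $(p_{\mathrm{trop}})_*(u)=0$ in one step, and the maximum principle for the resulting harmonic PL function $\bar h_s$ on the tree $G$ gives constancy of $\bar h_s$, so no appeal to the rigidity constraints built into wall types is needed.
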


We have a natural
map $\iota:N_1(X_t)\rightarrow N_1(X)$ induced by the inclusion 
$X_t\hookrightarrow X$. Choose a monoid $Q_t\subset N_1(X_t)$ as usual
such that $\iota(Q_t)\subseteq Q$. This allows us to define, for
any $\sigma\in \P_0$ of codimension zero or one, a map
\[
\iota_*:\kk[\Lambda_{\sigma}][Q_t] \rightarrow \kk[\Lambda_{\sigma}][Q].
\]
We may then define
\begin{equation}
    \label{Eq: ioata applied to canonical scattering}
    \iota(\foD_{(X_t,D_t)})=\{(\fod,\iota_*(f_{\fod}))\,|\, (\fod,f_{\fod})\in\foD_{(X_t,D_t)}\}.
\end{equation}
\begin{definition}
In the above situation, 
the \emph{asymptotic scattering diagram} of $\foD_{(X,D)}$ is defined as 
\[
\foD_{(X,D)}^{\as} =
\{(\fod\cap p_{\mathrm{trop}}^{-1}(0),f_{\fod})\,|\,
\hbox{$(\fod,f_{\fod})
\in \foD_{(X,D)}$ with $\dim \fod\cap p_{\mathrm{trop}}^{-1}(0)=\dim X_t-1$}\}.
\]
Note that this makes sense as a scattering diagram on $B_0$:
any non-trivial monomial $ct^{\ul{\beta}}z^{-u}$ appearing in a wall function
$f_{\fod}$ must have $u$ tangent to $B_0$ by Theorem \ref{thm:relative diagram}.
\end{definition}

We then also have from \cite[Prop.~3.18]{GSCanScat}:

\begin{theorem}
\label{thm:asymptotic general}
In the above situation, assuming that $p^{-1}(0)$ is reduced,
we have $\iota(\foD_{(X_t,D_t)})=\foD_{(X,D)}^{\as}$.
\end{theorem}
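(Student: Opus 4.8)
The plan is to prove the identity by setting up a bijection between the wall types $\tilde\tau=(\tau,\ul\beta)$ of $(X,D)$ whose associated wall $\fod_{\tilde\tau}=h(\tau_{\out})$ survives into $\foD_{(X,D)}^{\as}$ and the wall types $\tilde\tau'=(\tau',\ul\beta')$ of $(X_t,D_t)$, and then checking that under this bijection the two walls have equal support inside $B_0$, equal direction, equal integer $k_\tau$ of \eqref{Eq: k_tau}, equal curve class under $\iota$, and equal punctured invariant $N_{\tilde\tau}=N_{\tilde\tau'}$. The first four of these matchings are combinatorial, controlled by Theorem \ref{thm:relative diagram} and the local structure results of Section \ref{Sec: canonical scattering}; the equality of the invariants is the one non-formal input.

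First I would identify which wall types contribute. By Theorem \ref{thm:relative diagram} any punctured map contributing to $\foD_{(X,D)}$ is defined over $S=\AA^1$, so for a wall type $\tau$ the universal family $h\colon\Gamma\to B$ fits into \eqref{eq:tropical diagram}; since $p_{\mathrm{trop}}\circ h$ factors through $\pi_{\mathrm{trop}}\colon\Gamma\to Q_{\tau,\RR}^{\vee}$, each individual tropical curve $h_s$ lies entirely in a single fibre $p_{\mathrm{trop}}^{-1}(\phi(s))$ for an affine height function $\phi$ on $Q_{\tau,\RR}^{\vee}$, and $(p_{\mathrm{trop}})_*(u_\tau)=0$ makes $u_\tau$ tangent to $B_0$. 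As $\fod_{\tilde\tau}=h(\tau_{\out})$ is a cone on which $p_{\mathrm{trop}}$ is linear, $\fod_{\tilde\tau}\cap B_0$ is the face of $\fod_{\tilde\tau}$ on which $p_{\mathrm{trop}}$ vanishes, and it has dimension $\dim X_t-1$ precisely when $\phi$ is surjective onto $\RR_{\ge0}$. I would then check that restricting the universal family to the slice $\{\phi=0\}$ identifies it with the universal family of a wall type $\tau'$ of $(X_t,D_t)$, using $\Sigma(X_t)=B_0=\partial B$ from Remark \ref{rem:relative case}, that conversely every wall type of $(X_t,D_t)$ is the height-zero slice of a unique contributing $\tau$, and that then $\fod_{\tilde\tau}\cap B_0=h'(\tau'_{\out})$ and $u_\tau=u_{\tau'}$. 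Comparing the toric charts of Proposition \ref{prop:local affine submersion} for $B$ along $B_0$ with those of $B_0$ itself yields $k_\tau=k_{\tau'}$, and Lemma \ref{lem:beta recovery}(3), together with the compatibility of the canonical MVPL functions on $B$ and on $B_0$ under $N_1(X_t)\hookrightarrow N_1(X)$, yields $\iota(\ul\beta')=\ul\beta$.

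It then remains to prove $N_{\tilde\tau}=N_{\tilde\tau'}$ for a matched pair, which is the heart of the argument and where the hypothesis that $p^{-1}(0)$ is reduced is used. The plan is to identify the moduli space $\scrM_{\tilde\tau}(X)$ of punctured maps to $X/\AA^1$ of type $\tau$ with the moduli space $\scrM_{\tilde\tau'}(X_t)$ of punctured maps to $X_t$ of type $\tau'$, compatibly with the maps $\varepsilon$ to the respective Artin fans and hence with the perfect obstruction theories and the virtual pull-backs $\varepsilon^{!}$, so that equating degrees of the zero-dimensional virtual cycles gives the numerical identity. Since $p$ is log smooth and projective with reduced central fibre, the divisorial log structure of $D\subset X$ restricts on a general fibre to that of $D_t\subset X_t$, and a punctured map of type $\tau$ --- whose tropicalization surjects onto $\RR_{\ge0}$ --- read off at tropical height $1$ is a punctured map to $X_t$ of type $\tau'$, while conversely such a map to $X_t$ extends uniquely over $\AA^1$; reducedness of $p^{-1}(0)$ is exactly what makes this height-one slicing an isomorphism on the nose, with no base change $\AA^1\to\AA^1$ forced. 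I expect the main obstacle to be verifying that this correspondence is an equivalence of stacks carrying the obstruction data, i.e.\ that the tropical-height-one contributions to the punctured Gromov--Witten theory of the log smooth degeneration $X/\AA^1$ reproduce the punctured Gromov--Witten theory of the general fibre exactly, rather than merely after summing over tropical types as a degeneration formula would give. Granting this, $f_{\tilde\tau}=\iota_*(f_{\tilde\tau'})$ for every matched pair, and summing over the bijection of the first step yields $\foD_{(X,D)}^{\as}=\iota(\foD_{(X_t,D_t)})$ as in \eqref{Eq: ioata applied to canonical scattering}.
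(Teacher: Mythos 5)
The paper does not actually prove this theorem: immediately above the statement it says ``We then also have from \cite[Prop.~3.18]{GSCanScat}:'' and the result is imported from there, so there is no ``paper's own proof'' to compare against. Your proposal therefore has to be assessed on its own terms, and it has a genuine gap.

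The combinatorial reduction in your second paragraph — matching wall types, supports, directions, $k_\tau$, and curve classes between $\foD_{(X,D)}^{\as}$ and $\foD_{(X_t,D_t)}$ using Theorem \ref{thm:relative diagram}, Remark \ref{rem:relative case} and Lemma \ref{lem:beta recovery} — is plausible and roughly the right shape. The problem is the moduli-theoretic step. You propose to ``identify the moduli space $\scrM_{\tilde\tau}(X)$ of punctured maps to $X/\AA^1$ of type $\tau$ with the moduli space $\scrM_{\tilde\tau'}(X_t)$,'' and you gloss this as ``reading off at tropical height $1$.'' This does not make sense as stated: an object of $\scrM_{\tilde\tau}(X)$ is a punctured map $f\colon C^\circ/W\to X$ over a log point, and the underlying curve lands in a \emph{single} scheme-theoretic fibre $X_s$ of $p$, which may be the central fibre $X_0$. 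There is no canonical ``height-one slice'' of such a map — the tropical family lies over $Q^\vee_{\tau,\RR}$, and different points of $Q^\vee_{\tau,\RR}$ give tropical curves at different heights, but all of these come from the same punctured map. So $\scrM_{\tilde\tau}(X)$ is emphatically not the same stack as $\scrM_{\tilde\tau'}(X_t)$, and the ``extends uniquely over $\AA^1$'' direction is also not a moduli-space isomorphism. You acknowledge you are ``granting'' that the tropical-height-one contributions reproduce the GW theory of the general fibre ``exactly'' — but that is the whole content of the theorem, not a verification to be checked at the end.

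The ingredient you are missing is a deformation-invariance / conservation-of-number argument for punctured invariants in the family $X\to\AA^1$: the virtual count attached to the type $\tau$ over the total space has to be compared with the punctured invariant of a single fibre, and it is the log smoothness of $p$ together with reducedness of $p^{-1}(0)$ (which you correctly flag as the place the hypothesis enters, but for the wrong reason — it is needed so that the log structure on the degeneration restricts to the standard one on fibres and the relevant obstruction theory pulls back compatibly, not merely to avoid a base change) that makes the virtual class constant in the family. This is what \cite[Prop.~3.18]{GSCanScat} supplies. Without citing or reproving that input, the argument does not close; and the closing step should be phrased as a virtual-class comparison along $\AA^1$, not as an isomorphism of moduli stacks.
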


\subsection{Balancing and consistency in higher codimension}

Here we gather a couple of results needed for the behaviour of
tropical curves and consistency along codimension $\ge 2$
cells of $\P$ which are not contained in the discriminant locus.
Hence fix once and for all in this section a log Calabi-Yau pair
$(X,D)$ leading to $(B,\P)$ an affine manifold with singularities.

The following generalizes the balancing statement of 
\cite[Lem.~2.1]{GSCanScat}:

\begin{theorem}
\label{thm:balancing general}
Let $f:C^{\circ}/W\rightarrow X$ be a punctured map, with
$W=(\Spec\kappa, \kk^{\times}\oplus Q)$ a log point. Let
$h_s:G\rightarrow B$ be the induced tropical map for some
$s\in \Int(Q_{\RR}^{\vee})$. Let $\rho\in \P$ satisfy the hypotheses
of Proposition \ref{prop:local affine submersion}. If $v\in V(G)$
with $h_s(v)\in\Int(\rho)$, then $h_s$ satisfies the balancing
condition at $v$. More precisely, if $E_1,\ldots,E_m$ are the
legs and edges adjacent to $v$, oriented away from $v$, then the
contact orders ${\bf u}(E_i)$ may be interpreted as elements
of $\Lambda_{h_s(v)}$, and in this group 
\[
\sum_{i=1}^m {\bf u}(E_i)=0.
\]
\end{theorem}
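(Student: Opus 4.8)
The plan is to reduce the balancing statement at $v$ to the known balancing statement for punctured maps mapping to a \emph{toric} target, via the affine submersion $\upsilon:\mathrm{Star}(\rho)\to\RR^d$ produced by Proposition \ref{prop:local affine submersion}. The point of that proposition is precisely that, near $\rho$, the affine manifold $B$ looks like (the support of) a complete fan $\Sigma$ in $\RR^n$, with $D_\tau$ identified with a toric stratum; equivalently, there is a toric variety $X_\Sigma$ whose tropicalization agrees with $\mathrm{Star}(\rho)$ as an affine manifold, compatibly with the log/combinatorial structure. So first I would pass from the punctured map $f:C^\circ/W\to X$ to the composite punctured map to the Artin fan $\shX$, which by \S\ref{subsubsect:the artin fan} is the same datum as the family of tropical maps $h:\Gamma\to\Sigma(X)=B$. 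Since $v$ has $h_s(v)\in\Int(\rho)$ and hence $\mathrm{Star}(\rho)\cap\Delta=\emptyset$, all edges and legs $E_i$ adjacent to $v$ have their images near $v$ contained in $\mathrm{Star}(\rho)$, where the affine structure is genuinely that of the fan $\Sigma$; thus the contact orders ${\bf u}(E_i)$ unambiguously make sense as elements of $\Lambda_{h_s(v)}$.

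Next I would observe that the relevant local picture is governed by the toric variety $X_\Sigma$ (with its toric boundary log structure) and that the vertex $v$, together with the adjacent edges/legs, defines a punctured map germ to $X_\Sigma$ — more precisely, by restricting attention to the irreducible component $C_v$ of $C^\circ$ corresponding to $v$ and composing with the strict map $X\to\shX$ and the identification of the relevant part of $\shX$ with $[X_\Sigma/\GG_m^n]$. The balancing of \cite[Lem.~2.1]{GSCanScat} (or its ancestor for log maps to toric varieties) then applies to this germ and yields $\sum_{i=1}^m {\bf u}(E_i)=0$ in $\Lambda_{h_s(v)}\cong\ZZ^n$. Here one has to be mildly careful: the edges $E_i$ that are genuine nodes of $C$ connect $C_v$ to other components, and one uses the matching condition on contact orders at nodes (Remark \ref{Remark: contact orders log map}, equation \eqref{eq:node contact order}) to see that the contact order contributed at $v$ along such an edge is exactly the tangency of $C_v$ with the corresponding boundary divisor of $X_\Sigma$; for punctured legs one uses \eqref{Eq: contact order at punctured points} similarly.

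The main obstacle is the bookkeeping needed to make the phrase ``restricting to the component $C_v$ gives a punctured map to $X_\Sigma$'' rigorous: one must check that the log structure on $C_v$ inherited from $C^\circ$, together with the map to $X$ and then to the local model, is a bona fide (pre-stable) punctured map to $X_\Sigma$, and that its tropicalization is the star of $v$ in $h_s$. This is where Proposition \ref{prop:local affine submersion} and Remark \ref{rem:normal bundles} do the heavy lifting, since they provide the isomorphism of affine-manifold germs and of normal bundles $\O_X(D_\rho)|_{D_\tau}\cong\O_{X_\Sigma}(D_{\psi_\tau(\rho)})|_{D_{\psi_\tau(\tau)}}$ that is needed to identify the log structures locally. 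Once that identification is in place, balancing is a purely toric statement — it is the assertion that the degree of a line bundle pulled back to a curve can be computed as a sum of local intersection numbers, exactly as in \cite[Lem.~2.1]{GSCanScat} — and the global geometry of $X$ away from $\mathrm{Star}(\rho)$ is irrelevant. I would therefore structure the write-up as: (i) reduce to the Artin-fan/tropical picture; (ii) invoke Proposition \ref{prop:local affine submersion} to replace $\mathrm{Star}(\rho)$ by the fan of $X_\Sigma$; (iii) apply toric balancing at $v$; (iv) transport the conclusion back along $\upsilon$ (resp. $\psi_\tau$) to $\Lambda_{h_s(v)}$.
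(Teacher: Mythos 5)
Your high-level strategy — isolate the vertex $v$, use Proposition~\ref{prop:local affine submersion} and Remark~\ref{rem:normal bundles} to identify the local geometry with a toric model, and then invoke toric balancing — is the right one and is what the paper does. However, there are two genuine gaps.

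First, you route the argument through the Artin fan $\shX$: you take the composite $C^\circ\to X\to\shX$ and then claim this gives ``a punctured map germ to $X_\Sigma$'' after identifying the relevant piece of $\shX$ with $[X_\Sigma/\GG_m^n]$. This composite lands only in the quotient stack $[X_\Sigma/\GG_m^n]$, not in $X_\Sigma$ itself, and that distinction is fatal here. Maps to the Artin fan carry purely tropical/combinatorial information, and the moduli stack $\foM(\shX)$ parameterizes arbitrary (in general \emph{unbalanced}) tropical maps. Balancing is precisely the additional constraint imposed by having an honest map to $X$ (or $X_\Sigma$): it expresses that the degree of a line bundle $\O(D)$ pulled back to the component $C_v$ can be computed from the local contact orders. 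That data is invisible after passing to $\shX$. The paper instead observes that the restriction $f_v$ factors through the strict inclusion $D_\rho\hookrightarrow X$, uses the explicit description
\[
\shM_{D_\rho}=\shM_{D'_\rho}\oplus_{\O_{D_\rho}^\times}\bigoplus_{j}\shM_{i_j}
\]
together with Remark~\ref{rem:normal bundles} to identify $D_\rho$ with $D_{\psi_\rho(\rho)}\subset X_\Sigma$ as \emph{log} schemes, and only then reinterprets $f_v$ as a punctured map to the actual toric variety $X_\Sigma$, so that \cite[Ex.\ 7.5]{GSlog} applies.

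Second, you identify ``restricting to $C_v$ yields a bona fide punctured map'' as the main obstacle, but you attribute its resolution to Proposition~\ref{prop:local affine submersion} and Remark~\ref{rem:normal bundles}. Those results handle the identification of the local geometry with the toric model; they say nothing about how to turn the nodes of $C^\circ$ adjacent to $C_v$ into punctures with the right contact orders on the one-vertex domain $C_v^\circ$. That step requires the splitting construction of \cite[Prop.\ 5.2]{ACGSII}, which you do not cite; the paper invokes it explicitly, and it is exactly this lemma that justifies passing to $f_v:C_v^\circ/W\to X$ and asserting that contact orders are unchanged. Without the splitting lemma the reduction to a single vertex has no formal foundation.
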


\begin{proof}
As each $\mathbf{u}(E_i)$ is a tangent vector to 
$\boldsymbol{\sigma}(E_i)$, which is necessarily a cone of $\P$
containing $\rho$, we may view this tangent vector as a tangent
vector to $B$ at $h_s(v)$ via parallel transport in $\Lambda$
via a path inside $\boldsymbol{\sigma}(E_i)$.

Let $C_v\subseteq C$ be the irreducible component corresponding
to the vertex $v$. By splitting $C^{\circ}$ at the nodes contained
in $C_v$ using \cite[Prop.\ 5.2]{ACGSII}, we obtain a punctured
map $f_v:C_v^{\circ}/W\rightarrow X$ which has turned these nodes
into punctures. In particular, the dual graph of $C_v^{\circ}$
consists of one vertex $v$ and $m$ legs $E_1,\ldots, E_m$. Further,
the splitting process does not change the contact orders of
legs or nodes. Thus it is enough to show that the induced
map $(f_v)_{\mathrm{trop}}:\Sigma(C_v^{\circ})\rightarrow\Sigma(X)$ is balanced
at the vertex $v$. 

To do so, first note that $f_v$ factors through the strict inclusion
$D_{\rho}\hookrightarrow X$. The log structure on $D_{\rho}$,
pulled back from that on $X$, has
an explicit description. Let $D'_{\rho}$ be the log structure
on $\ul{D}_{\rho}$ induced by the toric boundary of $D_{\rho}$.
On the other hand, $D_{\rho}\subseteq D_{i_1},\ldots,D_{i_{n-d}}$
as in the proof of Proposition \ref{prop:local affine submersion}.
Let $\shM_{i_j}$ be the restriction to $D_{\rho}$ of the divisorial log
structure on $X$ induced by $D_{i_j}$. This log structure only
depends on the restriction $\O_X(D_{i_j})|_{D_{\rho}}$. Then
\[
\shM_{D_{\rho}}=\shM_{D_{\rho}'}\oplus_{\O_{D_{\rho}}^{\times}}
\bigoplus_{j=1}^{n-d} \shM_{i_j}.
\]
Here, the push-outs are all over $\O_{D_{\rho}}^{\times}$.

Now recall from the proof of Proposition \ref{prop:local affine submersion}
the chart $\psi_{\rho}$ which identifies the closure of the
star of $\rho$ with the support of a fan $\Sigma$ in $\RR^n$.
By Remark \ref{rem:normal bundles}, if we consider
instead $X_{\Sigma}$ with its standard toric log structure,
$D_{\psi_{\rho}(\rho)}\hookrightarrow X_{\Sigma}$ the strict inclusion,
then the log structure on $D_{\psi_{\rho}(\rho)}$ has the same 
description, and the isomorphism $D_{\rho}\cong D_{\psi_{\rho}(\rho)}$ also 
holds at the log level. Thus we may instead view $f_v$ as a morphism
$f_v:C_v^{\circ}\rightarrow D_{\psi_{\rho}(\rho)}\hookrightarrow X_{\Sigma}$.
In other words, $f_v$ can be viewed as a punctured map to a toric
variety. As the chart $\psi_{\rho}$ defined in the
proof of Proposition \ref{prop:local affine submersion} gives
an integral affine identification of $\mathrm{Star}(\rho)$ and
$\mathrm{Star}(\psi_{\rho}(\rho))$, it is enough to check that
the tropicalization of $f_v:C^{\circ}_v\rightarrow X_{\Sigma}$ is
balanced at the vertex $v$. But this follows as in 
\cite[Ex.\ 7.5]{GSlog}. 
\end{proof}

In \cite{GSCanScat}, the canonical scattering diagram $\foD_{(X,D)}$
was only considered as a scattering diagram on $B$ when
$\Delta$ is the union of all codimension two cells. As we often
take $\Delta$ to be smaller here, we need to know that
the wall functions for $\foD_{(X,D)}$ still make sense in higher
codimension, away from $\Delta$.

\begin{theorem}
\label{thm:D defined deeper codim}
Let $\rho\in\P$ satisfy the hypotheses of Proposition 
\ref{prop:local affine submersion}, and let $(\fod,f_{\fod})\in
\foD_{(X,D)}$ with $\fod\cap\Int(\rho)\not=\emptyset$. If 
$x\in \Int(\rho)\cap\fod$, $y\in\Int(\fod)$ and $f_{\fod}=\sum_{m\in\shP^+_y}
 c_m z^m$, then under parallel transport to $x$, $m\in\shP^+_x$ whenever
$c_m\not=0$.
\end{theorem}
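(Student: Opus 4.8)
The plan is to reduce the statement first to a single monomial and then to an effectivity statement for a curve class, which is where the tropical geometry enters.

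Each wall of $\foD_{(X,D)}$ has the form $(\fod,f_\fod)=\big(h(\tau_{\out}),\exp(k_\tau N_{\tilde\tau}t^{\ul\beta}z^{-u})\big)$ for a wall type $\tilde\tau=(\tau,\ul\beta)$, with $u=\mathbf{u}(L_{\out})$ the output contact order. If $N_{\tilde\tau}=0$ there is nothing to prove, so assume $N_{\tilde\tau}\neq0$. The monomials of $f_\fod$ with nonzero coefficient are $z^{km_1}$, $k\geq1$, where $m_1\in\shP^+_y$ is the element represented by $t^{\ul\beta}z^{-u}$; since parallel transport $\shP_y\to\shP_x$ is a monoid isomorphism and $\shP^+_x\subseteq\shP_x$ is a submonoid, it suffices to show $m_1$ transports into $\shP^+_x$. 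As $\rho$ satisfies the hypotheses of Proposition~\ref{prop:local affine submersion}, Proposition~\ref{prop:phi tau} provides a single-valued representative $\varphi_\rho=\iota\circ\psi\circ\upsilon$ of $\varphi$ on $\mathrm{Star}(\rho)$, so $\shP$ and $\shP^+$ are defined there as in \eqref{eq:psi tau sigma} and \eqref{eq:P+x final}, with $\shP^+_x=\bigcap_{\rho\subseteq\sigma'\in\P^{\max}}\shP^+_{y'}$.

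Transporting $m_1$, if necessary, into the interior of a maximal cell $\sigma_0\supseteq\rho$ of $\P$ with $u\in\Lambda_{\sigma_0}$ (possible since $u$ is tangent to the cell of $\P$ of codimension $\leq1$ carrying $L_{\out}$, which shares a maximal cell with $\rho$ because $x\in\Int(\rho)\cap\fod$), we may write $m_1=(-u,\beta_0)$ with $\beta_0\in Q$. Passing through the $\chi_\rho$-trivialization to $y'\in\Int(\sigma')$ via \eqref{eq:psi tau sigma}, the $Q^{\gp}$-component of $m_1$ at $y'$ equals $\beta_0+\big((d\varphi_\rho|_{\sigma'})-(d\varphi_\rho|_{\sigma_0})\big)(u)$. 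Thus $m_1\in\shP^+_x$ if and only if this element lies in $Q$ for every maximal $\sigma'\supseteq\rho$ — which is automatic for $\sigma'=\sigma_0$ — and, by the universality of $\psi$ (Lemma~\ref{lem:beta recovery}) applied to the fan $\Sigma_\rho$ of $D_\rho$, the correction term equals $\iota(\beta_{\sigma_0,\sigma'})$ for a curve class $\beta_{\sigma_0,\sigma'}\in N_1(D_\rho)$ attached to the cones $\upsilon(\sigma_0),\upsilon(\sigma')\in\Sigma_\rho$ and the vector $\upsilon_*(u)$. The statement thus reduces to $\beta_0+\iota(\beta_{\sigma_0,\sigma'})\in Q$.

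I expect this effectivity to be the main obstacle, and the plan is to establish it by realizing $\beta_0+\iota(\beta_{\sigma_0,\sigma'})$ as a sum of effective curve classes on $X$. Since $x\in\fod=h(\tau_{\out})$ lies in $\Int(\rho)$ and not in an open maximal cell, it lies on the image of the output leg of some member $h_{s_0}$ of the universal family of tropical maps of type $\tau$; after a suitable, possibly degenerating, choice, $x$ is the image of a vertex $v$ of $h_{s_0}$, and splitting $C^\circ$ at the nodes of $C$ contained in the component $C_v$ via \cite[Prop.\ 5.2]{ACGSII} produces a punctured map whose output leg emanates from $v$ and whose component $C_v$ factors through the strict inclusion $D_\rho\hookrightarrow X$ (as $\bsigma(v)=\rho$). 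By Theorem~\ref{thm:balancing general} the tropical map is balanced at $v$, and — identifying $D_\rho$ with its toric model $D_{\psi_\rho(\rho)}\subseteq X_{\Sigma_\rho}$ as in Remark~\ref{rem:normal bundles} and arguing as in the proof of Theorem~\ref{thm:balancing general} — one reads off $\beta_{\sigma_0,\sigma'}$, up to classes contracted by $\upsilon$, as the class of the portion of the curve running from $v$ to $x$. Together with the effective classes of the remaining components, this yields $\beta_0+\iota(\beta_{\sigma_0,\sigma'})\in Q$. The delicate point is precisely this bookkeeping — matching the class of the truncated sub-curve with $-\iota(\beta_{\sigma_0,\sigma'})$ — which is forced by Lemma~\ref{lem:beta recovery} together with the balancing of $h$ along the leg joining $v$ to $x$.
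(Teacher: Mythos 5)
Your overall strategy coincides with the paper's: reduce to the single monomial $m_1=(-u,\ul\beta)$, reformulate membership in $\shP^+_x$ as an effectivity condition in $Q$, and establish that effectivity by splitting the punctured curve at the vertex adjacent to $L_{\out}$, using balancing (Theorem~\ref{thm:balancing general}), \cite[Cor.\ 1.14]{GSAssoc}, and the universal PL function $\psi$ of Lemma~\ref{lem:beta recovery}. The reformulation as ``$\beta_0+\iota(\beta_{\sigma_0,\sigma'})\in Q$ for each maximal $\sigma'\supseteq\rho$'' is equivalent to the paper's direct decomposition of the monoid element as a sum of elements of $\shP^+_x$, so this is not a different route, only a different packaging of the same target.

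There is, however, a genuine gap: the claim that ``after a suitable, possibly degenerating, choice, $x$ is the image of a vertex $v$ of $h_{s_0}$'' is false in general. The set of points realizable as $h_s(v)$ over \emph{all} $s\in Q^\vee_{\tau,\RR}$ (including boundary degenerations) is exactly $h(\tau_v)$, and by \eqref{eq:tauout tauv} the wall $\fod=h(\tau_{\out})=(h(\tau_v)+\RR_{\ge0}u)\cap\sigma$ is strictly larger; it is entirely possible that $x\in\fod\cap\Int(\rho)$ but $x\notin h(\tau_v)$, i.e., $x$ sits in the interior of the leg image for every parameter $s$, with the adjacent vertex mapping elsewhere. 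No degeneration of the type $\tau$ places a vertex at such an $x$; one would need to insert a brand-new bivalent vertex on $L_{\out}$, which is not a degeneration of $\tau$ but a genuinely different combinatorial type, and there is no reason the corresponding stratum of the moduli space is populated. The paper handles this by a case split: if $x\notin h(\tau_v)$, then \eqref{eq:tauout tauv} forces $-u$ into the tangent wedge $T_x\sigma$, and $(-u,(d\varphi_\rho|_\sigma)(-u)+\ul\beta)\in\shP^+_x$ follows directly from \eqref{eq:P+x final} with no tropical work; only when $x\in h(\tau_v)$ does one run the splitting-and-balancing argument you outline. Your proposal, as written, silently assumes the second case throughout. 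A secondary, smaller issue is the phrase ``the class of the portion of the curve running from $v$ to $x$'': in the case you are actually handling, $h_s(v)=x$, so this phrase is empty; the class you need is instead extracted, via balancing at $v$ and \cite[Cor.\ 1.14]{GSAssoc}, from the contact orders of the \emph{other} legs and edges $E_2,\dots,E_s$ adjacent to $v$, which is precisely the bookkeeping the paper carries out explicitly.
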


\begin{proof}
A wall in $\foD_{(X,D)}$ intersecting $\Int(\rho)$ arises from a choice of 
$\tilde\tau= (\tau,\ul{\beta})$ with the
type $\tau$ having precisely one leg $L_{\out}$, and $\boldsymbol{\sigma}(L_{\out})=\sigma\in\P$
with $\rho\subseteq\sigma$. Then $\fod=h(\tau_{\out})$. 
Further, $f_{\fod}=\exp(\alpha t^{\ul{\beta}}z^{-u})$ for some
$\alpha\in\kk$. Thus we need to show that $(-u,\ul{\beta})\in \shP^+_y$
lies in $\shP^+_x$ under parallel transport. Note that this element
of $\shP_x$ is, using 
\eqref{Eq: Pplus description}, $(-u,(d\varphi_{\rho}|_{\sigma})(-u)
+\ul{\beta})$.

First suppose that $x\not\in h(\tau_v)$. Then by
\eqref{eq:tauout tauv}, necessarily $-u$ lies in the tangent wedge
of $\sigma$ along $\rho$. Then $(-u,(d\varphi_{\rho}|_{\sigma})(-u)+
\ul{\beta})\in \shP_x^+$ by \eqref{Eq: Pplus description}.

Second, suppose $x\in h(\tau_v)$. Let $f:C^{\circ}/W \rightarrow X$
be a curve in $\scrM_{\tilde\tau}(X)$, with $W$ a geometric log point.
Such a curve must exist if the wall is non-trivial, i.e., $N_{\tilde\tau}
\not=0$.
With $v$ the vertex of $G$ adjacent to $L_{\out}$, let
$C^{\circ}_v\subseteq C^{\circ}$ be the union of irreducible components
corresponding to $v$ (as the type of $f$ may not be $\tau$ but have a contraction to $\tau$, see \cite[Def.\ 2.24]{ACGSI}, 
this may not be a single irreducible component). We then
obtain $f_v:C^{\circ}_v\rightarrow X$ by splitting, as in the proof
of Theorem \ref{thm:balancing general}. Further, $f_v$
factors through the
strict embedding $D_{\rho}\rightarrow X$. Let $D_{\rho}'$ be
the log structure on $\ul{D}_{\rho}$ induced by the toric boundary of
$D_{\rho}$. Then there is a morphism of log schemes $D_{\rho}
\rightarrow D_{\rho}'$, whose tropicalization is the map $\upsilon$
of Proposition \ref{prop:local affine submersion}.
We thus may compose to obtain a punctured
map $f_v':C^{\circ}_v\rightarrow D'_{\rho}$. If $E_1,\ldots,E_s$
are the legs and edges of $G$ adjacent to $v$, say with
$E_1=L_{\out}$, then $C^{\circ}_v$
has corresponding punctures $p_1,\ldots,p_s$, and
$m_i=\upsilon_*(\mathbf{u}(E_i))$ is the contact order of
$f_v'$ at $p_i$. By the balancing of Theorem \ref{thm:balancing general}, 
$\sum_i m_i=0$. Let $\Sigma_{\rho}$ be the fan defining $D'_{\rho}$.
Each $m_i$ lies in some minimal cone $\sigma_i\in\Sigma_{\rho}$.
Then the data of the $m_i$, $\sigma_i$ determine a class
$\ul{\beta}'_v\in N_1(D_{\rho})$ by the discussion
preceding Lemma \ref{lem:beta recovery}.
It follows from \cite[Cor.\ 1.14]{GSAssoc} that
the class $\ul\beta_v\in N_1(D_{\rho})$ of the map $f_v'$ in fact
agrees with $\ul{\beta}_v'$.
Indeed, the cited corollary precisely describes the intersection
of $\ul\beta_v$ with boundary divisors of $D'_{\rho}$
in terms of the contact orders $m_i$, and recovers the relation
\eqref{eq:Drho dot beta}.

Let $\iota_*:N_1(D_{\rho})\rightarrow N_1(X)$ be the map induced
by the inclusion. Then $\ul{\beta}=\iota_*(\ul\beta_v)+\ul\beta'$ for some
$\ul\beta'\in Q$. Now note that $m_1=\upsilon_*(u)$. By construction
of $\varphi_{\rho}$ in Proposition \ref{prop:phi tau}, it is the
pull-back of the piecewise linear map $\psi$ given in
Lemma \ref{lem:beta recovery}, (1) under $\upsilon$. Further, by Lemma
\ref{lem:beta recovery}, (3), since $m_i\in\sigma_i$ for each $i$,
\[
\ul\beta_v=\sum_i \psi(m_i).
\]
Thus, bearing in mind that $u$ is in the tangent wedge to 
$\sigma$ along $\rho$ by \eqref{eq:tauout tauv},
\begin{align*}
(d\varphi_{\rho})|_{\sigma} (-u) + \ul{\beta}
{} = & -\psi(m_1)+\ul\beta_v+\ul{\beta}'\\
{} = & \sum_{i=2}^s \psi(m_i)+\ul{\beta}'.
\end{align*}
Thus we may write 
\[
(-u,(d\varphi_{\rho})|_{\sigma}(-u)+\ul{\beta})
= (0,\ul{\beta}')+\sum_{i=2}^s \big(\mathbf{u}(E_i), \psi(\mathbf{u}(E_i))\big).
\]
This is a sum of terms in $\shP^+_x$, hence lies in $\shP^+_x$,
as desired.
\end{proof}

Finally, we check that consistency of $\foD_{(X,D)}$ implies
a consistency in higher codimension, away from $\Delta$, similar
to consistency in codimensions zero and one.

\begin{theorem}
\label{thm:consistency at deeper strata}
Let $\rho\in\P$ satisfy the hypotheses of Proposition 
\ref{prop:local affine submersion}, $\foD_{(X,D)}$ the canonical
scattering diagram on $B$. Let $x\in\Int(\rho)$. Then 
for any path $\gamma$
with endpoints in $\mathrm{Star}(\rho)\setminus\Supp(\foD_{(X,D)})$
and image in $\mathrm{Star}(\rho)\setminus\Sing(\foD_{(X,D)})$ and
which only cross walls of $\foD_{(X,D)}$
which meet $\Int(\rho)$, $\theta_{\gamma,\foD_{(X,D)}}$
is an automorphism of $\kk[\shP^+_x]/I_x$. Further if $\gamma$ is a loop,
this automorphism is the identity.
\end{theorem}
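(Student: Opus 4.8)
The plan is to reduce the statement, via the affine submersion $\upsilon:\mathrm{Star}(\rho)\to\RR^d$ of Proposition \ref{prop:local affine submersion}, to the already established consistency of $\foD_{(X,D)}$ in codimensions zero and one. First I would observe that Theorem \ref{thm:D defined deeper codim} guarantees that for every wall $(\fod,f_\fod)\in\foD_{(X,D)}$ meeting $\Int(\rho)$, the function $f_\fod$, written as $\sum c_m z^m$ at a point $y\in\Int(\fod)$, has all exponents $m$ lying in $\shP^+_x$ under parallel transport to $x\in\Int(\rho)$. Hence each such $f_\fod$ is a well-defined element of $R_\rho:=\kk[\shP^+_x]/I_x$ (independent of $x\in\Int(\rho)$, as in \S\ref{Subsubsec: Consistency around codim one joints}), and therefore $\theta_{\gamma,\fod}$ makes sense as an automorphism of $R_\rho$ via the formula \eqref{Eqn: theta_fop}, exactly as in the codimension-one case. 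Composing, $\theta_{\gamma,\foD_{(X,D)}}$ is an automorphism of $\kk[\shP^+_x]/I_x$ for any path $\gamma$ of the stated kind. This proves the first assertion.

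For the second assertion — that $\theta_{\gamma,\foD_{(X,D)}}=\id$ when $\gamma$ is a loop — I would push the picture forward under $\upsilon$. The loop $\gamma$ in $\mathrm{Star}(\rho)$ has image $\bar\gamma:=\upsilon\circ\gamma$, a loop in $\RR^d=\mathrm{Star}(\psi_\tau(\tau))$, where I am using the toric identification of $\mathrm{Star}(\rho)$ (here $\rho$ plays the role of $\tau$ in Proposition \ref{prop:local affine submersion}) with the support of a fan in $\RR^n$, and $\upsilon$ is projection onto the $\RR^d$ factor. A wall $\fod$ crossed by $\gamma$, meeting $\Int(\rho)$, projects to a codimension-one rational polyhedral cone $\upsilon(\fod)$ in $\RR^d$; and by the fibered structure $\upsilon^{-1}(\Int(\omega))=\Int(\rho_\omega)$ from Proposition \ref{prop:local affine submersion}, crossing $\fod$ in $\mathrm{Star}(\rho)$ corresponds precisely to crossing $\upsilon(\fod)$ in $\RR^d$, and the wall-crossing automorphism is governed by the same exponent data (via \eqref{Eqn: kink} and the splitting $\shP^+_x$ of \eqref{eq:P+x final}). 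The key structural input, as in the proof of Theorem \ref{thm:D defined deeper codim}, is that any wall meeting $\Int(\rho)$ comes from a wall type whose single leg $L_{\out}$ is contained in a cone $\sigma\supseteq\rho$; splitting off the vertex $v$ adjacent to $L_{\out}$ and using Theorem \ref{thm:balancing general}, one sees that the output direction $u$ and the curve class contribution organize, modulo the kernel-direction $\Lambda_\rho$, exactly as wall data for a scattering diagram on $\RR^d$ built from the stratum $D_\rho$. Concretely, one checks that $\theta_{\gamma,\foD_{(X,D)}}$, as an automorphism of $R_\rho=\kk[Q]/I[\Lambda_x]$, depends only on $\upsilon(\gamma)=\bar\gamma$ and on the collection of walls $\upsilon(\fod)$ with their functions, i.e.\ it factors through a path-ordered product $\theta_{\bar\gamma,\bar\foD}$ for an induced scattering diagram $\bar\foD$ on $\RR^d$.

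It then remains to identify $\bar\foD$ with (an equivalence-modulo-$I$ version of) the restriction of $\foD_{(X,D)}$ to the relevant lower-dimensional picture and invoke consistency there. The cleanest route: the loop $\bar\gamma$ in $\RR^d$ either bounds a disk meeting $\Sing$ in a single codimension-two joint, or can be decomposed (by subdividing the disk it bounds) into a product of such elementary loops, each encircling exactly one joint of $\bar\foD$. For a joint of codimension zero or one in $B$ lying over such an elementary loop, consistency of $\foD_{(X,D)}$ in codimensions zero and one (quoted Theorem, $\foD_{(X,D)}$ consistent) gives $\theta=\id$ directly. The only genuine subtlety is a joint $\foj$ with $\Int(\foj)\subseteq\Int(\rho)$, i.e.\ a joint whose smallest containing cell has codimension exactly $\codim\rho\ge 2$; here one uses the codimension-two consistency of $\foD_{(X,D)}$ at $\foj$, Definition \ref{Def: Consistency in codim two}, together with the fact that since $\Int(\rho)\cap\Delta=\emptyset$ and the affine and MVPL structures extend across $\mathrm{Star}(\rho)$, no broken line passing through $\mathrm{Star}(\rho)$ can bend there (by Proposition \ref{prop:order increasing} and the single-valuedness of $\varphi_\rho$), so the broken-line formulation of consistency at $\foj$ degenerates into the statement $\theta_{\gamma,\foD_{\foj}}=\id$ on $R_\rho$ for a small loop $\gamma$ around $\foj$. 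Assembling the elementary loops gives $\theta_{\gamma,\foD_{(X,D)}}=\id$ for the original loop.

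\textbf{Main obstacle.} The hard part is the bookkeeping in the middle paragraph: showing rigorously that the path-ordered product computed in $\mathrm{Star}(\rho)\subseteq B$ agrees with one computed in the lower-dimensional toric model $\RR^d$, i.e.\ that $\upsilon$ is genuinely compatible with the wall-crossing automorphisms and the monoids $\shP^+_x$. This requires carefully tracking the splitting \eqref{eq:P+x final} of $\shP^+_x$ along $\mathrm{Star}(\tau)$, the identification of $\varphi_\rho$ as $\iota\circ\psi\circ\upsilon$ from Proposition \ref{prop:phi tau}, and the class-decomposition argument of Theorem \ref{thm:D defined deeper codim} — making sure these all cohere so that the induced diagram $\bar\foD$ on $\RR^d$ is literally a scattering diagram in the sense of \S\ref{Sec: wall structures} whose consistency is inherited from that of $\foD_{(X,D)}$. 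Once that translation is set up cleanly, the reduction to the already-proven codimension $0$, $1$ and $2$ consistency of $\foD_{(X,D)}$ is essentially formal.
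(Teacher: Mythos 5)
Your first assertion (that $\theta_{\gamma,\foD_{(X,D)}}$ restricts to an automorphism of $\kk[\shP^+_x]/I_x$) is argued essentially as in the paper: Theorem \ref{thm:D defined deeper codim} puts each wall function into $\kk[\shP^+_x]/I_x$, and the wall-crossing automorphisms on $\kk[\shP^+_y]/I_y$ restrict to this subquotient. Likewise, reducing the loop case to elementary loops around single joints, and disposing of codimension zero and one joints by the already-established consistency, matches the paper. The elaborate projection to $\RR^d$ and the construction of an induced scattering diagram $\bar\foD$ there is not needed and is not how the paper proceeds; the paper works directly with the local models $(B_{\foj},\P_{\foj})$ at each joint.

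The genuine gap is in your treatment of a codimension two joint $\foj$ meeting $\Int(\rho)$. You claim that, because the affine structure and $\varphi$ extend across $\mathrm{Star}(\rho)$, no broken line can bend there, so that the broken-line consistency condition ``degenerates'' into $\theta_{\gamma,\foD_{\foj}}=\id$. This is false: broken lines bend at walls, and walls of $\foD_{(X,D)}$ may well meet $\Int(\rho)$ (indeed the theorem is only interesting when they do); Proposition \ref{prop:order increasing} controls the $Q$-order under parallel transport, not bending. Consequently you have no argument passing from Definition \ref{Def: Consistency in codim two} (invariance of the theta functions $\vartheta^{\foj}_m(p)$ under wall-crossing) to the stronger statement that the full automorphism $\theta_{\gamma}$ is the identity. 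The missing idea, which is the crux of the paper's proof, is this: since $\foj\not\subseteq\Delta$, every $\vartheta^{\foj}_m(p)$ can be viewed as an element of the single ring $\kk[\shP^+_x]/I_x$; consistency gives $\theta_{\gamma}(\vartheta^{\foj}_m(p))=\vartheta^{\foj}_m(p)$ for all $m\in\ZZ^n$; and because each $\vartheta^{\foj}_m(p)$ is $z^m$ plus higher-order terms, the collection $\{\vartheta^{\foj}_m(p)\}_{m\in\ZZ^n}$ generates $\kk[\shP^+_x]/I_x$ as a $\kk[Q]/I$-module. An automorphism fixing a generating set is the identity. Without this generation argument (or a correct substitute), your proof of the second assertion does not go through. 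Note also that the paper's closing remark explains why the hypothesis $\foj\not\subseteq\Delta$ is essential: otherwise there is no common ring in which to compare the $\vartheta^{\foj}_m(p)$, and the argument fails.
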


\begin{proof}
Recall that any wall-crossing automorphism associated with
$\gamma$ acts, a priori, on $\kk[\shP^+_y]/I_y$ for $y$ in the
interior of a maximal cell containing the wall. However, by parallel transport
to $x$ inside of $\mathrm{Star}(\rho)$, there is an inclusion
$\shP^+_x\subseteq \shP^+_y$. Further, by the desciption
\eqref{eq:Ix def}, under this inclusion, $I_y\cap \shP^+_x\subseteq
I_x$. Thus $\kk[\shP^+_x]/I_x$ is a subquotient of $\kk[\shP^+_y]/I_y$.
Further, by Theorem \ref{thm:D defined deeper codim}, for each
wall $\fod$ crossed by $\gamma$, $f_{\fod}$ may be viewed as 
an element of $\kk[\shP^+_x]/I_x$. Thus the wall-crossing automorphism
on $\kk[\shP^+_y]/I_y$ restricts to a wall-crossing automorphism
on $\kk[\shP^+_x]/I_x$, proving the first statement.

To prove that $\theta_{\gamma}:=\theta_{\gamma,\foD_{(X,D)}}$ is the identity
for a loop $\gamma$, it
is sufficient to prove that if $\gamma$ is a loop around a joint $\foj$
intersecting $\Int(\rho)$, then $\theta_{\gamma}$ is the identity.
If the joint $\foj$ is codimension zero or codimension one, this
is already implied by the definition of consistency, so it suffices
to show this for a codimension two joint. Thus we may pass to
$(B_{\foj},\P_{\foj})$ and $\foD_{(X,D),\foj}$. Note now that
$B_{\foj}\cong \RR^n$ as an integral affine manifold, and consistency
implies that for each $m\in\ZZ^n$ and
general $p\in B_{\foj}\setminus\Supp(\foD_{(X,D),\foj})$,
there is a function $\vartheta^{\foj}_m(p)$, and these functions are related by
wall-crossing as in Definition \ref{Def: Consistency in codim two}.
But from the construction of broken lines and the definition
of $\vartheta^{\foj}_m(p)$, it follows that $\vartheta^{\foj}_m(p)$
can be viewed as an element of $\kk[\shP^+_x]/I_x$. Thus for
a loop $\gamma$ around $\foj$, $\theta_{\gamma}(\vartheta^{\foj}_m(p))
=\vartheta^{\foj}_m(p)$. Since the $\vartheta^{\foj}_m(p)$
for $m\in\ZZ^n$ generate $\kk[\shP^+_x]/I_x$ as a $\kk[Q]/I$-module,
$\theta_{\gamma}$ hence acts as the identity, proving the result.

Note that if instead $\foj\subseteq\Delta$, this argument would not
work, as there is no common ring in which to compare the
$\vartheta^{\foj}_m(p)$.
\end{proof}

\section{Pulling singularities out and an asymptotic equivalence}
\label{Sec: Pulling singularities out}
In the remaining part of this paper we restrict our attention to log Calabi--Yau pairs $(X, D)$ obtained by blowing up a toric variety along hypersurfaces in its toric boundary. Specifically, let $X_{\Sigma}$ be a projective toric variety associated to a complete toric fan in $M_{\RR}$. Fix a tuple of distinct rays of $\Sigma$,
\begin{equation}
    \label{Eq:p}
    \mathbf{P}=(\rho_1,\ldots,\rho_s).
\end{equation}
Though all the upcoming discussion could be carried out in greater generality,
it is vastly simplifying to make the following two assumptions, analogous 
to the two-dimensional case in \cite[\S4.2]{GPS}:
\begin{enumerate}
\item $X_{\Sigma}$ is non-singular.
\item There is no $\sigma\in\Sigma$ with $\rho_i,\rho_j\subseteq\sigma$
for $i\not=j$.
\end{enumerate}
Note both of these conditions may be achieved by refining the fan $\Sigma$. Corresponding to the ray $\rho_i$ is a divisor $D_{\rho_i}\subseteq X_{\Sigma}$.
We note that by the second assumption above, $D_{\rho_i}\cap D_{\rho_j}=\emptyset$ for
$i\not=j$. Assume also given non-singular general hypersurfaces 
\begin{equation}
    \label{Eq: Hi}
H_i\subseteq
D_{\rho_i}    
\end{equation}
meeting the toric boundary of $D_{\rho_i}$ transversally. Then, $X$ is obtained by the blow-up
\begin{equation}
    \label{Eq: our log CY}
    \Bl_{H}: X \longrightarrow X_{\Sigma}
\end{equation}
with center $H$ given by the union of the hypersurfaces $H_i$. Define $D$ to be the strict transform of $D_{\Sigma}$. 
In this section, we
further describe a degeneration $(\widetilde X, \widetilde D)$ of $(X,D)$ and the associated canonical scattering diagram $\foD_{(\widetilde X,\widetilde D)}$. We show that a slice of $\foD_{(\widetilde X,\widetilde D)}$ is asymptotically equivalent to $\foD_{(X,D)}$.

\subsection{A degeneration $(\widetilde X, \widetilde D)$}
 We construct a degeneration of $(X,D)$ analogous
to that constructed in two dimensions in \cite[\S5.3]{GPS}. 
In higher dimensions, see also \cite[\S5]{mandel2019theta}.
For this, first consider the toric variety 
$X_{\overline{\Sigma}}:= X_{\Sigma} \times \AA^1$,
with associated toric fan in 
\[
\overline{M}_{\RR}:=
M_{\RR}\oplus\RR
\]
given by:
\[
\overline{\Sigma}=\bigcup_{\sigma\in\Sigma} \{\sigma\times \{0\},
\sigma \times \RR_{\ge 0}\}. 
\]
We adopt the notation that if $\sigma\in\Sigma$, then we also
view $\sigma=\sigma\times\{0\}\in\overline{\Sigma}$, and write
$\bar\sigma:=\sigma\times\RR_{\ge 0}\in \overline{\Sigma}$. Now, consider the blow-up $\mathrm{Bl}_{\mathbf{D}} \colon X_{\widetilde{\Sigma}} \to X_{\overline{\Sigma}}$
with center 
\[\mathbf{D}=\bigcup_{i=1}^s D_{\rho_i}\times\{0\}.\]
We note that as the $D_{\rho_i}\times\{0\}$ are toric strata of $X_{\Sigma}
\times \AA^1$, the blow-up with these centers is a toric blow-up. We 
review the description of the toric fan $\widetilde{\Sigma}$  as
a refinement of $\overline{\Sigma}$ 
using standard toric methods in the following construction.

\begin{construction}
\label{Constr: toric fan for blow up}
Note that the codimension two toric stratum $D_{\rho_i}\times \{0\}$ of
$X_{\overline{\Sigma}}=X_{\Sigma}\times\AA^1$ corresponds to the
cone $\bar\rho_i$. By \cite{Oda}, Prop.\ 1.26, the blow-up of 
$X_{\overline{\Sigma}}$
at $\bigcup_{i=1}^m D_{\rho_i}\times\{0\}$ is given by the star 
subdivision of $\overline{\Sigma}$ at the cones $\bar\rho_i$, $1\le i \le s$. 
Recall that this is defined as follows. Note that $\bar\rho_i$ is generated
by $(m_i,0)$ and $(0,1)$, where $m_i$ is a primitive generator of $\rho_i$.
If $\bar\rho_i\subseteq\bar\sigma\in
\overline{\Sigma}$, then we can write the generators of $\bar\sigma$ 
as $(m_i,0), (0,1), u_1,\ldots,u_r \in \overline{M}$.
One defines $\Sigma_{\bar\sigma}^*(\bar\rho_i)$ to be
the set of cones generated by subsets of $\{(m_i,0),(0,1),u_1,\ldots,u_r,
(m_i,1)\}$
not containing $\{(m_i,0), (0,1)\}$. Then we set
\[
\widetilde\Sigma:=\left(\overline{\Sigma}\setminus\bigcup_{i=1}^s 
\{\bar\sigma\in\overline{\Sigma}\,|\,\bar\sigma \supseteq \bar\rho_i\}\right)
\cup \bigcup_{i=1}^s
\bigcup_{\bar\sigma\supseteq\bar\rho_i} \Sigma^*_{\bar\sigma}(\bar\rho_i).
\]
Note that if $\bar\sigma\in \overline{\Sigma}$ does not contain any
$\bar\rho_i$, then also $\bar\sigma\in \widetilde{\Sigma}$, and we
continue to use the notation $\bar\sigma$ for this cone as a cone
in $\widetilde\Sigma$. On the other hand, if $\bar\sigma\supseteq
\bar\rho_i$, then $\bar\sigma$ is subdivided into two cones.
Explicitly, if $\bar\sigma$ is generated by $\bar\rho_i$ and additional
generators $u_1,\ldots,u_r$ as above, then the two subcones are
\[
\hbox{$\tilde\sigma:=\langle (m_i,0),(m_i,1),u_1,\ldots,u_r\rangle$ and 
$\tilde\sigma':=\langle(m_i,1), (0,1),u_1,\ldots,u_r\rangle$.}
\]
\begin{figure}
\center{\input{TildeSigma.pspdftex}}
\caption{Slices of the toric fan $\widetilde{\Sigma}$ at height zero and one for $X_{\overline \Sigma}= \PP^2 \times \AA^1$ and $\mathbf{D}=\PP^1 \times \{ 0 \}$.}
\label{Fig: TildeSigma}
\end{figure}
Note that the divisor in $X_{\widetilde\Sigma}$ corresponding to the 
ray 
\begin{equation}
\label{Eq:nui def}
\nu_i:=\RR_{\ge 0}(m_i,1)
\end{equation}
is an exceptional divisor of the blow-up, which we write as
$\PP_i\cong \PP(\mathcal{O}_{D_{\rho_i}} \oplus \mathrm{Norm}_{D_{\rho_i}/X_{\rho_i}})$, with restriction of $\mathrm{Bl}_{\mathbf D}$ exhibiting the
$\PP^1$-bundle structure:
\begin{equation}
\label{Eq:Pi bundle}
p_i:\PP_i\rightarrow D_{\rho_i}.
\end{equation}
\end{construction}
The composition of $\mathrm{Bl}_{\mathbf{D}}$ with the
projection to $\AA^1$ yields a flat family
\begin{equation}
    \label{Eq: Epsilon}
    \epsilon:X_{\widetilde\Sigma}\rightarrow\AA^1,
\end{equation}
with the property that $\epsilon^{-1}(\AA^1\setminus\{0\})=
X_{\Sigma}\times (\AA^1\setminus \{0\})$, and
$\epsilon^{-1}(0)$ is the union of $s+1$ irreducible components,
one isomorphic to $X_{\Sigma}$ and the remaining $s$ components
the exceptional divisors $\PP_1,\ldots,\PP_s$ of the blowup 
$\mathrm{Bl}_{\mathbf{D}}$.

Recall the hypersurfaces $H_i\subseteq
D_{\rho_i}$ from \eqref{Eq: Hi}, 
meeting the toric boundary of $D_{\rho_i}$ transversally. We now denote by 
$\widetilde H_i\subseteq X_{\widetilde\Sigma}$ the strict
transform of $H_i\times \AA^1 \subseteq X_{\overline{\Sigma}}$.
We note:
\begin{lemma}
\label{lem:Hrhoi}
We have 
\begin{enumerate}
\item $\widetilde H_i\cap \epsilon^{-1}(\AA^1\setminus\{0\})=
H_i\times (\AA^1\setminus\{0\})$.
\item
$\widetilde H_i\cap \epsilon^{-1}(0)$ is contained in the toric
stratum $D_{\widetilde\Sigma,\tilde\rho_i}$ corresponding to $\tilde\rho_i$ of
$X_{\widetilde\Sigma}$. Further, there is an isomorphism of
pairs
\[
(D_{\widetilde\Sigma,\tilde\rho_i},\widetilde H_i\cap \epsilon^{-1}(0))
\cong (D_{\rho_i},H_i).
\]
\end{enumerate}
\end{lemma}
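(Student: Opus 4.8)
Part (1) is immediate and I would dispose of it first: the centre $\mathbf{D}=\bigcup_i D_{\rho_i}\times\{0\}$ is contained in $\epsilon^{-1}(0)$, so $\mathrm{Bl}_{\mathbf{D}}$ restricts to an isomorphism over $\AA^1\setminus\{0\}$; under the resulting isomorphism $\epsilon^{-1}(\AA^1\setminus\{0\})\cong X_{\Sigma}\times(\AA^1\setminus\{0\})$ the strict transform $\widetilde H_i$ becomes the part of $H_i\times\AA^1$ lying over $\AA^1\setminus\{0\}$, i.e.\ $H_i\times(\AA^1\setminus\{0\})$.

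For part (2) I would first reduce to a single blow-up. Since (as noted above) $D_{\rho_i}\cap D_{\rho_j}=\emptyset$ for $i\neq j$, the centres $D_{\rho_j}\times\{0\}$ are pairwise disjoint, so over the open set $U:=X_{\overline\Sigma}\setminus\bigcup_{j\neq i}(D_{\rho_j}\times\{0\})$ the map $\mathrm{Bl}_{\mathbf{D}}$ coincides with the blow-up of $X_{\overline\Sigma}$ along the single smooth codimension-two centre $D_{\rho_i}\times\{0\}$; and since $H_i\subseteq D_{\rho_i}$ is disjoint from $D_{\rho_j}$ for $j\neq i$, the strict transform $\widetilde H_i$ lies over $U$. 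Write $Z_i$ for the strict transform of $D_{\rho_i}\times\AA^1$. Now $H_i\times\AA^1\subseteq D_{\rho_i}\times\AA^1$, and $D_{\rho_i}\times\AA^1$ contains the centre $D_{\rho_i}\times\{0\}=\{w=0\}$ ($w$ the $\AA^1$-coordinate) as an effective Cartier divisor, while $(H_i\times\AA^1)\cap(D_{\rho_i}\times\{0\})=H_i\times\{0\}$ is an effective Cartier divisor in $H_i\times\AA^1$ (here one uses $H_i$ nonsingular). By the standard identification of the strict transform of a subscheme $W$ with $\mathrm{Bl}_{W\cap(\text{centre})}W$, together with the fact that blowing up an effective Cartier divisor is an isomorphism, it follows that $Z_i\to D_{\rho_i}\times\AA^1$ and $\widetilde H_i\to H_i\times\AA^1$ are isomorphisms, that under these isomorphisms $\epsilon$ restricts to the second projections onto $\AA^1$, and that the inclusion $\widetilde H_i\hookrightarrow Z_i$ becomes the inclusion $H_i\times\AA^1\hookrightarrow D_{\rho_i}\times\AA^1$. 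Taking fibres over $0$ then yields $\widetilde H_i\cap\epsilon^{-1}(0)\cong H_i\times\{0\}\cong H_i$ compatibly with $Z_i\cap\epsilon^{-1}(0)\cong D_{\rho_i}\times\{0\}\cong D_{\rho_i}$, so it remains only to identify $Z_i\cap\epsilon^{-1}(0)$ with the toric stratum $D_{\widetilde\Sigma,\tilde\rho_i}$.

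This last identification I would check in the affine toric chart $U_{\tilde\sigma}\subseteq X_{\widetilde\Sigma}$ corresponding to $\tilde\sigma=\langle(m_i,0),(m_i,1),u_1,\ldots,u_r\rangle$ as in Construction \ref{Constr: toric fan for blow up}. Writing $x$ for a local equation of $D_{\rho_i}$ and $x',t$ for the coordinates on $U_{\tilde\sigma}$ dual to the rays $\RR_{\ge 0}(m_i,0)$ and $\nu_i=\RR_{\ge 0}(m_i,1)$, the blow-up is locally $x=x't$, $w=t$; hence $\PP_i=\{t=0\}$, $\epsilon$ pulls back to $t$, $Z_i=\{x'=0\}$ is the toric divisor for $\RR_{\ge 0}(m_i,0)$, and the face $\tilde\rho_i=\langle(m_i,0),\nu_i\rangle$ of $\tilde\sigma$ has stratum $D_{\widetilde\Sigma,\tilde\rho_i}=\{x'=t=0\}=Z_i\cap\PP_i$, a section of $p_i:\PP_i\to D_{\rho_i}$. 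This section is disjoint from the section $X_{\Sigma}\cap\PP_i$ along which the $X_{\Sigma}$-component of $\epsilon^{-1}(0)$ meets $\PP_i$, and $Z_i$ is disjoint from the $\PP_j$ with $j\neq i$; therefore $Z_i\cap\epsilon^{-1}(0)=Z_i\cap\PP_i=D_{\widetilde\Sigma,\tilde\rho_i}$. Writing $H_i=\{g=0\}$ inside $D_{\rho_i}$ locally, so that $H_i\times\AA^1=\{x=g=0\}$ and $\widetilde H_i=\{x'=g=0\}$ in $U_{\tilde\sigma}$, we get $\widetilde H_i\cap\epsilon^{-1}(0)=\{x'=t=g=0\}\subseteq D_{\widetilde\Sigma,\tilde\rho_i}$, which together with the previous paragraph exhibits the asserted isomorphism of pairs $(D_{\widetilde\Sigma,\tilde\rho_i},\widetilde H_i\cap\epsilon^{-1}(0))\cong(D_{\rho_i},H_i)$.

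The step I expect to be the real obstacle is the bookkeeping in the last paragraph: keeping the combinatorics of the star subdivision straight so as to land in the correct codimension-two stratum — distinguishing $Z_i\cap\PP_i=D_{\widetilde\Sigma,\tilde\rho_i}$ from the other section of $p_i$ cut out by the $X_{\Sigma}$-component of the central fibre — and confirming that the resulting isomorphism of pairs is genuinely the one induced by $\mathrm{Bl}_{\mathbf{D}}$, which comes down to the compatibility of the two blow-down isomorphisms used above. Everything else is a routine unwinding of Construction \ref{Constr: toric fan for blow up} and of standard properties of strict transforms.
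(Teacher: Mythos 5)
Your proposal is correct and takes essentially the same route as the paper. Both part (1) and the key step in part (2) — observing that $\mathbf{D}\cap(D_{\rho_i}\times\AA^1)=D_{\rho_i}\times\{0\}$ and $(H_i\times\AA^1)\cap\mathbf{D}=H_i\times\{0\}$ are Cartier, so that both strict transforms are isomorphic to their originals under $\mathrm{Bl}_{\mathbf{D}}$, after which one restricts these isomorphisms to $\epsilon^{-1}(0)$ — reproduce the paper's argument; the only difference is that where the paper identifies $Z_i\cap\epsilon^{-1}(0)$ with $D_{\widetilde\Sigma,\tilde\rho_i}$ by noting abstractly that the strict transform of the divisor for the ray $\rho_i\in\overline\Sigma$ is again the divisor for $\rho_i\in\widetilde\Sigma$ and meets the central fibre in the stratum of $\tilde\rho_i$, you carry out the same identification in an explicit affine chart of the star subdivision, which is a useful sanity check but adds nothing logically.
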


\begin{proof} (1) is obvious as $\mathrm{Bl}_{\mathbf{D}}$ is an
isomorphism away from $\epsilon^{-1}(0)$. For (2), first note
that $H_i\subseteq D_{\rho_i}$, and the divisor
$D_{\rho_i}\times\AA^1\subseteq X_{\overline{\Sigma}}$ is the 
divisor corresponding to the ray $\rho_i\in\overline{\Sigma}$.
The strict transform of this divisor in $X_{\widetilde\Sigma}$ is again
the divisor corresponding to $\rho_i$. Intersecting this divisor
with $\epsilon^{-1}(0)$ gives the toric stratum $D_{\tilde\rho_i}$.

Since $\mathbf{D}\cap (D_{\rho_i}
\times\AA^1)=D_{\rho_i}\times \{0\}$, a Cartier divisor on 
$D_{\rho_i}\times\AA^1$, the strict transform of $D_{\rho_i}\times
\AA^1$ is in fact isomorphic to $D_{\rho_i}\times\AA^1$ under
$\mathrm{Bl}_{\mathbf{D}}$.
Further, as $H_i\times\AA^1\subseteq D_{\rho_i}\times\AA^1$
intersects the centre of the blow-up in the Cartier divisor
$H_i\times\{0\}$, in fact $\widetilde H_i$ is
isomorphic to $H_i\times\AA^1$ under
$\mathrm{Bl}_{\mathbf{D}}$. 
Restricting these isomorphisms to $\epsilon^{-1}(0)$ gives the 
isomorphism of pairs.
\end{proof}

Now consider a further blow-up
\begin{equation}
\label{Eq: Bl_H}
    \mathrm{Bl}_{\widetilde{H}}:\widetilde{X} \longrightarrow X_{\widetilde{\Sigma}},
\end{equation}
with center 
\[ \widetilde{ H}= \bigcup_i \widetilde H_i.\]
By composing \eqref{Eq: Bl_H} with $\epsilon$ defined in \eqref{Eq: Epsilon} we obtain the flat family  
\begin{equation}
    \label{Eq: The degeneration}
    \epsilon_{\mathbf{P}}: \widetilde{X}\longrightarrow  \AA^1.
\end{equation}
For  $t \neq 0$, we have $\epsilon_{\mathbf{P}}^{-1}(t)=X$. The central fiber is of the form
\begin{equation}
    \label{Eq: central fiber of epsilon}
   \epsilon_{\mathbf{P}}^{-1}(0)=X_{\Sigma} \cup \bigcup_{i=1}^s \widetilde\PP_i
\end{equation}
where $\widetilde\PP_i$ is the blow-up of $\PP_i$, (see \eqref{Eq:Pi bundle})
at the limits of the hypersurfaces $H_i\subseteq
D_{\rho_i}$ as $t\to 0$.
\subsection{The tropical space $\widetilde B_1$ associated to $(\widetilde X,\widetilde D)$}
\label{subsec:tropical}
Let $\widetilde X$ be the degeneration in $\eqref{Eq: The degeneration}$ and denote by $\widetilde D$ the strict transform of the toric boundary divisor $D_{\widetilde \Sigma} \subseteq X_{\widetilde \Sigma}$ under the blow-up \eqref{Eq: Bl_H}. We consider $\widetilde X$ as a log space with the divisorial log structure given by $\widetilde D$.

From the construction of \S\ref{subsubsection:affine manifold},
we obtain polyhedral cone
complexes $(\widetilde B, \widetilde\P)$ and $(B_{\widetilde\Sigma},
\P_{\widetilde\Sigma})$ from the log Calabi-Yau pairs $(\widetilde X,
\widetilde D)$ and $(X_{\widetilde\Sigma}, D_{\widetilde\Sigma})$ respectively. 
Note we may also view the fan $\widetilde\Sigma$ as a polyhedral
cone complex $(|\widetilde\Sigma|,\widetilde\Sigma)$. In what follows, for cones $\rho\in \widetilde\P$ (resp.\ $\rho\in\P_{\widetilde
\Sigma}$) we write $\widetilde D_{\rho}$ (resp.\ $D_{\widetilde\Sigma,\rho}$)
for the corresponding stratum of $\widetilde D$ (resp.\ $D_{\widetilde
\Sigma}$) as in \eqref{eq:Drho def}. 

\begin{proposition}
\label{Prop: isomorphic cone complexes}
There are canonical isomorphisms of polyhedral cone complexes:
\begin{equation}
    \label{Eq: tropicalizations}
(\widetilde B,\widetilde\P)\cong (B_{\widetilde\Sigma},\P_{\widetilde{\Sigma}})
\cong (|\widetilde\Sigma|,\widetilde\Sigma).
\end{equation}
\end{proposition}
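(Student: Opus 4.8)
The statement asserts that three polyhedral cone complexes coincide: the tropicalization $(\widetilde B,\widetilde\P)$ of the log Calabi--Yau pair $(\widetilde X,\widetilde D)$, the tropicalization $(B_{\widetilde\Sigma},\P_{\widetilde\Sigma})$ of the toric pair $(X_{\widetilde\Sigma},D_{\widetilde\Sigma})$, and the cone complex $(|\widetilde\Sigma|,\widetilde\Sigma)$ underlying the fan $\widetilde\Sigma$ itself. The second isomorphism is the purely toric statement, and the first is the one that requires genuine work.

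For the toric identification $(B_{\widetilde\Sigma},\P_{\widetilde\Sigma})\cong(|\widetilde\Sigma|,\widetilde\Sigma)$, the plan is to invoke Example \ref{Ex: divisorial}: for a toric variety with its toric boundary divisor, the abstract polyhedral cone complex produced by the construction of \S\ref{subsubsection:affine manifold} is canonically the fan. Concretely, the irreducible components of $D_{\widetilde\Sigma}$ are the toric prime divisors $D_{\widetilde\Sigma,\nu}$ for rays $\nu\in\widetilde\Sigma(1)$; the cone $\sum_{i\in I}\RR_{\ge 0}D_i^*$ is nonempty (i.e.\ $\bigcap_{i\in I}D_i\ne\emptyset$) exactly when the corresponding rays span a cone of $\widetilde\Sigma$, and the assignment $D_i^*\mapsto m_{\nu_i}$ identifies $\Div_D(X)^*_\RR$ with $\overline M_\RR$ carrying $\P_{\widetilde\Sigma}$ onto $\widetilde\Sigma$. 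One then checks that the affine structures agree: the piecewise-linear charts $\psi_\rho$ determined by \eqref{eq:chart relation general} reproduce, via Remark \ref{Rem: Motuvation for toricy definition}, exactly the linear gluing of adjacent cones in a fan, so on a toric variety the discriminant locus is empty and the affine manifold is globally linear, i.e.\ literally $(|\widetilde\Sigma|,\widetilde\Sigma)$.

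For the first isomorphism, the key point is that the blow-up $\mathrm{Bl}_{\widetilde H}\colon\widetilde X\to X_{\widetilde\Sigma}$ and the toric blow-up $\mathrm{Bl}_{\mathbf D}\colon X_{\widetilde\Sigma}\to X_{\overline\Sigma}$ do not change the combinatorial data feeding \S\ref{subsubsection:affine manifold}. The blow-up $\mathrm{Bl}_{\widetilde H}$ has centers $\widetilde H_i$ which lie in the toric strata $D_{\widetilde\Sigma,\tilde\rho_i}$ and are \emph{disjoint from $\widetilde D$} (the strict transform of the boundary), and moreover $\mathrm{Bl}_{\widetilde H}$ is an isomorphism in a neighbourhood of $\widetilde D$ since $\widetilde H$ is general and meets the toric boundary of each $D_{\widetilde\Sigma,\tilde\rho_i}$ transversally (as recorded in Lemma \ref{lem:Hrhoi} and the standing hypotheses). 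Hence the irreducible components of $\widetilde D$ are in canonical bijection with those of $D_{\widetilde\Sigma}$, all intersection patterns $\bigcap_{i\in I}\widetilde D_i\ne\emptyset$ match those of $\bigcap_{i\in I}D_{\widetilde\Sigma,i}$, and each stratum $\widetilde D_\rho$ is isomorphic to $D_{\widetilde\Sigma,\rho}$ together with the restriction of its normal bundle data. This gives a bijection of cone complexes $(\widetilde B,\widetilde\P)\cong(B_{\widetilde\Sigma},\P_{\widetilde\Sigma})$ at the level of cones, and the relation \eqref{eq:chart relation general} for $\widetilde D$ is identical to that for $D_{\widetilde\Sigma}$ because the intersection numbers $\widetilde D_{\tau_j}\cdot\widetilde D_\rho$ equal $D_{\widetilde\Sigma,\tau_j}\cdot D_{\widetilde\Sigma,\rho}$ — again because the blow-up is a local isomorphism near the relevant one-dimensional strata. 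Therefore the affine structures agree and the isomorphism of cone complexes is affine, hence an isomorphism of integral affine manifolds with singularities (both discriminant loci being the union of codimension $\ge 2$ cones).

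\textbf{Main obstacle.} The technical heart is verifying that $\mathrm{Bl}_{\widetilde H}$ genuinely induces no change to the tropical data: one must confirm that the strict transform $\widetilde D$ really is isomorphic to $D_{\widetilde\Sigma}$ as a simple normal crossing divisor (so that the ``maximally degenerate'' and ``$\bigcap_{i\in I}D_i$ connected'' hypotheses transfer), and that the normal-bundle/intersection-number data entering \eqref{eq:chart relation general} is unaffected. This rests on the transversality of the general hypersurfaces $H_i$ to the toric boundary of $D_{\rho_i}$ — guaranteeing the center of each blow-up meets each boundary stratum in the expected dimension and does not perturb the combinatorics — together with the observation that the relevant intersection numbers are computed along one-dimensional strata lying in the locus where $\mathrm{Bl}_{\widetilde H}$ is an isomorphism. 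Once these local comparisons are in place, stitching the charts together is routine, using that a fan is by construction a cone complex with the trivial (globally linear) affine structure away from codimension two.
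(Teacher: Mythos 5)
Your identification of the three cone complexes as abstract polyhedral cone complexes is correct and follows the same route as the paper: the second identification is the standard toric one, and the first comes from the bijection on boundary components together with the observation that by transversality of the $H_i$, the centre $\widetilde H$ contains no toric stratum of $X_{\widetilde\Sigma}$, so every stratum of $\widetilde D$ is a strict transform and the incidence lattice is unchanged. Up to that point you match the paper's proof of the proposition as stated.

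However, your final paragraph contains a genuine error, and it is worth flagging because it contradicts the whole technical mechanism of the paper. You assert that the intersection numbers $\widetilde D_{\tau_j}\cdot\widetilde D_\rho$ equal $D_{\widetilde\Sigma,\tau_j}\cdot D_{\widetilde\Sigma,\rho}$ ``because the blow-up is a local isomorphism near the relevant one-dimensional strata,'' and conclude that the integral affine structures agree. This is false: by Lemma \ref{lem:Hrhoi}, $\widetilde H_i$ does meet the one-dimensional strata $D_{\widetilde\Sigma,\tilde\rho}$ for $\rho\supseteq\rho_i$ of codimension one, and $\mathrm{Bl}_{\widetilde H}$ is emphatically \emph{not} an isomorphism near them. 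The paper then computes (see \eqref{eq:D1} in Theorem \ref{thm: affine structure on the big part}) that $\widetilde D_{\tau_1}\cdot\widetilde D_{\tilde\rho} = D_{\widetilde\Sigma,\tau_1}\cdot D_{\widetilde\Sigma,\tilde\rho} - D_\rho\cdot H_i$, so the intersection data feeding \eqref{eq:chart relation general} genuinely changes. Correspondingly, the identification \eqref{Eq: PL homeomorphism} is only \emph{piecewise} linear, not affine; $B_{\widetilde\Sigma}$ has empty discriminant locus while $\widetilde B$ has non-trivial affine monodromy around $\widetilde\Delta$ (Lemma \ref{Lem: affine monodromy}), which is precisely what drives the scattering theory later in the paper. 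The proposition only claims an isomorphism of polyhedral cone complexes, not of integral affine manifolds, so your overreach does not invalidate the proof of the statement itself; but you should delete the claim that the affine structures agree, since it is both false and would, if true, render the rest of the paper vacuous.
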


\begin{proof}
The second isomorphism is obvious.
The first isomorphism in \eqref{Eq: tropicalizations} arises as follows.
The map $\mathrm{Bl}_{\widetilde{H}}$ in \eqref{Eq: Bl_H} induces a one-to-one correspondence
between the irreducible components of $\widetilde D$ and
the irreducible components of $D_{\widetilde\Sigma}$, with an
irreducible component $\widetilde D_i$ of $\widetilde D$ corresponding to
the irreducible component $\mathrm{Bl}_{\widetilde{H}}(\widetilde D_i)$
of $D_{\widetilde\Sigma}$. Because of the transversality
assumption on $H_i$, $\widetilde H_i$ does not contain 
any toric stratum
of $X_{\widetilde\Sigma}$. Thus any stratum of $\widetilde D$ is
the strict transform of any stratum of $D_{\widetilde\Sigma}$, showing
that the bijection between components induces an inclusion
preserving bijection between the cones of $\widetilde\P$
and $\P_{\widetilde\Sigma}$. Hence, we obtain the desired isomorphism
as abstract polyhedral cone complexes. 
\end{proof}

As a consequence of this proposition, we will freely identify cones
in $\widetilde\P$ and cones in $\widetilde\Sigma$.

Recall from the construction of \S\ref{subsubsection:affine manifold} that
$\widetilde B$ and $B_{\widetilde\Sigma}$ both carry the structure of
an affine manifold away from the codimension $\ge 2$ cones. However,
note that if a log Calabi--Yau pair is toric, given by a toric variety with
fan $\Sigma$ and its toric boundary divisor, then the associated pair $(B,\P)$ may be identified with $(|\Sigma|,\Sigma)$. In particular, the obvious
affine structure induced by $|\Sigma|\subseteq M_{\RR}$ agrees, off
of codimension two, with that constructed in \S\ref{subsubsection:affine manifold} by Remark \ref{Rem: Motuvation for toricy definition}. Thus
we may take the discriminant locus
 $\Delta=\emptyset$. So, in the case of the toric log Calabi--Yau pair $(X_{\widetilde{\Sigma}},D_{\widetilde{\Sigma}})$, the integral affine structure on $ B_{\widetilde \Sigma}$, which was constructed away from the codim $\ge 2$ cells in \S\ref{subsubsection:affine manifold}, extends to $B_{\widetilde \Sigma}$. 

As in Proposition \ref{prop:local affine submersion}, 
the affine structure on $\widetilde{B}\setminus \Delta$ 
extends to a larger subset of $\widetilde{B}$ in our situation. 
While we may use
that proposition directly to describe this extension, it is more
convenient now to give several large charts on $\widetilde B$.

For this first note that from Proposition \ref{Prop: isomorphic cone complexes} there exists a piecewise linear homeomorphism
\begin{equation}
    \label{Eq: PL homeomorphism}
    \widetilde B \lra B_{\widetilde \Sigma}.
\end{equation}
Since we have integral affine identifications
$B_{\widetilde\Sigma}=|\widetilde\Sigma|=M_{\RR}\times
\RR_{\ge 0}$, this induces a piecewise linear homeomorphism
\begin{equation}
    \label{Eq: Psi}
\Psi \colon \widetilde B  \lra M_{\RR} \times \RR_{\geq 0}.    
\end{equation}

We will now describe the affine structure on a larger subset of $\widetilde B$ explicitly 
using $\Psi$. To do so, we need one additional piece of information. 
Recall that $H_i$
is a hypersurface in $D_{\rho_i}$, the toric variety defined by the fan $\Sigma(\rho_i)$ in $M_{\RR}/\RR\rho_i$ given by
\begin{equation}
    \label{Eq: sigma rho}
\Sigma(\rho_i)=\{(\sigma+\RR\rho_i)/\RR\rho_i\,|\,\sigma\in\Sigma,
\rho_i\subseteq\sigma\}.
\end{equation}
In particular,
there is a piecewise linear function on the fan $\Sigma(\rho_i)$,
\begin{equation}
    \label{Eq: varphi-i}
    \varphi_i:M_{\RR}/\RR\rho_i \rightarrow \RR
\end{equation}
corresponding to the divisor $H_i$ defined as follows: if $H_i$
is linearly equivalent to a sum $\sum a_\tau D_{\tau}$ of boundary
divisors, where $\tau$ ranges over rays in $\Sigma(\rho_i)$,
then $\varphi_i(m_{\tau})=a_{\tau}$ for $m_{\tau}$ a primitive generator of $\tau$. Note that the $H_i$'s are Cartier divisors, as $X_{\Sigma}$ is smooth, so in particular the boundary toric divisors $D_{\rho_i}$ are also smooth. Therefore, the values $a_{\tau}$ define $ \varphi_i$ uniquely up to a linear function. 

Now, recall that we denoted by $\tilde\rho_i$ the cone of $\widetilde\Sigma$ generated
by $(m_i,0)$ and $(m_i,1)$. Further, recall from \eqref{eq:star def} 
the open star $\mathrm{Star}(\tilde\rho_i)$ of $\tilde\rho_i$.
Let $U,V \subset \widetilde{B}$ be open subsets defined as follows:
\begin{align}
\label{Eq:UV def}
\begin{split}
U:=&
\widetilde{B}\setminus \bigcup_{i=1}^m  
\bigcup_{\rho_i\subseteq \rho\in \Sigma\atop
\dim\rho=n-1} \tilde\rho,\\
V:= & \bigcup_{j=1}^m \mathrm{Star}(\tilde\rho_j).
\end{split}
\end{align}
and let 
\begin{equation}
    \label{Eq: Delta'}
    \widetilde \Delta=\widetilde{B} \setminus \left(U\cup V \right).
\end{equation}
One checks easily that $U\cup V$ contains the interior
of every codimension zero and one cone of $\widetilde\P$,
and hence $\widetilde \Delta\subset\Delta$, where $\Delta$ is the union of
all codimension two cones of $\widetilde\P$.
We will show that the integral affine structure on $\widetilde{B} \setminus \Delta$ extends to $\widetilde{B} \setminus \widetilde \Delta$.
\begin{theorem}
\label{thm: affine structure on the big part}
There is an integral affine structure on $\widetilde{B}\setminus\widetilde \Delta$
extending the integral affine structure on $\widetilde{B}\setminus
\Delta$, with affine coordinate charts given by
\[
\psi_U:U\rightarrow  \overline{M}_\RR
\]
with $\psi_U:=\Psi|_U$ where $\Psi$ is as in \eqref{Eq: Psi}, and
\[
\psi_V:V\rightarrow \overline M_{\RR}
\]
given by, for $(m,r)\in M_{\RR}\times\RR_{\ge 0}$ such that
$\Psi^{-1}(m,r)\in \mathrm{Star}(\tilde\rho_i)\subseteq V$,
\begin{equation}
\label{eq:psii formula}
\psi_V(\Psi^{-1}(m,r))=(m+\varphi_i(\pi_i(m)) m_i,r)
\end{equation}
where $\pi_i:M_{\RR}\rightarrow M_{\RR}/\RR\rho_i$ is the quotient map, and $\varphi_i$ is defined as in \eqref{Eq: varphi-i}.
\end{theorem}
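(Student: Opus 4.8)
The plan is to verify that the two proposed charts $\psi_U$ and $\psi_V$ are each well-defined integral affine embeddings, that they agree with the affine structure constructed in \S\ref{subsubsection:affine manifold} on the overlap with $\widetilde B\setminus\Delta$, and finally that the transition function $\psi_V\circ\psi_U^{-1}$ on $U\cap V$ lies in $\Aff(\overline M)$. Granting these three points, the charts glue to an integral affine structure on $U\cup V=\widetilde B\setminus\widetilde\Delta$ extending the one on $\widetilde B\setminus\Delta$, which is exactly the assertion.

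First I would treat $\psi_U$. The key observation is that $U$ is obtained from $\widetilde B$ by deleting all codimension-one cones $\tilde\rho$ with $\rho_i\subseteq\rho$, $\dim\rho=n-1$; the ``blow-up'' subdivision of Construction \ref{Constr: toric fan for blow up} only genuinely refines $\overline\Sigma$ along these deleted cones, so on $U$ the piecewise linear homeomorphism $\Psi$ of \eqref{Eq: Psi} is a map from $U$ to an open subset of $\overline M_{\RR}$ which is \emph{linear} on each maximal cone and has no kinks along the codimension-one cells surviving in $U$. More precisely, the kinks of the affine structure of \S\ref{subsubsection:affine manifold} along an interior codimension-one cone $\rho\in\widetilde\P$ are governed by \eqref{eq:chart relation general}, i.e.\ by the intersection numbers $\widetilde D_{\tau_j}\cdot\widetilde D_\rho$ on $\widetilde X$; for a codimension-one cone of $\widetilde B$ not meeting $\widetilde\Delta$, one checks via Lemma \ref{lem:Hrhoi} and the description of $\widetilde\Sigma$ that these intersection numbers coincide with the corresponding toric intersection numbers on $X_{\widetilde\Sigma}$, which by Remark \ref{Rem: Motuvation for toricy definition} are exactly those making $\Psi$ an affine chart. (Equivalently: here $\varphi=$ the canonical MVPL function of Construction \ref{constr:phi} has $\kappa_{\widetilde D_\rho}=[\widetilde D_\rho]$, and pulling down along the exceptional contraction shows $[\widetilde D_\rho]\cdot(\text{test curves})$ matches the toric values.) So $\psi_U:=\Psi|_U$ is an integral affine chart compatible with the old structure.

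Next, $\psi_V$. Here $V=\bigcup_j\mathrm{Star}(\tilde\rho_j)$, a disjoint union (disjointness of the $\tilde\rho_j$ comes from assumption (2) in \S\ref{Sec: Pulling singularities out}), so it suffices to analyze a single $\mathrm{Star}(\tilde\rho_i)$. The toric stratum $D_{\widetilde\Sigma,\tilde\rho_i}$ is, by Lemma \ref{lem:Hrhoi}(2), isomorphic to $D_{\rho_i}\times\text{(a curve)}$, and after the blow-up $\mathrm{Bl}_{\widetilde H}$ the corresponding stratum $\widetilde D_{\tilde\rho_i}$ is toric with fan $\Sigma(\rho_i)\times(\text{ray})$ and its stratification is toric; thus $\tilde\rho_i$ satisfies the hypotheses of Proposition \ref{prop:local affine submersion}, which already guarantees the affine structure extends over $\mathrm{Star}(\tilde\rho_i)$. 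What remains is to identify this extension with the explicit formula \eqref{eq:psii formula}. For that I would feed the normal-bundle data into the construction in the proof of Proposition \ref{prop:local affine submersion}: the line bundle $\O_{\widetilde X}(\widetilde D_k)|_{\widetilde D_{\tilde\rho_i}}$ on the toric variety $D_{\widetilde\Sigma,\tilde\rho_i}$ is determined by a PL function, and because $\widetilde H_i$ is the strict transform of $H_i\times\AA^1$ with $H_i$ the divisor whose PL function is $\varphi_i$ (see \eqref{Eq: varphi-i}), the relevant PL function is precisely $\varphi_i\circ\pi_i$ in the $M_{\RR}$-directions and trivial in the $\RR_{\ge0}$-direction. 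Substituting into the formula $\psi_\tau(D_k^*)=\sum_j\psi_j(m_\rho)e_j+(0,m_\rho)$ from that proof, and matching coordinates through $\Psi$, yields exactly $(m,r)\mapsto(m+\varphi_i(\pi_i(m))m_i,\,r)$; I would also note this map is manifestly injective on $\mathrm{Star}(\tilde\rho_i)$ since $\varphi_i$ is PL and $m_i$ spans the kernel of $\pi_i$.

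Finally, the compatibility of the two charts on $U\cap V$. On each maximal cone contained in $U\cap V$, both $\psi_U$ and $\psi_V$ are linear, and $\psi_V\circ\psi_U^{-1}$ is the identity in the $\RR_{\ge0}$-direction and in the $M_{\RR}$-direction acts by $m\mapsto m+\varphi_i(\pi_i(m))\,m_i$ on the portion of the cone lying in $\mathrm{Star}(\tilde\rho_i)$; since $\varphi_i$ restricted to a single cone of $\Sigma(\rho_i)$ is \emph{linear}, this is an element of $GL_n(\ZZ)$ there (the shear by a primitive lattice vector composed with a linear functional with integer values). One must check these linear maps, glued over the various maximal cones of $U\cap V$, assemble to a single global element of $\Aff(\overline M)$ on each connected component of $U\cap V$ — this is where I expect the only real bookkeeping, and the \textbf{main obstacle}: one has to confirm that crossing from $\mathrm{Star}(\tilde\rho_i)$-territory into $U\setminus V$ and back does not introduce a discrepancy, i.e.\ that the chosen representatives of $\varphi_i$ (unique only up to a \emph{global} linear function) can be pinned down consistently, and that the apparent kinks of $\psi_V\circ\psi_U^{-1}$ along the walls of $\Sigma(\rho_i)$ are genuinely absent because those walls correspond to cones already removed from $U\cap V$ or lying in $\widetilde\Delta$. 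Once that verification is in place — essentially a re-run of the kink computation \eqref{eq:chart relation general} comparing the two charts, as in the proof of Proposition \ref{prop:local affine submersion} — the gluing is automatic and the theorem follows.
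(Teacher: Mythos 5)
Your overall scaffolding is reasonable and your treatment of $\psi_U$ matches the paper's: both appeal to the fact that $\mathrm{Bl}_{\widetilde H}$ is an isomorphism near the one-dimensional strata disjoint from $\widetilde H$, which by Lemma~\ref{lem:Hrhoi} are exactly those removed to form $U$, so that $\Psi|_U$ is affine linear and the chart relation \eqref{eq:chart relation general} is inherited from the toric case. But there are two points where you diverge from, or misjudge, the actual argument.

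First, for $\psi_V$ the paper does \emph{not} route through Proposition~\ref{prop:local affine submersion}; it verifies the chart relation \eqref{eq:chart relation general} directly across each $\tilde\rho$ (for $\rho\supseteq\rho_i$ of codimension one), reducing it to the three intersection-number identities \eqref{eq:D1}--\eqref{eq:D3}, which it then proves by pulling line bundles and exceptional divisors through $\mathrm{Bl}_{\widetilde H}$. Your alternative — invoke Proposition~\ref{prop:local affine submersion} with $\tau=\tilde\rho_i$ and then match the resulting chart with the explicit formula \eqref{eq:psii formula} — is in principle viable, and indeed the hypotheses of that proposition do hold at $\tilde\rho_i$. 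But your justification contains a factual slip: $D_{\widetilde\Sigma,\tilde\rho_i}$ is isomorphic to $D_{\rho_i}$ itself (Lemma~\ref{lem:Hrhoi}(2)), not to $D_{\rho_i}\times(\text{a curve})$, and its fan is $\Sigma(\rho_i)$, not $\Sigma(\rho_i)\times(\text{ray})$; you appear to have confused the codimension-two stratum $D_{\widetilde\Sigma,\tilde\rho_i}$ with the codimension-one stratum $D_{\widetilde\Sigma,\rho_i}$. More substantively, the matching of the Proposition~\ref{prop:local affine submersion} chart with \eqref{eq:psii formula} — identifying $\psi_1,\psi_2$ with the correct piecewise-linear functions involving $\varphi_i$, and aligning the $\RR^2\oplus\RR^{n-1}$ coordinates with $\overline M_{\RR}$ — is precisely where all the work is; it amounts to the same normal-bundle bookkeeping as the paper's (D1)--(D3), and you have left it as a gesture rather than a computation. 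Your route does not buy you anything over the paper's: Proposition~\ref{prop:local affine submersion} guarantees abstract extendability, but the theorem asserts a \emph{specific} formula for $\psi_V$, and that formula cannot be obtained without the intersection computation.

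Second, you misidentify the ``main obstacle.'' The concern you raise about ``crossing from $\mathrm{Star}(\tilde\rho_i)$-territory into $U\setminus V$ and back'' and about assembling the transition maps over the maximal cones of $U\cap V$ is vacuous: since every $\rho\in\Sigma$ of codimension one containing $\rho_i$ is removed from $U$, and every cone $\tilde\omega$ of $\widetilde\Sigma$ with $\tilde\rho_i\subseteq\tilde\omega$ and $\dim\tilde\omega\le n$ is a face of some such $\tilde\rho$, the intersection $U\cap\mathrm{Star}(\tilde\rho_i)$ is the \emph{disjoint union} of the interiors $\Int(\tilde\sigma)$ for $\sigma\in\Sigma$ maximal with $\rho_i\subseteq\sigma$. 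Each connected component of $U\cap V$ is thus a single open maximal cell, on which both $\psi_U$ and $\psi_V$ are linear, so the transition is a single linear map with nothing to glue. There is genuinely no loop along which a discrepancy could arise. The actual content of the theorem is your step 2 — verifying that each chart satisfies \eqref{eq:chart relation general}, equivalently computing \eqref{eq:D1}--\eqref{eq:D3} — not your step 3.
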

\begin{proof}
While the homeomorphism $\widetilde B \rightarrow B_{\widetilde\Sigma}$ in \eqref{Eq: PL homeomorphism} is only piecewise linear
in general, we first observe that it is in fact affine linear across any
codimension one cone $\rho$ of $\widetilde{\Sigma}$ such that
neighbourhoods of $\widetilde D_{\rho}\subseteq \widetilde{X}$ and 
$\widetilde {D}_{\rho}
\subseteq X_{\widetilde\Sigma}$ are isomorphic. Now the blow-up map
$\mathrm{Bl}_{\widetilde{H}}:\widetilde{X}\rightarrow X_{\widetilde\Sigma}$ 
in \eqref{Eq: Bl_H}
induces an isomorphism in neighbourhoods of those one-dimensional strata
disjoint from any of the $\widetilde H_i$. But by Lemma \ref{lem:Hrhoi},
$\widetilde H_i$ only
intersects those one-dimensional strata corresponding to cones
$\tilde \rho\in\widetilde\Sigma$ with $\rho\in\Sigma$, 
$\dim\rho=n-1$, and $\rho\supseteq\rho_i$. Thus we immediately
see that $\Psi|_U$ is affine linear, and hence $\psi_U$ defines
an affine coordinate chart on $U$ compatible with the affine structure
on $\widetilde B\setminus\Delta$.

Now choose $\rho\in\Sigma$, $\dim\rho=n-1$, $\rho\supseteq \rho_i$.
Let $\sigma_1,\sigma_2$ be the maximal cones of $\Sigma$ containing
$\rho$. Then we wish to verify
\eqref{eq:chart relation general} for the chart \[\psi_V|_{\Int(\tilde\sigma_1
\cup\tilde\sigma_2)}.\]
Write $\tau_1,\ldots,\tau_{n-1}$
for the one-dimensional faces of $\rho$, taking
$\tau_1$ to be $\rho_i$, and let $\tau_n$, $\tau_n'$ be the
additional one-dimensional faces of $\sigma_1$ and $\sigma_2$ respectively.
We will take primitive generators $\mu_1,\ldots,\mu_n,\mu_n'$ for these
rays, noting that $\mu_1=m_i$. Then $\tilde\rho$ has one-dimensional
faces $\tau_1,\ldots,\tau_{n-1},\nu_i$ where $\nu_i=\RR_{\ge 0}(m_i,1)$,
$\tilde\sigma_1$ has one-dimensional
faces $\tau_1,\ldots,\tau_n,\nu_i$, and 
$\tilde\sigma_2$ has one-dimensional faces $\tau_1,\ldots,\tau_{n-1},
\tau_n',\nu_i$. To check compatibility of the chart $\psi_V|_{\Int(\tilde\sigma_1\cup
\tilde\sigma_2)}$ with \eqref{eq:chart relation general}, we need to show that
\[
\psi_V(\mu_n,0)+\psi_V(\mu_n',0)=-\sum_{j=1}^{n-1} (\widetilde D_{\tau_j}\cdot
\widetilde D_{\tilde\rho})\psi_V(\mu_j,0)-
(\widetilde D_{\nu_i}\cdot \widetilde D_{\tilde\rho})\psi_V(m_i,1).
\]
Here we note that $\psi_V$ is defined on all arguments in the above formula
as $\psi_V$ extends continuously in an obvious way to the closure 
of $V$ using \eqref{eq:psii formula}. Now this is equivalent
to the following:
\begin{itemize}
\label{eq:chart relation2}
    \item $\widetilde D_{\nu_i}\cdot \widetilde D_{\tilde\rho}=0$
\item $\mu_n+\mu'_n+
\big(\varphi_i(\pi_i(\mu_n))+\varphi_i(\pi_i(\mu_n'))\big)m_i
=-\sum_{j=1}^{n-1}(\widetilde D_{\tau_j}\cdot
\widetilde D_{\tilde\rho})(\mu_j+\varphi_i(\pi_i(\mu_j))m_i)$.
\end{itemize}
Without loss of generality, we may assume that
$\varphi_i|_{(\sigma_1+\RR\rho_i)/\RR \rho_i}=0$, as $\varphi_i$ is
only defined up to a choice of linear function. In this case, it
is standard toric geometry that 
$\varphi_i(\pi_i(\mu_n'))=
H_i\cdot D_{\rho}$, where the intersection number is calculated in
$D_{\rho_i}$. Further, applying \eqref{eq:chart relation general} to
$X_{\widetilde\Sigma}$, we see that
\[
(\mu_n+\mu'_n,0)= - \sum_{j=1}^{n-1} (D_{\widetilde\Sigma,\tau_j}\cdot
D_{\widetilde\Sigma,\tilde\rho}) (\mu_j,0)-(D_{\widetilde\Sigma,\nu_i}\cdot D_{\widetilde\Sigma,\tilde\rho})
(m_i,1).
\]
Putting this together, it is thus enough to show that
\begin{align}
\label{eq:D1}
\widetilde{D}_{\tau_1}\cdot \widetilde D_{\tilde\rho}
= {} & D_{\widetilde\Sigma,\tau_1}\cdot D_{\widetilde\Sigma,\tilde\rho}-
D_{\rho}\cdot H_i,\\
\label{eq:D2}
\widetilde{D}_{\tau_j}\cdot \widetilde D_{\tilde\rho}
= {} & D_{\widetilde\Sigma,\tau_j}\cdot D_{\widetilde\Sigma,\tilde\rho}, \quad
2\le j\le n-1\\
\label{eq:D3}
\widetilde D_{\nu_i}\cdot \widetilde D_{\tilde\rho}= {} & 
D_{\widetilde\Sigma,\nu_i}\cdot D_{\widetilde\Sigma,\tilde\rho}=0.
\end{align}

{\bf Proof of \eqref{eq:D1}.} 
Recall here that we had selected $\tau_1=\rho_i$. Thus $D_{\widetilde\Sigma,\tau_1},
\widetilde D_{\tau_1}$
are the strict transforms of the divisor 
$D_{\rho_i}\times\AA^1\subseteq
X_{\Sigma}\times\AA^1$ in $X_{\widetilde\Sigma}$ and $\widetilde{X}$
respectively. Since $H_i\times\AA^1\subseteq D_{\rho_i}\times\AA^1$,
we have $\widetilde H_i\subseteq D_{\widetilde\Sigma,\rho_i}$, and hence
\[
\O_{\widetilde{X}}(\widetilde D_{\rho_i})\cong \mathrm{Bl}_{\widetilde{H}}^*\O_{X_{\widetilde
\Sigma}}( D_{\widetilde\Sigma,\rho_i})\otimes \O_{\widetilde{X}}(-\widetilde E_{\rho_i}),
\]
where $\widetilde E_{\rho_i}$
denotes the exceptional divisor over $\widetilde H_i$. Now
\[
D_{\widetilde\Sigma,\rho_i}\cdot  D_{\widetilde\Sigma,\tilde\rho}
=\deg \O_{X_{\widetilde\Sigma}}(D_{\widetilde\Sigma,\rho_i})|_{D_{\widetilde\Sigma,\tilde\rho}}.
\]
Thus, also using that $\mathrm{Bl}_{\widetilde{H}}|_{\widetilde D_{\tilde\rho}}$ induces
an isomorphism between $\widetilde D_{\tilde\rho}$ and $\widetilde 
D_{\widetilde\Sigma,\tilde\rho}$, we obtain
\[
\widetilde{D}_{\rho_i}\cdot \widetilde D_{\tilde\rho}
=\deg \O_{\widetilde{X}}(\widetilde D_{\rho_i})|_{\widetilde D_{\tilde\rho}}
= D_{\widetilde\Sigma,\rho_i}\cdot  D_{\widetilde\Sigma,\tilde\rho}
-\deg \O_{\widetilde{X}}(\widetilde E_{\rho_i})|_{\widetilde D_{\tilde\rho}}.
\]
However, as $ D_{\widetilde\Sigma,\tilde\rho}$ is transverse to 
$\widetilde H_i$, we may in fact calculate
\[
\deg\O_{\widetilde{X}}(\widetilde E_{\rho_i})|_{\widetilde D_{\tilde\rho}}
=\# D_{\rho}\cap H_i=D_{\rho}\cdot H_i,
\]
where the intersection is calculated in $D_{\rho_i}$.
This gives \eqref{eq:D1}.

{\bf Proof of \eqref{eq:D2}.} 
In this case, we note that $\widetilde D_{\tau_j}$ does not contain
$\widetilde H_j$, so that 
\[
\O_{\widetilde{X}}(\widetilde D_{\tau_j})\cong \mathrm{Bl}_{\widetilde{H}}^*\O_{X_{\widetilde
\Sigma}}(D_{\widetilde\Sigma,\tau_j}).
\]
Thus \eqref{eq:D2} immediate as before.

{\bf Proof of \eqref{eq:D3}.} 
Note that the central fibre of the morphism $X_{\widetilde\Sigma}
\rightarrow \AA^1$ is the divisor
\[
 D_{\widetilde\Sigma,\RR_{\ge 0}(0,1)} +\sum_{j=1}^s D_{\widetilde\Sigma,\nu_j}.
\]
As this divisor is linearly equivalent to $0$, we thus have
\[
D_{\widetilde\Sigma,\nu_i}\sim 
- D_{\widetilde\Sigma,\RR_{\ge 0}(0,1)} - \sum_{j\not=i} D_{\widetilde\Sigma,\nu_j}.
\]
Similarly, we have
\[
 \widetilde D_{\nu_i} \sim 
-\widetilde D_{\RR_{\ge 0}(0,1)} - \sum_{j\not=i}^s \widetilde D_{\nu_j}.
\]
Now each divisor $ D_{\widetilde\Sigma,\RR_{\ge 0}(0,1)}$ and $
D_{\widetilde\Sigma,\nu_j}$, $j\not=i$ are in fact disjoint from $ D_{\widetilde\Sigma,\tilde\rho}$,
and the same is true for the $\widetilde D$'s. Thus \eqref{eq:D3} holds.
\end{proof}
\begin{figure}
\center{\input{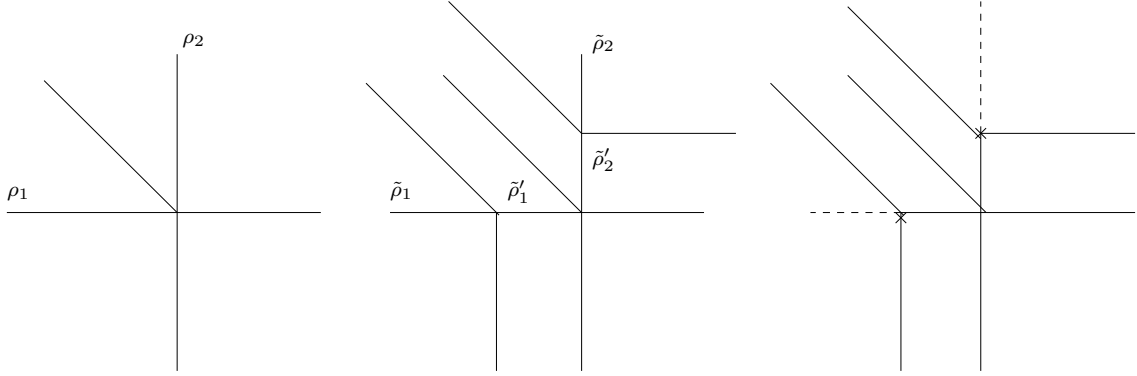}}
\caption{On the left is a two-dimensional fan $\Sigma$. The middle
shows the height $1$ slice of the fan $\widetilde\Sigma$, while
the right-hand picture shows one of the two charts for the affine manifold
$\widetilde B_1$, with the crosses indicating the discriminant locus.}
\label{Fig: Fig1}
\end{figure}

\subsection{The scattering diagram $\foD_{(\widetilde X,\widetilde D)}^1$ on $\widetilde{B}_1$}
\label{sec:The GS-locus}
Let $\widetilde{X}$ be the total space of the degeneration defined in \eqref{Eq: The degeneration}. Let 
\begin{equation}
    \label{Eq: p}
\widetilde p:\widetilde{B}\rightarrow \RR_{\ge 0} 
\end{equation}
be the canonical projection obtained by composition of the map $\Psi$ in \eqref{Eq: Psi} with the projection onto the second factor. We note that
$\widetilde p$ is the tropicalization of $\epsilon_{\mathbf{P}}:\widetilde
X \rightarrow \AA^1$. Recalling
that $\widetilde{B}$ carries the structure of an integral
affine manifold with singularities defined by Theorem \ref{thm: affine structure on the big part}, it is easy to see that
$\widetilde p$ is an affine submersion, either directly from the discription
of the affine structure or from \cite[Prop.~1.14]{GSCanScat}. In particular, the fibres of $\widetilde p$
are affine manifolds with singularities, and the fibres over $\NN\subseteq
\RR_{\ge 0}$ are integral affine manifolds with singularities. Set 
\begin{equation}
    \label{Eq: p-1(i)}
\widetilde{B}_i=\widetilde  p^{-1}(i),~~\mathrm{for~} i=0,1    
\end{equation}
and $\P_i = \{\sigma \cap\widetilde  p^{-1}(i)~| ~ \sigma \in P\}$. Note that $(\widetilde{B}_0,\P_0)$ is the tropicalization of $(X,D)$,
see \cite[Prop.~1.18,(1)]{GSCanScat}. This integral affine
manifold has discriminant locus which is a cone over the origin, and the homeomorphism $\Psi$ of \eqref{Eq: Psi} induces a piecewise linear identification
\begin{equation}
    \label{Eq: mu}
    \mu:=(\Psi|_{\widetilde p^{-1}(0)})^{-1} \colon M_{\RR} \lra \widetilde B_0
\end{equation}
identifying $\Sigma$ with $\P_0$. On the other hand,
$\widetilde{B}_1$ can be viewed as a deformation of $\widetilde
B_0$ in which the discriminant locus is pulled away from the
origin. Fibres $\widetilde p^{-1}(r)$ as $r\rightarrow\infty$ can then be seen
as a ``pulling out'' of the discriminant locus to infinity.
We frequently also use the piecewise linear identification
\[
\Psi_1:=\Psi|_{\widetilde p^{-1}(1)}:\widetilde{B}_1\rightarrow M_{\RR}.
\]

Now let us write $\foD_{(\widetilde X,\widetilde D)}$ for the canonical scattering
diagram for $\widetilde{X}$, living in $
\widetilde{B}$, and $\foD_{(X,D
)}$ for the canonical scattering diagram
for $X$, living in $\widetilde{B}_0$. Note that all vectors $u_L$ appearing in the canonical scattering diagram
satisfy $p_*(u_L)=0$ (i.e., $u_L$ is tangent to fibres of $p$) by
Theorem \ref{thm:relative diagram}. 
Given a wall $(\fod,f_{\fod})$ in $\foD_{(\widetilde X,\widetilde D)}$,
let 
\[
\ind(\fod):=|\coker(\tilde p_*:\Lambda_{\fod}\rightarrow \ZZ)|.
\]
We then obtain a wall 
$(\fod\cap \widetilde{B}_1,f_{\fod}^{\ind(\fod)})$. We set 
\begin{equation}
    \label{Eq: The GS diagram at heght one}
    \foD^{1}_{(\widetilde X,\widetilde D)}=\{(\fod\cap \widetilde{B}_1,
f_{\fod}^{\ind(\fod)})\,|\, (\fod,f_{\fod})\in \foD_{(\widetilde X,\widetilde D)}\}.
\end{equation}
Since $\foD_{(\widetilde X,\widetilde D)}$ is consistent, so is $\foD^{1}_{(\widetilde X,\widetilde D)}$ by \cite[Prop.\ 5.3]{GSCanScat}. 
Note that the exponent $\ind(\fod)$ arises as we are
reversing the procedure of \cite[Def.~4.7]{GHS}.

In the following sections we will analyze in more detail the affine geometry of
$\widetilde{B}_1$ in which $\foD^{1}_{(\widetilde X,\widetilde D)}$ lives, and in particular
describe the affine monodromy around the discriminant locus in $\widetilde{B}_1$. 

\subsubsection{The affine monodromy on $\widetilde{B}_1$}
Recall the description \eqref{Eq: Delta'} of the discriminant locus $\widetilde \Delta \subseteq \widetilde{B}$ as the complement of the union of the open sets $U$ and $V$ described in \eqref{Eq:UV def}.
We may then write
$
\widetilde \Delta  =  
\bigcup_{i=1}^m \widetilde\Delta^i
$
where
\begin{equation}
 \label{Eq: widetilde Delta}    
\widetilde\Delta^i=\left(\bigcup_{\rho_i\subseteq\rho\in\Sigma
\atop\dim\rho=n-1}\tilde\rho\right)\setminus\mathrm{Star}(\tilde\rho_j).
\end{equation}
We set
\begin{equation}
    \label{Eq: Delta-i}
\widetilde \Delta_1 \colon =  \widetilde \Delta \cap \widetilde B_1, \quad
\widetilde \Delta_1^i \colon =  \widetilde \Delta^i \cap \widetilde B_1.
\end{equation}

We may further decompose $\widetilde\Delta_1^i$ as follows.
If $\ul{\rho}\in\Sigma(\rho_i)$ is a codimension 1 cone, then 
$\ul{\rho}=(\rho+\RR\rho_i)/\RR\rho_i$ for some codimension one cone
$\rho\in\Sigma$ containing $\rho_i$. We then obtain a piece
of $\widetilde\Delta^i_1$ defined as
\begin{equation}
\label{Eq:delta rho def}
\widetilde\Delta_{\ul{\rho}}:=\left(\tilde\rho\setminus \mathrm{Star}(\tilde\rho_j)\right)\cap \widetilde{B}_1 = \tilde\rho\cap \tilde\rho'
\cap \widetilde B_1.
\end{equation}
Further,
\[
\widetilde\Delta^i_1=\bigcup_{\ul{\rho}\in\Sigma(\rho_i)} \widetilde
\Delta_{\ul{\rho}},
\]
where the union is over codimension one cones of $\Sigma(\rho_i)$.

If $\ul{\sigma}^{\pm}\in \Sigma(\rho_i)$ are the maximal cones containing
$\ul{\rho}$, with $\ul{\sigma}^{\pm}=(\sigma^{\pm}+\RR\rho_i)/\RR\rho_i$
for some maximal cones $\sigma^{\pm}\in\Sigma$, then we also note that 
the maximal cells of $\scrP_1$ containing $\Delta_{\ul{\rho}}$
are the intersections of $\tilde\sigma^+, \tilde\sigma'^+,
\tilde\sigma^-$ and $\tilde\sigma'^-$ with $\widetilde B_1$ as illustrated in blue in Figure \ref{Fig: B1}.

\begin{figure}
\center{\input{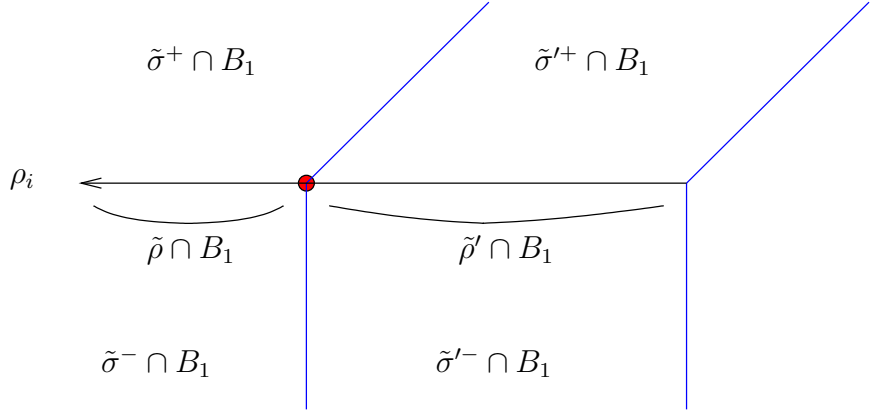}}
\caption{The discriminant locus given by a point in red in $\widetilde B_1$ where $X_{\Sigma}=\PP^2$ and $H\subset \PP^2$ is a point along a divisor.
}
\label{Fig: B1}
\end{figure}

Then the following result is immediate from the description of $\widetilde \Delta_1^i$ in \eqref{Eq: widetilde Delta}.
\begin{proposition}
\label{Eq: Delta1}
Let $\pi_i:M_{\RR}\rightarrow M_{\RR}/\RR\rho_i$ be the quotient map.
Then the restriction of $\pi_i\circ\Psi_1$ to $\widetilde \Delta_1^i  \subseteq \widetilde  B_1$ defined in \eqref{Eq: widetilde Delta} is a piecewise
affine linear isomorphism with its image, the
codimension one skeleton of $\Sigma(\rho_i)$.
\end{proposition}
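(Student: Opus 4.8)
The plan is to unwind the definitions and reduce to a purely combinatorial check about the star subdivision, using the charts constructed in Theorem~\ref{thm: affine structure on the big part}. First I would recall that $\widetilde{\Delta}_1^i$ is, by \eqref{Eq: widetilde Delta} and \eqref{Eq: Delta-i}, the union over codimension one cones $\rho\in\Sigma$ containing $\rho_i$ of the slices $\tilde\rho\cap\widetilde{B}_1$, with the open stars of the $\tilde\rho_j$ removed; equivalently, by \eqref{Eq:delta rho def}, it is $\bigcup_{\ul\rho\in\Sigma(\rho_i)^{[n-2]}}\widetilde{\Delta}_{\ul\rho}$ where each $\widetilde{\Delta}_{\ul\rho}=\tilde\rho\cap\tilde\rho'\cap\widetilde{B}_1$ is an honest codimension one polyhedral piece. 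The claim then splits into two parts: that $\pi_i\circ\Psi_1$ maps each $\widetilde{\Delta}_{\ul\rho}$ affine-linearly and bijectively onto the cone $\ul\rho$ in $\Sigma(\rho_i)$, and that these maps glue to a global homeomorphism of $\widetilde{\Delta}_1^i$ onto the codimension one skeleton of $\Sigma(\rho_i)$.

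For the local statement, fix a codimension one cone $\rho\in\Sigma$ with $\rho_i\subseteq\rho$. By Construction~\ref{Constr: toric fan for blow up}, passing to $\widetilde{\Sigma}$ subdivides the cone $\bar\rho$ into $\tilde\rho=\langle(m_i,0),(m_i,1),\mu_2,\dots,\mu_{n-1}\rangle$ and $\tilde\rho'=\langle(m_i,1),(0,1),\mu_2,\dots,\mu_{n-1}\rangle$, where $\mu_2,\dots,\mu_{n-1}$ are the primitive generators of the one-dimensional faces $\tau_2,\dots,\tau_{n-1}$ of $\rho$ other than $\rho_i$. Their common face is $\tilde\rho\cap\tilde\rho'=\langle(m_i,1),\mu_2,\dots,\mu_{n-1}\rangle$, and intersecting with $\widetilde{B}_1=\widetilde{p}^{-1}(1)$ (which under $\Psi$ is the height-one slice $M_\RR\times\{1\}$) cuts out a compact polytope $\widetilde{\Delta}_{\ul\rho}$ lying in $\mathrm{Star}(\tilde\rho_i)$, so that $\psi_V$ and hence the formula \eqref{eq:psii formula} applies. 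Under $\Psi_1$ this polytope maps to the slice at height $1$ of $\tilde\rho\cap\tilde\rho'$, and I would compute the image of its vertices: the vertex coming from the ray $\nu_i=\RR_{\ge0}(m_i,1)$ maps to $m_i\in M_\RR$, and the vertices coming from $\tau_j$ (for $2\le j\le n-1$) map to points $c_j m_i+\mu_j$ for suitable scalars $c_j$ making the height equal to $1$. Applying $\pi_i:M_\RR\to M_\RR/\RR\rho_i$ kills all the $m_i$ contributions, so the images become $0$ and $\pi_i(\mu_j)$; taking the convex hull and then the cone, this is exactly $\ul\rho=(\rho+\RR\rho_i)/\RR\rho_i\in\Sigma(\rho_i)$. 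Injectivity follows since the $\pi_i(\mu_j)$ together with the height constraint determine the point in $\widetilde{\Delta}_{\ul\rho}$ uniquely, and integrality of the map is clear from \eqref{eq:psii formula} and the fact that $\pi_i$ sends the lattice $M$ onto the lattice of $M_\RR/\RR\rho_i$.

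To finish, I would check compatibility on overlaps: if two codimension one cones $\rho,\rho''\in\Sigma$ containing $\rho_i$ share a codimension two face, the corresponding pieces $\widetilde{\Delta}_{\ul\rho}$ and $\widetilde{\Delta}_{\ul{\rho''}}$ meet along a face whose image under $\pi_i\circ\Psi_1$ is $\ul\rho\cap\ul{\rho''}$, and the two local affine maps agree there because they are both restrictions of the single globally-defined map $\pi_i\circ\Psi_1$ on $\mathrm{Star}(\tilde\rho_i)$ — here I use that $\widetilde{\Delta}_1^i$, being defined in \eqref{Eq: widetilde Delta} as a subset of $\mathrm{Star}(\tilde\rho_j)$ minus nothing relevant, actually lies in the region $V$ where $\psi_V$ is defined. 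Surjectivity onto the full codimension one skeleton of $\Sigma(\rho_i)$ follows from the bijection $\rho\mapsto\ul\rho$ between codimension one cones of $\Sigma$ containing $\rho_i$ and codimension one cones of $\Sigma(\rho_i)$, which is built into the definition \eqref{Eq: sigma rho}. The main obstacle, and the one deserving the most care, is the bookkeeping in the vertex computation of the previous paragraph: one must correctly track how the star subdivision of $\bar\rho$ interacts with the shear $m\mapsto m+\varphi_i(\pi_i(m))m_i$ in \eqref{eq:psii formula}, and verify that the shear is precisely the piece of $\psi_V$ that gets annihilated by $\pi_i$, so that the image depends only on the $M_\RR/\RR\rho_i$-components — this is exactly what makes $\pi_i\circ\Psi_1$ (rather than $\Psi_1$ alone) the natural map to $\Sigma(\rho_i)$.
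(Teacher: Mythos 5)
The paper offers no proof — it simply declares this ``immediate from the description of $\widetilde\Delta_1^i$'' — so there is nothing to compare your argument to; I will instead assess it on its own terms. Your strategy (decompose $\widetilde\Delta_1^i$ into the pieces $\widetilde\Delta_{\ul\rho}$, check that each piece maps linearly and bijectively onto the corresponding cone $\ul\rho$, and note that the global map glues the local pictures) is the right one and the core computation is correct: $\Psi$ identifies $\tilde\rho\cap\tilde\rho'\cap\widetilde B_1$ with the translate $m_i+\langle\mu_2,\dots,\mu_{n-1}\rangle_{\ge0}$ of a cone, and $\pi_i$ kills the $m_i$-shift and sends the spanning directions $\mu_j$ to the generators of $\ul\rho$, giving a linear bijection because $m_i,\mu_2,\dots,\mu_{n-1}$ are linearly independent ($\Sigma$ nonsingular).

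However, several of the supporting details are off. First, $\widetilde\Delta_{\ul\rho}$ is \emph{not} a compact polytope: the rays $\RR_{\ge0}(\mu_j,0)$ of $\tilde\rho\cap\tilde\rho'$ lie at height zero and never meet $\widetilde B_1=\widetilde p^{-1}(1)$, so the slice is the unbounded set $\{(m_i+\sum b_j\mu_j,1):b_j\ge0\}$ with a single vertex $(m_i,1)$ and recession cone spanned by the $(\mu_j,0)$; the ``vertices coming from $\tau_j$'' you refer to do not exist. This does not break the argument — the image is still the cone $\ul\rho$ — but the vertex bookkeeping as you describe it is not what happens. Second, and more importantly, your claim that $\widetilde\Delta_{\ul\rho}$ lies in $\mathrm{Star}(\tilde\rho_i)=V$ so that $\psi_V$ applies is false: by the very definition \eqref{Eq: Delta'}, \eqref{Eq: widetilde Delta}, the discriminant $\widetilde\Delta$ is the complement of $U\cup V$, and $\tilde\rho\cap\tilde\rho'$ does not have $\tilde\rho_i$ as a face. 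Fortunately this error is also harmless, because the proposition concerns $\Psi_1=\Psi|_{\widetilde B_1}$ (which is globally defined as a PL homeomorphism via Proposition \ref{Prop: isomorphic cone complexes} and \eqref{Eq: Psi}), not the chart $\psi_V$; the whole detour through the shear $m\mapsto m+\varphi_i(\pi_i(m))m_i$ and its vanishing under $\pi_i$ is unnecessary. If you strip out the chart discussion and correct the description of $\widetilde\Delta_{\ul\rho}$ as an unbounded slice of a cone, what remains is a clean and complete proof.
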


We turn to the description of the affine monodromy around 
$\widetilde\Delta_{\ul\rho}$ for $\ul{\rho}$ a codimension one cone
of $\Sigma(\rho_i)$. 

Recalling from \eqref{Eq: varphi-i} the piecewise linear function $\varphi_i$ on the fan
$\Sigma(\rho_i)$, let $\kappa^i_{\ul{\rho}}$ denote the kink of 
this function along $\ul{\rho}$, see Definition \ref{def:kinky}.
To fix notation, let $\ul{n}^{\pm}\in (M/\ZZ m_i)^*$
be the slopes of $\varphi_i|_{\ul{\sigma}^{\pm}}$. Let
\begin{equation}
\label{Eq:delta def}
\delta:M/\ZZ m_i\to\ZZ
\end{equation}
be the quotient by
$\Lambda_{\ul\rho}$. Fix signs by requiring that $\delta$
is non-negative on tangent vectors pointing from $\ul{\rho}$ into
$\ul{\sigma}^+$. Then the kink
$\kappa_{\ul\rho}^i\in \ZZ$
satisfies
\begin{equation}
\label{Eqn: kink along rho}
\ul n^+ - \ul n^- =\delta \cdot\kappa_{\ul\rho}^i.
\end{equation}
We now see that the kink determines the affine monodromy around a general point of $\widetilde \Delta_{\ul{\rho}}$.

It is useful to describe a local version of
$\widetilde{B}_1$ in a neighbourhood of $\widetilde\Delta_{\ul\rho}$.
Taking $\foj=\widetilde\Delta_{\ul\rho}$, we obtain an integral affine manifold
$(B_{\foj},\P_{\foj})$ as in \S\ref{Subsubsect: Consistency around codim zero joints}. There is an obvious action by translation of the real
tangent space $\Lambda_{\widetilde \Delta_{\ul{\rho}},\RR}$ on
$B_{\foj}$, and dividing out by this action gives a two-dimensional
integral affine manifold $B'$ with one singularity $0\in B'$. We may then
choose a (non-canonical) integral affine isomorphism
$B_{\foj}\cong B'\times \Lambda_{\widetilde\Delta_{\ul{\rho}},\RR}$. Note
that for a point $x\in\Int(\widetilde\Delta_{\ul{\rho}})$, there is
a small open neighbourhood of $x$ in $\widetilde{B}_1$
isomorphic to a neighbourhood of $(0,0)\in B'\times 
\Lambda_{\widetilde\Delta_{\ul{\rho}},\RR}$.


Using this local description, identify $B'$ with $B'\times \{0\}$, and let
\begin{eqnarray}
\label{Eq: U' and V'}
\nonumber
U' & = & U \cap B', \\
\nonumber
V' & = & V \cap B'.
\end{eqnarray}
Choose $p, p'\in U'\cap V'$ with the property
that under $\pi_i\circ\Psi_1$, $p, p'$ map to $\ul{\sigma}^+$ and $\ul{\sigma}^-$ respectively. Now choose a small loop $\gamma$ in $B'$ starting at $p$, passing to $p'$ through $V'$, and then passing back to $p$ through $U'$ as illustrated in Figure \ref{Fig: monodromy}. We may then view $\gamma$
as a loop in $\widetilde{B}_1$ around the piece of the discriminant
locus $\widetilde\Delta_{\ul{\rho}}$.

\begin{figure}
\center{\input{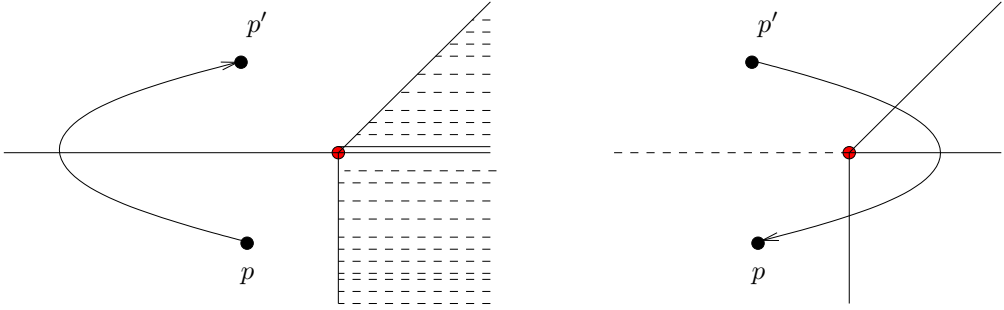}}
\caption{The left side figure illustrates the image of $\gamma$ in $V'$ and on the right hand side its image in $U'$. The dashed lines indicate the complements of the domains $U'$ and $V'$ respectively.
}
\label{Fig: monodromy}
\end{figure}

By definition the affine monodromy around $\gamma$ is obtained by taking the compositions of the linear parts of the change of coordinate functions \cite[Defn 3.1]{AP}. 
Note that the affine chart on $U$ is given by $\psi_U|_{U\cap \widetilde B_1
}= \Psi_1|_{U\cap \widetilde{B}_1}$, which allows
us to identify the tangent spaces $\Lambda_p, \Lambda_{p'}$ with $M$.
By Equation \eqref{eq:psii formula} in Theorem \ref{thm: affine structure on the big part}, 
the coordinate change between the coordinate chart $\psi_U$ and 
$\psi_V$ in a neighbourhood of $p$ or $p'$ is
\begin{equation}
\label{Eq:psiv psiu}
(\psi_V\circ \psi_U^{-1})|_{U\cap V\cap\widetilde{B}_1}(m)
=
m+\varphi_i(\pi_i(m))m_i.
\end{equation}
If $m$ lies close to $p$, then $\varphi_i(\pi_i(m))=\langle \ul{n}^+,
\pi_i(m)\rangle$, as $\pi_i(m)\in \ul{\sigma}^+$. Similarly,
if $m$ lies close to $p'$, then $\varphi_i(\pi_i(m))=
\langle\ul{n}^-, \pi_i(m)\rangle$. Putting this together, the
monodromy around $\gamma$ is then given by
\begin{equation}
    \label{Eq: Tgamma}
    T_{\gamma}(m)= m + (\ul{n}^+ - \ul{n}^-)(\pi_i(m)) \cdot m_i.
\end{equation}
This then gives, by the definition of $\kappa_{\ul \rho}^i$, the
result:

\begin{lemma}
\label{Lem: affine monodromy}
The monodromy around the loop $\gamma$ described above is given by
\begin{equation}
    \label{Eq: afffine monodromy}
T_{\gamma}(m)= m + \kappa_{\ul \rho}^i\cdot \delta(m) \cdot m_i.
\end{equation}
\end{lemma}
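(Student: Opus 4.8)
The plan is to observe that the substantive computation has already been carried out in the derivation of \eqref{Eq: Tgamma}, so the lemma reduces to inserting the defining relation of the kink. First I would recall the setup: working in the local model $B'$ around $\widetilde\Delta_{\ul\rho}$, the chart $\psi_U$ identifies the tangent spaces $\Lambda_p$ and $\Lambda_{p'}$ with $M$, and by \eqref{Eq:psiv psiu} the transition function from $\psi_U$ to $\psi_V$ in a neighbourhood of a point is $m\mapsto m+\varphi_i(\pi_i(m))\,m_i$. Its linear part depends only on which maximal cone of $\Sigma(\rho_i)$ contains $\pi_i(m)$: near $p$ that cone is $\ul\sigma^+$, so the relevant slope is $\ul n^+$, and near $p'$ it is $\ul\sigma^-$, so the slope is $\ul n^-$. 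Composing the linear parts of the coordinate changes encountered along $\gamma$ — trivial along the two arcs lying entirely in $U'$ or in $V'$, and equal to the linear parts of \eqref{Eq:psiv psiu} at $p$ and at $p'$ at the two passages between the charts — produces exactly $T_\gamma(m)=m+\bigl(\ul n^+-\ul n^-\bigr)(\pi_i(m))\,m_i$, which is \eqref{Eq: Tgamma}.

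The second and final step is the substitution. By \eqref{Eqn: kink along rho} the equality $\ul n^+-\ul n^-=\delta\cdot\kappa_{\ul\rho}^i$ holds in $(M/\ZZ m_i)^*$, where $\delta$ is the quotient map of \eqref{Eq:delta def}. Evaluating on $\pi_i(m)$ and adopting the standard abuse of notation $\delta(m):=\delta(\pi_i(m))$ — legitimate since $m_i$ lies in the kernel of $\delta\circ\pi_i$ — we obtain $\bigl(\ul n^+-\ul n^-\bigr)(\pi_i(m))=\kappa_{\ul\rho}^i\,\delta(m)$, and plugging this into \eqref{Eq: Tgamma} gives $T_\gamma(m)=m+\kappa_{\ul\rho}^i\,\delta(m)\,m_i$, which is the assertion.

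There is no serious obstacle here; the only thing requiring attention is the bookkeeping of signs and orientations, which I would verify carefully rather than gloss over. One must check that the sign normalization of $\delta$ (non-negative on tangent vectors pointing out of $\ul\rho$ into $\ul\sigma^+$) is compatible with the chosen traversal of $\gamma$, namely that $\gamma$ runs from $p$, which maps into $\ul\sigma^+$, across $V'$ to $p'$, which maps into $\ul\sigma^-$, and then back through $U'$, so that the two chart-change contributions enter with precisely the signs displayed in \eqref{Eq: Tgamma}. The freedom of adding a linear function to $\varphi_i$ (used to normalize $\varphi_i|_{(\sigma_1+\RR\rho_i)/\RR\rho_i}=0$) does not affect the difference $\ul n^+-\ul n^-$, so it plays no role in this computation.
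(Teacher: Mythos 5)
Your proof is correct and follows the same route as the paper: derive $T_\gamma(m)=m+(\ul n^+-\ul n^-)(\pi_i(m))\,m_i$ by composing the linear parts of the chart changes \eqref{Eq:psiv psiu} along $\gamma$, then substitute the kink relation \eqref{Eqn: kink along rho}. The paper treats \eqref{Eq: Tgamma} as already established in the surrounding text and states the lemma as an immediate corollary of the definition of $\kappa_{\ul\rho}^i$, so your write-up simply makes the same argument more explicit, including the harmless abuse $\delta(m):=\delta(\pi_i(m))$.
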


\begin{corollary}
\label{Corollary: monodromy invariant}
Around a generic point of the discriminant locus $\widetilde\Delta_{\ul{\rho}}$,
the tangent space to $\pi_i^{-1}(\ul{\rho})$ is monodromy invariant. 
\end{corollary}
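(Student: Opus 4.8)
The plan is to read off the corollary directly from the explicit monodromy formula in Lemma~\ref{Lem: affine monodromy}. Recall that $T_\gamma(m) = m + \kappa_{\ul\rho}^i \cdot \delta(m)\cdot m_i$, where $\delta: M/\ZZ m_i \to \ZZ$ is the quotient by $\Lambda_{\ul\rho}$. So a tangent vector $m$ is fixed by $T_\gamma$ precisely when $\kappa_{\ul\rho}^i \cdot \delta(m) = 0$ in $\ZZ$; since $\varphi_i$ is a convex piecewise linear function with a genuine kink along $\ul\rho$ (because $H_i$ is an honest hypersurface, not linearly trivial near $\ul\rho$), we have $\kappa_{\ul\rho}^i \neq 0$, hence the condition is exactly $\delta(m) = 0$, i.e.\ $m \in \ker(M \to M/\ZZ m_i \xrightarrow{\delta} \ZZ)$.

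First I would identify this kernel with the tangent space in question. By construction $\delta$ is the quotient of $M/\ZZ m_i$ by $\Lambda_{\ul\rho}$, so $\ker(\delta: M/\ZZ m_i \to \ZZ) = \Lambda_{\ul\rho}$, and therefore the kernel of the composite $M \to \ZZ$ is the preimage of $\Lambda_{\ul\rho}$ under $\pi_i: M \to M/\ZZ m_i$, which is exactly $\Lambda_{\pi_i^{-1}(\ul\rho)} = \pi_i^{-1}(\Lambda_{\ul\rho})$ (together with the line $\ZZ m_i$, which lies in it since $\pi_i(m_i) = 0 \in \Lambda_{\ul\rho}$). This is precisely the tangent space to $\pi_i^{-1}(\ul\rho)$ at a point near $\widetilde\Delta_{\ul\rho}$, identified with a subspace of $M$ via the chart $\psi_U = \Psi_1$, as set up in the discussion preceding Lemma~\ref{Lem: affine monodromy}. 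Since $\gamma$ is a loop generating the local fundamental group around a generic point of $\widetilde\Delta_{\ul\rho}$, monodromy invariance of a tangent subspace is equivalent to its invariance under $T_\gamma$, which we have just verified.

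I expect no real obstacle here: the only point requiring a word of justification is that $\kappa_{\ul\rho}^i \neq 0$, which follows from the general-position/transversality hypotheses on $H_i$ (see \eqref{Eq: Hi}) guaranteeing that $H_i$ is not linearly equivalent near $\ul\rho$ to a combination of boundary divisors with matching slopes on both sides of $\ul\rho$; equivalently, $\varphi_i$ is genuinely non-linear across $\ul\rho$. One should also note the statement is chart-dependent only up to the affine isomorphism identifying $\Lambda_x$ with $M$, so "the tangent space to $\pi_i^{-1}(\ul\rho)$" is well-defined as a monodromy-invariant subspace regardless of the choice of chart among $\psi_U, \psi_V$ (indeed $\psi_V \circ \psi_U^{-1}$ differs from the identity by a multiple of $m_i$, which lies in the subspace, so the subspace is the same in either chart). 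This completes the argument.
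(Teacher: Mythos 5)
Your core observation is exactly the paper's: a vector $m$ is fixed by $T_\gamma$ as soon as $\delta(\pi_i(m))=0$, and by definition of $\delta$ (quotient by $\Lambda_{\ul\rho}$) this holds for every $m$ tangent to $\pi_i^{-1}(\ul\rho)$. That one direction is all the corollary claims, and it is the entirety of the paper's proof.

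Where your write-up overreaches is in insisting on the converse, i.e.\ that the monodromy-fixed subspace is \emph{exactly} the tangent space. To get this you assert $\kappa_{\ul\rho}^i\neq 0$. That claim is not justified in the paper and need not hold: $\kappa_{\ul\rho}^i = H_i\cdot D_{\ul\rho}$ is an intersection number in $D_{\rho_i}$, and a general non-ample hypersurface (e.g.\ a curve of bidegree $(1,0)$ on $D_{\rho_i}\cong\PP^1\times\PP^1$) can have zero intersection with a toric curve $D_{\ul\rho}$. In that degenerate situation $T_\gamma=\id$, your claimed ``exactness'' fails, but the corollary is trivially true. So the unnecessary half of your argument is the only place a gap could appear; since the corollary does not require it, your proof of the stated result still stands. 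I would simply drop the $\kappa_{\ul\rho}^i\neq 0$ discussion: the statement is a containment, not an equality, and the one-line argument that $\delta$ kills $\Lambda_{\pi_i^{-1}(\ul\rho)}$ is already complete.
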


\begin{proof}
This is immediate, as $\delta$ vanishes on the tangent space to $\ul{\rho}$
by definition.
\end{proof}

\subsubsection{The MVPL function $\varphi$ on $\widetilde{B}_1$}

We recall from Construction \ref{constr:phi} a choice of
MVPL function $\varphi$ on $\widetilde{B}$ defined via kink
$\kappa_{\rho}=[\widetilde D_{\rho}]$ for $\rho\in\widetilde\P$
of codimension one. This function takes values in $Q^{\gp}_{\RR}=
N_1(\widetilde X)\otimes_{\ZZ}\RR$, with kinks in a chosen
monoid $Q\subseteq N_1(\widetilde X)$ with $Q^{\times}=0$ and $Q$
containing all effective curve classes.

\begin{lemma}
$\varphi$ is a convex MVPL function, i.e., $\kappa_{\rho}\in Q\setminus
\{0\}$ for all codimension one $\rho \in \widetilde\P$ not contained
in the boundary of $\widetilde B$.
\end{lemma}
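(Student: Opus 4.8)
The plan is to show that for every codimension one cone $\rho\in\widetilde\P$ not contained in the boundary of $\widetilde B$, the corresponding stratum $\widetilde D_\rho$ is a projective irreducible curve in $\widetilde X$ which is not numerically trivial, so that $\kappa_\rho=[\widetilde D_\rho]\in Q\setminus\{0\}$ by the choice of $Q$. Since $\dim\widetilde X=n+1$ and $\rho$ has codimension one in the $(n+1)$-dimensional complex $\widetilde B$, the stratum $\widetilde D_\rho$ is one-dimensional; irreducibility follows from the standing connectedness assumption on strata in \S\ref{subsubsection:affine manifold}, and properness since $\widetilde X$ is projective. So the only real content is that $[\widetilde D_\rho]\neq 0$ in $N_1(\widetilde X)$, i.e.\ that there exists a divisor on $\widetilde X$ meeting $\widetilde D_\rho$ nontrivially.

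First I would reduce to a statement about the toric model $X_{\widetilde\Sigma}$. Using Proposition \ref{Prop: isomorphic cone complexes}, the cones of $\widetilde\P$ are identified with those of $\widetilde\Sigma$, and since $\widetilde H_i$ contains no toric stratum of $X_{\widetilde\Sigma}$, each $\widetilde D_\rho$ is the strict transform under $\mathrm{Bl}_{\widetilde H}$ of the toric curve $D_{\widetilde\Sigma,\rho}$; in particular $\mathrm{Bl}_{\widetilde H}$ restricts to a birational, hence degree-one, morphism $\widetilde D_\rho\to D_{\widetilde\Sigma,\rho}$. Thus for any divisor $E$ on $X_{\widetilde\Sigma}$ not containing $\widetilde D_\rho$ or more carefully using the projection formula, $\widetilde D_\rho\cdot \mathrm{Bl}_{\widetilde H}^*E = D_{\widetilde\Sigma,\rho}\cdot E$. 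Now on the complete toric variety $X_{\widetilde\Sigma}$ the curve class $[D_{\widetilde\Sigma,\rho}]$ is nonzero: a one-dimensional toric stratum always has positive intersection with at least one toric boundary divisor $D_{\widetilde\Sigma,\tau}$ for $\tau$ an appropriate maximal cone face, by standard toric intersection theory (e.g.\ \cite{Oda}). Pulling back such an $E=D_{\widetilde\Sigma,\tau}$ gives $\widetilde D_\rho\cdot\mathrm{Bl}_{\widetilde H}^*E>0$, so $[\widetilde D_\rho]\neq 0$.

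The main subtlety — and the step I expect to require the most care — is the bookkeeping at the exceptional divisors, i.e.\ making sure the pull-back computation $\widetilde D_\rho\cdot \mathrm{Bl}_{\widetilde H}^*E = D_{\widetilde\Sigma,\rho}\cdot E$ is valid and that we haven't inadvertently landed on a curve contracted or altered by the blow-up $\mathrm{Bl}_{\widetilde H}$. This is handled exactly as in the proof of Theorem \ref{thm: affine structure on the big part}: by Lemma \ref{lem:Hrhoi} and the transversality of $H_i$ with the toric boundary, $\widetilde H_i$ meets the one-dimensional strata only in the finitely many cones $\tilde\rho$ with $\rho\supseteq\rho_i$, $\dim\rho=n-1$, and there $\mathrm{Bl}_{\widetilde H}|_{\widetilde D_{\tilde\rho}}$ is still a degree-one morphism onto $D_{\widetilde\Sigma,\tilde\rho}$ (it is the blow-up of a point-free curve in the relevant exceptional locus), so the projection formula still applies. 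For all other $\rho$ the map is a local isomorphism near $\widetilde D_\rho$ and the claim is immediate.

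Finally, I would assemble the pieces: $\kappa_\rho=[\widetilde D_\rho]$ lies in $Q$ because $Q$ contains all effective curve classes and $\widetilde D_\rho$ is an effective curve, and $\kappa_\rho\neq 0$ by the intersection computation above, hence $\kappa_\rho\in Q\setminus\{0\}$ for all codimension one $\rho\in\widetilde\P$ not in $\partial\widetilde B$. By Definition \ref{def:kinky} this is precisely the statement that $\varphi$ is convex. One small point worth flagging explicitly in the write-up: one must check $Q^{\times}=\{0\}$ is compatible with $[\widetilde D_\rho]$ being a genuine nonzero element (it cannot be a unit since effective classes generate a strictly convex cone), but this is automatic from the hypotheses on $Q$.
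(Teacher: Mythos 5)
Your overall strategy — show $\widetilde D_\rho$ is a proper curve with nonzero class by exhibiting a divisor meeting it positively — is the same as the paper's, and your intersection-theoretic step via the projection formula does land on the right conclusion. But two of the supporting assertions are false as stated, and the detour through $X_{\widetilde\Sigma}$ is unnecessary.

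First, $\widetilde X$ is \emph{not} projective over $\kk$: it is a blow-up of $X_{\widetilde\Sigma}$, which is itself a blow-up of $X_{\Sigma}\times\AA^1$, and so is only projective over $\AA^1$. Thus "properness since $\widetilde X$ is projective" is not a valid justification. The fact you actually need is that $\widetilde D_\rho$ lies in the central fibre $\epsilon_{\mathbf P}^{-1}(0)$, which is projective; this is exactly what the paper uses, and it holds because $\rho\not\subseteq\partial\widetilde B=\widetilde B_0=\widetilde p^{-1}(0)$ forces $\rho$ to contain a ray with positive last coordinate, placing $\widetilde D_\rho$ in a component of the central fibre. The same slip recurs in "the complete toric variety $X_{\widetilde\Sigma}$": its fan is supported on $M_{\RR}\times\RR_{\ge 0}$, so it is not complete. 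The curve $D_{\widetilde\Sigma,\rho}$ is nevertheless a $\PP^1$ precisely because $\rho\not\subseteq\partial\widetilde B$ implies $\rho$ is contained in two maximal cones of $\widetilde\Sigma$.

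Second, the paper avoids the projection-formula detour entirely: pick any maximal cone $\sigma\supset\rho$ and the unique additional ray $\tau$ of $\sigma$ not contained in $\rho$. Then $\widetilde D_\tau$ is a divisor on $\widetilde X$ meeting $\widetilde D_\rho$ in the single point $\widetilde D_\sigma$, so $\widetilde D_\tau\cdot\widetilde D_\rho>0$ directly, with no need to compare with $D_{\widetilde\Sigma,\rho}$ or worry about the degree of $\mathrm{Bl}_{\widetilde H}|_{\widetilde D_\rho}$ near the exceptional locus, which is where most of your effort went. Your argument can be repaired by fixing the two projectivity/completeness claims, but the direct intersection is cleaner. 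Your closing remark about $Q^{\times}=\{0\}$ is not really a separate point to verify — that condition is part of the standing hypotheses on $Q$, and the content of the lemma is just that $\kappa_\rho=[\widetilde D_\rho]\neq 0$, which the intersection computation gives.
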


\begin{proof}
Let $\rho\in \widetilde{\P}$ be codimension one
not lying in the boundary of $\widetilde B$, i.e., $\widetilde B_0$.
Then $\widetilde D_{\rho}$ is contained in the central fibre of
$\epsilon_{\mathbf P}:\widetilde X\rightarrow \AA^1$, and in particular
is a proper curve. Hence $\widetilde D_{\rho}$ defines a class in
$N_1(\widetilde X)$. Note next that there is necessarily a top-dimensional
cone $\sigma\supset \rho$, and hence a unique ray $\tau$ of $\sigma$
not contained in $\rho$. Then $\widetilde D_{\tau}$ is a divisor
intersecting $\widetilde D_{\rho}$ in one point. Hence $\widetilde D_{\rho}$
cannot be numerically trivial.
\end{proof}
 
A priori, $\varphi$ is an
MVPL function on $\widetilde{B}$, but by restriction, it defines an
MVPL functions on $\widetilde B_1$. We continue
to write $\varphi$ for this restricted MVPL function.
This restriction is defined in terms of kinks. Each codimension
one $\rho\in\P_1$ is the intersection $\tilde\rho\cap\widetilde B_1$
of a codimension one $\tilde\rho\in\widetilde{\P}$. Thus we may take
the kink $\kappa_{\rho}$ to be $\kappa_{\tilde\rho}=[\widetilde D_{\tilde\rho}]$.

For $\rho\in\P_1$ with $\rho=\tilde\rho\cap \widetilde{B}_1$
for $\tilde\rho\in\widetilde\P$, we also
write $\widetilde D_{\rho}:=\widetilde D_{\tilde\rho}$,
and note that such a stratum is always contained in the fibre
$\epsilon_{\mathbf P}^{-1}(0)$.

\begin{proposition}
\label{Prop: varphi0}
Let $\overline{V}$ be the closure of $V$ defined in \eqref{Eq:UV def}. There is a single valued representative for $\varphi$ on
$\widetilde{B}_1\setminus \overline{V}$.
We write a choice of such single-valued representative as $\varphi_0$.
\end{proposition}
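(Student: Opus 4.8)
The plan is to produce $\varphi_0$ by hand as a ``toric'' piecewise linear function pulled back through the chart $\Psi_1$, and then to check that its kinks agree with those of $\varphi$. I would start with two preliminary observations. First, $\widetilde B_1\setminus\overline V$ is disjoint from the discriminant locus: each piece $\widetilde\Delta_{\ul{\rho}}$ of $\widetilde\Delta_1$ lies in a cell $\tilde\rho$ containing $\tilde\rho_i$, hence in $\overline{\mathrm{Star}(\tilde\rho_i)}\subseteq\overline V$. Second, $\widetilde B_1\setminus\overline V\subseteq U\cap\widetilde B_1$, so by Theorem \ref{thm: affine structure on the big part} the affine structure there is given by the single chart $\psi_U=\Psi|_U$; thus $\Psi_1$ restricts to an affine-linear open embedding of $\widetilde B_1\setminus\overline V$ into the height-one slice of $|\widetilde\Sigma|=\overline M_{\RR}$, which we identify with $M_{\RR}$.

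Next I would produce the function on the toric model. Since $\overline M_{\RR}$ is convex, it carries no discriminant locus, so there is a single-valued piecewise linear function $\widehat\psi\colon\overline M_{\RR}\to N_1(X_{\widetilde\Sigma})\otimes_{\ZZ}\RR$ whose kink along every codimension one cone $\tilde\rho\in\widetilde\Sigma$ equals $[D_{\widetilde\Sigma,\tilde\rho}]$; this is Lemma \ref{lem:beta recovery}(1), whose proof is local and does not use completeness of the fan, or else one simply integrates the kinks. Restricting $\widehat\psi$ to the height-one slice gives a single-valued piecewise linear function $\widehat\psi_1$ on $M_{\RR}$ with kink $[D_{\widetilde\Sigma,\tilde\rho}]$ along $\tilde\rho\cap\widetilde B_1$. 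Using the inclusion $\iota\colon N_1(X_{\widetilde\Sigma})\hookrightarrow N_1(\widetilde X)$ coming from the splitting $N_1(\widetilde X)=N_1(X_{\widetilde\Sigma})\oplus\bigoplus_{i,j}\ZZ\widetilde E_i^j$, I would set $\varphi_0:=\iota\circ\widehat\psi_1\circ\Psi_1$ on $\widetilde B_1\setminus\overline V$, a function which is single-valued by construction.

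It then remains to show that for each codimension one cell $\rho=\tilde\rho\cap\widetilde B_1$ of $\P_1$ contained in $\widetilde B_1\setminus\overline V$ one has $\iota([D_{\widetilde\Sigma,\tilde\rho}])=\kappa_\rho=[\widetilde D_{\tilde\rho}]$; since $\varphi$ is determined by its kinks (Definition \ref{def:kinky}), this identifies $\varphi_0$ as the desired representative. This is the step I expect to be the main work. The point is that $\widetilde D_{\tilde\rho}$ is disjoint from the exceptional locus $\widetilde E$ of $\mathrm{Bl}_{\widetilde H}$. Indeed, $\rho\not\subseteq\overline V$ forces $\tilde\rho$ to contain none of the cones $\tilde\rho_i$; the curve $\widetilde D_{\tilde\rho}$ lies in $\epsilon_{\mathbf{P}}^{-1}(0)$, so $D_{\widetilde\Sigma,\tilde\rho}=\mathrm{Bl}_{\widetilde H}(\widetilde D_{\tilde\rho})$ lies in $\epsilon^{-1}(0)$, where by Lemma \ref{lem:Hrhoi} the center meets $\epsilon^{-1}(0)$ only inside $\bigcup_i D_{\widetilde\Sigma,\tilde\rho_i}$. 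Hence $D_{\widetilde\Sigma,\tilde\rho}$ can meet $\widetilde H_i$ only along $D_{\widetilde\Sigma,\tilde\rho}\cap D_{\widetilde\Sigma,\tilde\rho_i}$, which, as $\tilde\rho\not\supseteq\tilde\rho_i$, is either empty or a single torus-fixed point of $D_{\widetilde\Sigma,\tilde\rho_i}\cong D_{\rho_i}$; transversality of $H_i$ to the toric boundary of $D_{\rho_i}$ keeps $H_i$, and therefore $\widetilde H_i$, away from such points. Thus $\mathrm{Bl}_{\widetilde H}$ is an isomorphism near $D_{\widetilde\Sigma,\tilde\rho}$, so $[\widetilde D_{\tilde\rho}]\cdot\widetilde E_i^j=0$ for all $i,j$ and $(\mathrm{Bl}_{\widetilde H})_*[\widetilde D_{\tilde\rho}]=[D_{\widetilde\Sigma,\tilde\rho}]$, which together give $[\widetilde D_{\tilde\rho}]=\iota([D_{\widetilde\Sigma,\tilde\rho}])$ and complete the argument.
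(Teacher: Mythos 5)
Your proof is correct, and it takes a genuinely different route from the paper's. The paper proves this in one line by applying Proposition \ref{prop:phi tau} to the vertex $0$, after noting $\widetilde B_1\setminus\overline V=\mathrm{Star}(0)$, that $\widetilde D_0\cong X_\Sigma$ is toric, and that the stratification of $\widetilde X$ restricts to the toric stratification on $\widetilde D_0$; the abstract machinery of Propositions \ref{prop:local affine submersion} and \ref{prop:phi tau} then produces $\varphi_0$ as $\iota\circ\psi\circ\upsilon$ with $\psi$ taking values in $N_1(\widetilde D_0)=N_1(X_\Sigma)$. You instead build $\varphi_0$ directly by pulling back a piecewise linear function on the toric model $|\widetilde\Sigma|$ valued in $N_1(X_{\widetilde\Sigma})$ through the chart $\Psi_1$ and the section $\iota$ of $(\mathrm{Bl}_{\widetilde H})_*$, and then verify the kinks by a hands-on intersection-theoretic argument showing $\widetilde D_{\tilde\rho}$ avoids the exceptional locus when $\tilde\rho\not\supseteq\tilde\rho_i$. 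This disjointness check is essentially what makes the paper's hypothesis ``the stratification restricts to the toric stratification'' hold, so the two proofs share the same geometric core, but yours bypasses the general framework and makes the curve-class identification $\iota([D_{\widetilde\Sigma,\tilde\rho}])=[\widetilde D_{\tilde\rho}]$ completely explicit. Two small points worth tightening: the target of $\widehat\psi$ should be $|\widetilde\Sigma|=M_\RR\times\RR_{\ge 0}$ rather than all of $\overline M_\RR$, and the appeal to Lemma \ref{lem:beta recovery}(1) for a non-complete fan is a little quick — it is fine because the support is convex and simply connected so one can integrate the kinks radially, but since you only need $\widehat\psi_1$ on the convex open set $\Psi_1(\widetilde B_1\setminus\overline V)$, it would be cleaner to say so directly rather than invoke the lemma outside its stated hypotheses.
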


\begin{proof}
Recalling that $0$ is a vertex of $\scrP_1$, note that $\widetilde B_1
\setminus\overline{V}
=\mathrm{Star}(0)$. Also,
$\widetilde D_0$ is a toric variety, isomorphic to $X_{\Sigma}$.
Further, the stratification of $\widetilde X$ restricts to the toric
stratification of $X_{\Sigma}$. Thus the result follows from
Proposition \ref{prop:phi tau}.
\end{proof}
\begin{figure}
\center{\input{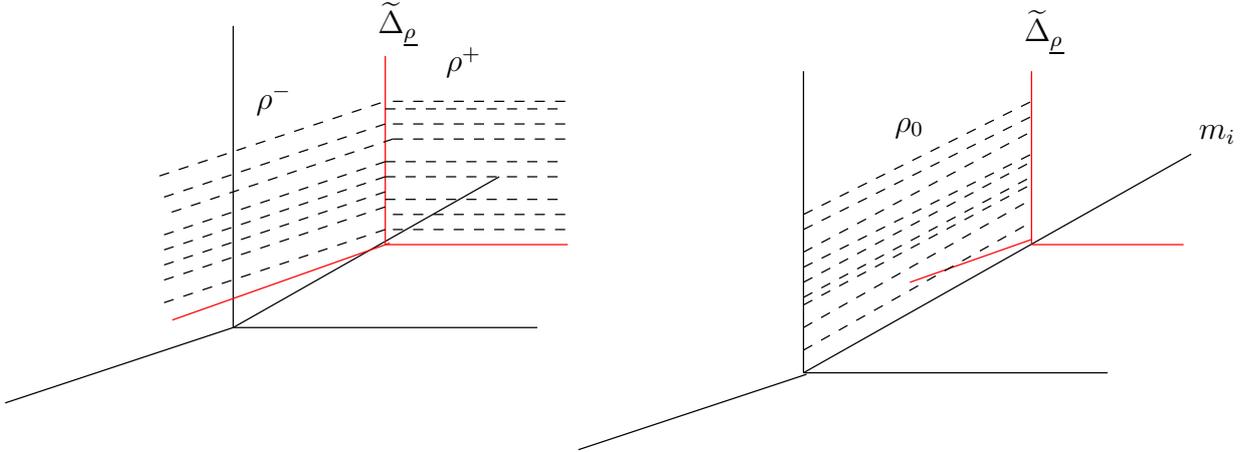}}
\caption{The codim $1$ cells $\rho^+,\rho^-$ and $\rho_0$ for $X=\PP^3$.}
\label{Fig: P3}
\end{figure}
\subsection{Geometry around the discriminant locus at $\widetilde B_1$}
\label{rem:phi disc}
We fix in this subsection $\ul{\rho}\in \Sigma(\rho_i)$ of codimension one, with
$\rho\in\Sigma$ satisfying $\ul{\rho}=(\rho+\RR\rho_i)/\RR\rho_i$.
Further, let $\sigma^{\pm}\in \Sigma$ be the maximal cones containing
$\rho$, as usual.
We have the
part $\widetilde\Delta_{\ul{\rho}}$ of the discriminant locus $\widetilde\Delta_1$ associated to $\ul{\rho}$
as defined in \eqref{Eq:delta rho def}. 
In what follows we will adopt the following notation for the codimension one cells
of $\P_1$ containing $\widetilde\Delta_{\ul{\rho}}$:
\begin{align*}
\rho_{\infty}:= {} &\tilde\rho \cap \widetilde{B}_1\\
\rho_0 := {} & \tilde\rho'\cap \widetilde{B}_1\\
\rho^+ := {} & \tilde\sigma^+\cap\tilde\sigma'^+\cap \widetilde{B}_1\\
\rho^- := {} & \tilde\sigma^-\cap\tilde\sigma'^-\cap \widetilde{B}_1,
\end{align*}
In the notation of \S\ref{sec:The GS-locus}, $\widetilde\Delta_{\ul{\rho}}$
is contained in four top-dimensional cells,
$\tilde\sigma^{\pm}\cap \widetilde{B}_1$ and $\widetilde
\sigma'^{\pm}\cap \widetilde{B}_1$.

\subsubsection{Geometry of the strata}
\label{subsubsec:geometry of strata}
As $\widetilde\Delta_{\ul{\rho}}$ contains the 
point $\nu_i\cap \widetilde{B}_1$, where $\nu_i$ is defined
in \eqref{Eq:nui def}, it follows that 
$\widetilde D_{\widetilde\Delta_{\ul{\rho}}}$
is a surface contained in $\widetilde\PP_i$, and in particular is the
inverse image of $D_{\ul{\rho}}\subseteq D_{\rho_i}$ under the
composition
\begin{equation}
\label{eq:fibre bundle}
\xymatrix@C=30pt
{
\widetilde\PP_i \ar[r]^{\mathrm{Bl}_{\widetilde{H}}}&
\PP_i \ar[r]^{p_i}& D_{\rho_i}.
}
\end{equation}
Note that the toric stratum $p_i^{-1}(D_{\ul{\rho}})$ of
$X_{\widetilde\Sigma}$ is a $\PP^1$-bundle over
$D_{\ul{\rho}}$. Denote the class of the fibre $F_i$. Further, 
this $\PP^1$-bundle has two sections, the strata 
$D_{\widetilde\Sigma,\tilde\rho}$ and 
$D_{\widetilde\Sigma,\tilde\rho'}$. 

These curve classes have a relationship in $N_1(p_i^{-1}(D_{\ul{\rho}}))$, 
which we now explain, along
with some auxiliary notation which will be used in the sequel.

Fix notation as follows. Let $u_1=m_i,u_2,\ldots,u_{n-1}$ generate
$\rho$, and take additional generators of $\sigma^+$, $\sigma^-$
to be $u_n^+$ and $u_n^-$ respectively. Thus we have generators
\begin{align*}
\tilde\rho:\quad&(m_i,1),(u_1,0),\ldots,(u_{n-1},0)\\
\tilde\rho':\quad&(m_i,1),(0,1),(u_2,0),\ldots,(u_{n-1},0)\\
\tilde\sigma^{\pm}
\cap\tilde\sigma'^{\pm}:\quad
&(m_i,1),(u_2,0),\ldots,(u_{n-1},0),(u_n^{\pm},0)
\end{align*}

Choose a primitive normal
vector $\bar n_{\rho^-}\in N_{\RR}\oplus \RR$ to the cone 
$\tilde\sigma^-\cap \tilde\sigma'^-$, positive on $(m_i,0)$.
Explicitly,
we may take $n_{\rho^-}\in N$ to be a primitive normal vector
to the span of $u_2,\ldots,u_{n-1},u_n^-$ positive on $m_i$.  We then take
$\bar n_{\rho^-}=(n_{\rho^-}, -\langle n_{\rho^-},m_i\rangle)$. 
We may similarly define $n_{\rho^+}, \bar n_{\rho^+}$.
Note that since $u_1=m_i,\ldots,u_n^{\pm}$ span a standard cone, in fact 
\begin{equation}
\label{Eq: nrho one}
\langle\bar n_{\rho^{\pm}},(m_i,0)\rangle=\langle n_{\rho^{\pm}},m_i\rangle = 1.
\end{equation}

Denote $\delta_{\ul{\rho}}:=\tilde\rho\cap\tilde\rho'$,
so that $\widetilde\Delta_{\ul{\rho}}=\delta_{\ul{\rho}}\cap \widetilde B_1$.
Using the canonical identification betwen $\widetilde\P$ and
$\widetilde\Sigma$, we also view $\delta_{\ul{\rho}}$ as
a codimension two cone in $\widetilde\Sigma$.
Let $\widetilde\Sigma(\delta_{\ul{\rho}})$ denote the quotient
fan of $\widetilde\Sigma$, i.e., the two-dimensional fan defining
the toric variety $D_{\widetilde\Sigma,\delta_{\ul{\rho}}}=
p_i^{-1}(D_{\ul{\rho}})$.

Now $\bar n_{\rho^-}$ defines a linear function on 
$\widetilde\Sigma(\delta_{\ul{\rho}})$ and hence 
$z^{\bar n_{\rho^-}}$ can be viewed as a rational function
on $D_{\widetilde\Sigma,\delta_{\ul{\rho}}}$, hence defining
a principal divisor supported on the boundary. Explicitly,
the order of vanishing of this rational function on a toric divisor
of $D_{\widetilde\Sigma,\delta_{\ul{\rho}}}$ is obtained by evaluating
$\bar n_{\rho^-}$ on primitive generator of the corresponding ray of 
$\widetilde\Sigma(\delta_{\ul{\rho}})$. The one dimensional
rays of $\Sigma(\delta_{\ul{\rho}})$ are the generated by the images of
$(m_i,0)$, $(0,1)$ and $(u_n^{\pm},0)$, corresponding to the
divisors $D_{\widetilde\Sigma,\tilde\rho}$, $D_{\widetilde\Sigma,
\tilde\rho'}$ and $F_i$ respectively.
Then, using \eqref{Eq: nrho one}, we obtain the linear equivalence relation
\begin{equation}
\label{Eq: lin equiv 1}
D_{\widetilde\Sigma,\tilde\rho}-D_{\widetilde\Sigma,\tilde\rho'}+\langle
n_{\rho^-},u_n^+\rangle F_i\sim 0.
\end{equation}

Next,
\[
\mathrm{Bl}_{\widetilde{H}}:\widetilde D_{\widetilde\Delta_{\ul{\rho}}}
\rightarrow D_{\widetilde\Sigma,\delta_{\ul{\rho}}}
\]
is the blow-up at $\kappa^i_{\ul{\rho}}$ distinct points on
the section $D_{\widetilde\Sigma,\tilde\rho}$.

We continue to write $F_i$ for the fibre class of the induced fibration
\[
p_i\circ\mathrm{Bl}_{\mathbf P}:\widetilde 
D_{\widetilde\Delta_{\ul{\rho}}}\rightarrow
D_{\ul{\rho}},
\]
and write $s_{\rho_{\infty}}$ and $s_{\rho_0}$ for the classes
of the strict transforms of the sections
$D_{\widetilde\Sigma,\tilde\rho}$ and $D_{\widetilde\Sigma,\tilde\rho'}$
respectively under the blow-up. We also write $E^j_{\ul{\rho}}$,
$1\le j\le \kappa^i_{\ul{\rho}}$ for the classes
of the exceptional curves of the blow-up. As classes in
$N_1(\widetilde{X})$, these may or may not be distinct, depending
on whether they lie over different connected components of $H_i$.
However, for bookkeeping purposes, it is convenient to distinguish
all of these classes.
Observe that the relation \eqref{Eq: lin equiv 1} now gives rise
to a linear equivalence relation
\begin{equation}
\label{Eq: lin equiv 2}
s_{\rho_{\infty}}-s_{\rho_0}+\langle n_{\rho^-},m_+\rangle F_i
\sim -\sum_{j=1}^{\kappa^i_{\ul{\rho}}} E^j_{\ul{\rho}}.
\end{equation}

Note in this notation, the kinks of $\varphi$ in $\mathrm{Star}(\widetilde
\Delta_{\ul{\rho}})$ are:
\begin{equation}
\label{eq:local kinks}
\kappa_{\rho_{\infty}}=s_{\rho_{\infty}},\quad
\kappa_{\rho_0}=s_{\rho_0},\quad
\kappa_{\rho^{\pm}}=F_i.
\end{equation}

\subsubsection{Parallel transport}
We next turn to a description of a monodromy invariant
subsheaf of $\shP$ on $\mathrm{Star}
(\widetilde\Delta_{\ul{\rho}})$. We note that 
$\Lambda_{\rho_{\infty}}$ and $\Lambda_{\rho_0}$ are monomdromy
invariant on $\mathrm{Star}(\widetilde\Delta_{\ul{\rho}})$ by
Corollary \ref{Corollary: monodromy invariant}.
So if $y\in \Int(\rho_{\infty})$, $y'\in \Int(\rho_0)$, we may
view $\Lambda_{\rho_{\infty}}\subset \Lambda_y$, 
$\Lambda_{\rho_0}\subseteq \Lambda_{y'}$, and these
two subgroups may then be identified canonically via parallel transport. 

Next, using the splittings $\shP_y=\Lambda_y\oplus Q^{\gp}$,
$\shP_{y'}=\Lambda_{y'}\oplus Q^{\gp}$ of
\eqref{Eq:trivial one} (induced by a choice of maximal cells
containing $y$,$y'$), if $m\in \Lambda_y$ is tangent to
$\rho_{\infty}$, we may parallel transport $(m,0)\in\shP_y$
to $\shP_{y'}$ through either $\rho^+$ or $\rho^-$, using
the description \eqref{eq:parallel transport}. In particular, 
the parallel transport of $(m,0)$ is then 
$(m,\langle n_{\rho^\pm},m\rangle F_i)$, with $n_{\rho^{\pm}}$, $F_i$, 
as in \S\ref{subsubsec:geometry of strata}. However, 
in the notation introduced in this previous subsection, 
$\Lambda_{\rho_{\infty}}$ is generated by $m_i,u_2,\ldots,u_{n-1}$.
Noting that by the explicit descriptions of $n_{\rho^{\pm}}$ given there, 
$\langle n_{\rho^{\pm}},u_j\rangle=0$ for $2\le j\le n-1$, while
$\langle n_{\rho^{\pm}},m_i\rangle =1$ by \eqref{Eq: nrho one}.
Thus $n_{\rho^+}$ and $n_{\rho^-}$ take the same values on
$\Lambda_{\rho_{\infty}}$.
So $(m,q)$ is invariant under monodromy in the local system $\shP$
whenever $m\in \Lambda_{\rho_{\infty}}$, $q\in Q$.

In particular, we have a parallel transport map
\begin{equation}
\label{Eq: wp def}
\wp:\kk[\Lambda_{\rho_{\infty}}][Q] \rightarrow 
\kk[\Lambda_{\rho_0}][Q]
\end{equation}
given by
\begin{equation}
\label{eq:parallel transport P}
t^qz^m\mapsto t^{q+\langle n_{\rho^{\pm}},m\rangle F_i}z^m.
\end{equation}

\subsubsection{Balancing along $\widetilde\Delta$}

While Theorem \ref{thm:balancing general} says that the tropicalization
of a punctured map to $\widetilde X$ satisfies the standard balancing
condition away from the discriminant locus, in our particular case,
we still have a weaker balancing result at the discriminant locus.

\begin{proposition}
\label{prop:delta balancing}
Let $f:C^{\circ}/W\rightarrow \widetilde X$ be a punctured map, with
$W=(\Spec\kappa, \kappa^{\times}\oplus Q)$ a log point. Let
$h_s:G\rightarrow \widetilde B$ be the induced tropical map for some
$s\in \Int(Q_{\RR}^{\vee})$. Let $v\in V(G)$ with $h_s(v)\in\widetilde
\Delta^i$ for some $i$. If $E_1,\ldots,E_m$ are the legs
and edges adjacent to $v$, oriented away from $v$, then the contact
orders $\mathbf{u}(E_j)$ determine well-defined elements 
$u_j\in\Lambda_{x}/\ZZ m_i$ for $x\in\widetilde B\setminus
\widetilde\Delta$ a point close to $h_s(v)$.
Further, we have
\[
\sum_{j=1}^m u_j=0.
\]
\end{proposition}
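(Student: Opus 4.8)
The plan is to reduce the statement to the ordinary balancing result of Theorem \ref{thm:balancing general}, applied not to $\widetilde X$ itself but to an auxiliary log scheme in which the singularity of the affine structure at $\widetilde\Delta^i$ has been removed. The key geometric observation is Corollary \ref{Corollary: monodromy invariant}: around a generic point of $\widetilde\Delta_{\ul\rho}$ the monodromy $T_\gamma(m)=m+\kappa^i_{\ul\rho}\,\delta(m)\,m_i$ fixes exactly the hyperplane $\pi_i^{-1}(\ul\rho)$, and more importantly fixes the line $\RR m_i$ and acts trivially on the quotient $\Lambda_x/\ZZ m_i$. Hence although the contact vectors $\mathbf{u}(E_j)$ are tangent vectors at varying cones of $\P_1$ and cannot in general be transported to a common stalk of $\Lambda$ near $h_s(v)$, their images in $\Lambda_x/\ZZ m_i$ are monodromy-invariant, so they do determine well-defined elements $u_j\in\Lambda_x/\ZZ m_i$. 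This disposes of the well-definedness claim.

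For the balancing identity itself, I would proceed as in the proof of Theorem \ref{thm:balancing general}: split $C^\circ$ at the nodes lying on the component $C_v$ using \cite[Prop.~5.2]{ACGSII} to obtain a punctured map $f_v:C_v^\circ/W\rightarrow\widetilde X$ whose dual graph has the single vertex $v$ with legs $E_1,\ldots,E_m$, without changing contact orders. Since $h_s(v)\in\widetilde\Delta^i$, which for $\ul\rho$-generic points lies in the stratum $\widetilde D_{\widetilde\Delta_{\ul\rho}}$, the map $f_v$ factors through the strict inclusion of this stratum, hence through $\widetilde\PP_i$, and composing with the fibre bundle \eqref{eq:fibre bundle} $\widetilde\PP_i\to\PP_i\to D_{\rho_i}$ we obtain a punctured map $g_v:C_v^\circ/W\rightarrow D_{\rho_i}$ to the \emph{toric} variety $D_{\rho_i}$. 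On the tropical side this corresponds to composing $h_s$ with the quotient $\pi_i$ (which is precisely the map $\Lambda_x\to\Lambda_x/\ZZ m_i$ appearing in the statement), and $\pi_i\circ h_s$ is a tropical map to the honest fan $\Sigma(\rho_i)$, away from its singularities. Since $D_{\rho_i}$ is toric, the tropicalization of $g_v$ satisfies the ordinary balancing condition at $v$ by \cite[Ex.~7.5]{GSlog} (exactly as invoked at the end of the proof of Theorem \ref{thm:balancing general}), which reads $\sum_j \pi_i(\mathbf{u}(E_j))=\sum_j u_j=0$ in $\Lambda_x/\ZZ m_i$.

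The one point requiring care — and the step I expect to be the main obstacle — is the compatibility of log structures: one must check that the log structure on the stratum $\widetilde D_{\widetilde\Delta_{\ul\rho}}$ pulled back from $\widetilde X$, after composing with the bundle projection to $D_{\rho_i}$, really does reproduce the toric log structure of $D_{\rho_i}$, so that ``tropicalize, then project by $\pi_i$'' agrees with ``project the target, then tropicalize.'' This is the analogue of the identification via Remark \ref{rem:normal bundles} used in Theorem \ref{thm:balancing general}, but here there are two subtleties: the extra $\PP^1$-bundle direction (the fibre class $F_i$), and the fact that $h_s(v)$ may lie in $\widetilde\Delta^i$, so the affine structure at $h_s(v)$ is singular and one cannot identify $\Lambda$-stalks directly — which is exactly why the statement is phrased modulo $\ZZ m_i$. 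I would handle this by working near a generic point of $\widetilde\Delta_{\ul\rho}$, using the explicit generators of the cones $\tilde\rho,\tilde\rho',\tilde\sigma^{\pm}\cap\tilde\sigma'^{\pm}$ listed in \S\ref{subsubsec:geometry of strata} and the description of $p_i^{-1}(D_{\ul\rho})$ as a $\PP^1$-bundle over $D_{\ul\rho}$, to verify that $\pi_i$ kills precisely the fibre direction and identifies the remaining data with the fan $\Sigma(\rho_i)$; then the balancing in the quotient is the toric balancing on $D_{\rho_i}$, as claimed.
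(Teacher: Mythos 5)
Your overall strategy follows the paper closely — split at $v$, factor $f_v$ through $\widetilde\PP_i$, project to the toric variety $D_{\rho_i}$, and invoke toric balancing from \cite[Ex.~7.5]{GSlog} — and the well-definedness argument via Lemma \ref{Lem: affine monodromy} is exactly right. But there is a concrete gap in the final step, precisely at the spot you flagged as the ``one point requiring care.''

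The tropicalization of the bundle projection $\widetilde\PP_i\to D_{\rho_i}$ is not ``$\pi_i$'' acting as $\Lambda_x\to\Lambda_x/\ZZ m_i$. Recall that $\Lambda_x$ is a tangent space in $\widetilde B$, which via the chart $\psi_U$ is identified with $M\oplus\ZZ$ (rank $n+1$), so $\Lambda_x/\ZZ m_i\cong(M\oplus\ZZ)/\ZZ(m_i,0)$ has rank $n$. On the other hand the tropicalization of $p_i\circ\mathrm{Bl}_{\widetilde H}:\widetilde\PP_i\to D_{\rho_i}$ is, as the paper notes, the composite $\mathrm{Star}(\nu_i)\xrightarrow{\Psi} M_{\RR}\times\RR_{\geq0}\to M_{\RR}/\RR m_i$, $(m,r)\mapsto m\bmod\RR m_i$, whose induced map on integral tangent vectors quotients by the rank-two sublattice generated by $(m_i,0)$ \emph{and} $(0,1)$, landing in $M/\ZZ m_i$ of rank $n-1$. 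So toric balancing on $D_{\rho_i}$ only yields $\sum_j u_j=0$ modulo the extra direction $(0,1)$, i.e.\ in the quotient of $\Lambda_x/\ZZ m_i$ by the image of $(0,1)$ — which is strictly weaker than the claimed identity in $\Lambda_x/\ZZ m_i$.

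To close the gap one needs one more piece of balancing in the $(0,1)$-direction. The paper supplies it by noting that the composed map $C^\circ_v\to\widetilde X\to\AA^1$ also tropicalizes to a balanced curve, giving the vanishing of the $(0,1)$-component of $\sum_j u_j$. Adding that observation to your argument makes it complete and then it coincides with the paper's proof.
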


\begin{proof}
As each $\mathbf{u}(E_i)$ is a tangent vector to $\boldsymbol{\sigma}(E_i)$,
it determines a tangent vector to some maximal cell of $\widetilde\P$
containing $h_s(v)$. The parallel transport of this vector to
$x$ near $h_s(v)$ is then well-defined modulo $\ZZ m_i$ by 
Lemma \ref{Lem: affine monodromy}.

As in the proof of Theorem \ref{thm:balancing general}, we may
split the domain to obtain a punctured map $f_v:C^{\circ}_v
\rightarrow \widetilde X$ with $C^{\circ}_v$ the irreducible component
of $C^{\circ}$ corresponding to $v$. Now $f_v$
factors through the strict inclusion $\widetilde\PP_i\hookrightarrow 
\widetilde X$. Let $\widetilde\PP_i'$ be the log
structure on $\widetilde\PP_i$ induced by the divisor which is
the union of lower dimensional strata of $\widetilde X$ contained in
$\widetilde\PP_i$. Then we have a composition
\[
\xymatrix@C=30pt
{
f'_v:C^{\circ}_v\ar[r]& \widetilde\PP_i\ar[r]& \widetilde\PP_i'
\ar[r]^{p_i\circ \mathrm{Bl}_{\widetilde H}}& D_{\rho_i}
}
\]
as in \eqref{eq:fibre bundle}, noting all morphisms are defined at the
logarithmic level (even though $\mathrm{Bl}_{\widetilde H}$ is not). 
Note that $\Sigma(\widetilde\PP_i)$, the tropicalization
of $\widetilde\PP_i$, is just $\mathrm{Star}(\nu_i)$, where
$\nu_i$ is as in \eqref{Eq:nui def}. The tropicalization of
$\widetilde\PP_i\rightarrow D_{\rho_i}$ is given
by the composition of $\Psi|_{\mathrm{Star}(\nu_i)}:\mathrm{Star}(\nu_i)
\rightarrow M_{\RR}\times\RR_{\ge 0}$ with the map
$M_{\RR}\times\RR_{\ge 0}\rightarrow M_{\RR}/\RR m_i$ given by
$(m,r)\mapsto m\mod \RR m_i$. 
In particular, this map is induced
by the quotient map by the subspace spanned by $(m_i,0)$ and $(0,1)$.
As $D_{\rho_i}$ is toric,
the tropicalization of $f'_v$ is balanced, via \cite[Ex.\ 7.5]{GSlog}. 
As $\Lambda_x/\ZZ m_i$
may be identified with $(M\oplus\ZZ)/\ZZ (m_i,0)$, we see that
$\sum_{j=1}^m u_j=0$ holds in $\Lambda_x/\ZZ m_i$ modulo $\ZZ(0,1)$. On the other hand, the composed
map $C^{\circ}_v\rightarrow \widetilde X\rightarrow \AA^1$
also tropicalizes to a balanced curve, and hence it follows that
in fact $\sum_{j=1}^m u_j=0$ holds in $\Lambda_x/\ZZ m_i$.
\end{proof}

\subsection{$\foD^{1}_{(\widetilde X,\widetilde D)}$ is asymptotically equivalent to $\foD_{(X,D)}$ }
\label{subsec:asymp}
We now note that $\foD_{(X,D)}$ may be reconstructed from
$\foD^1_{(\widetilde X,\widetilde D)}$ via Theorem \ref{thm:asymptotic general}.

First, we must analyze the map
\[
\iota:N_1(X)\rightarrow N_1(\widetilde X)
\]
induced by the inclusion of $X$ into $\widetilde X$ as a general fibre of
$\epsilon_{\mathbf{P}}:\widetilde X\rightarrow\AA^1$. We have:

\begin{lemma}
\label{lem:iota injective}
The map $\iota$ is injective. Further, its image is the subgroup
of $N_1(\widetilde X)$ of curve classes with intersection number
$0$ with all irreducible components of $\epsilon_{\mathbf P}^{-1}(0)$.
\end{lemma}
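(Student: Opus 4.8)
The plan is to understand $\widetilde X$ as a blow-up of $X_{\overline\Sigma} = X_\Sigma \times \AA^1$ in two stages, and to track how curve classes behave. First I would recall the structure of the central fibre $\epsilon_{\mathbf P}^{-1}(0) = X_\Sigma \cup \bigcup_i \widetilde\PP_i$ from \eqref{Eq: central fiber of epsilon}, and note that a general fibre $X$ meets none of these components (it is disjoint from the central fibre), so any curve class $\iota(\beta)$ automatically has intersection number $0$ with each component of $\epsilon_{\mathbf P}^{-1}(0)$; this gives the easy inclusion ``image $\subseteq$ that subgroup.''

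For injectivity, the cleanest approach is to exhibit a left inverse. Since $\epsilon_{\mathbf P}: \widetilde X \to \AA^1$ is flat with $\widetilde X$ smooth and $X$ a fibre, the Gysin/specialization construction gives a homomorphism in the other direction, or more concretely: a curve in $\widetilde X$ can be deformed to a general fibre and then one checks that the resulting map $N_1(\widetilde X) \to N_1(X)$, restricted to $\iota(N_1(X))$, is the identity. Alternatively — and this is probably the route I would actually take here, since it is most in the spirit of the paper's toric bookkeeping — I would compute $N_1(\widetilde X)$ explicitly. We have $N_1(X_{\overline\Sigma}) = N_1(X_\Sigma) \oplus \ZZ[\text{fibre of }\AA^1]$, but since $\AA^1$ contributes no complete curves beyond those already in fibres, effectively $N_1(X_\Sigma \times \AA^1) \cong N_1(X_\Sigma)$ for the purposes of complete curves lying in fibres. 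Passing to $X_{\widetilde\Sigma}$ (a toric blow-up along the $D_{\rho_i} \times \{0\}$) and then to $\widetilde X$ (blow-up along the $\widetilde H_i$) each time adds exceptional curve classes: the fibre classes $F_i$ of the $\PP^1$-bundles $\PP_i \to D_{\rho_i}$, and the exceptional curves $E^j_{\ul\rho}$ over the blown-up points of $\widetilde H_i$. One then has a direct-sum description of $N_1(\widetilde X)$ in terms of $N_1(X)$ together with these new classes, which simultaneously proves injectivity of $\iota$ and identifies the image via the intersection conditions with the components $\widetilde\PP_i$ and $X_\Sigma$ of the central fibre.

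The key steps, in order: (1) record that $X$ is disjoint from $\epsilon_{\mathbf P}^{-1}(0)$, giving the inclusion of the image into the claimed subgroup; (2) analyze the two blow-ups $\widetilde X \to X_{\widetilde\Sigma} \to X_{\overline\Sigma}$, using the standard blow-up formula for $N_1$ of a smooth variety blown up along a smooth centre, to get an explicit splitting $N_1(\widetilde X) \cong \iota(N_1(X)) \oplus (\text{span of } F_i, E^j_{\ul\rho}, \text{and fibre/boundary directions in } X_\Sigma)$; (3) check that the intersection pairing with the central-fibre components detects exactly the complement of $\iota(N_1(X))$ — e.g.\ $F_i \cdot \widetilde\PP_i \neq 0$ while $F_i$ pairs appropriately, and the curves inside $X_\Sigma$ pair nontrivially with $X_\Sigma$ as a component (using that $X_\Sigma$ is a component with nontrivial normal bundle information), so that requiring intersection number $0$ with all components forces the class into $\iota(N_1(X))$.

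The main obstacle I expect is step (3): being careful about which curve classes pair to zero with \emph{all} components of the reducible central fibre. Because $\epsilon_{\mathbf P}^{-1}(0)$ is linearly trivial, there is one linear relation among the classes of its components, so the conditions ``intersection $0$ with each component'' are not independent; one must verify that the rank of the constraint is exactly right so that the solution space is precisely $\iota(N_1(X))$. This is essentially a rank computation using the explicit toric/blow-up description, and the relations \eqref{Eq: lin equiv 1}, \eqref{Eq: lin equiv 2} together with the kink data \eqref{eq:local kinks} are exactly the tools needed to pin it down. The remaining steps are routine applications of standard facts about $N_1$ under blow-ups and about products with $\AA^1$.
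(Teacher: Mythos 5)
Your route is a valid alternative to the paper's, but it is substantially more elaborate for the same conclusion. The paper argues as follows: $\epsilon_{\mathbf P}^{-1}(\AA^1\setminus\{0\})=X\times(\AA^1\setminus\{0\})$, so restriction gives $\Pic\big(\epsilon_{\mathbf P}^{-1}(\AA^1\setminus\{0\})\big)\cong\Pic(X)$; the localization sequence for line bundles under removing the divisor $\epsilon_{\mathbf P}^{-1}(0)$ shows the restriction $\Pic(\widetilde X)\to\Pic\big(\epsilon_{\mathbf P}^{-1}(\AA^1\setminus\{0\})\big)$ is surjective with kernel generated by the classes of the irreducible components of $\epsilon_{\mathbf P}^{-1}(0)$; and $\iota$ is the adjoint of this surjection under the intersection pairing, so dualizing gives injectivity \emph{and} the description of the image simultaneously. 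Your Route~B instead builds $N_1(\widetilde X)$ from $N_1(X_\Sigma\times\AA^1)\cong N_1(X_\Sigma)$ by two applications of the blow-up formula, identifies $\iota(N_1(X))$ with the span of $N_1(X_\Sigma)$ and the exceptional fibre classes over the $\widetilde H_{ij}$, and then verifies that the complementary classes $F_i$ pair nondegenerately with the central-fibre components. That can be made to work; the decisive input is $F_i\cdot X_\Sigma=1$ and $F_i\cdot\widetilde\PP_j=0$ for $j\neq i$, which together with the linear triviality of the fibre class give $F_i\cdot\widetilde\PP_i=-1$ and hence a nondegenerate (in fact unimodular) pairing. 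Note, though, that \eqref{Eq: lin equiv 1}, \eqref{Eq: lin equiv 2} and \eqref{eq:local kinks} are \emph{not} the right tools here: those relations live inside the particular surfaces $\widetilde D_{\widetilde\Delta_{\ul\rho}}$ and are aimed at a different bookkeeping problem; what you actually need is the intersection theory of the fibre classes $F_i$ with the central-fibre components, which is a one-line toric or degeneration-to-the-normal-cone computation.

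One concrete objection: the ``Gysin/specialization'' Route~A you float would not produce a well-defined homomorphism $N_1(\widetilde X)\to N_1(X)$ by ``deforming a curve to a general fibre'' — not every curve in the total space moves into a smooth fibre. The correct left inverse is exactly what the dualization in the paper's proof produces: a divisor class on $\widetilde X$ restricts to a divisor class on $X$, and this dualizes to a retraction of $\iota$. If you want an explicit left inverse in your framework, that is the mechanism to use, not cycle specialization.
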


\begin{proof}
Note that $\epsilon_{\mathbf P}^{-1}(\AA^1\setminus \{0\})=
X\times (\AA^1\setminus \{0\})$, and the inclusion
$X=X\times \{t\} \hookrightarrow X\times (\AA^1\setminus\{0\})$
for any $t\in \AA^1\setminus\{0\}$
induces an isomorphism of Picard groups via pull-back under this inclusion.
On the other hand, there is a surjective restriction map
$\Pic(\widetilde X)\rightarrow 
\Pic(\epsilon_{\mathbf P}^{-1}(\AA^1\setminus \{0\}))$ with kernel
generated by the irreducible components of of $\epsilon_{\mathbf P}^{-1}(0)$.
We obtain the result by dualizing.
\end{proof}

Now if $(\fod,f_{\fod})\in \foD^1_{(\widetilde X,\widetilde D)}$,
let $\mathrm{Cone}(\fod)\subseteq\widetilde B$ denote the cone over
$\fod$. Explicitly, $\fod$ is a polyhedral subset of
some cell $\sigma\cap\widetilde B_1 \in\P_1$ for $\sigma\in\widetilde\P$.
Then we may take $\mathrm{Cone}(\fod)\subseteq\widetilde\rho$ as the closure
of the cone generated by $\fod$ in $\sigma$. Thus we obtain
a wall 
\begin{equation}
\label{eq:cone index}
(\mathrm{Cone}(\fod),f_{\fod}^{1/\ind(\mathrm{Cone}(\fod))})\in 
\foD_{(\widetilde X,\widetilde D)},
\end{equation}
giving a one-to-one correspondence between 
$\foD^1_{(\widetilde X,\widetilde D)}$ and $\foD_{(\widetilde X,\widetilde D)}$.

Thus the following definition is intrinsic to $\foD^1_{(\widetilde X,
\widetilde D)}$:


\begin{definition}
\label{def:asymptotic scattering}
The \emph{asymptotic
scattering diagram} associated to $\foD_{(\widetilde X,\widetilde D)}^1$ is the scattering diagram
\begin{equation}
\label{Eq: height one}
 \foD_{(\widetilde X,\widetilde D)}^{1,\as} :=   \{(\fod\cap \widetilde{B}_0, f_{\fod})\,|\, \hbox{$(\fod,f_{\fod})\in
\foD_{(\widetilde X,\widetilde D)}$ with $\dim\fod\cap\widetilde{B}_0=n-1$}\}.
\end{equation}
\end{definition}

We then may restate Theorem \ref{thm:asymptotic general} in our context,
noting that by Lemma \ref{lem:iota injective},
 we lose no information in applying $\iota$:

\begin{proposition}
\label{Prop: Asymptotic equivalence}
The asymptotic scattering diagram $\foD_{(\widetilde X,\widetilde D)}^{1,\as}$ is equivalent to $\iota(\foD_{(X,D)})$.
\end{proposition}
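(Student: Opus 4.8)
The statement to prove, Proposition \ref{Prop: Asymptotic equivalence}, is that $\foD_{(\widetilde X,\widetilde D)}^{1,\as}$ is equivalent to $\iota(\foD_{(X,D)})$. The strategy is to recognise this as a direct instance of Theorem \ref{thm:asymptotic general}, applied to the degeneration $\epsilon_{\mathbf P}:\widetilde X\to\AA^1$, together with the bookkeeping that relates $\foD^{1}_{(\widetilde X,\widetilde D)}$ back to $\foD_{(\widetilde X,\widetilde D)}$ via the index-correction of \eqref{eq:cone index}. First I would check that $\epsilon_{\mathbf P}:\widetilde X\to\AA^1$ satisfies the hypotheses of Theorem \ref{thm:asymptotic general}: namely that it is a projective log smooth (indeed normal crossings) morphism with $\epsilon_{\mathbf P}^{-1}(0)\subseteq\widetilde D$ and, crucially, that the central fibre $\epsilon_{\mathbf P}^{-1}(0)$ is reduced. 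Reducedness is immediate here since $\epsilon_{\mathbf P}$ is obtained from the toric family $\epsilon:X_{\widetilde\Sigma}\to\AA^1$, whose central fibre $D_{\widetilde\Sigma,\RR_{\ge 0}(0,1)}+\sum_j D_{\widetilde\Sigma,\nu_j}$ is the toric boundary divisor corresponding to a reduced sum of rays, by a further blow-up along $\widetilde H$ which is an isomorphism near the generic points of all these components (by Lemma \ref{lem:Hrhoi} and the transversality of the $H_i$). One also needs that the general fibre $(X,D)$ is maximally degenerate, so that $(\widetilde B_0,\P_0)=\partial\widetilde B$; this is part of the standing hypotheses of the paper and follows from Remark \ref{rem:relative case} together with $\widetilde B_0 = \widetilde p^{-1}(0)$.

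Next I would unwind the definitions on the two sides. On the one hand, $\widetilde B_0 = \widetilde p^{-1}(0)$ is identified via $\mu$ of \eqref{Eq: mu} with $M_\RR$ and with the tropicalization of $(X,D)$ (see \cite[Prop.~1.18,(1)]{GSCanScat}), and the monoid $Q_t\subseteq N_1(X_t)=N_1(X)$ is chosen as in the statement of Theorem \ref{thm:asymptotic general} so that $\iota(Q_t)\subseteq Q\subseteq N_1(\widetilde X)$; the map $\iota$ here is precisely the map $\iota:N_1(X)\to N_1(\widetilde X)$ of Lemma \ref{lem:iota injective}, which is injective, so no information is lost and $\iota(\foD_{(X,D)})$ is genuinely equivalent, as a scattering diagram, to the object $\iota(\foD_{(X_t,D_t)})$ appearing in Theorem \ref{thm:asymptotic general}. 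On the other hand, $\foD_{(\widetilde X,\widetilde D)}^{1,\as}$ as defined in Definition \ref{def:asymptotic scattering} is literally $\{(\fod\cap\widetilde B_0, f_{\fod})\,|\,(\fod,f_{\fod})\in\foD_{(\widetilde X,\widetilde D)},\ \dim(\fod\cap\widetilde B_0)=n-1\}$, which is exactly $\foD_{(\widetilde X,\widetilde D)}^{\as}$ in the notation of the earlier definition of the asymptotic scattering diagram. Theorem \ref{thm:asymptotic general} then says $\iota(\foD_{(X_t,D_t)})=\foD_{(\widetilde X,\widetilde D)}^{\as}$, which is the desired equivalence. A small point to verify is that one is free to use $\foD^{1}_{(\widetilde X,\widetilde D)}$ or $\foD_{(\widetilde X,\widetilde D)}$ interchangeably when forming the asymptotic diagram: the wall of $\foD^{1}_{(\widetilde X,\widetilde D)}$ attached to $\fod$ carries $f_{\fod}^{\ind(\fod)}$, while the corresponding wall of $\foD_{(\widetilde X,\widetilde D)}$ carries $f_{\fod}$; but since $\ind(\mathrm{Cone}(\fod))$ computes the index of the tangent lattice to $\mathrm{Cone}(\fod)$ relative to the full lattice in the direction of $\widetilde p$, and the wall $\fod\cap\widetilde B_0$ is cut out inside $\widetilde B_0$ where $\widetilde p$ is constant, the definition of the asymptotic diagram only records the walls whose closure-cone meets $\widetilde B_0$ in codimension one, and on these the function is the one attached to the cone wall in $\foD_{(\widetilde X,\widetilde D)}$. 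So Definition \ref{def:asymptotic scattering} agrees with the asymptotic scattering diagram of $\foD_{(\widetilde X,\widetilde D)}$ in the sense of the earlier definition, and no index factors intervene.

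The only substantive technical point — and the one I expect to be the main obstacle — is confirming that the hypotheses of Theorem \ref{thm:relative diagram} and Theorem \ref{thm:asymptotic general} are met for $\epsilon_{\mathbf P}$, in particular that every punctured log map contributing to $\foD_{(\widetilde X,\widetilde D)}$ is in fact defined over $\AA^1$ (so that all output directions $u$ are tangent to the fibres of $\widetilde p$, making $\foD_{(\widetilde X,\widetilde D)}^{\as}$ well-defined as a scattering diagram on $\widetilde B_0$) and that reducedness of $\epsilon_{\mathbf P}^{-1}(0)$ genuinely holds after the blow-up $\mathrm{Bl}_{\widetilde H}$. The first follows from Theorem \ref{thm:relative diagram} once we know $\epsilon_{\mathbf P}$ is log smooth, which it is: $X_{\widetilde\Sigma}\to\AA^1$ is a toric morphism hence log smooth, and blowing up along $\widetilde H$, a smooth centre lying in the boundary and transverse to all strata (Lemma \ref{lem:Hrhoi}), preserves log smoothness of the composite to $\AA^1$. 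The second is a local computation at the generic point of each component of $\epsilon_{\mathbf P}^{-1}(0)$, where $\mathrm{Bl}_{\widetilde H}$ is an isomorphism, so multiplicities are inherited from the toric family and are all $1$. Assembling these checks, the equivalence follows by direct quotation of Theorem \ref{thm:asymptotic general}.
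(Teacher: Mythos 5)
Your proposal is correct and matches the paper's approach: the paper gives no separate argument for this proposition, deriving it directly from Theorem \ref{thm:asymptotic general} applied to $\epsilon_{\mathbf P}:\widetilde X\to\AA^1$ together with the injectivity of $\iota$ from Lemma \ref{lem:iota injective}. Your additional verifications (reducedness of the central fibre, log smoothness, and the observation that Definition \ref{def:asymptotic scattering} is phrased directly in terms of $\foD_{(\widetilde X,\widetilde D)}$ so no index factors intervene) are all sound and simply make explicit what the paper leaves implicit.
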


\section{The structure of $\foD_{(\widetilde{X},\widetilde{D})}^1$ and
radiance.}
\label{sec:radiance}
This section forms the technical heart of the paper. Here we carry
out a deeper analysis of the scattering diagram 
$\foD_{(\widetilde{X},\widetilde{D})}^1$. The key point in our analysis
is a proof that this diagram is \emph{radiant}, as defined below.
Put roughly, recall we have a canonical piecewise linear identification
$\Psi|_{\widetilde B_1}$
of $\widetilde B_1$ with $M_{\RR}$. Radiance of 
$\foD_{(\widetilde{X},\widetilde{D})}^1$ can be interpreted as saying
that each wall of this scattering diagram is contained in the
inverse image of a codimension one linear subspace of $M_{\RR}$.
This will give us good control of how the scattering diagram 
behaves in radial directions from the origin. This eventually allows
us to completely describe $\foD_{(\widetilde{X},\widetilde{D})}^1$
in terms of the structure of the diagram near the origin,
see \S\ref{subsec:equivalence}.

\subsection{$\foD_{(\widetilde{X},\widetilde{D})}^1$ is a radiant scattering diagram}
The following definition of a radiant manifold can be found in \cite{goldman1984radiance}.
\begin{definition}
An affine structure on a topological manifold $B$ is \emph{radiant} if and only if the change of coordinate transformations of the atlas
defining the affine structure on $B$ lie in $GL_n(\RR) \subset \mathrm{Aff}(M_{ \RR})$. We call a radiant affine structure \emph{integral} if the change of coordinate transformations further lie in $GL_n(\ZZ)$. An (integral) \emph{radiant manifold with singularities} is a topological manifold $B$ which admits an (integral) radiant structure on a subset $B\setminus \Delta$, where $\Delta \subset B$ is a union of submanifolds of $B$ of codimension at least $2$. 
\end{definition}
The notion of radiant vector field is defined as in \cite[\S 6.5.1]{Goldman}; we do not review the general definition here. However, if $B$ is 
radiant, $B$ carries a radiant vector field
constructed as follows. Let $y_1,\ldots,y_n$ be local linear
coordinates, and set
\begin{equation}
    \label{Eq: vector field}
    \vec\rho:=\sum_{i=1}^n y_i{\partial\over\partial y_i}.
\end{equation}
This yields a global vector field on $B$ \cite[\S\ 1.5]{Goldman}. Indeed, observe that the formula for $\vec\rho$ remains invariant
under a linear change of coordinates. The following proposition can be found in \cite[\S 6.5.1]{Goldman}.
\begin{proposition}
\label{Prop: equivalent definitions of radiant}
Let $B$ be an affine manifold. Then the following are equivalent:
\begin{itemize}
 \item  M is a radiant manifold.
\item M possesses a radiant vector field.
\end{itemize}
\end{proposition}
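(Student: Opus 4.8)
The statement is the standard equivalence from \cite{Goldman}, ``radiant affine manifold $\Leftrightarrow$ existence of a radiant (parallel-to-Euler) vector field,'' and the plan is to prove both implications by a local-to-global patching argument using the explicit coordinate formula \eqref{Eq: vector field}. For the implication ``radiant $\Rightarrow$ radiant vector field,'' I would cover $B$ by affine charts whose transition maps lie in $\GL_n(\RR)$, write $\vec\rho = \sum_i y_i \partial/\partial y_i$ in each chart, and verify that this expression is invariant under a linear change of coordinates: if $y' = Ay$ with $A\in\GL_n(\RR)$, then $\partial/\partial y_j = \sum_k A_{kj}\partial/\partial y'_k$ and $y_i = \sum_\ell (A^{-1})_{i\ell} y'_\ell$, so $\sum_i y_i \partial/\partial y_i = \sum_{k,\ell}\big(\sum_i (A^{-1})_{i\ell} A_{ki}\big) y'_\ell \partial/\partial y'_k = \sum_k y'_k \partial/\partial y'_k$. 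Hence the locally defined vector fields agree on overlaps and glue to a global vector field, which is radiant by construction (its covariant derivative with respect to the flat connection is the identity endomorphism of $TB$, since $\nabla_{\partial/\partial y_j}\vec\rho = \partial/\partial y_j$ in any linear chart).

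For the converse, ``radiant vector field $\Rightarrow$ radiant affine structure,'' I would argue that a radiant vector field is precisely one satisfying $\nabla X = \mathrm{id}_{TB}$ for the given flat connection $\nabla$, and then show that in a neighbourhood of any point one can choose affine (i.e.\ $\nabla$-flat) coordinates $y_1,\dots,y_n$ in which $X = \sum_i y_i\partial/\partial y_i$: indeed the condition $\nabla X = \mathrm{id}$ forces the components of $X$ in any flat chart $(z_i)$ to satisfy $\partial X^j/\partial z_i = \delta_i^j$, so $X^j = z_j + c_j$ for constants $c_j$, and translating by $(c_j)$—an affine transformation—gives the normal form. Since any two such ``radial'' charts are related by a coordinate change fixing the normal form $\sum y_i\partial/\partial y_i$, and since a translation $y\mapsto y + c$ changes this field to $\sum(y_i+c_i)\partial/\partial y_i$, the only affine transition maps compatible with the normal form on an overlap are those with trivial translational part, i.e.\ elements of $\GL_n(\RR)$. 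Thus the sub-atlas of radial charts defines a radiant affine structure refining the given one. In the singular case one simply runs this argument on $B\setminus\Delta$, where the affine structure is defined.

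The main obstacle, such as it is, is purely bookkeeping rather than conceptual: one must be careful that the two directions use slightly different notions of ``chart''—in the forward direction one starts from an atlas already known to have linear transitions, whereas in the backward direction one must \emph{construct} such an atlas from the vector field—and one should check that the radial charts produced in the converse do cover $B\setminus\Delta$ and are mutually compatible, which follows because the normal form is rigid up to $\GL_n$. Since the paper only needs to \emph{invoke} this equivalence for $\widetilde B_1$ once radiance of $\foD^1_{(\widetilde X,\widetilde D)}$ is established, it suffices to cite \cite[\S6.5.1]{Goldman} for the general statement and record the explicit vector field \eqref{Eq: vector field} together with the linear-invariance computation above; I would present the argument at roughly that level of detail rather than reproving the full theory of radiant manifolds.
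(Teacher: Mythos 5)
Your proof is correct, and the forward-direction computation (invariance of $\sum_i y_i\,\partial/\partial y_i$ under linear coordinate changes) is exactly the sketch the paper gives; the paper does not prove the proposition itself but simply cites \cite[\S 6.5.1]{Goldman}, which is also what you recommend. Your detailed argument for the converse via $\nabla X = \mathrm{id}$, normalization to $X^j = z_j + c_j$, and rigidity of the normal form under affine transitions is the standard one and fills in what the paper (and Goldman, whom it cites) leaves to the reader.
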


\begin{definition} If $B$ is a radiant manifold, $B'$ an affine linear
submanifold of $B$, we say $B'$ is a \emph{radiant submanifold} if $\vec\rho$
is tangent to $B'$ at all points of $B'$. In this case, $B'$ itself
is a radiant manifold.
\end{definition}

\begin{remark} Just as a radiant manifold is modeled on a vector
space $M_{\RR}$, a radiant submanifold is modeled on a \emph{linear}
subspace of $M_{\RR}$; i.e., the only affine linear subspaces of
$M_{\RR}$ which are radiant are the linear subspaces. 
\end{remark}

\begin{lemma}
\label{lem:radiant line}
Let $B$ be a radiant affine manifold with a point $O\in B$ where
the radiant vector field vanishes.
Let $\gamma:[0,1]\rightarrow B$ be an affine linear immersion.
If $O=\gamma(1)$, then $\gamma$ is tangent to $\vec\rho$ defined as in \eqref{Eq: vector field} everywhere.
\end{lemma}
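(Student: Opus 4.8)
The plan is to work in a fixed radiant affine chart around $O$ and use the structure of the radiant vector field $\vec\rho$. First I would choose an affine linear coordinate chart $\psi:U\to M_{\RR}$ defined on a neighbourhood $U$ of $O$ with $\psi(O)=0$; since $B$ is radiant and $\vec\rho$ vanishes at $O$, the vector field $\vec\rho$ is given in these coordinates by $\sum_i y_i\,\partial/\partial y_i$ (this is the content of the construction \eqref{Eq: vector field} together with Proposition \ref{Prop: equivalent definitions of radiant}, and the vanishing at $O$ is exactly what pins down the chart so that there is no constant term). The key feature of $\vec\rho$ in such coordinates is that its integral curves are precisely the rays through the origin: a point $y\ne 0$ has $\vec\rho(y)$ parallel to $y$ itself.

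Next I would use the hypothesis that $\gamma$ is an affine linear immersion with $\gamma(1)=O$. Restricting attention to $t$ near $1$, so that $\gamma(t)\in U$, the composition $\psi\circ\gamma:[1-\epsilon,1]\to M_{\RR}$ is an affine linear (hence of the form $t\mapsto a + (t-1)v$ for constants $a,v\in M_{\RR}$, with $v\ne 0$ by immersion) map with $\psi(\gamma(1))=0$, forcing $a=0$. Thus $\psi(\gamma(t)) = (t-1)v$, so for $t<1$ the point $\gamma(t)$ lies on the ray $\RR_{<0}v$, and its tangent vector $(\psi\circ\gamma)'(t) = v$ is parallel to $\psi(\gamma(t)) = (t-1)v$. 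Since $\vec\rho$ at the point with coordinate $(t-1)v$ is exactly $(t-1)v$ viewed as a tangent vector, we conclude $\gamma$ is tangent to $\vec\rho$ at every $t\in[1-\epsilon,1)$, and at $t=1$ both sides concern the zero/degenerate situation at $O$ which we handle by continuity (or simply note the statement is about $\gamma$ being an integral curve up to reparametrisation, which only needs to be checked away from the zero of $\vec\rho$).

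Finally I would propagate this local conclusion to all of $[0,1]$. The set $S=\{t\in[0,1] : \gamma \text{ is tangent to }\vec\rho\text{ at }\gamma(t)\}$ is closed by continuity of $\vec\rho$ and $\gamma'$; I have just shown it contains an interval $[1-\epsilon,1]$, so $S$ is nonempty. To see it is open (in $[0,1]$): near any $t_0\in S$ with $\gamma(t_0)\ne O$, pass to a radiant chart $\psi_0$ around $\gamma(t_0)$; tangency of the affine linear curve $\psi_0\circ\gamma$ to the linear radial vector field at one point of an affine line forces the line to pass through the origin of that chart (an affine line through a point $p$ with direction parallel to $p$ is the line $\RR p$), whence tangency holds on a whole neighbourhood. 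Connectedness of $[0,1]$ then gives $S=[0,1]$. The only genuinely delicate point is bookkeeping around the zero $O$ of $\vec\rho$ — the vector field degenerates there, so "tangent to $\vec\rho$" should be interpreted appropriately (e.g. the image of $\gamma$ is locally a ray emanating from $O$), but this is exactly what the affine-linearity of $\gamma$ and $\gamma(1)=O$ deliver, so no real obstacle arises; the argument is essentially the observation that in a radiant chart centred at $O$ the integral curves of $\vec\rho$ are the rays through the origin, and an affine line hitting $O$ is such a ray.
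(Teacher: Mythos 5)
Your proof is correct and follows essentially the same strategy as the paper's: in any radiant chart an affine line is either everywhere or nowhere tangent to the radial vector field, tangency holds near $t=1$ because the line passes through the origin of a chart centred at $O$, and this propagates along $[0,1]$ (the paper uses a finite chain of charts and induction where you use an open–closed connectedness argument, but these are the same idea).
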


\begin{proof}
We may find $0=t_0<t_1<\cdots<t_n=1$ and charts
$\psi_i:U_i\rightarrow M_{ \RR}$ with linear transition maps,
where $U_i$ is an neighbourhood of
$\gamma(t_i)$ and $\{\gamma^{-1}(U_i)\,|\,0\le i\le n\}$ covers
$[0,1]$ by connected intervals. Now because $\gamma$ is an affine 
linear immersion, 
$(\psi_i\circ\gamma)|_{\gamma^{-1}(U_i)}$ is an affine line, and
the image is either always tangent to $\vec\rho$ or never tangent.
Now if $\gamma(1)=O$, then in the chart $U_n$, it is clear that
this image is tangent to $\vec\rho$. Hence inductively the
same is true on all of $[0,1]$.
\end{proof}

Note that for an arbitrary log Calabi-Yau pair $(X,D)$, the affine structure 
constructed on the associated $(B,\P)$ is always radiant. In our situation,
we would like to say a bit more: while $\widetilde B$ is automatically
radiant, in fact $\widetilde B_1$ also is:
\begin{proposition}
\label{prop: height one radiant}
Let $\widetilde B_1$ be as in \eqref{Eq: p-1(i)}. Then 
$\widetilde B_1$ is radiant.
\end{proposition}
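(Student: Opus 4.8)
Proof plan for Proposition~\ref{prop: height one radiant}.

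\textbf{Setup.} Recall from Theorem~\ref{thm: affine structure on the big part} that $\widetilde B_1 = \widetilde p^{-1}(1)$ carries an integral affine structure, with the discriminant locus $\widetilde\Delta_1$ removed, covered by the two charts $\psi_U|_{U\cap\widetilde B_1}$ and $\psi_V|_{V\cap\widetilde B_1}$. By Proposition~\ref{Prop: equivalent definitions of radiant}, it suffices to produce a global radiant vector field on $\widetilde B_1\setminus\widetilde\Delta_1$, equivalently to check that all transition maps between the given charts have \emph{linear} (not merely affine) parts, i.e.\ fix the origin of $\overline M_\RR$. Actually more is true: $\widetilde B_1$ sits inside $\widetilde B$, which is the tropicalization of the log Calabi--Yau pair $(\widetilde X,\widetilde D)$ and hence is a cone complex, so its affine structure is automatically radiant (as remarked just before the proposition). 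The content of the statement is that the radiant structure on $\widetilde B$ restricts to one on the slice $\widetilde B_1$, i.e.\ that the radiant vector field $\vec\rho_{\widetilde B}$ on $\widetilde B$ is tangent to $\widetilde B_1$.

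\textbf{Main argument.} The plan is: first observe that $\widetilde p:\widetilde B\to\RR_{\ge 0}$ is an affine submersion (already noted in \S\ref{sec:The GS-locus}), so on $\widetilde B\setminus\widetilde\Delta$ one has the decomposition coming from the two open charts. In the chart $\psi_U = \Psi|_U$, the image lies in $\overline M_\RR = M_\RR\times\RR$ with $\widetilde B_1$ mapping to $M_\RR\times\{1\}$; the radiant vector field on $\widetilde B$, written in these coordinates $(m,r)$, is $\vec\rho = \sum_i m_i\partial_{m_i} + r\,\partial_r$, which is manifestly \emph{not} tangent to the slice $r=1$. So the naive restriction fails in the $U$-chart, and the right statement must be proved via a different, intrinsic radiant vector field on $\widetilde B_1$ itself. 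Concretely: use the piecewise linear identification $\Psi_1:\widetilde B_1\to M_\RR$ of \S\ref{sec:The GS-locus}. On the open set $U\cap\widetilde B_1$ this is literally the linear chart $\psi_U$ composed with projection, so the Euler vector field $\vec\rho_1 := \sum_i m_i\partial_{m_i}$ on $M_\RR$ pulls back to a well-defined vector field there. On $V\cap\widetilde B_1$, the chart is $\psi_V$, and by \eqref{eq:psii formula} the transition $\psi_V\circ\psi_U^{-1}$ restricted to height $1$ is $m\mapsto m + \varphi_i(\pi_i(m))\,m_i$ (as recorded in \eqref{Eq:psiv psiu}); since $\varphi_i$ is piecewise \emph{linear} (homogeneous of degree one) on the fan $\Sigma(\rho_i)$, this transition map is piecewise linear and homogeneous, hence its linear parts fix $0\in M_\RR$. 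Therefore $\vec\rho_1$ is preserved: $\psi_V$-pushforward of $\vec\rho_1$ equals the Euler field in $\psi_V$-coordinates as well, so the two local definitions agree on overlaps and glue to a global radiant vector field on $\widetilde B_1\setminus\widetilde\Delta_1$. Finally, near the discriminant locus, by Lemma~\ref{Lem: affine monodromy} the monodromy $T_\gamma(m) = m + \kappa^i_{\ul\rho}\delta(m)m_i$ is linear and fixes $0$; since $\vec\rho_1$ is the Euler field in any linear chart and is preserved by linear changes of coordinates, it is monodromy-invariant, hence descends to the quotient and extends (as a well-defined multivalued-free vector field) across $\widetilde\Delta_1$ in the sense needed — or, more cleanly, one simply notes the vector field is defined on $\widetilde B_1\setminus\widetilde\Delta_1$, which is all that ``radiant manifold with singularities'' requires. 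Invoking Proposition~\ref{Prop: equivalent definitions of radiant} then gives the claim.

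\textbf{Expected main obstacle.} The subtle point is not the computation but correctly bookkeeping \emph{which} radiant structure is meant: $\widetilde B_1$ does not inherit its radiant structure from $\widetilde B$ by naive restriction (the slice $r=1$ is affine but not linear in the ambient Euler coordinates). The real work is checking that the homogeneity of the toric divisorial piecewise-linear functions $\varphi_i$ is exactly what makes the transition maps in \eqref{eq:psii formula} homogeneous, so that $\Psi_1$-pulled-back Euler field is globally well defined; and then verifying compatibility across $\widetilde\Delta_1$ using the explicit linear monodromy of Lemma~\ref{Lem: affine monodromy}. Both are short once the correct formulation is in place, but getting that formulation right — and not conflating $\vec\rho_{\widetilde B}$ with $\vec\rho_1$ — is where care is required.

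\begin{proof}
By Proposition~\ref{Prop: equivalent definitions of radiant}, it suffices to exhibit a radiant vector field on $\widetilde B_1$, i.e.\ on $\widetilde B_1\setminus\widetilde\Delta_1$, where by Theorem~\ref{thm: affine structure on the big part} the integral affine structure is defined.

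Recall the piecewise linear identification $\Psi_1=\Psi|_{\widetilde p^{-1}(1)}:\widetilde B_1\to M_\RR$. On $U\cap\widetilde B_1$ the chart of Theorem~\ref{thm: affine structure on the big part} is $\psi_U=\Psi|_U$, which under the identification $\overline M_\RR = M_\RR\times\RR$ agrees with $\Psi_1$ on the slice. Let $\vec\rho_1$ denote the Euler vector field $\sum_i y_i\,\partial/\partial y_i$ on $M_\RR$ as in \eqref{Eq: vector field}, where $(y_i)$ are linear coordinates on $M_\RR$; pulling back along $\psi_U$ gives a vector field on $U\cap\widetilde B_1$, which we continue to denote $\vec\rho_1$.

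On $V\cap\widetilde B_1$ the chart is $\psi_V$, and by \eqref{eq:psii formula} together with \eqref{Eq:psiv psiu} the transition map, restricted to height one, is
\[
(\psi_V\circ\psi_U^{-1})|_{U\cap V\cap\widetilde B_1}(m)=m+\varphi_i(\pi_i(m))\,m_i,
\]
where $\pi_i:M_\RR\to M_\RR/\RR\rho_i$ is the quotient and $\varphi_i$ the piecewise linear function of \eqref{Eq: varphi-i}. Since $\varphi_i$ is piecewise linear with respect to the fan $\Sigma(\rho_i)$, it is positively homogeneous of degree one; hence on each maximal cone of the fan the transition map is a linear automorphism of $M_\RR$, and in particular its linear part fixes $0\in M_\RR$. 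The Euler field $\vec\rho_1$ is invariant under any linear change of coordinates, so the pull-back of $\vec\rho_1$ along $\psi_V$ on the overlap coincides with the vector field already defined via $\psi_U$. Therefore the locally defined vector fields patch to a single global vector field, still denoted $\vec\rho_1$, on $\widetilde B_1\setminus\widetilde\Delta_1$, and in every affine chart it is the Euler field, hence radiant.

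Finally, we check compatibility around the discriminant locus $\widetilde\Delta_1$. For $\ul\rho\in\Sigma(\rho_i)$ of codimension one, the affine monodromy around a loop $\gamma$ encircling the component $\widetilde\Delta_{\ul\rho}$ is, by Lemma~\ref{Lem: affine monodromy},
\[
T_\gamma(m)=m+\kappa^i_{\ul\rho}\cdot\delta(m)\cdot m_i,
\]
a linear transformation fixing $0$. As $\vec\rho_1$ is the Euler field in each linear chart and linear automorphisms preserve the Euler field, $\vec\rho_1$ is monodromy invariant around every component of $\widetilde\Delta_1$. Thus $\vec\rho_1$ is a well-defined radiant vector field on $\widetilde B_1\setminus\widetilde\Delta_1$, and by Proposition~\ref{Prop: equivalent definitions of radiant} the affine manifold with singularities $\widetilde B_1$ is radiant.
\end{proof}
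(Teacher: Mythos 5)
Your proof is correct and takes essentially the same approach as the paper: both arguments come down to observing that $\psi_V\circ\psi_U^{-1}$, restricted to height $1$, is $m\mapsto m+\varphi_i(\pi_i(m))m_i$, which is linear on each maximal cone because $\varphi_i$ is piecewise linear on $\Sigma(\rho_i)$. The paper states this directly as linearity of the transition maps, while you rephrase it via the Euler vector field and Proposition~\ref{Prop: equivalent definitions of radiant}; the added paragraph about monodromy around $\widetilde\Delta_1$ is harmless but unnecessary, since radiance of a manifold with singularities is a condition only on $\widetilde B_1\setminus\widetilde\Delta_1$.
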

\begin{proof}
The charts on $\widetilde B_1$ are the restrictions of
the charts on $\widetilde B$ described in Theorem 
\ref{thm: affine structure on the big part}, so it suffices
to check these restrictions are linear. But the transition map between $U$ and 
$\mathrm{Star}(\tilde\rho_i)$ is given by
\[
\psi_V\circ \psi_U^{-1}(m,1)=(m+\varphi_i(\pi_i(m)) m_i,1).
\]
This is defined on $\widetilde B_1\cap\tilde\sigma$ for $\tilde\sigma\in\widetilde
\Sigma$ a maximal cone containing $\tilde\rho_i$, and $\varphi_i$
is linear on $(\sigma+\RR\rho_i)/\RR\rho_i$. Thus
$m\mapsto \varphi_i(\pi_i(m)) m_i$ is a linear map on $M_{\RR}$, and hence the
transition maps are indeed linear after restricting to $\widetilde B_1$.
\end{proof}

\begin{definition}
\label{Def: radiant scattering diagram}
We say a scattering diagram $\foD$ on a radiant affine manifold $B$
is \emph{radiant} if for each wall $(\fod,f_{\fod})\in\foD$,
the affine submanifold $\fod$ is a radiant submanifold.
\end{definition}

\begin{remark}
\label{rem:affine sub our case}
In the case of $\widetilde B_1$, note that
the piecewise linear map $\Psi|_{\widetilde B_1}:\widetilde B_1
\rightarrow M_{\RR}$
preserves the radiant vector fields where $\Psi$ is linear. Indeed, the chart
$\psi_U$ on $U$ is the restriction of $\Psi$, and hence identifies
radiant vector fields on $U\cap \widetilde B_1$ and $M_{\RR}$.
As a consequence,
an affine subspace $B'\subseteq \widetilde B_1$ is radiant
if and only $\Psi|_{\widetilde B_1}(B')\subseteq M_{\RR}$ spans
a linear subspace of $M_{\RR}$ of the same dimension as $B'$. This is even the case when $B'$ is contained in the locus
$\widetilde B_1\setminus U$,
where $\Psi$ is not linear, as this set is a union of
cells of the form $\rho_{\infty}$ (in the notation of
\S\ref{rem:phi disc}) for $\rho\in\Sigma$
a codimension one cone containing $\rho_i$ for some $i$. 
However, if $B'\subseteq
\rho_{\infty}$, then $B'$ is radiant if and only if
$\psi_V|_{\widetilde B_1}(B')\subseteq M_{\RR}$ spans
a linear subspace of the same dimension as $B'$,
if and only if
$\Psi|_{\widetilde B_1}(B')\subseteq M_{\RR}$
spans a linear subspace of the same dimension as $B'$, as follows
since $\psi_V\circ\psi_U^{-1}$ is linear.
\end{remark}

Our immediate goal is to prove:

\begin{theorem} 
\label{Thm: radiant scatter}
The scattering diagram $\foD_{(\widetilde{X},\widetilde{D})}^1$ on $\widetilde B_1$ is
equivalent to a radiant scattering diagram.
\end{theorem}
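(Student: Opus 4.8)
The plan is to prove radiance by induction on the order of the defining ideal $I\subseteq Q$, showing that at each stage the new walls produced in the scattering process are forced to be radiant. The key observation is that every wall of $\foD^1_{(\widetilde X,\widetilde D)}$ arises as $\fod=h(\tau_{\out})\cap\widetilde B_1$ for a wall type $\tau$, and $h(\tau_{\out})=(h(\tau_v)+\RR_{\ge0}u_\tau)\cap\sigma$ by \eqref{eq:tauout tauv}. So one must show two things: (i) the rays $\RR_{\ge 0}u_\tau$ corresponding to output legs point in ``linear'' directions with respect to $\Psi_1$, i.e. the affine span of $\fod$ is a radiant submanifold of $\widetilde B_1$; and (ii) this property propagates through consistency. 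Since $\widetilde B_1$ is radiant by Proposition \ref{prop: height one radiant}, and by Remark \ref{rem:affine sub our case} radiance of $\fod$ can be checked after applying the piecewise-linear map $\Psi_1$, the whole argument can be phrased in terms of $M_\RR$.

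\textbf{Key steps.} First I would set up the induction: replace $\foD^1_{(\widetilde X,\widetilde D)}$ by an equivalent minimal scattering diagram (Remark \ref{Rem: minimal D}) and work modulo successive ideals $I$, using the compatible system viewpoint of Remark \ref{rem:equivalence}. Second, I would analyze the base case, namely the walls coming from ``initial'' contributions — these are the walls of $\foD_{(X_\Sigma,H),\inc}$ pushed onto $\widetilde B_1$, which by construction are tropical hypersurfaces of the $H_{ij}$ inside cones of $\Sigma$; these are manifestly cones through the origin, hence radiant. Third, the inductive step: suppose $\foD$ consistent modulo $I$ is radiant, and we add walls to make it consistent modulo $I'\supseteq I$ with $I'$ one ``step'' larger. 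A new wall $(\fod,f_\fod)$ is attached to a joint $\foj$, and $\fod=\foj+\RR_{\ge 0}u$ where $u$ is a $\ZZ$-linear combination (with the balancing relation) of the directions $\bar m$ of the walls $\fod'\ni\foj$ already present. Here is where I use the balancing results: by Theorem \ref{thm:balancing general} away from $\widetilde\Delta$ (and Proposition \ref{prop:delta balancing} at $\widetilde\Delta^i$, where balancing holds only modulo $\ZZ m_i$), the output direction $u$ satisfies a balancing condition, and combined with the fact that the joint $\foj$ is itself radiant (being a face-intersection of radiant cells, via the inductive hypothesis on the walls through it together with the cells of $\P_1$), one concludes that the affine span of $\fod$ passes through $0$. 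The crucial point is that $\Psi_1$-images of directions $\bar m$ appearing in wall functions are tangent to cells of $\Sigma$, so linear combinations of them together with the linear span of $\Psi_1(\foj)$ give a linear subspace.

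\textbf{Main obstacle.} The hard part will be the behavior at the discriminant locus $\widetilde\Delta^i$, where the affine monodromy is nontrivial (Lemma \ref{Lem: affine monodromy}) and balancing only holds modulo $\ZZ m_i$ (Proposition \ref{prop:delta balancing}). One must check that the ``ambiguity'' $\ZZ m_i$ in the balancing direction does not destroy radiance — i.e. that the extra $m_i$-component of an output direction, which records how a wall bends crossing the singular locus, is itself consistent with the wall lying in the inverse image of a linear subspace. This should follow from the explicit description in \S\ref{subsubsec:geometry of strata} and \S\ref{rem:phi disc}: the relevant walls near $\widetilde\Delta_{\ul\rho}$ live in the cells $\rho_\infty,\rho_0,\rho^\pm$, all of which are cones over the origin in $\widetilde B_1$, and the parallel transport map $\wp$ of \eqref{Eq: wp def} only modifies the $Q^\gp$-component (the curve-class exponent $F_i$), not the tangent direction modulo $m_i$; since $m_i$ itself spans a ray through the origin, adding multiples of it keeps the affine span linear. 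I would also need to verify that after the refinement of walls needed to make each wall lie in a single cone of $\widetilde\Sigma$, radiance is preserved (it is, since subdividing a radiant cone gives radiant pieces). Finally, I would record that the resulting radiant diagram is still equivalent to $\foD^1_{(\widetilde X,\widetilde D)}$, completing the proof.
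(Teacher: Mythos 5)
There is a genuine gap in the inductive step. You model the appearance of a new wall as $\fod=\foj+\RR_{\ge 0}u$ with $\foj$ a joint of the lower-order diagram and $u$ a balanced combination of the directions of walls already present through $\foj$. That is the Kontsevich--Soibelman algorithmic picture, which governs $\foD_{(X_\Sigma,H)}=\Scatter(\foD_{(X_\Sigma,H),\inc})$ but not the canonical scattering diagram: the walls of $\foD^1_{(\widetilde X,\widetilde D)}$ are defined directly as $h(\tau_{\out})$ for wall types $\tau$ via punctured Gromov--Witten theory, and there is no a priori reason a wall new at order $k+1$ emanates from a joint of the order-$k$ diagram. What one does know (Lemma \ref{lem:canonical repulsive}) is that each wall has a \emph{repulsive} facet $\foj$, and by Proposition \ref{Prop: repulsive radiance suffices} it suffices to show one such facet is radiant. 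But that facet may lie in the interior of a maximal cell of $\P_1$ and be met only by other order-$(k+1)$ walls, none yet known to be radiant — so your appeal to "$\foj$ is radiant as a face-intersection of radiant cells, via the inductive hypothesis on the walls through it" is circular exactly in the case that matters. The paper closes this by a walking-backwards argument: using consistency in codimensions zero and one around $\foj$ (together with Proposition \ref{prop:order increasing} and the radiance of codimension-two cells of $\P_1$ away from $\widetilde\Delta_1$), it shows that either $\foj$ is forced to be radiant, or there is another wall with the same direction on the far side of $\foj$, and one iterates; finiteness of $\foD^1_{(\widetilde X,\widetilde D),k+1}$ terminates the process. Nothing in your proposal plays this role.

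A second, related gap is at the discriminant locus. You correctly identify that the $\ZZ m_i$ ambiguity in balancing is the danger, and that a wall tangent to $m_i$ near $\widetilde\Delta_{\ul\rho}$ is automatically radiant. But the substantive claim — that a wall with repulsive facet on $\widetilde\Delta_1^i$ \emph{must} have $m_i$ tangent to it — does not follow from balancing or from the description of $\wp$; it is Lemma \ref{Lem: walls lie on radiant directions}, whose proof constructs an explicit broken line bending at a hypothetical non-$m_i$-tangent wall and derives a contradiction with consistency of the theta functions $\vartheta^{\foj}_{m_i}$ at two nearby points (working to the higher order $\ell=k+1+A\cdot F_i$ to see the effect past the kink $t^{F_i}$). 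Balancing of the tropical maps constrains the \emph{directions} of edges at vertices, not the position of the $(n-2)$-dimensional family $h(\tau_v)$ relative to the origin, so it cannot by itself yield radiance.
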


\begin{remark}
Recall that the scattering diagram $\foD_{(\widetilde{X},\widetilde D)}^1$
involves a choice of an ideal $I$. So the above statement should be
interpreted as saying this holds for every ideal $I$.
\end{remark}

\begin{remark}
Morally, there is a simple reason to expect the radiance of
$\foD_{(\widetilde{X},\widetilde{D})}^1$. Indeed, suppose instead
of using the degeneration $\epsilon_{\mathbf P}:\widetilde X\rightarrow
\AA^1$, we used a trivial degeneration
$\epsilon:X\times\AA^1\rightarrow \AA^1$. Then the the
affine manifold associated to
$\big(X\times \AA^1, (D\times \AA^1)\cup (X\times\{0\})\big)$ is just
$B\times \RR_{\ge 0}$. The corresponding 
scattering diagram is simply obtained from $\foD_{(X,D)}$
by replacing each wall $(\fod,f_{\fod})$ with $(\fod\times \RR_{\ge 0},
f_{\fod})$. It is then clear that restriction to $B\times \{1\}$ gives
a radiant scattering diagram. 

Now in fact $\widetilde X$ and $X\times\AA^1$ are equal after
removing their respective boundaries, and this equality extends to
a birational map between $\widetilde X$ and $X\times\AA^1$. 
Experience from \cite{ghkk} suggests that scattering
diagrams associated with such birationally equivalent spaces should
be closely related via moving worms (see \cite[\S3.3]{KS}), 
and in particular have the same support. However,
the necessary theory of birational invariance of punctured invariants
has not been developed in a way which would permit us to prove such
a result directly.
\end{remark}

Before we embark on several lemmas for the proof, we set up notation for
proving this inductively.

\begin{definition}
\label{Def: repulsive}
Let $\foD$ be a scattering diagram and let $(\fod,f_{\fod})\in\foD$ be a wall. We say a facet (i.e., a codimension one face)
$\foj\subseteq \fod$ is \emph{repulsive} if
$f_{\fod}=f_{\fod}(z^{-v})$ and viewing $v\in \Lambda_{\fod}$ as a
tangent vector at a point $y\in\foj$, we have $v$ pointing into $\fod$. 
\end{definition}
\begin{lemma}
\label{lem:canonical repulsive}
Every wall of $\foD_{(\widetilde X,\widetilde D)}^1$
has a repulsive facet.
\end{lemma}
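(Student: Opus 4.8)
\textbf{Proof plan for Lemma \ref{lem:canonical repulsive}.}

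The plan is to trace through the construction of the walls of $\foD_{(\widetilde X,\widetilde D)}^1$ back to the wall types in $\foD_{(\widetilde X,\widetilde D)}$ and locate a facet on which the output monomial points inward. First I would recall that a wall of $\foD_{(\widetilde X,\widetilde D)}^1$ is $(\fod\cap\widetilde B_1, f_{\fod}^{\ind(\fod)})$ for a wall $(\fod,f_{\fod})$ of $\foD_{(\widetilde X,\widetilde D)}$, and such a wall arises from a wall type $\tilde\tau=(\tau,\ul\beta)$ with $\tau=(G,\bsigma,\mathbf u)$ having a single leg $L_{\out}$, output contact order $u=\mathbf u(L_{\out})$, and $\fod=h(\tau_{\out})$. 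By the structure of a wall type, $\tau_{\out}$ is an $(n-1)$-dimensional cone of the polyhedral complex $\Gamma$ with $\tau_v\subseteq\tau_{\out}$ a face, where $v$ is the vertex of $G$ adjacent to $L_{\out}$ and $\pi_{\trop}|_{\tau_v}$ is an isomorphism onto $Q_{\tau,\RR}^\vee$; and from \eqref{eq:tauout tauv} we have $h(\tau_{\out}) = (h(\tau_v) + \RR_{\ge 0} u) \cap \sigma$. Intersecting with $\widetilde B_1$, the facet $\foj := h(\tau_v)\cap \widetilde B_1$ of $\fod\cap\widetilde B_1$ is a natural candidate for the repulsive facet: by construction $\fod\cap\widetilde B_1$ is swept out from $\foj$ by translating in the direction $u$, so $u$ (as a tangent vector to the wall at a point of $\foj$) points from $\foj$ into $\fod\cap\widetilde B_1$.

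Next I would check that this $\foj$ is genuinely a facet, i.e., that $h(\tau_v)$ really is codimension one in $h(\tau_{\out})$ and that the intersection with $\widetilde B_1$ does not degenerate; this follows since $\dim h(\tau_{\out}) = n-1$ and $\dim Q_{\tau,\RR}^\vee = n-2$ from the wall type axioms, so $h(\tau_v)$ (isomorphic image of $Q_{\tau,\RR}^\vee$) is $(n-2)$-dimensional, and the extra ray direction $u$ is not tangent to $h(\tau_v)$ — otherwise $h(\tau_{\out})$ would not be $(n-1)$-dimensional. Since $\widetilde p$ restricted to the cone $h(\tau_{\out})$ is (a multiple of) an affine submersion onto $\RR_{\ge 0}$ (using $p_*(u)=0$ from Theorem \ref{thm:relative diagram}, the vectors $u$ are tangent to fibres of $\widetilde p$), slicing at height one lowers each dimension by one compatibly, so $\foj$ is a codimension one face of $\fod\cap\widetilde B_1$. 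The remaining point is that $f_{\fod}^{\ind(\fod)} = f_{\fod}(z^{-u})^{\ind(\fod)}$ has direction $u$ in the sense of Definition \ref{Def: canonical scattering walls}: indeed $f_{\fod} = \exp(k_\tau N_{\tilde\tau}t^{\ul\beta}z^{-u})$ has every monomial a power of $z^{-u}$ times a scalar in $\kk[Q]$, and raising to a positive power preserves this. Hence with $v := u$ in the notation of Definition \ref{Def: repulsive}, we have $f_{\fod}^{\ind(\fod)} = f_{\fod'}(z^{-v})$ and $v$ points into the wall at $\foj$, so $\foj$ is repulsive.

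The main obstacle I anticipate is not conceptual but bookkeeping: making sure the facet $h(\tau_v)\cap\widetilde B_1$ survives as an honest facet rather than collapsing, and handling the walls of $\foD_{(\widetilde X,\widetilde D)}^1$ that have been refined so as to lie inside a single cone $\sigma\cap\widetilde B_1$ of $\P_1$ — for a refined sub-wall one must identify a facet transverse to the output direction $u$ within the refined piece, which one can always do by choosing the facet of the refined polyhedron that is "closest" to $h(\tau_v)$ along the $-u$ direction, so that $u$ still points inward. One should also confirm that the direction $v$ in the definition is allowed to be non-primitive or that one can rescale to a primitive generator of $\Lambda_{\fod}$ parallel to $u$ without changing which facet is repulsive; this is harmless since repulsiveness only depends on the ray $\RR_{\ge 0}u$. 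With these routine checks in place the lemma follows directly from the construction of the canonical scattering diagram in \S\ref{Sec: canonical scattering}.
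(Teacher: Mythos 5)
Your proposal is correct and follows essentially the same route as the paper, which simply observes that the claim is immediate from \eqref{Eq: walls of canonical} and \eqref{eq:tauout tauv}: the facet $h(\tau_v)\cap\widetilde B_1$ is exactly the one from which the output direction $u$ points inward. Two small remarks: the dimension count should use $\dim\widetilde X=n+1$ (so $\dim Q^\vee_{\tau,\RR}=n-1$ and $\dim h(\tau_{\out})=n$ before slicing by $\widetilde B_1$), and the worry about refined walls does not arise here, since \eqref{Eq: The GS diagram at heght one} defines $\foD^1_{(\widetilde X,\widetilde D)}$ directly from $\foD_{(\widetilde X,\widetilde D)}$ without any refinement — each wall of the canonical scattering diagram already lies in a single cell of $\widetilde\P$ by the wall-type axioms.
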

\begin{proof}
This is immediate from the construction of the canonical scattering
diagram, and in particular \eqref{Eq: walls of canonical}
and \eqref{eq:tauout tauv}.
\end{proof}

\begin{proposition}
\label{Prop: repulsive radiance suffices}
If $(\fod,f_{\fod})\in\foD^1_{(\widetilde X,\widetilde D)}$ with one of 
the repulsive facets of $\fod$ radiant, then $\fod$ is radiant.
\end{proposition}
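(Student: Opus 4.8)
The plan is to use the radiance of $\widetilde B_1$ established in Proposition \ref{prop: height one radiant}, together with Lemma \ref{lem:radiant line}, to promote a single radiant facet of $\fod$ to radiance of all of $\fod$. By Remark \ref{rem:affine sub our case}, it suffices to show that $\Psi|_{\widetilde B_1}(\fod)$ spans a linear subspace of $M_{\RR}$ of dimension equal to $\dim\fod = n-1$; equivalently, that $\fod$ is a radiant submanifold of $\widetilde B_1$, i.e.\ that the radiant vector field $\vec\rho$ is tangent to $\fod$ at every point of $\fod$.

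First I would recall the structure of the wall $\fod$ coming from \eqref{Eq: walls of canonical} and \eqref{eq:tauout tauv}: writing $\tilde\tau = (\tau,\ul\beta)$ for the wall type producing $(\fod,f_{\fod})$, we have $\fod = h(\tau_{\out})$, and $h(\tau_{\out}) = (h(\tau_v) + \RR_{\ge 0} u)\cap\sigma$, where $\tau_v$ is a face of $\tau_{\out}$ with $h|_{\tau_v}$ an affine-linear embedding of an $(n-2)$-dimensional cone. So $\fod$ is swept out by the rays $y + \RR_{\ge 0}u$ as $y$ ranges over the $(n-2)$-dimensional piece $h(\tau_v)$, and $h(\tau_v)$ is precisely (the closure of) the repulsive facet determined by the direction $v = u$ in Definition \ref{Def: repulsive}. (If $\fod$ has more than one repulsive facet the argument below applies verbatim to whichever one is assumed radiant; I would note at the outset that the repulsive facet of the hypothesis is this $h(\tau_v)$, or a translate of it inside $\partial\sigma$, by Lemma \ref{lem:canonical repulsive}.) Thus $\fod$ is the join, in the affine-linear sense, of the repulsive facet $\foj$ and the direction vector $u$.

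The key step is then purely linear-algebraic, carried out after transporting everything to $M_{\RR}$ via the chart in which $\fod$ lives (we may assume, as in the set-up around \eqref{Eq: wall upsilon canonical}, that $\fod\subseteq\tilde\sigma\in\widetilde\Sigma$ for a single maximal cone, on which $\Psi$ — or $\psi_U$, $\psi_V$ — is linear). The repulsive facet $\foj$ is assumed radiant, so $\Psi_1(\foj)$ spans an $(n-2)$-dimensional linear subspace $L\subseteq M_{\RR}$; concretely $\vec\rho$ is tangent to $\foj$ at every point of $\foj$. Since $\fod = \foj + \RR_{\ge 0}\Psi_1(u)$ as subsets of the linear chart, it remains to see that $\vec\rho$ is tangent to $\fod$ along the ray directions as well, i.e.\ that $\Psi_1(u)\in L$. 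Here is where I expect the real content to sit, and where I would invoke Lemma \ref{lem:radiant line}: the leg $L_{\out}$ of the type $\tau$, with direction $u$, is part of a connected tropical curve all of whose other legs/edges eventually reach the origin $0\in\widetilde B_1$ (the family of tropical curves is finite and the non-$L_{\out}$ portion is compact, so every unbounded ray of it, traced back, terminates at a vertex over which the curve is anchored; more precisely the vertex $v$ adjacent to $L_{\out}$ lies on $h(\tau_v)$, which is a radiant $(n-2)$-cone meeting $0$). Since $0$ is the point where $\vec\rho$ vanishes and $h(\tau_v)$ is an affine-linear cone through $0$, Lemma \ref{lem:radiant line} (applied to the segment from a point of $\Int(\foj)$ to $0$) shows $\vec\rho$ is tangent to $h(\tau_v)=\foj$ — which we already knew — but, crucially, the balancing conditions of Theorem \ref{thm:balancing general} and Proposition \ref{prop:delta balancing} at the vertex $v$ express $u$ as a $\ZZ$-linear combination of the directions of the other edges/legs at $v$, each of which is tangent to $\foj$ or differs from a tangent vector to $\foj$ by a multiple of some $m_i$; tracking these through the chart and using that $m_i$ itself is tangent to the relevant cone (as in the incoming-wall discussion after \eqref{Eq: wall upsilon canonical}) forces $\Psi_1(u)\in L$.

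\textbf{Main obstacle.} The delicate point is precisely the last step: showing $\Psi_1(u)$ lies in the linear span $L$ of the radiant facet, rather than merely in its affine span. The affine span of $\foj$ passes through $0$ (because $\foj$ is radiant), so this reduces to showing $u$ is a \emph{linear} combination — with the right vanishing of constant term — of vectors already in $L$, which is exactly what balancing at $v$ gives, \emph{provided} one is careful about the cases where $h_s(v)$ lies on the discriminant locus $\widetilde\Delta^i$: there balancing only holds modulo $\ZZ m_i$ (Proposition \ref{prop:delta balancing}), so one must separately check that $m_i$ itself maps into $L$ under $\Psi_1$. This follows because $\rho_i\subseteq\rho$ for the relevant codimension-one $\rho$, so $m_i$ is tangent to $\rho$, hence its image is in the linear span of $\Psi_1(\rho_\infty)\supseteq L$; the bookkeeping identifying the chart $\psi_V$ on $\mathrm{Star}(\tilde\rho_i)$ with $\psi_U$ up to the linear map $m\mapsto m + \varphi_i(\pi_i(m))m_i$ (Theorem \ref{thm: affine structure on the big part}) then closes the gap. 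I would present this as a short lemma isolating the claim ``$\Psi_1(u)\in L$'' and prove it by the balancing-plus-chart-comparison argument just sketched, after which radiance of $\fod$ is immediate from Remark \ref{rem:affine sub our case}.
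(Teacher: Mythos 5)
Your reduction ``it remains to see that $\vec\rho$ is tangent to $\fod$ along the ray directions as well, i.e.\ that $\Psi_1(u)\in L$'' is where the argument goes wrong, and this is a genuine error rather than a gap. If $\Psi_1(u)$ lay in $L$ (the $(n-2)$-dimensional linear span of $\Psi_1(\foj)$), then $\fod=\foj+\RR_{\ge 0}u$ would also span $L$ and hence have dimension at most $n-2$, contradicting that walls are codimension one. So the condition you set out to prove is not only unnecessary — it is false, and the long detour through balancing (Theorem \ref{thm:balancing general}, Proposition \ref{prop:delta balancing}), Lemma \ref{lem:radiant line}, and the chart comparison near $\widetilde\Delta^i$ is spent chasing a claim that cannot hold. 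The underlying confusion is between ``$\vec\rho$ tangent to $\fod$'' and ``$u$ in the span of $\foj$.''

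The actual argument is far more elementary and needs none of the tropical machinery. In the linear chart $\Psi|_{\widetilde B_1}$ on the cone $\sigma\supseteq\fod$, the radiant vector field at a point $x$ is just $x$. Near $y\in\foj$ write $\fod=\foj+\RR_{\ge 0}v$, with $v$ the wall direction. For $y'\in\foj$ near $y$, the radiant vector at $y'+\epsilon v$ is $y'+\epsilon v$. Radiance of $\foj$ says $y'$ is tangent to $\foj\subseteq\fod$; and $v$ is tangent to $\fod$ because it is the direction of the wall. Hence $y'+\epsilon v$ is a sum of two vectors tangent to $\fod$, so $\vec\rho$ is tangent to $\fod$ at $y'+\epsilon v$. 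Since $\fod$ is an affine-linear subset and tangency of $\vec\rho$ is an affine condition, this local tangency propagates to all of $\fod$, giving radiance. You do not need $v\in L$; you only need $v$ tangent to $\fod$, which is built into the definition of the wall direction. I would recommend you excise the entire ``main obstacle'' paragraph and the balancing argument, and instead observe directly that the radiant vector decomposes as (tangent to $\foj$) $+$ (multiple of the wall direction).
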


\begin{proof}
Let $\foj\subseteq\fod\subseteq\sigma\in\P_1$ be a repulsive facet. 
Recall that $\Psi|_{\widetilde B_1}$ induces a linear embedding of
$\sigma$ in $M_{\RR}$. Using this embedding, we may describe
$\fod$, locally near a point $y\in\foj$, as $\foj+\RR_{\ge 0}v$. Further,
under this embedding, the radiant vector field at a point $x\in M_{\RR}$
is just $x$. Thus if $y'\in\foj$ near $y$
then the radiant
vector at $y'+\epsilon v$ is $y'+\epsilon v$. 
By assumption, $y'$ is tangent to $\foj$, and thus
$y'+\epsilon v$ is tangent to $\fod$.
\end{proof}

\begin{definition}
Fix once and for all a divisor $A$ on $\widetilde X$
which is relatively ample for the morphism $\epsilon_{\mathbf P}:\widetilde X
\rightarrow \AA^1$ of \eqref{Eq: The degeneration}. We assume that $A^{\perp}\cap Q=\{0\}$; if necessary, we may shrink $Q$. 
Define 
\[
\fom_k:=\{p\in Q\,|\, A\cdot p \ge k\}.
\]
Note this is a monomial ideal in $Q$, and that $\fom_1=Q\setminus \{0\}$
is the maximal monomial ideal of $Q$. We will also denote by $\fom_k$ the corresponding ideal in $\kk[Q]$. If $p\in Q$ or $\alpha=ct^p\in \kk[Q]$, then we say the \emph{order of $p$
or $\alpha$} is $A\cdot p$.
We say a wall $(\fod,f_{\fod})$
is \emph{trivial to order $k$} if
\[
f_{\fod}\equiv 1 \mod \fom_{k+1}.
\]
Otherwise we say the wall is \emph{non-trivial to order $k$}.
\end{definition}

\begin{remark}
Since any ideal $I$
with $\sqrt{I}=\fom$ must contain $\fom_k$ for some sufficiently large
$k$, it is enough to prove Theorem \ref{Thm: radiant scatter}
for $I=\fom_k$, which we do inductively on $k$. Since the statement
is only claimed to be true up to equivalence, we now make a good choice of 
equivalent scattering diagram. To do so, we make use of the following
general observation:
\end{remark}

\begin{lemma}
\label{lem:product uniqueness}
Let $M=\ZZ^n$, $P$ be a monoid equipped with a monoid homomorphism
$r:P\rightarrow M$, and let $\fom_P=P\setminus P^{\times}$.
Write $\widehat{\kk[P]}$ for the completion of $\kk[P]$ with respect
to the monomial ideal $\fom_P$. Let
$f\in\widehat{\kk[P]}$ satisfy $f\equiv 1 \mod \fom_P$. Then there is a unique
convergent infinite product expansion
\begin{equation}
\label{Eq: infinite product}
f=\prod_{m\in M_{\mathrm{prim}}\cup\{0\}}f_m,
\end{equation}
where $M_{\mathrm{prim}}$ denotes the set of primitive elements of
$M$, and $f_m\in\widehat{\kk[P]}$ 
has the properties that (1) every monomial $z^p$ appearing in $f_m$
satisfies $r(p)$ positively proportional to $m$ and (2) $f_m\equiv 1
\mod \fom_P$.
\end{lemma}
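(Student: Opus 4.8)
The statement is a $\fom_P$-adic factorization result, and the natural approach is to construct the factors $f_m$ one graded piece at a time, using the grading on $\kk[P]$ coming from the order function $p\mapsto A\cdot p$ as in the surrounding text (or, more intrinsically here, any monoid homomorphism $P\to\NN$ whose kernel meets $P$ only in $P^\times$; such a homomorphism exists since $\fom_P=P\setminus P^\times$ and we may choose $A$ relatively ample). Write $\widehat{\kk[P]}=\varprojlim \kk[P]/\fom_P^{(k)}$ where $\fom_P^{(k)}$ is spanned by monomials of order $\ge k$, and note $f\equiv 1\bmod \fom_P$ means $f=1+(\text{terms of order}\ge 1)$. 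First I would fix a total order on $M_{\mathrm{prim}}\cup\{0\}$ refining ``order of the smallest power of $z^p$ with $r(p)$ proportional to $m$,'' though in fact any enumeration works because the construction is order-by-order in the $\fom_P$-adic filtration.

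\textbf{Key steps.} (1) \emph{Existence.} Proceed by induction on $k$: suppose we have found $f_m^{(k)}\equiv 1\bmod\fom_P$, each satisfying property (1), such that $f\equiv \prod_m f_m^{(k)}\bmod \fom_P^{(k+1)}$ (finite product modulo this ideal, since only finitely many rays $m$ support monomials of order $\le k$). The discrepancy $f\cdot(\prod_m f_m^{(k)})^{-1}-1$ lies in $\fom_P^{(k+1)}$; decompose its degree-$(k+1)$ part as a sum $\sum_m g_m$ where $g_m$ collects exactly the monomials $z^p$ with $r(p)$ positively proportional to $m$ (and discard any monomials with $r(p)=0$ but $p\ne 0$ — but there are none of order $\ge 1$, since $r(p)=0$ forces $p\in P^\times=\ker(r)\cap P$... more carefully, one must observe that every $p\in\fom_P$ has $r(p)$ lying in a well-defined ray of $M$ or is $0$; monomials with $r(p)=0$ are absorbed into $f_0$). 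Then set $f_m^{(k+1)}=f_m^{(k)}(1+g_m)$. This preserves properties (1) and (2) and improves the approximation to order $k+1$. Taking the limit gives the infinite product \eqref{Eq: infinite product}, which converges $\fom_P$-adically because for each $k$ only finitely many factors differ from $1$ modulo $\fom_P^{(k+1)}$.

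(2) \emph{Uniqueness.} Suppose $\prod_m f_m=\prod_m f_m'$ with both families satisfying (1) and (2). Argue by induction on order again: assume $f_m\equiv f_m'\bmod \fom_P^{(k)}$ for all $m$. Then $\prod f_m\cdot(\prod f_m')^{-1}=1$ forces, after expanding to order $k$, that $\sum_m(\text{degree-}k\text{ part of }f_m f_m'^{-1})=0$; but monomials appearing in the $m$-th summand have $r(p)$ positively proportional to $m$, and since two distinct primitive $m,m'$ are never positively proportional, the summands have disjoint supports in $M$, hence each vanishes. (Here one uses that monomials of order $k\ge 1$ have $r(p)\ne 0$, or are lumped in $f_0$ where the ``ray'' is the zero ray — this is the one point deserving care.) So $f_m\equiv f_m'\bmod\fom_P^{(k+1)}$, completing the induction.

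\textbf{Main obstacle.} The routine part is the order-by-order bookkeeping; the only genuinely delicate point is the treatment of monomials $z^p$ with $r(p)=0$ but $p\notin P^\times$ — i.e., making precise which factor $f_m$ such a monomial belongs to and checking the decomposition of each graded piece into ``positively-proportional-to-$m$'' components is well-defined and exhaustive. One resolves this by declaring that $f_0$ collects exactly the monomials with $r(p)=0$, observing that ``positive proportionality to a primitive vector'' partitions $M\setminus\{0\}$ into the rays indexed by $M_{\mathrm{prim}}$, so together with the $r(p)=0$ case every monomial of $\fom_P$ is assigned to a unique factor; convergence and the disjoint-support argument for uniqueness then go through verbatim. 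No deep input is needed beyond $\fom_P$-adic completeness and the fact that $P^\times=\{0\}$ is not actually required — only that $r$ separates $P^\times$ — though in our application $P=M\oplus\bigoplus\NN^{s_i}$ and $P^\times=\{0\}$, so the statement simplifies.
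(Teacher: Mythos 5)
Your proof is correct and takes essentially the same order-by-order inductive approach as the paper; the paper inducts directly on powers of $\fom_P$ rather than introducing an auxiliary order filtration, but these filtrations are cofinal, so the two arguments are interchangeable. One caution about the parenthetical in your existence step: the claim that $r(p)=0$ forces $p\in P^\times$ is false in the cases of interest (take $P=M\oplus\bigoplus_i\NN^{s_i}$ with $r$ the projection; then $\ker(r)\cap P = \bigoplus_i\NN^{s_i}\neq P^\times$), so monomials $z^p$ with $p\in\fom_P$ and $r(p)=0$ genuinely occur in every positive order. Your self-correction — routing all such monomials into $f_0$, the factor indexed by $0\in M_{\mathrm{prim}}\cup\{0\}$ — is exactly the right fix and is precisely what the explicit $\{0\}$ in the index set of \eqref{Eq: infinite product} is there for; the paper's proof handles this case silently via the same partition of $\fom_P$ by the ray of $M$ containing $r(p)$ (or $r(p)=0$), with the update rule forced at each stage.
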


\begin{proof}
We show this modulo $\fom_P^k$ for each $k\ge 1$. The base case,
$k=1$, is vacuous. Suppose given a unique product expansion of
$f$ modulo $\fom_P^k$ given as
$\prod f_{m,k}$ as in \eqref{Eq: infinite product}
with $f_{m,k}=1$ for all but a finite number of $m$.
Then modulo $\fom_P^{k+1}$, we have
\[
f-\prod_m f_{m,k} = \sum_i c_iz^{p_i}
\]
for $c_i\in\kk$ and $p_i$ distinct elements of $P$ with $z^{p_i}\in
\fom_P^k\setminus \fom_P^{k+1}$. We then have no choice but to modify
the product expansion by taking
\[
f_{m,k+1} = f_{m,k}+\sum_{i\,:\,r(p_i)\in \ZZ_{>0} m} c_iz^{p_i}.
\]
\end{proof}

\begin{construction}
\label{Constr: GSk}
For any $k$,
let $\foD_{(\widetilde X,\widetilde D),k}^1$ denote the scattering
diagram with respect to the ideal $\fom_k$. Recall that this is a finite set. 
By Lemma \ref{lem:product uniqueness}, we may
replace $\foD_{(\widetilde X,\widetilde D),k}^1$ with a diagram equivalent modulo $\fom_{k+1}$ with
the property that for each $x\in \Supp(\foD_{(\widetilde X,\widetilde D),k}^1)\setminus \Sing(\foD_{(\widetilde X,\widetilde D),k}^1)$,
and for each integral and primitive $v$ tangent to $\Supp(\foD_{(\widetilde X,\widetilde D),k}^1)$ at $x$,
there is at most one wall whose attached function involves monomials
of the form $t^{\beta}z^{-nv}$ for $n>0$. Further, for any such
$x$, the set of wall functions of walls containing $x$ are then
uniquely determined.
\end{construction}

We first consider the consequences of consistency along the discriminant
locus. This is slightly delicate as the definition of consistency there
makes use of broken lines.

\begin{lemma}
\label{Lem: walls lie on radiant directions}
Fix a general point $x$ on the discriminant locus $\widetilde \Delta_1^i
\subseteq\widetilde B_1$. Suppose that $m_i$ is tangent to any wall of $\foD_{(\widetilde X,\widetilde D),k}^1$ 
containing $x$. Then if
$\fod\in \foD_{(\widetilde X,\widetilde D),k+1}^1$ with $x\in\fod$ contained in a repulsive
facet of $\fod$, we have $m_i$ tangent to $\fod$. In particular,
$\fod$ is radiant.
\end{lemma}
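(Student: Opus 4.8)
The plan is to use consistency of $\foD^1_{(\widetilde X,\widetilde D)}$ along the codimension-two joint $\foj=\widetilde\Delta^i_{\ul\rho}$ (containing $x$) together with the monodromy computation of Lemma \ref{Lem: affine monodromy}, which says that the affine monodromy around $\widetilde\Delta_{\ul\rho}$ is $T_\gamma(m)=m+\kappa^i_{\ul\rho}\delta(m)m_i$. The key structural input is that any monomial appearing in a wall function must be parallel-transportable: if $\fod$ contains $x$ and $f_\fod=f_\fod(z^{-v})$, then $(-v,q)\in\shP^+_y$ for $y\in\fod$ near $x$ must, under parallel transport around the loop $\gamma$, still land in $\shP^+$. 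I would first show that if $v$ has a nonzero component in the $m_i$-direction (i.e.\ $\delta(\bar v)\ne 0$ in the notation of \eqref{Eq:delta def}), then the monodromy $T_\gamma$ moves $\bar v$ by a nonzero multiple of $m_i$, and after sufficiently many loops the $Q$-order of the transported monomial must grow without bound by Proposition \ref{prop:order increasing} (applied at the standard pieces of singular locus). But the wall function lies in a fixed ring $\kk[\shP^+_x]/I_x$ with $I_x=(\fom_{k+1})_x$, so orders are bounded; hence actually we get a contradiction unless the monomial is monodromy invariant, forcing $\delta(\bar v)=0$, i.e.\ $\bar v\in\Lambda_{\rho_\infty}$, i.e.\ $v$ tangent to $\tilde\rho$.

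More carefully, I expect the argument runs as follows. Fix the notation of \S\ref{rem:phi disc}: the four cells $\rho_\infty,\rho_0,\rho^\pm$ around $\widetilde\Delta_{\ul\rho}$. Since $x$ is general on $\widetilde\Delta^i_1$, by Proposition \ref{Eq: Delta1} it lies on a unique $\widetilde\Delta_{\ul\rho}$, and the walls through $x$ are precisely those which are locally pieces of a scattering diagram on the local model $(B_{\foj},\P_\foj)$. The hypothesis that $m_i$ is tangent to every wall of $\foD^1_{(\widetilde X,\widetilde D),k}$ through $x$ controls the diagram modulo $\fom_{k+1}$: all the ``old'' walls respect the $m_i$-direction, so in particular consistency of $\foD^1_{(\widetilde X,\widetilde D),k}$ at $\foj$ involves only wall-crossing automorphisms that fix $m_i$. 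Now take $\fod\in\foD^1_{(\widetilde X,\widetilde D),k+1}$ with $x$ in a repulsive facet $\foj'\subseteq\fod$, so $f_\fod=f_\fod(z^{-v})$ with $v$ pointing into $\fod$ from $\foj'$; because $\fod$ is (after Construction \ref{Constr: GSk}) uniquely determined by its direction and the wall functions through $x$, and because modulo $\fom_k$ the diagram already respects $m_i$, the only new walls can carry monomials of order exactly $k$. For such a monomial $t^\beta z^{-v}$, apply balancing along $\widetilde\Delta$ (Proposition \ref{prop:delta balancing}): the contact order $v$ descends to $\Lambda_x/\ZZ m_i$ in a well-defined (monodromy-invariant) way, and the geometry of the $\PP^1$-bundle $p_i^{-1}(D_{\ul\rho})$ together with the parallel transport map $\wp$ of \eqref{Eq: wp def}--\eqref{eq:parallel transport P} shows that for the transported monomial to remain in $\shP^+$ on all four cells we need $v\in\Lambda_{\rho_\infty}$. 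This is exactly tangency of $\fod$ to $\tilde\rho$ at $x$, hence $m_i\in\Lambda_{\rho_\infty}$ is tangent to $\fod$, and then $\fod$ is radiant by Remark \ref{rem:affine sub our case} (its image under $\Psi_1$ spans a linear subspace, since a repulsive facet $\foj'$ with $v$ tangent to $\tilde\rho$ forces $\fod$ to be cut out by the linear conditions defining $\pi_i^{-1}(\ul\rho)$ near $x$) — or more directly by Proposition \ref{Prop: repulsive radiance suffices} once one observes the repulsive facet in question is radiant.

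The main obstacle I anticipate is the bookkeeping around \emph{why} a new non-$m_i$-tangent direction cannot appear, i.e.\ making precise the dichotomy ``either $v$ is monodromy invariant or its $Q$-order is unbounded.'' The subtlety is that consistency at a codimension-two joint is phrased via broken lines (Definition \ref{Def: Consistency in codim two}), not via a naive loop around the joint, because the discriminant locus is present; so I cannot simply say ``$\theta_\gamma=\id$.'' Instead I would need to track how a broken line with asymptotic direction transverse to $m_i$ interacts with the putative new wall: each wall-crossing adds a monomial whose exponent is a linear combination of the existing monomial and $-v$, and the only way to produce a consistent family of theta functions $\vartheta^{\foj}_m(p)$ — which, by the analogue of Theorem \ref{thm:consistency at deeper strata} applied at $\foj$, must lie in the fixed subring $\kk[\shP^+_x]/I_x$ — is for the new wall's direction $v$ to be compatible with the monodromy-invariant sublattice. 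Equivalently, I would invoke Lemmas \ref{Lem:functional equation} and \ref{lem: multiple covers} (forward-referenced in the introduction as the results controlling the structure near standard singular loci) to pin down that the only new contributions are ``multiple cover'' corrections along $\tilde\rho$ itself. Getting the logical dependencies straight — that these structural lemmas are available at this point and do not themselves depend on the present lemma — is the delicate part; assuming they are, the rest is the monodromy and balancing computation sketched above.
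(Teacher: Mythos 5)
Your proposal correctly identifies the central issue — at a joint sitting on the discriminant locus, consistency is phrased through broken lines and theta functions, not through $\theta_\gamma=\mathrm{id}$ — and you are right that broken lines are the mechanism. However, the specific arguments you offer to exploit this have real gaps.

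First, the monodromy argument in your opening paragraph does not go through. Definition \ref{Def: canonical scattering walls} only requires parallel transport of the exponents of $f_{\fod}$ along paths \emph{inside} $\fod$; there is no requirement that the exponent be invariant under parallel transport around a loop encircling $\widetilde\Delta_{\ul\rho}$. A wall $\fod$ with $m_i$ not tangent to it simply lies on one side of the discriminant locus, so its wall function never sees the monodromy. Relatedly, Proposition \ref{prop:order increasing} needs a single-valued representative for $\varphi$ on the relevant open star, and by Proposition \ref{Prop: varphi0} this is only available on $\widetilde B_1\setminus\overline V$, which excludes exactly the discriminant points you need. So there is no ``iterate the loop and watch the $Q$-order blow up'' route here.

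Second, your fallback — invoking Lemmas \ref{Lem:functional equation} and \ref{lem: multiple covers} — is circular: both of those lemmas are derived from Theorem \ref{Thm: radiant scatter}, whose proof invokes the present lemma as the inductive step at the discriminant locus. You flag this yourself as ``the delicate part,'' and indeed it is fatal for that version of the argument. Similarly, Theorem \ref{thm:consistency at deeper strata} is explicitly stated to fail when the joint lies in $\Delta$, which is precisely the situation here, so it cannot be used to put the theta functions into a common ring $\kk[\shP^+_x]/I_x$.

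What is actually needed, and what the paper does, is a hands-on construction: pass to the local model $(B_{\foj},\P_{\foj})$ with its projection to a two-dimensional surface $B'$ around the $A_{\kappa^i_{\ul\rho}}$-type singularity, and build a specific broken line with asymptotic direction $\pm m_i$ (chosen so it crosses $\rho^+$ before meeting $\fod$). Crossing $\rho^+$ forces the monomial to pick up a factor $t^{F_i}$ (from the kink of $\varphi$), so the new term produced by bending at $\fod$ has order $\ell=k+1+A\cdot F_i$, strictly larger than $k+1$. One must then work modulo $\fom_{\ell+1}$ and prove a careful bound (the Claim in the paper's proof) showing that no other broken line or wall-crossing can produce or cancel the $\alpha t^{F_i}z^{m_i-v}$ term; comparing $\vartheta^{\foj}_{m_i}$ at two nearby points $P, P'$ on opposite sides of the ray $\fod+\RR_{\ge 0}(v-m_i)$ then yields the contradiction. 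The step from ``$m_i$ tangent to $\fod$'' to radiance is the easy part and your sketch of it (via Remark \ref{rem:affine sub our case}) is fine; the gap is the broken-line construction and the order bookkeeping that make the non-cancellation precise without appealing to the as-yet-unproved structural lemmas.
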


\begin{proof}
In the notation of \eqref{Eq:delta rho def},
let $\widetilde\Delta_{\ul{\rho}}\in \scrP_1$ be the codimension two
cell containing the point $x$, for some codimension one cone 
$\ul{\rho}\in\Sigma(\rho_i)$.
Necessarily there is a joint $\foj$ of $\foD_{(\widetilde X,\widetilde D),k+1}^1$
with $x\in\Int(\foj)\subseteq \Delta_{\ul{\rho}}$, by the genericity 
assumption on $x$.

We need to use the definition of consistency at codimension two joints
as reviewed in \S\ref{Subsubsec: Consistency around codim two joints}.
In particular, the affine manifold $B_{\foj}$ has already been 
discussed in \S\ref{sec:The GS-locus}, with a natural quotient
$B'$, which is an affine surface with singularity.
We also make use of the notation introduced in \S\ref{rem:phi disc},
and we write $\rho_{\pm}, \rho_{\infty},\rho_0$ also
for the corresponding codimension one cells of $\P_{\foj}$.
The affine manifold $(B_{\foj},\scrP_{\foj})$ carries
a scattering diagram $\foD_{(\widetilde X,\widetilde D),k+1}^{1,\foj}$. 

Write
\[
\nu:B_{\foj}\rightarrow B'
\]
for the affine submersion obtained by dividing out by the
$\Lambda_{\widetilde\Delta_{\ul{\rho}},\RR}$ action on $B_{\foj}$.
This map will be useful for visualizing broken lines in $B_{\foj}$.
It is immediate from Lemma \ref{Lem: affine monodromy}
that the monodromy around the unique singularity of $B'$ takes
the form $\begin{pmatrix} 1& a \\0&1\end{pmatrix}$ for some
integer $a$, and that $\overline m_i:=\nu_* m_i$ is a monodromy invariant
tangent vector. The polyhedral decomposition $\scrP_{\foj}$
projects to a polyhedral decomposition $\scrP'$ of $B'$. 
Note that $\nu(\rho_{\infty})$ is a ray generated by $\overline m_i$
and $\nu(\rho_0)$ is a ray generated by $-\overline m_i$.

For each wall $\fod\in \foD_{(\widetilde X,\widetilde D),k+1}^{1,\foj}$, 
$\nu(\fod)$ is a ray in $B'$.
Let $L=\rho_{\infty}\cup \rho_0\subseteq B_{\foj}$,
so that $B_{\foj}\setminus L$ consists
of two connected components, which we write as $U$ and $U'$, say
with $U$ intersecting $\rho^+$.
\begin{figure}
\center{\input{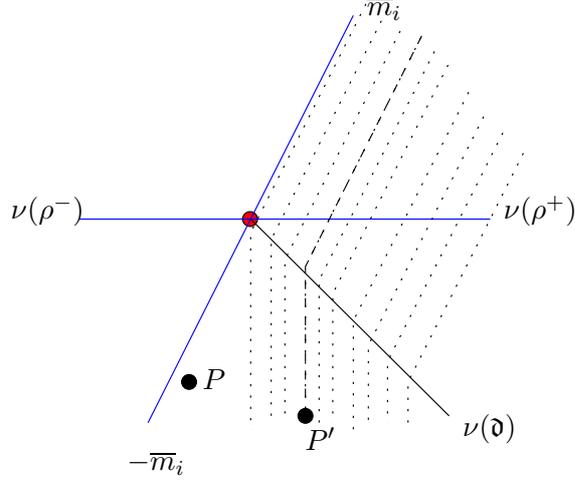}}
\caption{The polyhedral decomposition $\P'$ on $B'$, the projection of a wall $\fod \in B_j$ and a broken line $\nu\circ\gamma$ ending at $P'$ along with its possible translations.}
\label{Fig: walls}
\end{figure}
In Figure \ref{Fig: walls}, we depict $B'$, with the blue rays
being one-dimensional cones in $\scrP'$ and the black rays being
additional images of walls in $\foD_{(\widetilde X,\widetilde D),k+1}^{1,\foj}$. Note that a wall may
coincide with a codimension one cell of $\scrP_{\foj}$, and the image
of such a wall coincides with a one-dimensional cell in $\scrP'$.
Now let us assume there is a wall $\fod\in \foD_{(\widetilde X,\widetilde D),k+1}^{1,\foj}$
for which the unique face of $\fod$ (the tangent space to
$\Delta_{\ul{\rho}}$) is repulsive. Suppose further
that $m_i$ is not tangent to $\fod$. We wish to arrive at a contradiction
by showing that $\foD_{(\widetilde{X},\widetilde{D}),\ell+1}^1$ 
is not consistent for some sufficiently large $\ell$. Note
that the hypothesis of the lemma implies that all walls of
any such $\foD_{(\widetilde{X},\widetilde{D}),\ell+1}^1$ non-trivial
modulo $\fom_k$ are contained in $L$.

Without loss of generality, assume that $\fod$ is contained in the
closure of $U$. The assumption that $m_i$ is not tangent to $\fod$ tells
us that $\fod\not\subseteq L$. We now construct a specific broken line 
$\gamma:(-\infty,0]\rightarrow B_{\foj}$ for
the scattering diagram $\foD_{(\widetilde X,\widetilde D),k+1}^{1,\foj}$. This broken line 
will bend only
at $\fod$, and will have asymptotic direction either $m_i$
or $-m_i$. If $\fod\subseteq\rho^+\in\scrP_{\foj}$, this sign may be chosen 
arbitrarily.  Otherwise we choose the sign so that the broken line crosses
$\rho^+$ before bending. 
In Figure \ref{Fig: walls},
we depict the image of $\nu\circ\gamma$ of such a broken line as a
dashed piecewise linear path. 

Again without loss of generality, assume the asymptotic direction
is $m_i$; the argument is identical if the asymptotic direction
is $-m_i$.
First, the initial monomial attached to the broken line is then
$z^{m_i}\in\kk[\shP^+_x]$ for $x$ any point in the interior of the maximal
cell of $\scrP_{\foj}$ corresponding to $\tilde\sigma_+$. 
Here we use the description $\shP^+_x=\Lambda_x\oplus Q$ of \eqref{eq:P+x max}
and write for $(m,q)\in\shP^+_x$ the corresponding monomial as usual
as $t^q z^m$. Further, using the chart $\psi_U$ of Theorem 
\ref{thm: affine structure on the big part}, we may identify
$\Lambda_x$ with $M$. When we cross $\rho^+$,
we need to use parallel transport of the
monomial $z^{m_i}$ to the new cell using 
\eqref{eq:parallel transport P}.
Assuming $\fod\not=\rho^+$, then as we are not bending
at $\rho^+$, we may use \eqref{eq:parallel transport P} to accomplish
this transport, and the monomial $z^{m_i}$ becomes
$t^{\langle n_{\rho^+},m_i\rangle F_i }z^{m_i}$.
By \eqref{Eq: nrho one}, this is $t^{F_i}z^{m_i}$.

We now continue our broken line until we reach $\fod$.
Write
\[
f_{\fod}(z^{-v})=1+\alpha z^{-v}+\cdots
\]
with
$\alpha \in\fom_{k+1}\setminus \fom_{k+2}$, i.e., $\alpha$ of order
$k+1$. Then 
the monomial attached to this broken line after the bend is of the form
$\langle n_{\fod},m_i\rangle \alpha t^{F_i}z^{m_i-v}$, with $n_{\fod}$
a primitive normal vector to $\fod$ positive on $m_i$. 
In particular, as the tangent vector to the broken line
after the bend is $-(m_i-v)=-m_i+v$, the fact that $\fod$ is
repulsive shows the broken line bends away from $L$, as shown in 
Figure \ref{Fig: walls}. 

If $\fod$ coincides with $\rho_+$, we obtain similar behaviour, but
the change from the kink and the bend occur at the same time.

Note that because $F_i\in \fom_1$, in fact $\alpha t^{F_i}\in
\fom_{k+2}$. Thus we need to pass to higher order to detect the
malicious effect of this broken line. Let $\ell=k+1+A\cdot F_i$ be the order of 
$\alpha t^{F_i}$, and consider
now the scattering diagram $\foD_{(\widetilde X,\widetilde D),\ell+1}^{1,\foj}$. 

We next will prove:

\medskip

\emph{Claim.} For any point $P'$ lying between $\fod$
and $\rho_0$, any broken line for the scattering diagram
$\foD^1_{\widetilde X,\ell+1}$ with asymptotic direction $m_i$ ending at $P'$
with final monomial $\beta z^m\not\in \fom_{\ell+1}$ bends at most once
and is entirely contained in $U$.
Further the only possible broken line with final monomial
$\alpha'' z^{m_i-v} \not\equiv 0 \mod \fom_{\ell+1}$ is the one just
described.

\begin{proof}[Proof of claim.]
First consider a broken line entirely contained
in $U$.  It may only bend at most once
and still yield a coefficient non-trivial modulo
$\fom_{\ell+1}$. Indeed, if it bends twice, the coefficient
will be divisible by $t^{F_i} \alpha_1\alpha_2$, where the $t^{F_i}$
term arises from crossing $\rho^+$ and $\alpha_1,\alpha_2\in\fom_{k+1}$.
Thus the order of $t^{F_i}\alpha_1\alpha_2$ is at least 
$\ell+k+1\ge \ell+1$, so this term is trivial modulo $\fom_{\ell+1}$.

Next suppose that the broken line crosses between $U$ and $U'$. 
Such a line may start in $U'$ and cross into $U$, or worse start
in $U$, cross into $U'$, and then cross back into $U$. Regardless,
for this to happen, it must bend at least once at a wall
not contained in $L$, it must cross
either $\rho^+$ or $\rho^-$, and it must cross $L$. As a result,
the final coefficient must be divisible by $t^{\kappa_{\rho_{\pm}}}
t^{\kappa'}\alpha$, where as before $\kappa_{\rho_{\pm}}$ is the
kink of $\varphi$ along either $\rho^+$ or $\rho^-$, both of these
being the class $F_i$, see \eqref{eq:local kinks}. Further, $\kappa'$ 
comes from the kink along either $\rho_0$ or $\rho_{\infty}$, 
while $\alpha\in
\fom_{k+1}$ arises from the bend. However, as the order of $\kappa'$
is at least one, the whole term has order at least $\ell+1$, and
hence the attached final monomial lies in $\fom_{\ell+1}$.

Now observe that if a broken line is wholly contained in $U$
and bends only once, with an attached final monomial
$\alpha'' z^{m_i-v}\not\in \fom_{\ell+1}$, it must
bend at a wall whose attached function is $f(z^{-v})$. However,
$\fod$ is the unique such wall, given the assumption of Construction
\ref{Constr: GSk} that there is at most one wall with such an attached
function.
\end{proof}
Now consider a
point $P\in U\setminus (\fod+\RR_{\ge 0}(v-m_i))$ which lies between $\fod$ and $\rho_0$. Such
a point exists as $v$ is not tangent to $L$ as $L\cap\fod$ is repulsive a
repulsive facet of $\fod$. 
Note that there is a broken line of the type constructed bending
at $\fod$ and ending at $P'$ for any $P'\in \fod+\RR_{\ge 0}(v-m_i)$. However, there is no such broken line
ending at $P$. From this and the claim, we can then deduce that
\[
\vartheta^{\foj}_{m_i}(P') = t^{F_i} z^{m_i}+\alpha t^{F_i} z^{m_i-v} +\sum_j \alpha_j
z^{v_j}
\]
with $\alpha_j\in \fom_\ell$, $v_j\in M$. The first term comes from 
a straight line, the second from the constructed broken line,
and the sum comes from any other
broken lines which bend once and end at $P'$. Further, $v_j\not=m_i-v$
for any $j$.
On the other hand,
\[
\vartheta^{\foj}_{m_i}(P) = t^{F_i} z^{m_i} + \sum_j \alpha'_j z^{v_j'},
\]
again with $\alpha_j'\in \fom_\ell$, $v_j'\in M$, with $v_j'\not=
m_i-v$ for any $j$.

But we also have, for a path $\delta$ connecting $P$ to $P'$,
\[
\theta_{\delta,\foD_{(\widetilde X,\widetilde D),\ell+1}^{1,\foj}}(\vartheta^{\foj}_{m_i}(P'))
=\vartheta^{\foj}_{m_i}(P)
\]
by consistency, Definition \ref{Def: Consistency in codim two}.
However, as $\delta$ only crosses walls of
the form $(\fod', f_{\fod'}(z^{v'}))$ with $f_{\fod'}\equiv 1\mod
\fom_k$, we see easily that $\theta_{\delta,\foD_{(\widetilde X,\widetilde D),\ell+1}^{1,\foj}}$
leaves invariant modulo $\fom_{\ell+1}$ all terms of $\vartheta^{\foj}_{m_i}(P)$
except for the term $t^{F_i} z^{m_i}$. However, in order for
$\theta_{\delta,\foD_{(\widetilde X,\widetilde D),\ell+1}^{1,\foj}}(t^{F_i} z^{m_i})$
to produce a term $-\alpha t^{F_i} z^{m_i-v}$ necessary to cancel
this term in $\vartheta^{\foj}_{m_i}(P)$, $\delta$ would have to cross $\fod$,
which it does not. Thus we have obtained a contradiction to consistency.

We thus conclude that $m_i$ is tangent to $\fod$. This implies that
$\fod$ is contained in either $\rho_{\infty}$ or $\rho_0$.
Both of these are contained in $\bar\rho\cap \widetilde B_1$,
which is identified with $\rho$ under $\Psi|_{\widetilde B_1}$.
Thus by Remark \ref{rem:affine sub our case}, $\bar\rho\cap\widetilde B_1$
is radiant, and hence $\fod$ is radiant.

\end{proof}

\begin{proof}[Proof of Theorem \ref{Thm: radiant scatter}]
We will show the result by induction by showing that $\foD_{(\widetilde X,\widetilde D),k}^1$
is radiant for each $k$. The base case $k=1$ is trivial as
$\foD^1_{(\widetilde X,\widetilde D),1}$ is empty. So assume the result is true for
$\foD_{(\widetilde X,\widetilde D),k}^1$.

Suppose given a wall $(\fod,f_{\fod}(z^{-v}))\in\foD_{(\widetilde X,\widetilde D),k+1}^1$. 
Recall that by Proposition 
\ref{Prop: repulsive radiance suffices}, to show $\fod\in\foD_{(\widetilde X,\widetilde D),k+1}^1$ 
is radiant, it suffices to show that one of the repulsive facets of $\fod$ is 
radiant. 

If $f_{\fod}\not\equiv 1\mod \fom_k$, then $\fod$ is already contained
in a wall of $\foD_{(\widetilde X,\widetilde D),k}^1$, and hence is radiant. Thus we
may assume that $f_{\fod}\equiv 1\mod \fom_k$.

Let $\sigma_{\fod}$ be the smallest cell of $\scrP_1$ containing
$\fod$.  Choose a general point
$x\in\Int(\fod)$. This can be done sufficiently generally so that
$(x-\RR_{\ge 0}v)\cap \sigma_{\fod}$ (which makes sense inside 
$\sigma_{\fod}$)
does not intersect any $(n-3)$-dimensional face of any $\fod'\in
\foD_{(\widetilde X,\widetilde D),k+1}^1$.
Let $\foj\subseteq\fod$ be the unique repulsive facet of $\fod$  
intersecting $(x-\RR_{\ge 0}v)\cap\sigma$, and let $y$ be the
only point of $(x-\RR_{\ge 0}v)\cap\foj$. Let $\sigma_{\foj}\in\scrP$
denote the smallest cell containing $\foj$.

We may assume that $\foj$ is not contained
in $\widetilde\Delta_1$. Indeed, if $\foj\subseteq
\widetilde\Delta_1$, then $\fod$
is radiant by the induction hypothesis and Lemma \ref{Lem: walls lie on radiant directions}. Thus we may assume $y\not\in\widetilde\Delta_1$.


Set 
\[
\foT_y:=\{\Lambda_{\fod'}\,|\, y\in \fod'\in \foD_{(\widetilde X,\widetilde D),k+1}^1\},
\]
where $\Lambda_{\fod'}$ denotes the integral tangent space to
$\fod'$ as a sublattice of $\Lambda_y$. Note 
different walls may give rise to the same element of $\foT_y$.

We analyze two different possibilities:

\emph{Case I. $\#\foT_y\le 2$.}
If there is any wall $(\fod',f_{\fod'})\in\foD_{(\widetilde X,\widetilde D),k+1}^1$ 
containing $y$ with 
$\Lambda_{\fod}=\Lambda_{\fod'}$, and $f_{\fod'}\not\equiv 1
\mod\fom_{k}$, then by the induction hypothesis $\fod'$ is
radiant, and hence so is $\fod$. Thus we may assume that all
walls containing $y$ and with the same tangent space as $\fod$
are trivial modulo $\fom_k$.
In particular, the wall-crossing automorphisms associated to
crossing these walls commute with all other wall-crossing automorphisms
modulo $\fom_{k+1}$.

If $\#\foT_y=2$, 
let $\Lambda\in \foT_y$ with $\Lambda\not=\Lambda_{\fod}$, and choose
a $m\in \Lambda\setminus\Lambda_{\fod}$. If $\#\foT_y=1$, take any
$m\in \Lambda_y\setminus \Lambda_{\fod}$.
Then the wall-crossing automorphism $\theta$ associated to 
crossing a wall $\fod'$ with $\Lambda_{\fod'}=\Lambda$ satisfies
$\theta(z^m)=z^m$. Thus if $\gamma$ is a suitably oriented loop
around $\foj$ close to $y$, we find
\begin{equation}
\label{Eq:theta action1}
\theta_{\gamma,\foD_{(\widetilde{X},\widetilde{D})}^1}(z^m)=z^mf_1^{\langle m, n_{\fod}\rangle}
f_2^{-\langle m,n_{\fod}\rangle},
\end{equation}
where $n_{\fod}$ is a primitive normal vector to $\Lambda_{\fod}$, and
\begin{align*}
\foD_1:= {} & \{(\fod',f_{\fod'})\in\foD_{(\widetilde X,\widetilde D),k+1}^1\,|\,
y\in\fod', \Lambda_{\fod}=\Lambda_{\fod'}, \dim\fod\cap\fod'=n-1\},\\
\foD_2:= {} & \{(\fod',f_{\fod'})\in\foD_{(\widetilde X,\widetilde D),k+1}^1\,|\,
y\in\fod', \Lambda_{\fod}=\Lambda_{\fod'}, \dim\fod\cap\fod'=n-2\},\\
f_i:= {} & \prod_{\fod'\in \foD_i} f_{\fod'},\quad i=1,2.
\end{align*}
Note that by the definition of consistency in codimensions $0$ and
$1$ (Definitions \ref{Def: consistency in codim zero} and
\ref{Defn: consistency along codim one}), the identity
\eqref{Eq:theta action1} holds in the ring
$\kk[\shP^+_y]/\fom_{k,y}$.

\begin{figure}
\center{\input{Repulsive.pspdftex}}
\caption{The local structure near $\foj$ when $\#\foT_y=2$.}
\label{Fig: repulsive}
\end{figure}

We now distinguish further between three cases. If $\dim\sigma_{\foj}=n$,
then $\kk[\shP^+_y]/\fom_{k,y}=\kk[\Lambda_y][Q]/\fom_k$.
Then \eqref{Eq:theta action1} implies that $f_1=f_2\mod \fom_{k+1}$. 
In particular, there must
be a wall $(\fod',f_{\fod'}(z^{-v}))\in \foD_2$, with an $\epsilon>0$
such that $y-\epsilon v\in \fod'$. We may now replace $\fod$ with $\fod'$
and $x$ with $x':=y-\epsilon v$ and continue the process.

If $\dim\sigma_{\foj}=n-1$, then 
$\fom_{k+1,y}$ is the monomial ideal generated
by the inverse image of the ideals $\fom_{k+1}+\shP^+_x$ and
$\fom_{k+1}+\shP^+_{x'}$ under the maps
$\shP^+_y\rightarrow \shP^+_x, \shP^+_{x'}$ of \eqref{eq:parallel transport}.
\eqref{Eq:theta action1} implies that $f_1=f_2 \mod \fom_{k+1,y}$. 
It then follows
from Proposition \ref{prop:order increasing} that $f_2$ must contain
a term $\alpha z^{-v}$ of order $<k$, so that $f_2\not\equiv 1\mod
\fom_k$. Thus by the induction hypothesis, one of the walls in
$\foD_2$ is non-trivial to order $k$, and hence by the induction
hypothesis is radiant. Thus $\fod$ is radiant.

If $\dim\sigma_{\foj}=n-2$, then 
$\foj$ is contained in a codimension two
cell of $\scrP_1$ which is not contained in the discriminant locus.
However, all codimension two cells of $\scrP_1$ not contained in the
discriminant locus are radiant, by Remark \ref{rem:affine sub our case}.
Thus $\foj$ is radiant. So by Proposition 
\ref{Prop: repulsive radiance suffices}, $\fod$ is radiant.

\emph{Case II. $\#\foT_y \ge 3$.} We will show that either
(a) we have similar behaviour as in the previous case, or (b)
there exists walls $\fod_1,\fod_2$ containing $y$ with
$f_{\fod_i}\not\equiv 1\mod \fom_{k}$ and $\Lambda_{\fod_1}\not=
\Lambda_{\fod_2}$. Thus $\fod_1,\fod_2$ are radiant, so their
intersection is also radiant. So $\foj$ is radiant, and hence
so is $\fod$.

Case (a) occurs when there is at most one $\Lambda\in \foT_y$
with the property that there exists a wall $(\fod',f_{\fod'})\in
\foD_{(\widetilde X,\widetilde D),k+1}^1$ with $y\in\fod'$, $\Lambda_{\fod'}=\Lambda$,
and $f_{\fod'}\not\equiv 1\mod \fom_k$. If there is no such
$\Lambda$, then pick some $\Lambda\in\foT_y$ distinct from $\Lambda_{\fod}$
for what follows.

Similarly to before, choose some 
$m\in \Lambda\setminus\bigcup_{\Lambda'\in\foT_y\setminus\{\Lambda\}}\Lambda'$.
Let $\gamma$ again be a small loop around $\foj$ near $y$.
All wall-crossing automorphisms involved in $\theta_{\gamma,\foD^1_{\widetilde X,
k+1}}$ commute except for those arising from crossing the
walls with tangent space $\Lambda$. However, the latter wall-crossing
automorphisms leave $z^m$ invariant as $m\in\Lambda$. Thus
we can write, the first equality by consistency,
\[
z^m=\theta_{\gamma,\foD_{(\widetilde X,\widetilde D),k+1}^1}(z^m)
=z^m \prod_{i=1}^n f_i^{a_i},
\]
where this equality holds in $\kk[\shP^+_y]/\fom_{k+1,y}$ as in Case I.
Here $f_1,\ldots,f_n$ are wall functions associated to walls
crossed by $\gamma$ whose tangent space is not $\Lambda$,
and all $f_i\equiv 1\mod \fom_{k}$. Further, the $a_i$'s are non-zero
integers. 

Now some $f_i=f_{\fod}(z^{-v})$. Thus in order to get the above
equation, there must be another $f_j$ which is a function of $z^{-v}$.
Necessarily this wall $(\fod_j,f_j)$ lies on the other side of
$\foj$ as $\fod$. Thus as in Case I, we have three possibilities.
If $\dim\sigma_{\foj}=n$, 
we can replace $\fod$ with $\fod_j$ and $x$ by $y-\epsilon v$.
If $\dim\sigma_{\foj}=n-1$, then $f_j\not\equiv 1\mod \fom_k$, and hence
$\fod_j$, and so $\fod$, is radiant by the induction hypothesis.
Finally, if $\dim\sigma_{\foj}=n-2$, we argue as in Case I.

This gives case (a). Otherwise (b) holds, and as observed, $\fod$
is radiant.

\medskip

Now observe that the process described above, replacing a wall $\fod$
with another wall and replacing $x$ with $y-\epsilon v$, eventually
stops. Indeed, as $\foD_{(\widetilde X,\widetilde D),k+1}^1$ is finite, and this process
stays within a cell of $\scrP_1$, we must eventually end up 
in one of the cases where we can conclude $\fod$ is radiant.
\end{proof}

\subsection{Consequences of radiance}

We now use radiance and consistency to analyze more carefully the structure,
up to equivalence, of 
$\foD^1_{(\widetilde X,\widetilde D)}$. We work
with the specific equivalent scattering diagrams
$\foD^1_{(\widetilde X,\widetilde D),k}$
of Construction \ref{Constr: GSk}, working with
as large a $k$ as is necessary for any given statement. 

We first will give two lemmas concerning the structure of the diagram
near the discriminant locus. In some sense, these two lemmas
are the main idea of the paper. Lemma \ref{Lem:functional equation}
gives a relationship between the walls on either side of the
discriminant locus which is implied entirely by the monodromy
of the sheaf $\shP$. Using this relationship, we may then give
in Lemma \ref{lem: multiple covers}
a ``pure thought'' calculation of contributions of
multiple covers of the exceptional curves of the blow-up
$\widetilde X\rightarrow X_{\widetilde\Sigma}$ to the canonical
scattering diagram. This replaces a localization argument which
was used in \cite{GPS}. See also \cite{parker2014tropical} for a derivation of the multiple cover formula in the two dimensional case, via tropical geometric arguments.

For this analysis,
choose a general point $x\in \widetilde\Delta_{\ul{\rho}}$, for $\ul{\rho}$
a codimension one cone in $\Sigma(\rho_i)$. In the notation of
\S\ref{rem:phi disc}, let $y,y'$ be points
sufficiently close to $x$ with $y\in \rho_{\infty}$ and $y'\in\rho_0$.
Since $\foD^1_{(\widetilde X,\widetilde D),k}$ is finite, this choice may be made so that
any wall containing 
$x$ contains either $y$ and $y'$. Here we use the fact that by radiance
of $\foD^1_{(\widetilde X,\widetilde D),k}$, (Theorem \ref{Thm: radiant scatter}),
any wall of this scattering diagram containing
$x$ is necessarily contained in either $\rho_{\infty}$ or $\rho_0$.

Recall for $z\in \widetilde B_1\setminus 
\Sing(\foD_{(\widetilde X,\widetilde D),k}^1)$ the notation
$f_z$ of \eqref{eq: fx def}. In particular, this gives elements
$f_y\in \kk[\Lambda_{\tilde\rho}][Q]/\fom_k$ and
$f_{y'}\in \kk[\Lambda_{\tilde\rho'}][Q]/\fom_k$.
Recall these may be compared via the map $\wp$ of \eqref{Eq: wp def}.
We then have:

\begin{lemma}
\label{Lem:functional equation}
In the notation of \S\ref{rem:phi disc}, we have
\[
f_{y'} = \wp(f_y)\prod_{j=1}^{\kappa^i_{\ul{\rho}}}
\left(t^{F_i-E_{\ul{\rho}}^j}z^{m_i}\right).
\]
\end{lemma}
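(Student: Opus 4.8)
Here is my plan for proving Lemma \ref{Lem:functional equation}.

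\medskip

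The plan is to use consistency of $\foD^1_{(\widetilde X,\widetilde D)}$ around the codimension two joint contained in $\widetilde\Delta_{\ul{\rho}}$, combined with the explicit description of the parallel transport in $\shP$ across the discriminant locus. First I would set up the local model $(B_{\foj},\P_{\foj})$ at the joint $\foj\subseteq\widetilde\Delta_{\ul\rho}$ as in \S\ref{Subsubsec: Consistency around codim two joints}, recalling from \S\ref{rem:phi disc} that the four top-dimensional cells are $\tilde\sigma^{\pm}\cap\widetilde B_1$, $\tilde\sigma'^{\pm}\cap\widetilde B_1$, separated by the codimension one cells $\rho_{\infty},\rho_0,\rho^+,\rho^-$, whose kinks are recorded in \eqref{eq:local kinks}. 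By radiance (Theorem \ref{Thm: radiant scatter}), every wall through a general point $x\in\widetilde\Delta_{\ul\rho}$ lies in $\rho_{\infty}$ or $\rho_0$, so the only walls of $\foD^1_{(\widetilde X,\widetilde D),k}$ meeting a small loop around $\foj$ are: the walls supported on $\rho_{\infty}$ (product $f_y$), those on $\rho_0$ (product $f_{y'}$), plus the ``virtual'' walls coming from the kinks $\kappa_{\rho^{\pm}}=F_i$ along $\rho^{\pm}$ — but those contribute nothing to wall-crossing since $\kappa_{\rho^\pm}$ is just absorbed into parallel transport. The one subtlety is that $\rho_{\infty}$ itself carries kink $s_{\rho_\infty}$; I would track this carefully.

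\medskip

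The heart of the argument is the following. Consider the loop $\gamma$ around $\foj$ passing from a point $y\in\rho_\infty$ (say in $\tilde\sigma^+$), through $\rho^+$, to $y'\in\rho_0$, then through $\rho^-$ back to $y$. Consistency around $\foj$ (Definition \ref{Def: Consistency in codim two}, or rather its codimension-zero/one shadow for the ambient scattering diagram, plus Theorem \ref{thm:consistency at deeper strata}) forces $\theta_{\gamma}=\mathrm{id}$ on $R_{\rho_\infty}$. Writing out $\theta_\gamma$ as the composition of: crossing the $\rho_\infty$-walls ($f_y$), parallel transport through $\rho^+$ (given by \eqref{eq:parallel transport P}, i.e.\ $\wp$ in the direction of $\tilde\sigma'^+$), crossing the $\rho_0$-walls ($f_{y'}^{-1}$, say), parallel transport back through $\rho^-$, one obtains after cancellation of the parallel-transport kinks an identity of the shape $\wp(f_y) = f_{y'}\cdot g$ where $g$ is the product of the monodromy discrepancies. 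The discrepancy is precisely the failure of parallel transport of $z^{m_i}$ to be monodromy-invariant once we account for the exceptional divisor $\widetilde E_{\rho_i}$: here the linear equivalence \eqref{Eq: lin equiv 2}, $s_{\rho_\infty}-s_{\rho_0}+\langle n_{\rho^-},m_+\rangle F_i \sim -\sum_{j=1}^{\kappa^i_{\ul\rho}} E^j_{\ul\rho}$, together with the kink values \eqref{eq:local kinks} shows that going around the loop multiplies the monomial $z^{m_i}$ by exactly $\prod_{j=1}^{\kappa^i_{\ul\rho}} t^{F_i-E^j_{\ul\rho}}$. Rearranging gives the claimed formula $f_{y'}=\wp(f_y)\prod_j(t^{F_i-E^j_{\ul\rho}}z^{m_i})$.

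\medskip

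The main obstacle, I expect, is bookkeeping the signs and directions of parallel transport correctly — which normal vector $n_{\rho^{\pm}}$ is used on which side, which power of $F_i$ appears from the kink versus from the linear equivalence \eqref{Eq: lin equiv 2}, and making sure the factor $t^{F_i}z^{m_i}$ (rather than $t^{F_i}$ alone, or with a different exponent on $z$) emerges. The relation \eqref{Eq: nrho one} that $\langle n_{\rho^\pm},m_i\rangle=1$ is what makes the monomial exponents come out cleanly. A secondary point requiring care: one must check the identity at every order, i.e.\ work modulo $\fom_k$ for all $k$, using that $f_y\equiv f_{y'}\equiv 1\bmod\fom_1$ so that the infinite products converge and the wall-crossing automorphisms are well-defined in $\kk[\Lambda_{\tilde\rho}][Q]/\fom_k$ and $\kk[\Lambda_{\tilde\rho'}][Q]/\fom_k$ respectively; the map $\wp$ of \eqref{Eq: wp def} is precisely designed to intertwine these. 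Once the transport bookkeeping is pinned down, the rest is a short rearrangement of the consistency identity.
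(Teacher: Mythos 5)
Your overall strategy --- consistency at the codimension two joint in $\widetilde\Delta_{\ul\rho}$, combined with the kinks \eqref{eq:local kinks} and the linear equivalence \eqref{Eq: lin equiv 2} --- is the right one, and you correctly identify where the correction factor must come from. But the pivotal step is not available as you state it. You assert that consistency ``forces $\theta_{\gamma}=\id$ on $R_{\rho_\infty}$'' for a loop $\gamma$ around the joint $\foj\subseteq\widetilde\Delta_{\ul\rho}$, citing Theorem \ref{thm:consistency at deeper strata}. That theorem applies only to cells satisfying the hypotheses of Proposition \ref{prop:local affine submersion}, i.e.\ cells \emph{not} contained in the discriminant locus, and its proof ends by explicitly noting that the argument breaks down when $\foj\subseteq\Delta$, because the nontrivial monodromy (Lemma \ref{Lem: affine monodromy}) means there is no common ring on which the path-ordered product around $\gamma$ acts. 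Indeed, if one could assert $\theta_\gamma=\id$ in a fixed ring one would conclude $f_{y'}=\wp(f_y)$ with no correction factor at all --- the entire content of the lemma is that naive consistency fails by exactly $\prod_j(t^{F_i-E^j_{\ul\rho}}z^{m_i})$. Your appeal to ``the failure of parallel transport of $z^{m_i}$ to be monodromy-invariant'' gestures at the right phenomenon but is not anchored to a precise consistency statement from which the factor can be extracted.

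The paper's route around this is the broken-line formulation of codimension two consistency (Definition \ref{Def: Consistency in codim two}). One picks the test direction $m_+$ tangent to $\rho^+$ with $\langle n_{\rho_\infty},m_+\rangle=1$ and computes the theta functions $\vartheta^{\foj}_{m_+}$ at points $p_\pm,p'_\pm$ in the four maximal cells: these are $z^{m_+}$ in the upper two cells (no bending), while
$\vartheta^{\foj}_{m_+}(p_-)=t^{s_{\rho_\infty}}z^{m_++\kappa^i_{\ul\rho}m_i}f_y$ and $\vartheta^{\foj}_{m_+}(p'_-)=t^{s_{\rho_0}}z^{m_+}f_{y'}$, the monomial shift $z^{\kappa^i_{\ul\rho}m_i}$ coming from the affine monodromy. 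Since no wall separates $p_-$ from $p'_-$, condition (2) of Definition \ref{Def: Consistency in codim two} equates these after parallel transport across $\rho^-$; cancelling $z^{m_+}$ (at sufficiently high order, since $z^{m_+}$ is a zero divisor) and applying \eqref{Eq: lin equiv 2} yields the stated formula. To repair your argument you would need to replace the $\theta_\gamma=\id$ step with this theta-function comparison, or else prove an analogue of Theorem \ref{thm:consistency at deeper strata} with the monodromy correction built in --- which is essentially the lemma itself.
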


\begin{proof}
Let $\foj$ be the joint of $\foD^1_{(\widetilde X,\widetilde D),k}$ containing
the point $x$. Necessarily $\foj\subseteq \widetilde\Delta_{\ul{\rho}}$
is a codimension two joint. Thus we make use of the consistency of
\S\ref{Subsubsec: Consistency around codim two joints} for
the scattering diagram $\foD^{1,\foj}_{(\widetilde X,\widetilde D),k}$, similarly as
in Lemma \ref{Lem: walls lie on radiant directions}.

We need to be quite careful about the charts we use to describe
$B_{\foj}$. The charts $\psi_U,\psi_V$ of Theorem 
\ref{thm: affine structure on the big part} induce charts on
$B_{\foj}$, with $U_{\foj}:=B_{\foj}\setminus\rho_{\infty}$
and $V_{\foj}=\mathrm{Star}(\rho_{\infty})$. Then we have
charts $\psi_{U_{\foj}}:U_{\foj}\rightarrow M_{\RR}$, $\psi_{V_{\foj}}:
V_{\foj}\rightarrow M_{\RR}$, with 
\[
\psi_{V_{\foj}}\circ \psi_{U_{\foj}}^{-1}(m)=m+\varphi_i(\pi_i(m)) m_i,
\]
on $U_{\foj}\cap V_{\foj}$, see \eqref{Eq:psiv psiu}.

In what follows, we use the chart $\psi_{U_{\foj}}$ to identify
tangent or cotangent vectors on $B_{\foj}$ with elements of $M$
or $N$, taking care when we need to compare such vectors via parallel
transport across $\rho_{\infty}$. As such, we have normal vectors
$n_{\rho_{\pm}}\in N$ to $\rho^+$ and $\rho^-$, as described in
\S\ref{subsubsec:geometry of strata}, always taken to be positive on
$\rho_{\infty}$. In addition, take
$n_{\rho_{\infty}}=n_{\rho_0}$ to be primitive normal vectors
to $\rho_{\infty}$ and $\rho_0$, positive on $\rho^+$. By \eqref{Eq: afffine monodromy}, this covector is monodromy invariant as $m_i$ is tangent to
$\rho_{\infty}$ and $\rho_0$.

Pick a tangent vector $m_+$ to $\rho^+$ pointing
away from $\widetilde\Delta_{\ul{\rho}}$ with the property that
$\langle n_{\rho_{\infty}}, m_+\rangle =1$; this can be chosen
to be the vector $u_n^+$ in the notation of 
\S\ref{subsubsec:geometry of strata}.

Let $p_{\pm}, p'_{\pm}$ be general points in the interior of
the cells corresponding to $\tilde\sigma^{\pm}$ and $\tilde\sigma'^{\pm}$.
We will consider the theta functions
$\vartheta^{\foj}_{m_+}(p_{\pm})$ and $\vartheta^{\foj}_{m_+}(p'_{\pm})$.
In particular, $\vartheta^{\foj}_{m_+}(p_+)=\vartheta^{\foj}_{m_+}(p_+')
=z^{m_+}$, as any broken line ending at $p_+$ or $p_+'$ with asymptotic
monomial $m_+$ never crosses a wall or a codimension one cell
of $\P_{\foj}$, and in particular does not bend.

As $\foD^{1}_{(\widetilde X,\widetilde D)}$ is consistent,
it then follows from condition (2) of Definition 
\ref{Def: Consistency in codim two} that
\begin{align*}
\vartheta^{\foj}_{m_+}(p_-) = {} & t^{s_{\rho_{\infty}}}z^{m_++\kappa^i_{\ul{\rho}}m_i}f_y,\\
\vartheta^{\foj}_{m_+}(p'_-) = {} & t^{s_{\rho_0}}z^{m_+}f_{y'}.
\end{align*}
Here the factors $t^{s_{\rho_0}}$, $t^{s_{\rho_{\infty}}}$ arise 
from the kinks of $\varphi$ along $\tilde\rho$, $\tilde\rho'$, see
\eqref{eq:local kinks}, and
the fact that $\langle n_{\rho_{\infty}},m_+\rangle = 1$.
Recalling that we are identifying tangent vectors with elements of
$M$ via the chart $\psi_{U_{\foj}}$, we see that the factor 
$z^{\kappa^i_{\ul{\rho}}m_i}$ arises from
Lemma \ref{Lem: affine monodromy} as we must use parallel transport
in the chart $V_{\foj}$.

Because there is no wall of $\foD_{(\widetilde X,\widetilde D)}^{1,\foj}$ between $p_-$ and $p_-'$, 
$\vartheta^{\foj}_{m_+}(p_-)$ and $\vartheta^{\foj}_{m_+}(p'_-)$
must agree under parallel transport across $\rho^-$, again
by Definition \ref{Def: Consistency in codim two}, (2).
Thus we have
\[
t^{s_{\rho_{\infty}}}t^{(\langle n_{\rho^-}, m_+\rangle + \kappa^i_{\ul \rho})F_i} 
z^{m_++\kappa^i_{\ul\rho} m_i}\wp(f_y)
=t^{s_{\rho_0}}z^{m_+} f_{y'}.
\]
Here, using \eqref{Eq: nrho one}, the exponent $(\langle n_{\rho^-}, 
m_+\rangle + \kappa^i_{\ul{\rho}}) F_i$ accounts for the change in the 
monomial $z^{m_++\kappa^i_{\ul{\rho}} m_i}$
caused by the kink of $\varphi$ along $\rho^-$. 

We now note that if we work with sufficiently high $k$, we may
cancel the $z^{m_+}$ on both sides. Indeed, the monomial $z^{m_+}$
lies in $\fom_1$ and is a zero-divisor, but if we are are interested
in proving the equality of the lemma modulo $\fom_k$ for any 
given $k$, we may increase $k$ and compare monomials, thereby getting,
after using \eqref{Eq: lin equiv 2},
\[
f_{y'}=t^{\kappa^i_{\ul{\rho}}F_i-\sum_j E^j_{\ul{\rho}}}z^{\kappa^i_{\ul{\rho}} m_i}\wp(f_y),
\]
which gives the desired formula.
\end{proof}

\begin{lemma}
\label{lem: multiple covers}
We have
\[
f_{y}= g \prod_{j=1}^{\kappa_{\ul{\rho}}^i}(1+t^{E^j_{\ul{\rho}}}z^{-m_i}),\quad
f_{y'}=\wp(g) \prod_{j=1}^{\kappa_{\ul{\rho}}^i}
(1+t^{F_i-E_{\ul{\rho}}^j}z^{m_i})
\]
for some $g\in \kk[\Lambda_{\tilde\rho}][Q]/\fom_k$.
\end{lemma}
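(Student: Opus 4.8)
The goal is to show that $f_y$ factors as a ``scattering part'' $g$ times a product of $\kappa^i_{\ul\rho}$ copies of the standard factor $1+t^{E^j_{\ul\rho}}z^{-m_i}$, and that $f_{y'}$ factors compatibly. The key input is Lemma \ref{Lem:functional equation}, which already gives the precise relation $f_{y'}=\wp(f_y)\prod_j(t^{F_i-E^j_{\ul\rho}}z^{m_i})$. So the content of this lemma is really: the presence of this ``twist'' factor forces $f_y$ itself to contain a matching product. The plan is to run an induction on the order $k$ (with respect to the ample divisor $A$ of \S\ref{sec:radiance}, so modulo $\fom_k$), exactly as in the proof of Theorem \ref{Thm: radiant scatter}. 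At order $k=1$ everything is trivial since $f_y\equiv f_{y'}\equiv 1$.

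For the inductive step, suppose we have established the claimed factorizations modulo $\fom_k$; write $f_y = g_0\prod_j(1+t^{E^j_{\ul\rho}}z^{-m_i})\pmod{\fom_k}$. Using Lemma \ref{lem:product uniqueness} (applied to the monoid $\shP^+_y$ with $r$ the projection to $\Lambda_y$), write $f_y$ modulo $\fom_{k+1}$ as a convergent product over primitive tangent directions. Since by radiance (Theorem \ref{Thm: radiant scatter}) all walls through $x$ lie in $\rho_\infty$ or $\rho_0$, hence are tangent to $m_i$, every monomial $z^{\bar m}$ appearing in $f_y$ has $\bar m\in\ZZ m_i$; so the product expansion has at most two non-trivial factors, one with $\bar m$ a positive multiple of $m_i$ and one with $\bar m$ a negative multiple of $m_i$ — call them $f_y^+$ and $f_y^-$. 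The same applies to $f_{y'}$. Now I apply $\wp$ (which, by \eqref{eq:parallel transport P}, multiplies the curve-class exponent of $z^{nm_i}$ by $\langle n_{\rho^\pm},nm_i\rangle F_i = nF_i$, see \eqref{Eq: nrho one}) to $f_y = f_y^- f_y^+$ and plug into the functional equation: $f_{y'}^- f_{y'}^+ = \wp(f_y^-)\wp(f_y^+)\prod_j(t^{F_i-E^j_{\ul\rho}}z^{m_i})$. Matching factors according to the direction of $\bar m$ (noting $\wp$ preserves this direction and the extra product contributes only to the $+m_i$ direction), one gets $f_{y'}^- = \wp(f_y^-)$ and $f_{y'}^+ = \wp(f_y^+)\prod_j(t^{F_i-E^j_{\ul\rho}}z^{m_i})$, by the uniqueness in Lemma \ref{lem:product uniqueness}.

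The crux is then to identify $f_y^-$ and $f_y^+$ themselves. Here I would use consistency in codimension \emph{zero} — i.e. scattering of the one-dimensional scattering diagram one gets by restricting to a transverse two-plane through a generic point of $\rho_\infty$ (away from $\widetilde\Delta_1$), where the walls are $f_y^\pm$ against walls coming from $\rho_0$ — combined with the classical two-variable scattering/Kontsevich--Soibelman computation. More directly: $f_y$ is the product of wall functions along $\rho_\infty$, and the walls along $\rho_\infty$ split into those lying strictly over the toric part (these are the ``scattering'' walls, whose product I call $g$, and which by Theorem \ref{thm:D defined deeper codim} and the exceptional-curve bookkeeping involve only curve classes of the form studied in \S\ref{sec: at infinity}) and those walls which are the initial walls attached to the exceptional curves $E^j_{\ul\rho}$. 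By the construction of the canonical scattering diagram (\eqref{Eq: walls of canonical}), the wall type consisting of a single leg mapping to $\rho_\infty$ with contact order $-m_i$ and curve class $E^j_{\ul\rho}$ contributes exactly a factor whose logarithm is $\sum_{\ell\ge 1}\tfrac{1}{\ell^2}\,(\pm)\,t^{\ell E^j_{\ul\rho}}z^{-\ell m_i}$ — i.e. the multiple cover contribution — and this sums to $1+t^{E^j_{\ul\rho}}z^{-m_i}$ after accounting for the $k_\tau$ and $N_{\tilde\tau}$ factors. I expect \emph{this} step — pinning down the multiple cover coefficients $N_{\tilde\tau}$ for the exceptional classes purely from the functional equation and consistency, rather than by a localization computation — to be the main obstacle; the idea (as hinted in the text) is that the twist $\prod_j(t^{F_i-E^j_{\ul\rho}}z^{m_i})$ in Lemma \ref{Lem:functional equation} is rigid enough that, together with the known initial walls and consistency, it forces $g=\wp^{-1}(f_{y'}/f_y \cdot \text{(twist)})$-style cancellation to leave precisely the asserted product; one then reads off the multiple cover numbers as $1/\ell^2$ by comparing with the standard rank-one scattering identity $\exp(\sum_\ell \tfrac{1}{\ell}\cdot\tfrac{(-1)^{\ell-1}}{\ell} x^\ell\cdots)$, i.e. the identity $1+x = \exp(\sum_{\ell\ge1}\tfrac{(-1)^{\ell-1}}{\ell}x^\ell)$ reorganized through the $k_\tau$-weighting. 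Finally, setting $g := f_y\prod_j(1+t^{E^j_{\ul\rho}}z^{-m_i})^{-1}$, which by the above lies in $\kk[\Lambda_{\tilde\rho}][Q]/\fom_k$ and involves no negative powers of $z^{m_i}$ paired with exceptional classes in the forbidden way, and checking $\wp(g)\prod_j(1+t^{F_i-E^j_{\ul\rho}}z^{m_i}) = f_{y'}$ via the already-derived relation $f_{y'}^{\pm}$ formulas, completes the induction.
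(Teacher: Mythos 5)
Your overall plan correctly identifies Lemma \ref{Lem:functional equation} as the key input and has the right shape: isolate a ``multiple cover'' factor and absorb the rest into $g$. However, there are two concrete problems with the details.

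First, the deduction ``all walls through $x$ lie in $\rho_{\infty}$ or $\rho_0$, hence are tangent to $m_i$, so every monomial $z^{\bar m}$ of $f_y$ has $\bar m\in\ZZ m_i$'' is false. A wall $\fod\subseteq\rho_{\infty}$ has $\Lambda_{\fod}\subseteq\Lambda_{\tilde\rho}$, a rank-$(n-1)$ lattice, and its direction can be \emph{any} primitive element of $\Lambda_{\tilde\rho}$, not just $\pm m_i$; being tangent to $m_i$ only means $m_i\in\Lambda_{\fod}$. So $f_y$ in general has monomials in many directions, and your decomposition $f_y=f_y^-f_y^+$ into ``negative-$m_i$'' and ``positive-$m_i$'' pieces is not a decomposition of $f_y$. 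The other directional factors (together with any $\pm m_i$-directional contributions that do \emph{not} arise from curves mapping into $\widetilde\PP_i$) are precisely what the arbitrary $g\in\kk[\Lambda_{\tilde\rho}][Q]/\fom_k$ in the statement is there to absorb.

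Second, the paper resolves what you call the ``main obstacle'' without resummation of $\exp(\sum(-1)^{\ell-1}x^{\ell}/\ell)$ and without computing individual $N_{\tilde\tau}$, by a short degree argument rather than the harder computation you anticipate. The actual proof: (i) reduces to the degeneration $\widetilde X_i$ where only $\widetilde H_i$ is blown up; (ii) runs a tropical and curve-class analysis (using Theorem \ref{thm:balancing general}, Proposition \ref{prop:delta balancing}, and the dimension constraints defining wall types) to show that curves mapping entirely into $\widetilde\PP_i$ contribute only as multiple covers of $E^j_{\ul{\rho}}$ or $F_i-E^j_{\ul{\rho}}$, with wall support $\rho_{\infty}$ or $\rho_0$; (iii) writes these contributions as $\prod_j f_{y,j}(t^{E^j_{\ul{\rho}}}z^{-m_i})$ and $\prod_j f_{y',j}(t^{F_i-E^j_{\ul{\rho}}}z^{m_i})$ with $f_{y,j},f_{y',j}$ one-variable polynomials, and then notes that Lemma \ref{Lem:functional equation} forces these products to have degree exactly $\kappa^i_{\ul{\rho}}$ while symmetry in $j$ forces each $f_{y,j}$, $f_{y',j}$ to be linear; the constant terms being $1$ and the functional equation then fix the linear coefficients to $1$. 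The tropical step (ii), which your sketch only gestures at, is essential: it is what guarantees the relevant $\pm m_i$-directional factors are honest polynomials in $z^{\mp m_i}$, which is what powers the degree argument. Your proposed induction on $k$ is also unnecessary once this is in place.
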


\begin{proof}
{\bf Step I. Reduction to calculation of local terms.}
Suppose we show that 
\begin{equation}
\label{Eq: fy fyprime}
f_y= g\prod_j (1+t^{E^j_{\ul{\rho}}}z^{-m_i}),\quad
f_{y'}= g' \prod_j (1+t^{F_i-E^j_{\ul\rho}}z^{m_i}).
\end{equation}
Then applying Lemma \ref{Lem:functional equation}, we see that
\begin{align*}
f_{y'}=\wp(f_y)\prod_j (t^{F_i-E^j_{\ul{\rho}}}z^{m_i})
= {} & \wp(g)\prod_j\left[ t^{F_i-E^j_{\ul{\rho}}}z^{m_i}
(1+t^{E^j_{\ul{\rho}}}t^{-F_i}z^{-m_i})\right]\\
= {} & \wp(g)\prod_j (z^{F_i-E^j_{\ul{\rho}}}z^{m_i}+1).
\end{align*}
Here in the first line we use the fact that parallel transport
of $z^{-m_i}$ across $\rho^+$ or $\rho^-$ is given by 
\eqref{eq:parallel transport P}. Thus we see that
$g'=\wp(g)$. 

To show \eqref{Eq: fy fyprime}, we will calculate contributions
to the canonical scattering diagram arising from
curves entirely contained in $\widetilde\PP_i$, see
\eqref{Eq: central fiber of epsilon}. As we shall
see, these contributions arise only 
from multiple covers of the curve
classes $E^j_{\ul{\rho}}$ and $F_i-E^j_{\ul{\rho}}$.
\smallskip

{\bf Step II. Reduction to a simpler target space.}
Since the contributions from curves mapping into $\widetilde\PP_i$
only depend on an open neighbourhood of
$\widetilde\PP_i$, it is sufficient to carry out the calculation
for the target space where
we only blow up $\widetilde H_i$, and not $\widetilde H_j$ for
$i\not=j$. Specifically, let $\mathbf P_i=(\rho_i)$, the sub-$1$-tuple
of the $s$-tuple $\mathbf P$. Thus we obtain also a family
$\widetilde X_i\rightarrow \AA^1$, and
a corresponding scattering diagram $\foD_{(\widetilde X_i,\widetilde D_i)}^1$ on 
$\widetilde B_{1,i}$. We note that in a neighbourhood
of $\widetilde\PP_i$, $\widetilde X_i$ and $\widetilde X$
are isomorphic. Thus the desired contributions may be calculated in 
either space, and we now consider the canonical scattering diagram
for $\widetilde X_i$.
\smallskip

{\bf Step III. The tropical analysis.}
Let us now consider a type $\tau=(G,\boldsymbol{\sigma},
\mathbf{u})$ and curve class $\ul{\beta}$ contributing a non-trivial wall 
$(\fod,f_{\fod})\in\foD^1_{(\widetilde X_i,\widetilde D_i)}$.
Recall from Theorem \ref{thm:relative diagram} that any curve in
this moduli space is defined over $\AA^1$. In particular, we obtain
an induced map $Q_{\tau,\RR}^{\vee}\rightarrow \Sigma(\AA^1)=\RR_{\ge 0}$,
and a general point $s\in\tau$ mapping to $1\in \RR_{\ge 0}$
then yields a tropical map $h_s:G\rightarrow 
\widetilde B_{1,i}$.

By Theorem \ref{thm:balancing general}, the map $h_s$ satisfies the usual
tropical balancing condition at each vertex $v\in V(G)$
with $h_s(v)$ not lying in the discriminant locus $\widetilde\Delta_1^i$ of 
$\widetilde B_{1,i}$. On the
other hand, by Proposition \ref{prop:delta balancing}, if $h_s(v)$ lies in the 
discriminant locus, then the
balancing condition still holds modulo $m_i$.

The graph $G$, being genus $0$, is a tree. Further, it has precisely one leg,
$L_{\out}$,
corresponding to the unique punctured point. The graph then has a number
of univalent vertices, i.e., leaves of the tree.
There are two possibilities for a univalent vertex $v$
with adjacent edge or leg $E$.

First, if $h_s(v)\not\in \widetilde\Delta_1^i$, then balancing implies that
$\mathbf{u}(E)=0$. In particular, $E\not=L_{\out}$, as
$\mathbf{u}(L_{\out})\not=0$ for types contributing to the scattering
diagram.
If $v$ and $E$ are removed from the graph $G$
to obtain a new type $\tau'$, we note that $\dim Q^{\vee}_{\tau',\RR}
=\dim Q^{\vee}_{\tau,\RR}-1$, as the length of the edge $E$ is a
free parameter. On the other hand, if 
$h:\Gamma/Q^{\vee}_{\tau,\RR}\rightarrow\Sigma(\widetilde X)$
and $h':\Gamma'/Q^{\vee}_{\tau',\RR}\rightarrow\Sigma(\widetilde X)$
are the universal families of tropical maps of type $\tau$ and $\tau'$
respectively, it is clear that $h(\tau_{\out})=h'(\tau'_{\out})$.
Since the type $\tau$ only contributes to $\foD_{(\widetilde X,\widetilde D)}$
if $\dim Q^{\vee}_{\tau,\RR}=n-1$ and $\dim h(\tau_{\out})=n$, 
we obtain a contradiction. Thus no such univalent vertex occurs.

Second, if $h_s(v)\in \widetilde\Delta_1^i$, we then have
$\mathbf{u}(E)$ proportional (positively or negatively) to $m_i$.
The same argument as above shows $\mathbf{u}(E)$ can't be $0$.

Combining these two observations with balancing at vertices mapping to
non-singular points of $\widetilde B_1$, we see that the only possible 
tropical maps of the type being
considered must have image in $\widetilde B_1$ a ray or line segment
emanating from $\widetilde\Delta_i^1$ in the direction $m_i$ or $-m_i$.

Note that curves in a moduli space $\scrM_{(\tau,\ul{\beta})}(\widetilde X_i)$
have image entirely contained in $\widetilde\PP_i$ if and only if
$h_s(G^{\circ})$ is entirely contained in the open star of
$\nu_i$ defined in \eqref{Eq:nui def}, where $G^{\circ}$ is as 
given in Construction \ref{Const: dual graph}.
If this is the case, the restrictions on $\dim Q_{\tau,\RR}^{\vee}$
and $\dim h(\tau_{\out})$ imply that $G$ has one vertex $v$,
one leg $L_{\out}$, and no edges. Further,
$\mathbf{u}(L_{\out})$ is proportional to $m_i$.

If the constant of proportionality is positive, we say we are in
the \emph{positive case}, and then the corresponding wall
has support $h(\tau_{\out})\cap\widetilde B_1
=\rho_{\infty}$ for some codimension one $\rho\in\Sigma$ containing
$\rho_i$. If the constant of proportionality is negative,
we say we are in the \emph{negative case}, and then the corresponding
wall has support
$h(\tau_{\out})\cap\widetilde B_1=\rho_0$ again for some such $\rho$.
\smallskip

{\bf Step IV. Analysis of curve classes.}
Now let $f:C\rightarrow \widetilde{X}_i$ be a punctured
log map in the moduli space $\scrM_{(\tau,\ul{\beta})}(\widetilde{X}_i)$.
Then the image of $f$
must lie inside the surface $\widetilde D_{\widetilde\Delta_{\ul{\rho}}}$,
with $\ul{\rho}=(\rho+\RR\rho_i)/\RR\rho_i$ and $\rho$ as in the
previous step. We wish to determine
the curve class $\ul{\beta}$ expressed as a curve class in 
this surface. 

To do so, note that in the positive and negative cases described above, 
the vector $u=\mathbf{u}(L_{\out})$
is a tangent vector to the cones $\tilde\rho$ and $\tilde\rho'$
respectively. One may then write $u$ as a linear combination of the
generators of this cone. We may in particular use the description of these
generators in \S\ref{subsubsec:geometry of strata}. In the positive case,
we may write $u=k (u_1,0)$ for some $k>0$. In the negative case,
we may write $u=k(0,1)-k(m_i,1)$ for some $k>0$. Note that in the
language of \eqref{Eq: polyhedral decomposition}, we may
equate the generator $(u_1,0)$ of $\tilde\rho$ with $\widetilde D_{\rho_i}^*$,
and the generators $(0,1)$ and $(m_i,1)$ of $\tilde\rho'$ with
$\widetilde D_{\RR_{\ge 0}(0,1)}^*$ and $\widetilde D_{\nu_i}^*$ respectively.

Now \cite[Cor.\ 1.14]{GSAssoc} implies that
if $\widetilde D_j$ is any irreducible component of the boundary of 
$\widetilde
X_i$, we have $\widetilde D_j\cdot \ul{\beta}=\langle \widetilde D_j,u\rangle$ 
where $u$ is viewed as an element of 
$\Div_{\widetilde D}(\widetilde X_i)^*_{\RR}$ as described above.

Explicitly in our situation, 
in the positive case, the only boundary divisor which intersects
$\ul{\beta}$ non-trivially is $\widetilde D_{\rho_i}$. In the negative
case, $\widetilde D_{\RR_{\ge 0}(0,1)}$ intersects $\ul{\beta}$
positively and $\widetilde D_{\nu_i}$ intersects $\ul{\beta}$ negatively,
and all other intersections with boundary divisors are $0$.

We now view $\ul{\beta}$ as a curve class in the
surface $\widetilde D_{\Delta_{\ul{\rho}}}$: this makes sense, as the
image of $f$ is contained in $\widetilde D_{\Delta_{\ul{\rho}}}$. Note that
the curve classes $s_{\rho_{\infty}}$ and $s_{\rho_0}$ on 
$\widetilde D_{\widetilde\Delta_{\ul{\rho}}}$
are the classes of the intersection of 
$\widetilde D_{\widetilde\Delta_{\ul{\rho}}}$ with
$\widetilde D_{\rho_i}$ and $\widetilde D_{\RR_{\ge 0}
(1,0)}$ respectively. The curve class $F_i$ may be viewed
as the intersection of $\widetilde D_{\widetilde\Delta_{\ul{\rho}}}$
with another divisor which meets $\ul{\beta}$ trivially.
Thus, on the surface $\widetilde D_{\widetilde\Delta_{\ul{\rho}}}$,
we see that we have $\ul{\beta}\cdot F_i =0$ in both cases. In the positive 
case, $\ul{\beta}\cdot s_{\rho_{\infty}}>0$ and $\ul{\beta}\cdot s_{\rho_0}=0$,
while in the negative case $\ul{\beta}\cdot s_{\rho_{\infty}}=0$
and $\ul{\beta}\cdot s_{\rho_0}>0$. 

From this, we may conclude that in the positive case $\ul{\beta}=
\sum_j w_j E^j_{\ul{\rho}}$, while in the negative case $\ul{\beta}=\sum_j
w_j (F_i-E^j_{\ul{\rho}})$ for some collection of integers $w_j$.
As $\ul{\beta}$ is effective, all $w_j$ must be non-negative.
Note the $E^j_{\ul{\rho}}$ are a disjoint set of $-1$ curves.
Thus, in the positive case, if $w_j>0$, then $\ul{\beta}\cdot E^j_{\ul{\rho}}
<0$, and hence the image of $f$ must contain the exceptional
curve $E^j_{\ul{\rho}}$. It then follows that the image of $f$
is precisely the union of the $E^j_{\ul{\rho}}$ with $w_j\not=0$.
Since the image is connected, at most
one $w_j$ may be non-zero, so $\ul{\beta} = w E^j_{\ul{\rho}}$ for
some $j$. 

Similarly, the $F_i-E^j_{\ul{\rho}}$ form a disjoint set of
$-1$ curves, so the same argument implies that in the negative case, 
$\ul{\beta} = w (F_i-E^j_{\ul{\rho}})$
for some $j$. 

In particular, in the two cases, $f$ must be a degree $w$ cover
of $E^j_{\ul{\rho}}$ or $F_i-E^j_{\ul{\rho}}$.
\smallskip

{\bf Step V. Application of Lemma \ref{Lem:functional equation}
to finish the calculation.}
Now let $f_y$, $f_{y'}$ be the wall-crossing functions at $y,y'$
for the canonical scattering diagram for $\widetilde{X}_i$.
By the above analysis, we may write 
\[
f_y=\prod_{j=1}^{\kappa^j_{\ul{\rho}}}f_{y,j}(t^{E^j_{\ul{\rho}}}z^{-m_i}),
\quad
f_{y'}=\prod_{j=1}^{\kappa^j_{\ul{\rho}}}f_{y',j}(t^{f-E^j_{\ul{\rho}}}z^{m_i}).
\]
Here $f_{y,j}$, $f_{y',j}$ should be viewed as polynomials in one
variable. Now certainly $f_{y,j}$ is independent of the choice of
$j$ by symmetry, and the same is true of $f_{y',j}$. Further, since
$f_y$ is a polynomial in $z^{-m_i}$ and $f_{y'}$ is a polynomial
in $z^{m_i}$, Lemma \ref{Lem:functional equation} implies that
$f_y$ and $f_{y'}$ are polynomials of degree $\kappa^i_{\ul{\rho}}$
in $z^{-m_i}$ and $z^{m_i}$ respectively. 
This now implies that each $f_{y,j}$,
$f_{y',j}$ is a linear polynomial. Again, Lemma \ref{Lem:functional equation}
and symmetry tells us that
\[
f_{y',j}(t^{F_i-E^j_{\ul{\rho}}}z^{m_i}) = 
\wp\left(f_{y,j}(t^{E^j_{\ul{\rho}}}z^{-m_i})\right)
t^{F_i-E^j_{\ul{\rho}}}z^{m_i}.
\]
We may use in addition the fact that by construction
of the canonical scattering diagram, the constant term of each 
$f_{y,j}$ must be $1$, and the same for $f_{y',j}$. 
This immediately gives only one choice for $f_{y,j}$ and $f_{y',j}$,
namely
\[
f_{y,j}(t^{E^j_{\ul{\rho}}}z^{-m_i})=1+t^{E^j_{\ul{\rho}}}z^{-m_i},
\quad
f_{y',j}(t^{F_i-E^j_{\ul{\rho}}}z^{m_i})=1+t^{F_i-E^j_{\ul{\rho}}}z^{m_i}.
\]
This allows us to conclude the lemma.
\end{proof}

In the remainder of this section, we observe that
$\foD_{(\widetilde X,\widetilde D)}^1$ is now determined completely
by its behaviour near the origin.

To set up the notation, let $S=(M_{\RR}\setminus\{0\})/\RR_{> 0}$ be the
sphere parameterizing rays from the origin. Given $s\in S$, we
write $\rho_s$ for the corresponding ray. For a fixed $k$,
there is a subset $S_0\subset S$ whose complement is codimension $\ge 2$,
with the property that for $s\in S_0$, $\dim \rho_s\cap 
\Sing(\foD_{(\widetilde X,\widetilde D),k}^1)=0$.

\begin{theorem}
\label{thm:radial directions}
Let $s\in S_0$. There are three possible behaviours:
\begin{enumerate}
\item There exists $\sigma\in\P_1$ with $\rho_s\subseteq\sigma$.
In this case, $f_x$ is independent of $x\in \rho_s$.
\item There exists a maximal $\ul{\sigma}\in\Sigma(\rho_i)$ with 
$\rho_s\subseteq (\tilde\sigma'\cup\tilde\sigma)\cap \widetilde B_1$, 
but $\rho_s\cap \widetilde\Delta_1=\emptyset$. In this case,
$f_x$ for $x\in\rho_s$ only depends on whether 
$x\in \tilde\sigma'\cap \widetilde B_1$
or $x\in \tilde\sigma'\cap\widetilde B_1$. Taking 
\[
y\in \Int(\rho_s\cap\tilde\sigma),\quad y'\in \Int(\rho_s\cap\tilde\sigma'),
\]
$f_y$ and $f_{y'}$ are related by parallel transport in $\shP^+$ along
$\rho_s$.
\item There exists a codimension one cell $\rho\in\Sigma$
with $\rho_s\subseteq (\tilde\rho'\cup\tilde\rho)\cap 
\widetilde B_1$. In this case, $\rho_s$ intersects 
$\widetilde\Delta_{\ul{\rho}}$ at a point.
Again, $f_x$ for $x\in\rho_s$ only depends on whether $x\in \tilde\rho'\cap
\widetilde B_1$ or $x\in \tilde\rho\cap \widetilde B_1$.
Taking
\[
y\in \Int(\rho_s\cap\tilde\rho),\quad y'\in \Int(\rho_s\cap\tilde\rho'),
\]
then $f_y$, $f_{y'}$ can be written as in Lemma \ref{lem: multiple covers}
\end{enumerate}
\end{theorem}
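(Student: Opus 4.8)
The plan is to use radiance of $\foD_{(\widetilde X,\widetilde D),k}^1$ together with the consistency results of \S\ref{Sec: wall structures} to propagate the structure of the scattering diagram from the origin outward along the ray $\rho_s$. First I would fix a representative scattering diagram $\foD^1_{(\widetilde X,\widetilde D),k}$ as in Construction \ref{Constr: GSk} and work modulo $\fom_k$ for arbitrary $k$. Radiance (Theorem \ref{Thm: radiant scatter}) means every wall is contained in the inverse image under $\Psi|_{\widetilde B_1}$ of a codimension one linear subspace of $M_{\RR}$. Since $\rho_s$ is a ray from the origin, it is itself a radiant submanifold, and a wall $\fod$ either contains $\rho_s$ (if $\rho_s$ lies in the linear subspace spanned by $\fod$), or meets $\rho_s$ in a single point, or is disjoint from $\rho_s$. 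Because only finitely many walls are present and $s\in S_0$, one can find a small neighborhood of $\rho_s$ in which the only codimension two strata of $\Sing(\foD)$ that $\rho_s$ crosses are: a point of $\widetilde\Delta_1$ (case (3)), or, in case (2), no point of the singular locus at all — only the codimension one cells $\tilde\rho,\tilde\rho'$ of $\scrP_1$ themselves. In case (1), $\rho_s$ stays inside a single maximal cell $\sigma$.

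The three cases are then handled as follows. For case (1): since $\rho_s$ lies in the interior of a single $\sigma\in\P^{\max}_1$ away from $\Sing(\foD)$, any two points $x,x'\in\rho_s$ are joined by a path inside $\Int(\sigma)$ not crossing any wall (walls crossing $\rho_s$ transversally would create a singular point, excluded by $s\in S_0$ and genericity; walls containing $\rho_s$ contribute the same factor to $f_x$ and $f_{x'}$). Hence $f_x=f_{x'}$, so $f_x$ is constant on $\rho_s$. For case (2): $\rho_s$ crosses exactly the codimension one cell $\rho:=\tilde\rho\cap\tilde\rho'\cap\widetilde B_1$ (using the notation of \S\ref{rem:phi disc}) but avoids $\widetilde\Delta_1$, so by consistency in codimension one (Definition \ref{Defn: consistency along codim one}) together with the parallel transport description \eqref{eq:parallel transport}, $f_y$ and $f_{y'}$ for $y\in\tilde\sigma$, $y'\in\tilde\sigma'$ are related precisely by parallel transport in $\shP^+$ across $\rho$ along $\rho_s$; again constancy within each of $\tilde\sigma\cap\widetilde B_1$ and $\tilde\sigma'\cap\widetilde B_1$ follows from the codimension zero consistency argument as in case (1). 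For case (3): here $\rho_s$ passes through a generic point $x$ of $\widetilde\Delta_{\ul\rho}$, and by radiance every wall of $\foD^1_{(\widetilde X,\widetilde D),k}$ through $x$ is contained in $\rho_\infty=\tilde\rho\cap\widetilde B_1$ or $\rho_0=\tilde\rho'\cap\widetilde B_1$. Then $f_y$ (for $y\in\rho_\infty$ near $x$) and $f_{y'}$ (for $y'\in\rho_0$ near $x$) are exactly the quantities computed in Lemma \ref{lem: multiple covers}, and constancy of $f_x$ on each of $\tilde\rho\cap\widetilde B_1$ and $\tilde\rho'\cap\widetilde B_1$ between $x$ and the endpoints of $\rho_s$ follows once more from the codimension zero/one consistency arguments. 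This gives the claimed description.

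The main obstacle will be verifying that, in each case, $f_x$ really is constant along the portions of $\rho_s$ lying in a fixed maximal or codimension one cell — i.e., that $\rho_s$ does not pick up extra wall contributions as it travels outward. The subtlety is that radiance only says walls are radial; one must argue that any wall meeting $\rho_s$ must either contain it entirely (contributing a constant factor) or meet it in a single point which, for generic $s\in S_0$, is forced to be one of the distinguished strata already accounted for. This requires combining radiance with the finiteness of $\foD^1_{(\widetilde X,\widetilde D),k}$ and a careful genericity choice of $s$: the complement $S\setminus S_0$ is codimension $\ge 2$, and one must check that for $s\in S_0$, $\rho_s$ meets each wall $\fod$ with $\rho_s\not\subseteq\mathrm{Span}(\fod)$ transversally in an interior point of some codimension $\le 1$ cell, which is exactly where the preceding codimension zero and one consistency conditions apply. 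Once this transversality/genericity bookkeeping is in place, the three cases are an essentially immediate repackaging of Definitions \ref{Def: consistency in codim zero}, \ref{Defn: consistency along codim one}, the parallel transport formula \eqref{eq:parallel transport}, and Lemma \ref{lem: multiple covers}.
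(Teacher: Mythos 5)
Your overall architecture (three cases; codimension-one consistency and parallel transport for case (2); Lemma \ref{lem: multiple covers} for case (3)) matches the paper's, but the central step --- constancy of $f_x$ along the portion of $\rho_s$ inside a fixed cell --- rests on a false dichotomy. You assume a wall either contains $\rho_s$ entirely or crosses it transversally in a single point, the latter being ``excluded by $s\in S_0$''. Neither half of this is right. First, the definition of $S_0$ only requires $\dim\big(\rho_s\cap\Sing(\foD^1_{(\widetilde X,\widetilde D),k})\big)=0$, so $\rho_s$ may, and in general does, meet the singular locus in finitely many points; it is not disjoint from it. Second, walls are polyhedral subsets, not hyperplanes: radiance does guarantee there are no transversal crossings at interior points of walls (a radiant wall containing a point of $\rho_s$ in its interior must contain a neighbourhood of that point in $\rho_s$), but a wall may contain only a \emph{proper subsegment} of $\rho_s$, beginning or ending at a joint. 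Hence the collection of walls containing $x$ genuinely changes as $x$ passes through a joint lying on $\rho_s$, and your case (1) argument (``any two points are joined by a path not crossing any wall'') does not address why the product $f_x$ is unchanged across such a joint --- note also that $f_x$ is by definition a product over walls containing $x$, so the existence of a wall-avoiding path between $x$ and $x'$ is not in itself a reason for $f_x=f_{x'}$.

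The missing ingredient is the consistency argument at these joints, which is what the paper's proof supplies. If $\foj$ is a joint met by $\rho_s$ in a single point $x$, then $\foj$ cannot be radiant (otherwise $\rho_s$ would locally lie in it, contradicting zero-dimensionality of the intersection), so by radiance of the diagram every wall containing $\foj$ meets $\rho_s$ in a one-dimensional set on one side or the other of $x$. One then repeats the computation of Cases I and II of the proof of Theorem \ref{Thm: radiant scatter}: for a small loop $\gamma$ around $\foj$ and a suitable monomial $z^m$, consistency forces $\theta_{\gamma,\foD}(z^m)=z^mf_1^{\langle m,n_{\fod}\rangle}f_2^{-\langle m,n_{\fod}\rangle}$, where $f_1,f_2$ are the products of wall functions of the walls lying on the two sides, whence $f_1=f_2$ and so $f_y=f_{y'}$ for $y,y'\in\rho_s$ on either side of $x$. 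With this step inserted, the remainder of your outline --- parallel transport in $\shP^+$ across the codimension one cell in case (2), and Lemma \ref{lem: multiple covers} at the point of $\widetilde\Delta_{\ul{\rho}}$ in case (3) --- goes through as in the paper.
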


\begin{proof}
The three possibilities for the interaction of $\rho_s$ with
$\P_1$ are immediate from the description of the polyhedral
cone complex $\widetilde\P$ in \S\ref{subsec:tropical}. Thus we
only need to check the behaviour of $f_x$ along $\rho_s$. 

We consider case (1) first. Suppose that $\rho_s$ intersects
a joint $\foj$ of $\foD_{(\widetilde X,\widetilde D),k}^1$. 
By the definition of $S_0$, $\rho_s$ only intersects $\foj$
at one point, say $x$, so $\foj$ cannot be radiant. Thus by radiance
of $\foD_{(\widetilde X,\widetilde D),k}^1$, any wall containing
$\foj$ must intersect $\rho_s$ in a one-dimensional set,
as $\rho_s$ is radiant. 
We may then argue similarly as in the proof of Theorem
\ref{Thm: radiant scatter} that consistency of the scattering
diagram implies $f_y=f_{y'}$ for $y, y'\in\rho_s$ near $x$,
with $y,y'$ lying on the two sides of $\foj$. This shows the claimed
independence.

The argument is the same in cases (2) and (3), except that in
passing from $\widetilde\sigma'$ to $\widetilde\sigma$ in case (2)
we have to bear in mind parallel transport in $\shP^+$, and in
passing from $\widetilde\rho'$ to $\widetilde\rho$ in case (3),
we apply Lemma \ref{lem: multiple covers}.
\end{proof}

\section{Pulling singularities to infinity: scattering in $M_{\RR}$}
\label{Sec: The tropical vertex}
Let $T_0\widetilde B_1$ denote the tangent space to the origin $0\in \widetilde B_1$, thought of
as a (radiant) affine manifold. Note that $T_0\widetilde B_1$ has
a canonical identification with $M_{\RR}$. Define a scattering diagram in $T_0\widetilde B_1$ obtained by localizing $\foD^1_{(\widetilde X,\widetilde D)}$ to the origin by
\begin{equation}
    \label{Eq: asymptotic canonical }
T_0\foD^{1}_{(\widetilde X,\widetilde D)}:=\{(T_0\fod, f_{\fod})\,|\,
(\fod,f_{\fod})\in \foD^{1}_{(\widetilde X,\widetilde D)}, \quad 0\in \fod\}.    
\end{equation}
Here $T_0\fod$ denotes the \emph{tangent wedge} to $\fod$ at $0$, in this case, just the cone generated by $\fod$. Note that $T_0\foD^{1}_{(\widetilde X,\widetilde D)}$ is a scattering diagram
on $M_{\RR}$. In this section we will describe another viewpoint on
scattering diagrams in $M_{\RR}$, and then construct another diagram which we associate to the toric variety $X_{\Sigma}$ and the hypersurfaces $H_i$.
This diagram is constructed purely algorithmically, using the method of
\cite{GS2}, generalizing a two-dimensional construction of
\cite{KS}.
In the final section we will compare these two scattering diagrams in $M_{\RR}$. 
\subsection{The higher dimensional tropical vertex}
\subsubsection{The general scattering setup}
\label{Sect: the general setup}
Fix a lattice $M$ of finite rank, and as usual let $N=\Hom_{\ZZ}(M,\ZZ)$,
$M_{\RR}=M\otimes_{\ZZ}\RR$. In what follows, assume we are given
a toric monoid $P$ along with a map $r:P\rightarrow M$; e.g., we might
take $P=M\oplus\NN^p$ for some positive integer $p$ with $r$ the projection. Let $P^{\times}$
be the group of units of $P$, and let $\fom_P=P\setminus P^{\times}$.
This induces an ideal $\fom_P$ in the monoid ring $\kk[P]$.
We write for any monomial ideal $I\subseteq P$ the ring
\begin{equation}
\label{Eq: RI}
    R_I:=\kk[P]/I,
\end{equation}
and we denote by $\widehat{\kk[P]}$ the completion of $\kk[P]$ with respect to
$\fom_P$.

Fix a monomial ideal $I$ with $\sqrt{I}=\fom_P$.
We define the module of log derivations as usual as
\[
\Theta(R_I):= R_I \otimes_{\ZZ} N.
\]
Here we write $z^m \partial_n:= z^m\otimes n$ for $m\in P$, $n\in N$,
and $z^m\partial_n$ acts on $R_I$ via 
\[
z^m\partial_n(z^{m'})=\langle n,r(m')\rangle z^{m+m'}.
\]
Thus in particular if $\xi\in \fom_P\Theta(R_I)$, then
\[
\exp(\xi)\in \Aut(R_I).
\]
We note the commutator relation
\[
[z^m\partial_n,z^{m'}\partial_{n'}]=z^{m+m'}
\partial_{\langle r(m'),n\rangle n'-\langle r(m),n'\rangle n}
\]
and obtain a nilpotent Lie subalgebra of $\fom_P\Theta(R_I)$
defined by
\[
\fov_I:=\bigoplus_{m\in P\setminus I\atop r(m)\not=0} z^m(\kk\otimes r(m)^{\perp})
\]
which is closed under Lie bracket and hence defines a group
\[
\VV_I:=\exp(\fov_I).
\]
This is the group whose set of elements is $\fov_I$ and with multiplication
given by the Baker-Campbell-Hausdorff formula.

We may then define the pro-nilpotent group
\[
\widehat\VV:= \lim_{\longleftarrow} \VV_{\fom_P^k}.
\]
This is the higher dimensional tropical vertex group. Note that
it acts by automorphisms on $\widehat{\kk[P]}$.

Let $\foj\subseteq M_{\RR}$ be a codimension two affine subspace
with rational slope and let $\Lambda_{\foj}\subseteq M$ be
the set of integral tangent vectors to $\foj$. This is a saturated sublattice
of $M$. Then we have the following Lie subalgebras 
of $\fov_I$:
\begin{align*}
\fov_{I,\foj} := {} & \bigoplus_{m\in P\setminus I\atop r(m)\not=0} z^m
(\kk\otimes (r(m)^{\perp}\cap \Lambda_\foj^{\perp}))\\
{}^{\perp}\fov_{I,\foj} := {} & \bigoplus_{m\in P\setminus I\atop r(m)\not\in\Lambda_\foj} z^m
(\kk\otimes (r(m)^{\perp}\cap \Lambda_\foj^{\perp}))\\
{}^{\|}\fov_{I,\foj} := {} & \bigoplus_{m\in P\setminus I\atop r(m)\in 
\Lambda_\foj\setminus
\{0\}} z^m
(\kk\otimes \Lambda_\foj^{\perp})
\end{align*}
One notes easily 
that $[\fov_{I,\foj},{}^{\perp}\fov_{I,\foj}]\subseteq {}^{\perp}\fov_{I,\foj}$,
and that ${}^{\|}\fov_{I,\foj}$ is abelian. In particular, if we
denote by $\VV_{I,\foj}$, ${}^{\perp}\VV_{I,\foj}$ and ${}^{\|}\VV_{I,\foj}$
the corresponding groups, we see that
\[
{}^{\|}\VV_{I,\foj}\cong \VV_{I,\foj}/{}^{\perp}\VV_{I,\foj}.
\]
Similarly, taking inverse limits, we have subgroups 
$\widehat\VV_{\foj}$,
${}^{\|}\widehat\VV_{\foj}$ and ${}^{\perp}\widehat\VV_{\foj}$.

We next consider scattering diagrams in $M_{\RR}$. Recall in \S\ref{Sec: wall structures} we defined the notion of a scattering diagram associated to an affine manifold together with a polyhedral decomposition $(B,\P)$, a choice of a monoid $Q$ and an MVPL-function $\varphi$. Here as a particular case, we consider the situation where $B=M_{\RR}$ and $\P$ is the trivial polyhedral decomposition whose only cell is $M_{\RR}$. For the monoid $Q$ we take $\NN^p$ for some positive integer $p$, and we consider the trivial MVPL-function $\varphi=0$. Under these choices the sheaf $\shP^+$ on $B$ is the constant sheaf with stalk $P= M \oplus \NN^p$. In this case we work with the algebra $R_I$ of \eqref{Eq: RI}. For a wall $\fod$, the attached function $f_{\fod}$ is an element of $R_I$ and is a sum 
$\sum c_m z^m$ with $r(m)$ negatively proportional to a primitive vector $m_0\in M\setminus\{0\}$ tangent to $\fod$, called the \emph{direction} of the wall. This gives us a scattering diagram $\foD$ over $R_I$ as in \S \ref{Sec: wall structures}, as a finite set of walls. By taking the inverse limit over all possible $I$ we obtain the notion of a scattering diagram over $\widehat{\kk[P]}$, which can have infinitely many walls. We may also generalize away from
using the sheaf $\shP^+$, and replace $P=M\oplus\NN^p$ with a more
general choice of monoid equipped with the map $r:P\rightarrow M$.
All definitions still apply without any difficulty.


Let $\foD$ be a scattering diagram over $\widehat{\kk[P]}$. For each $k>0$, let $\foD_k\subset\foD$ be the subset of
walls $(\fod,f_{\fod})\in\foD$ with $f_{\fod}\not\equiv 1\mod \fom_P^k$. A wall defines an element of $\VV_{\fom^k_P}$ in a standard way
(see e.g., \cite[Rem.\ 2.16]{GS2}), which in turn induces wall-crossing
automorphisms of $\kk[P]/\fom_P^k$ which agree with \eqref{Eqn: theta_fop}.
This allows us to define the wall-crossing automorphisms 
\[
\theta_{\gamma,\foD_k}:\kk[P]/\fom_P^k\rightarrow\kk[P]/\fom_P^k
\]
as in \eqref{Eq: path ordered product}. However, we may also view
$\theta_{\gamma,\foD_k}\in\VV_I$.  We then obtain $\theta_{\gamma,\foD}$, which is an element of the tropical vertex group  $\theta_{\gamma,\foD}\in\widehat{\VV}$, by taking the limit as $k\rightarrow
\infty$.

As we consider the trivial polyhedral decomposition $\P$ on $M_{\RR}$, all joints are of codimension zero in the sense of Definition \ref{Def: joints}. Therefore, the consistency of a scattering diagram is defined as in \S \ref{Subsubsect: Consistency around codim zero joints}.

\begin{example}
\label{exam: T0}
The scattering diagram $T_0\foD^1_{(\widetilde X,\widetilde D)}$ as defined
in \eqref{Eq: asymptotic canonical } may be viewed as
a scattering diagram as described above. If $(T_0\fod,f_{\fod})\in
T_0\foD^1_{(\widetilde X,\widetilde D)}$, then a priori,
in the setup of Definition \ref{Def: canonical scattering walls}, 
we have $f_{\fod}\in \kk[\shP^+_x]/I_x$ for some any $x\in\Int(\fod)$.
However, by Theorem \ref{thm:D defined deeper codim},
in fact $f_{\fod}\in \kk[\shP^+_0]/I_0$. Thus,
$T_0\foD^1_{(\widetilde X,\widetilde D)}$ may be viewed in this
revised setup, taking $P=\shP^+_0$ and $r:P\rightarrow M$ given by
$m\mapsto \bar m$.

Further, Theorem \ref{thm:consistency at deeper strata} then implies
that $T_0\foD^1_{(\widetilde X,\widetilde D)}$ is consistent in the above
sense.
\end{example}

\subsubsection{Widgets}
We now need to state the higher dimensional analogue of
the Kontsevich-Soibelman lemma, see \cite[Thm.\ 6]{KS} and 
\cite[Thm. 1.4]{GPS}. We cannot, however, start
with an arbitrary scattering diagram. The basic objects
we start with are as follows.
\begin{definition}
\label{Def: tropical hypersurface}
A \emph{tropical hypersurface} in $M_{\RR}$
is a fan $\T$ in $M_{\RR}$ whose support $|\T|$ is pure dimension $\dim M_{\RR}-1$, along
with a positive integer weight attached to each cone of $\T$ of
dimension $\dim M_{\RR}-1$, which satisfies the following
balancing condition.
For every $\omega\in\T$ of dimension $\dim M_{\RR}-2$, let $\gamma$
be a loop in $M_{\RR}\setminus \omega$ around an interior point
of $\omega$, intersecting top-dimensional 
cones $\sigma_1,\ldots,\sigma_p$ of $\T$ of weights $w_1,\ldots,w_p$.
Let $n_i\in N$ be the primitive element associated with the crossing
of $\sigma_i$ by $\gamma$ in the usual convention. Then
\begin{equation}
\label{Eq:balancing}
\sum_{i=1}^p w_in_i=0.
\end{equation}
\end{definition}
\begin{definition}
\label{def:widget}
Suppose we are given
\begin{enumerate}
\item A complete toric fan $\Sigma$ in $M_{\RR}$.
\item A ray $\rho_0\in\Sigma$ with primitive generator $m_0\in M\setminus\{0\}$.
\item A tropical hypersurface $\T$ in $(M/\ZZ m_0)\otimes_{\ZZ} \RR$ with
support contained in the union of codimension one cones of the
quotient fan $\Sigma(\rho_0)$.
\item An element $f_0\in\widehat{\kk[P]}$ such that $f_0=\sum c_mz^m$
with $r(m)$ positively proportional to $m_0$.
\end{enumerate}
Let $\pi:M_{\RR}\rightarrow M_{\RR}/\RR m_0$ be the projection.
Then the \emph{widget} associated to this data is the scattering diagram
\begin{equation}
\label{Eq:widget}
\foD_{m_0}:=\{(\fod_{\sigma}, f_0^{w_{\sigma}})\,|\,\sigma\in\T_{\max}\},
\end{equation}
where $\fod_{\sigma}$ is the unique codimension one cell of $\Sigma$ containing $\RR_{\geq 0}m_0$ and with $\pi(\fod_{\sigma})=\sigma$. 
\end{definition}
We illustrate two widgets $\foD_{e_1}$ and $\foD_{e_2}$ in Figure 
\ref{Fig: InitialP3}. 
\begin{definition}
\label{def:incoming}
We say a wall of a scattering diagram $(\fod,f_{\fod})$ with direction $m_0$ is \emph{incoming} if
\[
\fod=\fod-\RR_{\ge 0} m_0.
\]
\end{definition}

Note that by definition a widget is a union of incoming walls. In what follows we denote the relative boundary of a widget $\foD_{m_0}$ in $M_{\RR}$ by $\partial (\foD_{m_0})$.

\begin{lemma}
\label{widgetlemma}
Let $\foD_{m_0}$ be a widget. Then 
\[
\Sing(\foD_{m_0}) \setminus \partial (\foD_{m_0}) =\left(\bigcup_{\rho\in\T} \pi^{-1}(\rho)\right)
\cap \Supp(\foD_{m_0}),
\]
where the union is over all $\rho$ of dimension $\dim M_{\RR}-3$.
If $\gamma$ is a loop in $\mathrm{Star}(\rho_0)\setminus \Sing(\foD_{m_0})$, where $\rho_0$ is as in Definition \ref{def:widget},
then $\theta_{\gamma,\foD_{m_0}}=\id$.
\end{lemma}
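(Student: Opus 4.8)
\textbf{Proof plan for Lemma \ref{widgetlemma}.}

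The plan is to unwind the definitions in Definition \ref{def:widget} and reduce everything to the balancing condition \eqref{Eq:balancing} of the tropical hypersurface $\T$ together with the elementary two-variable scattering computation. First I would establish the description of $\Sing(\foD_{m_0})$. By construction each wall $\fod_\sigma$ is a codimension one cone of $\Sigma$ containing $\RR_{\ge 0}m_0$ and projecting to a maximal cone $\sigma$ of $\T$; the boundary $\partial\fod_\sigma$ consists of the lower-dimensional faces of $\fod_\sigma$, and for two walls $\fod_\sigma,\fod_{\sigma'}$ one has $\fod_\sigma\cap\fod_{\sigma'}=\pi^{-1}(\sigma\cap\sigma')\cap\Supp(\foD_{m_0})$. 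Since all the cells $\fod_\sigma$ contain the ray $\RR_{\ge 0}m_0$, the only codimension-two-in-$B$ intersections not already contained in $\partial(\foD_{m_0})$ (the relative boundary of the union) are exactly those of the form $\pi^{-1}(\rho)\cap\Supp(\foD_{m_0})$ for $\rho\in\T$ of dimension $\dim M_\RR - 3$, i.e.\ codimension-two faces of $\T$. This is a direct bookkeeping check comparing $\Sing(\foD_{m_0})$ as defined in Definition \ref{Def: canonical scattering walls} with the combinatorics of the fan $\Sigma$ relative to the subfan $\T$ upstairs; here I would use the running hypothesis that $X_\Sigma$ is nonsingular so that $\fod_\sigma$ is a standard cone and $\pi$ restricts to an isomorphism on it modulo $\RR m_0$.

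Next I would prove $\theta_{\gamma,\foD_{m_0}}=\id$ for a loop $\gamma$ in $\mathrm{Star}(\rho_0)\setminus\Sing(\foD_{m_0})$. Because a widget is a union of incoming walls all of which contain $\RR_{\ge 0}m_0$, any such loop $\gamma$ can be contracted within $\mathrm{Star}(\rho_0)$ to a loop around a single joint $\foj$, which is of the form $\pi^{-1}(\rho)\cap\Supp(\foD_{m_0})$ for $\rho\in\T$ a codimension-two face; so it suffices to treat a small loop around one such $\foj$. Fixing such a $\foj$, the walls crossed by $\gamma$ are precisely the $\fod_{\sigma_1},\ldots,\fod_{\sigma_p}$ with $\sigma_i\in\T_{\max}$ the maximal cones of $\T$ adjacent to $\rho$, with weights $w_1,\ldots,w_p$. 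For each crossing the wall-crossing automorphism from \eqref{Eqn: theta_fop} is $z^m\mapsto (f_0^{w_i})^{\langle n_{\fod_{\sigma_i}},\bar m\rangle}z^m$, where $n_{\fod_{\sigma_i}}$ is the primitive covector vanishing on $\Lambda_{\fod_{\sigma_i}}$, and since $\fod_{\sigma_i}\supseteq\RR_{\ge 0}m_0$ while $\pi(\fod_{\sigma_i})=\sigma_i$, this covector is exactly the pullback under $\pi$ of the primitive element $n_i\in (M/\ZZ m_0)^*$ associated to the crossing of $\sigma_i$ by $\pi\circ\gamma$ in the quotient.

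Now comes the key point, which is also the only place where anything nontrivial happens: all the wall functions $f_0^{w_i}$ are powers of the \emph{single} element $f_0=\sum c_m z^m$ with every $r(m)$ positively proportional to $m_0$. Hence all the automorphisms $\theta_{\gamma,\fod_{\sigma_i}}$ lie in the abelian subgroup of $\widehat\VV$ generated by exponentials of $z^{m}\partial_n$ with $r(m)\in\RR_{\ge 0}m_0$ and $n\in m_0^\perp$; indeed the commutator relation $[z^m\partial_n,z^{m'}\partial_{n'}]=z^{m+m'}\partial_{\langle r(m'),n\rangle n'-\langle r(m),n'\rangle n}$ vanishes when $r(m),r(m')$ are both proportional to $m_0$ and $n,n'\in m_0^\perp$. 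Therefore $\theta_{\gamma,\foD_{m_0}}=\prod_{i=1}^p \theta_{\gamma,\fod_{\sigma_i}}$ is computed by simply adding exponents, and on a monomial $z^m$ it multiplies by $f_0^{\sum_i w_i\langle n_{\fod_{\sigma_i}},\bar m\rangle}=f_0^{\langle \sum_i w_i n_i,\,\pi(\bar m)\rangle}$, which is $f_0^0=1$ by the balancing condition \eqref{Eq:balancing} for $\T$ at $\rho$. (One must also check the signs: the convention fixing $n_{\fod_{\sigma_i}}$ positive on the incoming side of $\gamma$ matches the convention fixing $n_i$ in Definition \ref{Def: tropical hypersurface}, which is the usual orientation bookkeeping around a loop.) Since $\gamma$ was an arbitrary small loop around an arbitrary joint, and any loop in $\mathrm{Star}(\rho_0)\setminus\Sing(\foD_{m_0})$ is a product of conjugates of such small loops, $\theta_{\gamma,\foD_{m_0}}=\id$ in general. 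The main obstacle in writing this cleanly is precisely the sign/orientation compatibility between the two occurrences of ``the primitive element associated with a crossing in the usual convention'' — once that is pinned down, the computation collapses to \eqref{Eq:balancing}.
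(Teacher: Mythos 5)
Your proposal is correct and follows essentially the same route as the paper: reduce to a small loop around a joint $\pi^{-1}(\rho)\cap\Supp(\foD_{m_0})$ for $\rho\in\T$ of codimension two, identify the walls crossed with the maximal cones $\sigma_1,\ldots,\sigma_p$ of $\T$ adjacent to $\rho$, and conclude from the balancing condition \eqref{Eq:balancing}. The one place you go further than the paper is worth noting: the paper's displayed computation writes the path-ordered product directly as $z^m\prod_i f_0^{w_i\langle n_i,r(m)\rangle}$ and then combines exponents, silently using that all wall functions are powers of the single $f_0$ and that each $\theta_{\gamma,\fod_i}$ fixes $f_0$ (since every monomial $z^{m'}$ of $f_0$ has $r(m')\in\RR_{>0}m_0\subseteq\Lambda_{\fod_i}$, hence $\langle n_i,r(m')\rangle=0$). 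Your observation that the relevant group elements lie in an abelian (in fact central) piece of $\widehat{\VV}$ makes this step explicit; either formulation justifies replacing the composition by a product of exponents, which is exactly the point the paper's terse calculation relies on. One minor imprecision: your parenthetical ``$\fod_\sigma\cap\fod_{\sigma'}=\pi^{-1}(\sigma\cap\sigma')\cap\Supp(\foD_{m_0})$'' is only an inclusion $\subseteq$ in general (the right side may involve other walls), and your early phrase about contracting an arbitrary loop ``to a loop around a single joint'' should be read as the statement you correctly make at the end, that any such loop is a product of conjugates of small loops around joints. Neither affects the correctness of the argument.
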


\begin{proof}
The first statement is obvious. For the second statement, 
it is enough to consider a loop $\gamma$ around $\pi^{-1}(\rho)$
for $\rho$ of dimension $\dim M_{\RR}-3$. Let $\sigma_1,\ldots,\sigma_p\in\T_{\max}$
contain $\rho$ with weights $w_1,\ldots, w_p$, 
$\fod_i=\pi^{-1}(\sigma_i)$, and suppose
that $\theta_{\gamma,\fod_i}$ is defined using $n_i\in N$, so that
\begin{align*}
\theta_{\gamma,\foD}(z^m)= {} &
z^m\prod_{i=1}^n f_0^{w_i\langle n_i,r(m)\rangle}\\
= {} & z^m f_0^{\langle \sum_i w_in_i,r(m)\rangle}\\
= {} & z^m
\end{align*}
by the balancing condition \eqref{Eq:balancing}.
\end{proof}
Let $\foD_{m_i}$ be a widget associated to a primitive vector $m_i \in M\setminus \{ 0 \}$, for
$i \in {1,\ldots, n}$. Denote their union by
\[  \foD= \bigcup_{i=1}^n \foD_{m_i} \]
The higher-dimensional analogue of the Kontsevich-Soibelman Lemma is then:

\begin{theorem}
\label{thm:higher dim KS}
There is a consistent scattering diagram
$\Scatter(\foD)$ containing $\foD$ such that \[\Scatter(\foD)\setminus
\foD\] consists only of non-incoming walls.
Further, this scattering diagram is unique up to equivalence.
\end{theorem}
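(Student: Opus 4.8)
The plan is to prove this by an order-by-order construction, following the classical strategy of Kontsevich--Soibelman \cite{KS} and Gross--Siebert \cite{GS2}, but carried out in the language of the tropical vertex group $\widehat\VV$ set up above. First I would fix the ideal $I = \fom_P^k$ and construct a consistent scattering diagram $\Scatter(\foD)_k$ over $R_{\fom_P^k}$ containing $\foD$, by induction on $k$. The base case $k=1$ is trivial since $\fov_{\fom_P}$ vanishes. For the inductive step, suppose $\Scatter(\foD)_k$ is consistent modulo $\fom_P^k$. For each codimension two rational affine subspace $\foj \subseteq M_\RR$ (a joint), the path-ordered product $\theta_{\gamma_\foj, \Scatter(\foD)_k}$ around a small loop $\gamma_\foj$ is trivial modulo $\fom_P^k$, hence lies in $\VV_{\fom_P^{k+1}, \foj}$ -- more precisely in the subgroup whose Lie algebra is the degree-$k$ part of $\fov$. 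The key algebraic input is that $\fov_{\fom_P^{k+1}}$ is abelian modulo $\fom_P^{k+2}$, so that the obstruction to consistency at $\foj$ is an element of $\fov_{\fom_P^{k+1},\foj}$ which we decompose, using $\,{}^\|\VV_{I,\foj} \cong \VV_{I,\foj}/{}^\perp\VV_{I,\foj}\,$, into a ``parallel'' part and a ``perpendicular'' part.

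Next I would handle these two parts separately. The perpendicular part ${}^\perp\fov_{\fom_P^{k+1},\foj}$ is killed by adding new non-incoming walls in the directions transverse to $\foj$: since ${}^\perp\fov$ is a module over the parallel directions and is ``upper triangular'' with respect to the angular position around $\foj$, one can cancel the perpendicular obstruction iteratively by inserting outgoing walls emanating from $\foj$, exactly as in the two-dimensional argument of \cite[Thm.~1.4]{GPS} or \cite[Thm.~3.4]{GS2}. The parallel part ${}^\|\fov_{\fom_P^{k+1},\foj}$, however, cannot be cancelled by adding walls through $\foj$ in directions tangent to $\foj$ without changing the automorphism; one must instead check that it is automatically zero. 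This is where the \emph{widget} structure of $\foD$ and the balancing condition \eqref{Eq:balancing} enter: the parallel obstruction along $\foj$ measures a monodromy of the initial data around $\foj$, and for walls coming from a widget $\foD_{m_0}$ this monodromy vanishes precisely by Lemma \ref{widgetlemma}. The induction then proceeds: having added finitely many new walls to kill all perpendicular obstructions modulo $\fom_P^{k+1}$ (finiteness because $\foD_k$ is finite and each joint contributes finitely many new wall directions), we obtain $\Scatter(\foD)_{k+1}$. Passing to the inverse limit over $k$ yields $\Scatter(\foD)$, a scattering diagram over $\widehat{\kk[P]}$ with $\Scatter(\foD)\setminus\foD$ consisting of non-incoming walls.

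For uniqueness up to equivalence, I would again argue by induction on $k$. Suppose $\Scatter(\foD)$ and $\Scatter(\foD)'$ are two consistent scattering diagrams containing $\foD$ with only non-incoming walls outside $\foD$, and suppose they are equivalent modulo $\fom_P^k$. Then the ``difference'' of the two diagrams at order $k+1$ -- made precise by comparing, for each generic $x$, the functions $f_x$ and $f'_x$ modulo $\fom_P^{k+1}$ -- defines, for each joint $\foj$, an element of the abelian group $\fov_{\fom_P^{k+1},\foj}$ which by consistency of both diagrams must satisfy a closedness condition. The non-incoming hypothesis forces this difference to be supported on outgoing rays from joints, and a standard argument (the ``asymptotic support'' argument of \cite[Lemma 2.7 and its proof]{GPS}, or the change-of-lattice trick) shows that a non-incoming wall whose function is nontrivial must have its support determined by the lower-order data; comparing supports and leading terms forces $f_x \equiv f'_x \bmod \fom_P^{k+1}$ for all generic $x$, i.e.\ equivalence modulo $\fom_P^{k+1}$. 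Taking the limit gives equivalence over $\widehat{\kk[P]}$.

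The main obstacle is the verification that the parallel obstruction ${}^\|\fov_{\fom_P^{k+1},\foj}$ vanishes at every joint. In the two-dimensional case this is essentially automatic because joints are points and there is no ``parallel'' direction; in higher dimensions one genuinely needs that the initial walls are organized into widgets satisfying the balancing condition, so that the relevant monodromy computed by Lemma \ref{widgetlemma} is trivial, and moreover that this triviality persists after adding the perpendicular corrective walls at earlier orders. Establishing this persistence -- that the newly added non-incoming walls never reintroduce a parallel obstruction -- requires keeping careful track of the directions $r(m)$ of the monomials produced at each stage and showing they remain transverse to every joint they meet, which is the technical core of the argument and the reason the widget hypotheses (3) and (4) in Definition \ref{def:widget} are imposed.
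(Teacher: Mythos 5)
Your overall architecture matches the paper's: order-by-order induction, decomposition of the obstruction at a joint $\foj$ into a perpendicular part (killed by inserting new non-incoming walls $\foj-\RR_{\ge 0}m_i$ with functions $1\pm c_iz^{m_i}$) and a parallel part lying in ${}^{\|}\VV_{\fom_P^{k+1},\foj}$ which must be shown to vanish automatically. However, your proposed mechanism for the parallel part is where the argument would fail. You suggest the ``technical core'' is to show that the newly added non-incoming walls have directions that ``remain transverse to every joint they meet.'' This is false in general: a non-incoming wall routinely contains joints to which its direction is tangent (these are exactly the type (3) walls in the paper's case analysis, and they occur already in the $\PP^3$ example, e.g.\ the wall $\langle e_1,-e_2\rangle$ with function $1+t_2y+t_1t_2xy$ contains joints tangent to its direction). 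Such walls genuinely contribute to the parallel part of $\theta_{\gamma_{\foj},\foD_k}$; the claim is not that these contributions are absent but that they sum to zero, and no local transversality bookkeeping will establish that.

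The paper's actual argument is global and homological, and you are missing both of its ingredients. First, one introduces \emph{interstices} (codimension-one facets of joints) and shows, via a path-ordered product around a small $2$-sphere transverse to an interstice, that the assignment $\foj\mapsto n(m,\foj)$ recording the parallel obstruction is a cycle: the contributions from all joints containing a given interstice sum to zero. Second, one propagates this cycle along the line $x+\RR r(m)$ through a generic point of $\foj$: the cycle condition forces $n(m,\foj)$ to equal $n(m,\foj_s)$ for the last joint $\foj_s$ in this chain, and either the chain terminates (forcing $n=0$) or $\foj_s$ is unbounded in the direction $r(m)$. In the unbounded case, working in $\VV_{\foj_s}/{}^{\perp}\VV_{\foj_s}$, the only walls that could contribute are incoming widget walls (which cancel by the balancing condition of Lemma \ref{widgetlemma} and abelianness of ${}^{\|}\VV_{\foj_s}$) and non-incoming walls carrying $z^m$ — but such a wall has direction negatively proportional to $r(m)$ and is therefore bounded in the direction $r(m)$, contradicting the unboundedness of $\foj_s$. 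So Lemma \ref{widgetlemma} enters only at the unbounded end of the propagation, not joint-by-joint as you suggest. Without the cycle condition at interstices and this propagation-to-infinity step, your induction does not close.
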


\begin{proof}
{\bf Step I. Order by order construction of $\Scatter(\foD)$.}
We construct $\Scatter(\foD)$ order by order, assuming we have
constructed $\foD_{k-1}\supseteq \foD$ which is consistent modulo
$\fom_P^k$. For the base case, $k=1$, we take $\foD_0=\foD$.
This works as modulo $\fom_P$, all wall-crossing group elements are
the identity.

We now construct $\foD_k$ from $\foD_{k-1}$. Let $\foD_{k-1}'$
consist of those walls in $\foD_{k-1}$ such that $f_{\fod}\not\equiv 1
\mod \fom_P^{k+1}$. By definition of scattering diagram, this is a finite
set. We recall the notion of joint from Definition 
\ref{Def: canonical scattering walls}. We also define an
\emph{interstice} to be a facet of a joint, hence a dimension $n-3$
polyhedron. We denote $\Joints(\foD'_{k-1})$ 
and $\Interstices(\foD'_{k-1})$ to be the set of joints and interstices
respectively of $\Sing(\foD'_{k-1})$. These cells form the
top-dimensional and codimension one cells of a polyhedral cell complex
structure on $\Sing(\foD'_{k-1})$.

We first carry out a standard procedure for each
joint. Let $\foj\in\Joints(\foD'_{k-1})$, and let $\Lambda_{\foj}\subseteq
M$ be the set of integral tangent vectors to $\foj$. If $\gamma_{\foj}$
is a simple loop around $\foj$ small enough so that it only intersects
walls containing $\foj$, we note that every group element
$\theta_{\gamma_{\foj},\fod}$ contributing to 
$\theta_{\gamma_{\foj},\foD'_{k-1}}$ lies in $\VV_{\fom_P^{k+1},\foj}$. 
Thus modulo
$\fom_P^{k+1}$ we can write
\[
\theta_{\gamma_p,\foD'_{k-1}}=\exp\left(\sum_{i=1}^s c_iz^{m_i}
\partial_{n_i}\right)
\]
with $c_i\in\kk$, $m_i\in \fom_P^k\setminus \fom_P^{k+1}$ 
and $n_i\in r(m_i)^{\perp}
\cap \Lambda_{\foj}^{\perp}$ primitive. Let
\[
\foD[\foj]:=\{(\foj-\RR_{\ge 0}m_i,1\pm c_iz^{m_i})\,|\,\hbox{$i=1,\ldots,s$
and $r(m_i)\not\in\Lambda_{\foj}$}\}.
\]
Here the sign is chosen in each wall so that its contribution to
$\theta_{\gamma_{\foj},\foD[\foj]}$ is $\exp(-c_iz^{m_i}\partial_{n_i})$
modulo $\fom_P^{k+1}$.

We now take 
\[
\foD_k:=\foD_{k-1}\cup\bigcup_{\foj} \foD[\foj].
\]

\medskip

{\bf Step II. Analysis of joints of $\foD_{k}$.}
Consider a joint
$\foj\in\Joints(\foD_k)$. There are three types of walls $\fod$ in $\foD_k$
containing $\foj$:
\begin{enumerate}
\item $\fod\in\foD_{k-1}\cup\foD[\foj]$.
\item $\fod\in\foD_{k}\setminus(\foD_{k-1}\cup\foD[\foj])$, 
but $\foj\not\subseteq
\partial\fod$. This type of wall does not contribute to 
$\theta_{\gamma_{\foj},\foD_k}$, as the associated group elements
lie in the center of $\VV_{\fom_P^{k+1},\foj}$,
and in addition this wall contributes twice to $\theta_{\gamma_{\foj},\foD_k}$,
with the two contributions inverse to each other.
\item $\fod\in\foD_k\setminus(\foD_{k-1}\cup \foD[\foj])$ and
$\foj\subseteq\partial\fod$. Necessarily
$f_{\fod}=1+c_iz^{m_i}$ for some $m_i$ with $r(m_i)\in\Lambda_{\foj}$.
So the associated group element lies in ${}^{\|}\VV_{\fom_P^{k+1},\foj}$, and
again lies in the center of $\VV_{\fom_P^{k+1}}$.
\end{enumerate} 
From this we see that by construction of $\foD[\foj]$,
\[
\theta_{\gamma_{\foj},\foD_k}\in{}^{\|}\VV_{\fom_P^{k+1},\foj}.
\]
A priori, this is not the identity, and we have, modulo $\fom_P^{k+1}$,
\[
\theta_{\gamma_{\foj},\foD_k}=\exp\left(\sum_{m\in \fom_P^k\setminus
\fom_P^{k+1}} z^m \partial_{n(m,\foj)}\right)
\]
where the constants in $\kk$ have been absorbed into $n(m,\foj)
\in N\otimes_{\ZZ}\kk$, and $n(m,\foj)=0$ if $r(m)$ is not tangent
to $\foj$. Note that if we fix an orientation on $M_{\RR}$, then
a choice of orientation on a joint $\foj$ determines a choice of
orientation for a loop $\gamma_{\foj}$, and changing the orientation
on the loop changes the sign of $n(m,\foj)$. Thus we can view, for
a fixed $m\in \fom^k_P\setminus\fom^{k+1}_P$, the map $\foj\mapsto
n(m,\foj)$ as an $N\otimes_{\ZZ}\kk$-valued $(n-2)$-chain for the polyhedral
complex $\Sing(\foD_k)$, the orientation
on $\foj$ being implicit.

We wish to show that in fact $\theta_{\gamma_{\foj},\foD_k}$ is the
identity, i.e., show that $n(m,\foj)=0$ for any joint $\foj$ and
any $m\in \fom^k_P\setminus\fom^{k+1}_P$.

\medskip

{\bf Step III. 
$\foj\mapsto n(m,\foj)$ is a cycle.}
To show this, we consider an interstice $\foc\in
\Interstices(\foD_k)$, and let $B=S^2\subseteq M_{\RR}$ be a small sphere
contained in a three-dimensional affine subspace of $M_{\RR}$
which intersects the interior of
$\foc$ transversally at one point. The sphere $B$
should be centered at this point. Let $\foj_1,\ldots,\foj_n$
be the joints of $\foD_k$ containing $\foc$; then $\foj_i$ intersects
$B$ at a single point, $y_i$. 

Fix a base-point $x\in B$, $x\not= y_i$ for any $i$. After choosing an
orientation on $B$, let $\gamma_i$ be a positively oriented loop
around $y_i$ based at a point $y_i'$ near $y_i$; we can choose
these along with paths $\beta_i$ joining $x$ to $y_i'$ so that
the loop
\[
\gamma:=\beta_1\gamma_1\beta_1^{-1}\cdots\beta_n\gamma_n\beta_n^{-1}
\]
is contractible in $B\setminus \{y_1,\ldots,y_n\}$, hence
$\theta_{\gamma,\foD_k}=\id$.
On the other hand, by the inductive assumption, 
$\theta_{\gamma_i,\foD_k}=\id\mod \fom_P^k$, and hence
$\theta_{\beta_i}$ commutes with $\theta_{\gamma_i}\mod\fom_P^{k+1}$. Thus
\[
\theta_{\gamma,\foD_k}=\theta_{\gamma_n,\foD_k}\circ
\cdots\circ\theta_{\gamma_1,\foD_k}.
\]
Since $\theta_{\gamma_i,\foD_k}
=\exp\left(\sum_m z^m\partial_{n(m,\foj_i)}\right)$,
we see that
\[
\sum_{i=1}^n n(m,\foj_i)=0,
\]
for each $m$.
This is precisely the cycle condition, completing this step.

{\bf Step IV. $\foD_{k}$ is consistent modulo $\fom_P^{k+1}$.}
It is sufficient to show that given $m$, $n(m,\foj)=0$ for
all joints $\foj$. First note that if $r(m)=0$, then $n(m,\foj)=0$
anyway, as terms $z^m\partial_n$ with $r(m)=0$ don't appear in $\fov$. 
Otherwise, for a joint $\foj$, consider the line $L_x=x+\RR r(m)$
for $x\in\Int(\foj)$. Note that $n(m,\foj)=0$ anyway unless $r(m)$
is tangent to $\foj$, so as $x$ moves $L_x$ varies in an $(n-3)$-dimensional
family. Since the boundary of an interstice is dimension $n-4$, 
we can choose $x$ so that for any interstice $\foc$ that $L_x$
intersects, $L_x$ intersects only the interior of $\foc$. Thus
there exists real numbers $0=\lambda_0
<\lambda_1<\cdots<\lambda_s\le\infty$ and joints
$\foj_1=\foj$, $\foj_2,\ldots,\foj_s$ with $x+\lambda r(m)
\in\Int\foj_i$ for $\lambda\in(\lambda_{i-1},\lambda_i)$, and
$s$, $\lambda_s$ maximal for this property. Suppose that $\foj_i$ and
$\foj_{i+1}$ meet at an interstice $\foc_i$. Since $r(m)$ is not tangent
to $\foc_i$ by the general choice of $x$, we see that $r(m)$ can only
be tangent to at most two joints containing $\foc_i$, and these must
be $\foj_i$ and $\foj_{i+1}$. By the cycle condition, it then follows
that $n(m,\foj_i)=n(m,\foj_{i+1})$. Thus inductively $n(m,\foj)=
n(m,\foj_s)$. Furthermore, if $\lambda_s\not=\infty$, then we can
conclude that $n(m,\foj_s)=0$.

Otherwise, since by construction, $\theta_{\gamma_{\foj_s},\foD_k}\in
{}^{\|}\VV_{\foj_s}$, we can compute $\theta_{\gamma_{\foj_s},\foD_k}$
in $\VV_{\foj_s}/{}^{\perp}\VV_{\foj_s}$. 
If $\fod$ contains $\foj_s$, then $\theta_{\gamma_{\foj_s},\fod}$ is
non-trivial in $\VV_{\foj_s}/{}^{\perp}\VV_{\foj_s}$ only if the
direction of $\fod$ is tangent to $\foj_s$. There are two sorts of walls
in $\foD_k$ with this property: incoming walls contained in a widget 
containing $\foj_s$, and non-incoming walls. From Lemma \ref{widgetlemma}
and the fact that ${}^{\|}\VV_{\foj_s}$ is abelian, one sees in fact
only the non-incoming walls contribute to $\theta_{\gamma_{\foj_s},\foD_k}$.
However, a non-incoming wall with $z^m$ appearing in $f_{\fod}$
must have direction negatively proportional to $r(m)$, and cannot
be unbounded in the direction $r(m)$. This contradicts the
fact that $\foj_s$ is by choice unbounded in this direction. Hence
$n(\foj_s,m)=0$.

\medskip

{\bf Step V. Uniquess of $\foD_k$ given uniqueness of $\foD_{k-1}$.}
This is similar to the argument in Step VI of the proof of
Theorem 1.28 in \cite[App.\ C]{ghkk}. We omit the details.
\end{proof}

\subsubsection{The scattering diagram $\foD_{(X_\Sigma,H)}$}
\label{Subsec: scattering in MR}
We will now give details of the key scattering diagram arising from the
setup in the previous subsection, which will be the higher dimensional analogue of the one constructed in \cite{GPS}. Let $\Sigma$ be a complete toric fan in $M_{\RR}$ and denote by $X_{\Sigma}$ the associated complete toric variety, as in \S\ref{Sec: Pulling singularities out}, along with the data of hypersurfaces $H=(H_1,\ldots,H_s)$ with $H_i \subset D_{\rho_i}$ in its toric boundary divisor corresponding to rays $\mathbf{P}=(\rho_1,\ldots,\rho_s)$.  We now use this data to determine a scattering diagram $\foD_{(X_{\Sigma},H)}$. First, decompose $H_i=\bigcup_{j=1}^{s_i} H_{ij}$
into connected components. Take
\begin{equation}
\label{Eq: P}
    P:=M\oplus \bigoplus_{i=1}^s\NN^{s_i}.
\end{equation}
We write $e_{i1},\ldots,e_{is_i}$ for the generators of $\NN^{s_i}$, and
write $t_{ij}:=z^{e_{ij}}\in \kk[P]$.
We define $r:P\rightarrow M$ to be the projection.

For each $1\le i \le s$, $1\le j\le s_i$, we give the data determining a widget
via Definition \ref{def:widget}: First, take $m_0=m_i$, the
primitive generator of the ray $\rho_i$. Second,
consider a tropical hypersurface
\[\scrH_{ij}\subseteq (M/m_i\ZZ)\otimes_{\ZZ}\RR\] determined by
$H_{ij}\subseteq D_{\rho_i}$. This is the tropical hypersurface
supported on the codimension $1$ skeleton of the toric fan $\Sigma(\rho_i)$ of the divisor $D_{\rho_i}$ defined as in \eqref{Eq: sigma rho},
with the weight on a cone $(\sigma+\RR\rho_i)/\RR\rho_i$ being
$w_{\sigma}=D_{\sigma}\cdot H_{ij}$, where the intersection number
is calculated on $D_{\rho_i}$. 
Third, we take for $f_0$ the function
\[
f_{\rho_i}:=1+t_{ij}z^{m_i}.
\]
This data now determines a widget $\foD_{ij}$ using \eqref{Eq:widget}.
We then set
\begin{equation}
    \label{Eq: widgets}
\foD_{(X_\Sigma,H),\mathrm{in}}=\bigcup_{i=1}^s\bigcup_{j=1}^{s_i} \foD_{ij}.
\end{equation}
We will denote the consistent scattering diagram obtained from \eqref{Eq: widgets} by
\[ 
\foD_{(X_\Sigma,H)} :=
\Scatter(\foD_{(X_\Sigma,H),\mathrm{in}}).
\]

\begin{remark}
\label{rem:smaller monoid}
It will be convenient later to use a submonoid $P'\subseteq P$, the free
submonoid with generators $e'_{ij}=(m_i,e_{ij})$. Then
$\foD_{(X_{\Sigma},H),\inc}$ can be viewed as a scattering diagram
defined using this smaller monoid, and hence the same is true of
$\foD_{(X_{\Sigma},H)}$. 
\end{remark}

\begin{example}
\label{Ex: P3 with two lines}
This example was calculated in collaboration with Tom Coates. Let $X_{\Sigma} = \mathbb{P}^3$, corresponding to the fan with ray generators $\langle e_1,e_2,e_3,e_4 =-e_1-e_2-e_3  \rangle$ and let $D=\bigcup_i D_i$, for $1\leq i \leq 4$, 
be the toric boundary divisor. Choose
$H=\ell_1 \cup \ell_2$ where $\ell_1\subset D_1$ and $\ell_2 \subset D_2$ are two general disjoint lines. The associated widgets are illustrated in Figure \ref{Fig: InitialP3}.
\begin{figure}
\input{TwoWidgets.pspdftex}
\caption{Walls of $\foD_{\PP^3,\mathrm{in}}$.}
\label{Fig: InitialP3}
\end{figure}
where we denote by $x=z^{e_1},y=z^{e_2},z=z^{e_3}$ the standard coordinates in $\RR^3$. We display the walls of the minimal scattering diagram equivalent to the consistent scattering diagram $\foD_{(X_\Sigma,H)}$ in Table \ref{Table: walls of final HDTV P3}, where the first two rows correspond to the walls of the initial scattering diagram $\foD_{\PP^3,\mathrm{in}}$. We denote by $\langle e_i,e_j \rangle$ the support of the codimension one cone spanned by $e_i$ and $e_j$, and we use analogous notation for cones generated by linear combinations of $e_i$'s and $e_j$'s in what follows.
\begin{table}[]
    \centering
    \begin{tabular}{ll} \hline
 $\fod$ & $f_{\fod}$ \\ \hline
  $\langle e_1,e_2 \rangle,\langle e_1,e_3 \rangle,\langle e_1,e_4 \rangle$ & ~~ $1+t_1x$  \\ 
$\langle e_2,e_1 \rangle,\langle e_2,e_3 \rangle,\langle e_2,e_4 \rangle$ & ~~$1+t_2y$ \\ 
  $\langle e_3,-e_1 \rangle,\langle e_4,-e_1 \rangle$ & ~~ $1+t_1x$  \\  
  $\langle e_3,-e_2 \rangle,\langle e_4,-e_2 \rangle$ & ~~ $1+t_2y$  \\ 
 $\langle -e_2,-e_1-e_2 \rangle,\langle -e_1,-e_1-e_2 \rangle,\langle e_3,-e_1-e_2 \rangle,\langle e_4,-e_1-e_2 \rangle$ & ~~$1+t_1t_2xy$ \\
 $\langle e_1,-e_2 \rangle$ & ~~ $1+t_2y+t_1t_2xy$  \\ 
  $\langle e_2,-e_1 \rangle$ & ~~ $1+t_1x+t_1t_2xy$  \\ 
\hline
  \end{tabular}
    \caption{Walls of $\foD_{(\PP^3,\ell_1\cup \ell_2)} $}
    \label{Table: walls of final HDTV P3}
\end{table}
\begin{figure}
\input{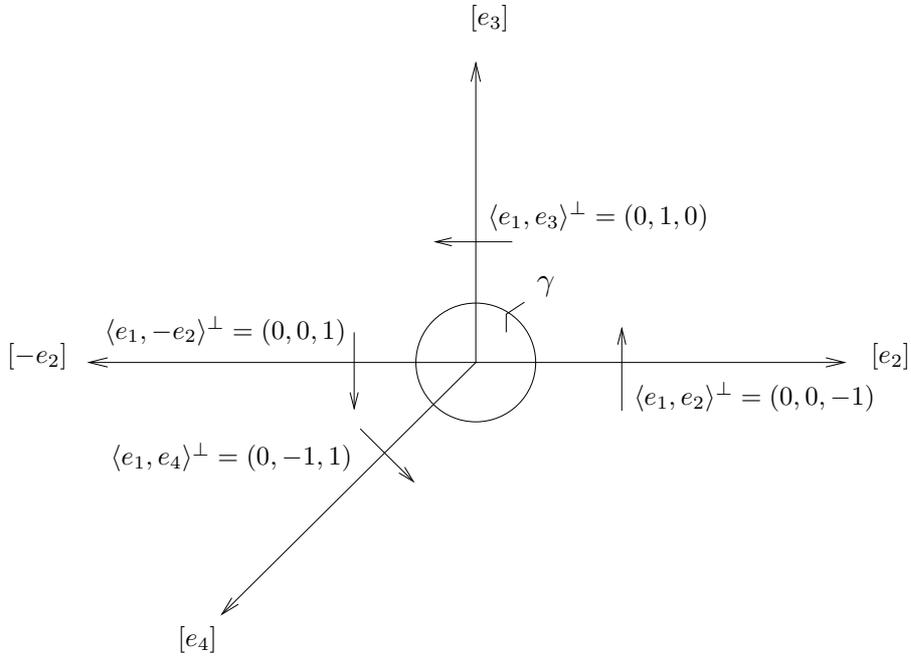}
\caption{The projection along $e_1$ of the walls of $\foD_{(\PP^3,\ell_1\cup \ell_2)}$ whose support contain $e_1$.}
\label{Fig: AroundX}
\end{figure}
The attached functions on the walls in the last two rows are obtained from an infinite product
\begin{eqnarray}
\label{Eq: 1+y+xy}
1+t_2y+t_1t_2xy & = & (1+t_2y)(1+t_1t_2xy)(1-t_1t_2^2xy^2)(1+t_1t_2^3xy^3)\ldots \\
\label{Eq: 1+x+xy}
1+t_1x+t_1t_2xy & = & (1+t_1x)(1+t_1t_2xy)(1-t_1^2t_2x^2y)(1+t_1^3t_2x^3y)\ldots 
\end{eqnarray}
Each wall $1 + (-1)^{b-1} t_1^at_2^bx^ay^b$ in the expansion \eqref{Eq: 1+y+xy} has support in $\langle e_1, (-a,-b,0) \rangle$.
However, we also obtain walls $1 + (-1)^{b} t_1^at_2^bx^ay^b$ with support $\langle -e_2,  (-a,-b,0) \rangle$ to achieve consistency around the joint generated by $-e_2$. Hence, after all cancellations there remains a single wall with the attached function $1+t_2y+t_1t_2xy$, whose support is $\langle e_1, - e_2 \rangle$. The wall with the attached function $1+t_1x+t_1t_2xy$, whose support is $\langle -e_1, e_2 \rangle$, is obtained analogously.

We illustrate the projections of the walls of $\foD_{(\PP^3,\ell_1\cup \ell_2)}$  around the joint generated by $e_1=\langle 1,0,0 \rangle$ in Figure \ref{Fig: AroundX}. To check for consistency around this joint, we need to check that the wall crossing automorphisms defined in \eqref{Eqn: theta_fop} along a loop $\gamma$ as illustrated Figure \ref{Fig: AroundX} leaves each of the monomials $x,y,z$ invariant. This follows immediately for $x$. For $y$ we have;
\begin{align*}
   y  & \xrightarrow{\langle e_1,e_3 \rangle} ~~ y(1+t_1x)  \xrightarrow{\langle e_1,-e_2 \rangle} ~~y(1+t_1x)
\\ 
  & \xrightarrow{\langle e_1,e_4 \rangle}  ~~ y(1+t_1x)^{-1}(1+t_1x)=y   \xrightarrow{\langle e_1,e_2 \rangle}  ~~y \\
\end{align*}
where the superscripts on the arrows indicate the support of the walls $\gamma$ crosses. Similarly, for $z$ we compute;
\begin{align*}
 z  & \xrightarrow{\langle e_1,e_3 \rangle} ~~ z   \xrightarrow{\langle e_1,-e_2 \rangle} ~~z(1+t_2y+t_1t_2xy)  \xrightarrow{\langle e_1,e_4 \rangle} ~~   ~~z(1+t_1x)(1+t_2y) \\
 & \xrightarrow{\langle e_1,e_2 \rangle} ~~ z(1+t_1x)^{-1}(1+t_2y)^{-1}(1+t_1x)(1+t_2y) = z
\end{align*}
Therefore, consistency of $\foD_{(\PP^3,\ell_1\cup\ell_2)}$ around the joint $e_1$ follows. The computation around the other joints is analogous, and is left to the reader. Hence, $\foD_{(\PP^3,\ell_1\cup\ell_2)}$ is consistent. 
\end{example}

\section{The main theorem}
\label{sec: at infinity}
We now state a precise version of the main theorem of the paper, which shows 
that the canonical scattering diagram $\foD_{(X,D)}$ defined using punctured invariants of $(X,D)$ can be obtained from the scattering diagram $\foD_{(X_{\Sigma},H)}$
defined in \S\ref{Subsec: scattering in MR}.
To do so, we introduce some additional notation.
Recall the degeneration $\widetilde X$ of $(X,D)$ defined in \eqref{Eq: The degeneration} has associated tropical space $\widetilde B$, which comes together with the map
\[ \widetilde p : \widetilde B \to \RR_{\geq 0}   \]
defined in \eqref{Eq: p}. We use the identification of $\widetilde B_0 :=\widetilde p^{-1}(0)$ with the tropical space $B$ associated to $(X,D)$ as explained in Remark
\ref{rem:relative case}. Thus the piecewise linear identification $\mu: M_{\RR} \to  \widetilde B_0 $ of \eqref{Eq: mu} induces a piecewise linear identification $\Upsilon:M_{\RR}\rightarrow B$.

Let $\mathbf{A}=(a_{ij})$ denote a tuple of integers, $1\le i\le s$,
$1\le j\le s_i$. Consider the collection of vectors
\[
\{a_{ij} m_i\,|\,1\le i\le s, 1\le j\le s_i\}\cup \{m_{\mathbf A}\}
\]
with $m_{\mathbf{A}}=-\sum_{i,j} a_{ij}m_i$. To each vector in this collection we associate a cone in $\Sigma$ as follows: to $a_{ij}m_i$ we associate the cone
$\rho_i$, and to $m_{\mathbf A}$ we associate some arbitrary $\sigma\in\Sigma$
such that $m_{\mathbf A}$ is tangent to $\sigma$. This data then
induces, as in Lemma \ref{lem:beta recovery},
a class $\bar\beta_{\mathbf{A},\sigma}\in N_1(X_{\Sigma})$. 
Recalling the blow-up map $\mathrm{Bl}_H:X\rightarrow X_{\Sigma}$,
there is then a class
\begin{equation}
\label{eq:beta A def}
\beta_{\mathbf{A},\sigma}:=
\mathrm{Bl}_{H}^*(\bar\beta_{\mathbf{A},\sigma})-\sum_{i,j} a_{ij}E^j_i.
\end{equation}
Here $\mathrm{Bl}_H^*(\bar\beta_{\mathbf{A},\sigma})$ is the unique curve class in
$N_1(X)$
whose push-forward to $X_{\Sigma}$ under $\mathrm{Bl}_H$ is
$\bar\beta_{\mathbf{A},\sigma}$ and whose intersection with all 
exceptional divisors is $0$. The curve class $E^j_i$ is the class of 
an exceptional curve of $\mathrm{Bl}_H:X\rightarrow X_{\Sigma}$
over $H_{ij}$.

We may now define a scattering diagram $\Upsilon(\foD_{(X_{\Sigma},H)})$ as follows.
First, we may refine $\foD_{(X_{\Sigma},H)}$ by replacing each
wall with a union of walls with the same wall function. We do
this so that any $\fod\in\foD_{(X_{\Sigma},H)}$ satisfies $\fod\subseteq
\sigma$ for some $\sigma\in\Sigma$. Note we do not need to refine
any walls in $\foD_{(X_{\Sigma},H),\inc}$.
Define $\Upsilon(\foD_{(X_{\Sigma},H),\inc})$ to be 
the collection of walls of the form 
\begin{equation}
\label{eq:Upsilon in}
\left(\Upsilon(\rho), \prod_{j=1}^{\kappa^i_{\ul{\rho}}} (1+t^{E^j_{\ul\rho}}
z^{-m_i})\right)
\end{equation}
with $1\le i\le s$ and
$\rho\in\Sigma$ running over codimension one cones containing $\rho_i$.
The curve class $E^j_{\ul{\rho}}$ is now viewed as lying in 
$N_1(X)$, being one of the $\kappa^i_{\ul{\rho}}$ exceptional curves 
of $\mathrm{Bl}_H:X\rightarrow X_{\Sigma}$ mapping to the one-dimensional
stratum $D_{\rho}$ of $X_{\Sigma}$. Note each
$E^j_{\ul{\rho}}$ coincides with $E^{j'}_i$ for some $i,j'$.

Next, if $(\fod,f_{\fod})\in \foD_{(X_{\Sigma},H)}\setminus
\foD_{(X_{\Sigma},H),\inc}$ with $\fod\subseteq\sigma\in\Sigma$, 
necessarily a monomial of $f_{\fod}$
is of the form  
\[
\prod_{i,j} (t_{ij}z^{m_i})^{a_{ij}}.
\]
for some tuple of integers $\mathbf{A}=(a_{ij})$, $1\leq i\leq s$,
$1\leq j \leq s_i$. We define $\Upsilon_*$ applied to this monomial to be
$t^{\beta_{\mathbf{A},\sigma}}z^{-\Upsilon_*(m_{\mathbf{A}})}$.
Note that $\Upsilon$ is piecewise linear, hence defines a well-defined
push-forward of tangent vectors to cones of $\Sigma$. This allows us to define
$\Upsilon_*(f_{\fod})$, and then set
\[
\Upsilon( \foD_{(X_{\Sigma},H)}\setminus \foD_{(X_{\Sigma},H),\inc}):=
\{(\Upsilon(\fod), \Upsilon_*(f_{\fod}))\,|\,
(\fod,f_{\fod})\in \foD_{(X_{\Sigma},H)}\setminus
\foD_{(X_{\Sigma},H),\inc}\}.
\]
Finally, we define
\[
\Upsilon(\foD_{(X_{\Sigma},H)}):=\Upsilon(\foD_{(X_{\Sigma},H),\inc})\cup
\Upsilon( \foD_{(X_{\Sigma},H)}\setminus
\foD_{(X_{\Sigma},H),\inc}).
\]

\begin{theorem}
\label{Thm: HDTV}
There is an equivalence of scattering diagrams between
$\Upsilon(\foD_{(X_{\Sigma},H)})$ and $\foD_{(X,D)}$.
\end{theorem}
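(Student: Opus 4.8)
The strategy is to factor $\Upsilon$ as the composition $\Upsilon = \mu\circ\nu$ announced in the introduction and to prove the two ``halves'' separately, using the degeneration $(\widetilde X,\widetilde D)$ and its height-one slice $\widetilde B_1$ as the bridge between the purely toric combinatorial diagram $\foD_{(X_\Sigma,H)}$ on $M_{\RR}$ and the canonical scattering diagram $\foD_{(X,D)}$ on $B\cong\widetilde B_0$. Concretely, I would first show (Theorem~\ref{thm:key theorem} in the plan) that the localized diagram $T_0\foD^1_{(\widetilde X,\widetilde D)}$ at the origin of $\widetilde B_1$ is equivalent, under the linear isomorphism $\nu\colon M_{\RR}\to T_0\widetilde B_1$, to $\foD_{(X_\Sigma,H)}$. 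Then I would show that re-spreading $T_0\foD^1_{(\widetilde X,\widetilde D)}$ out along radial directions recovers all of $\foD^1_{(\widetilde X,\widetilde D)}$ and hence, via Proposition~\ref{Prop: Asymptotic equivalence}, recovers $\iota(\foD_{(X,D)})$ under the piecewise-linear map $\mu$. Composing gives $\Upsilon(\foD_{(X_\Sigma,H)})\sim\iota(\foD_{(X,D)})$, and since $\iota$ is injective (Lemma~\ref{lem:iota injective}) the formulas defining $\Upsilon$ on wall functions are exactly the bookkeeping that strips off the embedding $\iota$, yielding the claimed equivalence with $\foD_{(X,D)}$ itself.

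\textbf{Step 1: $T_0\foD^1_{(\widetilde X,\widetilde D)}\sim\nu(\foD_{(X_\Sigma,H)})$.} Both diagrams live on $M_{\RR}$ with the trivial polyhedral decomposition (after identifying $T_0\widetilde B_1\cong M_{\RR}$ via $\nu$), both are consistent (the canonical one by Example~\ref{exam: T0}, the algorithmic one by Theorem~\ref{thm:higher dim KS}), and by the uniqueness clause of Theorem~\ref{thm:higher dim KS} it suffices to identify their incoming walls. The incoming walls of $\foD_{(X_\Sigma,H)}$ are the widgets $(\fod_\sigma,(1+t_{ij}z^{m_i})^{w_\sigma})$. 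On the canonical side, Theorem~\ref{thm:radial directions} case~(3) together with Lemma~\ref{lem: multiple covers} shows that the walls of $\foD^1_{(\widetilde X,\widetilde D)}$ passing through a radial direction $\rho_s$ meeting $\widetilde\Delta_{\ul\rho}$ contribute exactly factors $1+t^{E^j_{\ul\rho}}z^{-m_i}$, i.e. after the change of variables $t_{ij}z^{m_i}\leftrightarrow t^{E^j_i}z^{-m_i}$ and accounting for the index $\ind(\fod)$ and the local kinks, precisely the widget functions with the correct multiplicities $\kappa^i_{\ul\rho}=\sum_{\text{components}}D_\rho\cdot H_{ij}$. The matching of weights is where the toric-geometry content of \S\ref{subsubsec:geometry of strata} (the linear equivalences \eqref{Eq: lin equiv 1}, \eqref{Eq: lin equiv 2}) is used. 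I would then invoke Theorem~\ref{thm:consistency at deeper strata} and the radiance Theorem~\ref{Thm: radiant scatter} to conclude that no \emph{other} incoming walls occur, so the two diagrams have identical incoming data and hence are equivalent.

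\textbf{Step 2: reconstruction from the origin, and the asymptotic comparison.} Here the key input is Theorem~\ref{thm:radial directions}: along each radial ray $\rho_s$ in $\widetilde B_1$ the function $f_x$ is constant (case 1), or changes only by parallel transport in $\shP^+$ (case 2), or changes by the explicit multiple-cover factor of Lemma~\ref{lem: multiple covers} (case 3). This means $\foD^1_{(\widetilde X,\widetilde D)}$ is determined, up to equivalence, by $T_0\foD^1_{(\widetilde X,\widetilde D)}$ together with the monodromy/kink data of $\widetilde B_1$ — which is precisely the extra data encoded by the piecewise-linear (rather than linear) nature of $\mu$ and of $\Upsilon$ near the singular locus. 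I would make this precise by checking that applying $\mu$ to $T_0\foD^1_{(\widetilde X,\widetilde D)}$, wall by wall, and then taking the height-zero slice reproduces $\foD^{1,\as}_{(\widetilde X,\widetilde D)}$; the curve-class bookkeeping $\bar\beta_{\mathbf A,\sigma}\mapsto\beta_{\mathbf A,\sigma}=\mathrm{Bl}_H^*\bar\beta_{\mathbf A,\sigma}-\sum a_{ij}E^j_i$ of \eqref{eq:beta A def} is exactly what the definition of $\Upsilon_*$ on monomials does, and the analysis of Step~1 (Step IV of Lemma~\ref{lem: multiple covers}) shows these are the curve classes actually appearing. Then Proposition~\ref{Prop: Asymptotic equivalence} gives $\foD^{1,\as}_{(\widetilde X,\widetilde D)}\sim\iota(\foD_{(X,D)})$, and combining with Step~1 yields $\Upsilon(\foD_{(X_\Sigma,H)})\sim\iota(\foD_{(X,D)})$ as scattering diagrams with values in $\kk[Q]$; injectivity of $\iota_*$ lets us drop $\iota$ and land in $N_1(X)$, giving the theorem.

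\textbf{Main obstacle.} The delicate point is Step~1, specifically controlling that \emph{no spurious walls} appear in $\foD^1_{(\widetilde X,\widetilde D)}$ through radial directions beyond those forced by the widgets and their multiple covers — equivalently, that the radiance of $\foD^1_{(\widetilde X,\widetilde D)}$ (Theorem~\ref{Thm: radiant scatter}) plus consistency near $\widetilde\Delta_1$ really pins down the incoming part. The enumerative heart, Lemma~\ref{lem: multiple covers}, replaces the localization computation of \cite{GPS} by the ``functional equation'' Lemma~\ref{Lem:functional equation} coming from monodromy of $\shP$; I expect verifying that the degrees, signs, and the constant-term normalization in that lemma propagate correctly through the identification $\Upsilon$ — across all three cases of Theorem~\ref{thm:radial directions} and with the correct powers $\ind(\fod)$ vs.\ $1/\ind(\mathrm{Cone}(\fod))$ — to be the most technically demanding part of assembling the proof. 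The rest is a matter of carefully tracking the piecewise-linear maps $\mu$, $\nu$, $\Psi$ and their interaction with the local systems $\shP$ on the two sides.
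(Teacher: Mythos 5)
Your proposal follows essentially the same route as the paper's actual proof: factor $\Upsilon=\mu\circ\nu$, prove Theorem~\ref{thm:key theorem} for the $\nu$-half by matching incoming data and appealing to the uniqueness clause of Theorem~\ref{thm:higher dim KS}, then reconstruct $\foD^1_{(\widetilde X,\widetilde D)}$ from its germ at the origin via Theorem~\ref{thm:radial directions} and pass to $\foD_{(X,D)}$ through Proposition~\ref{Prop: Asymptotic equivalence} and the injectivity of $\iota$. The only small discrepancy is inside your Step~1: the paper rules out spurious incoming walls in $\foD_2$ not via Theorem~\ref{thm:consistency at deeper strata} plus radiance but via Lemma~\ref{lem:canonical repulsive} (every wall of the canonical diagram has a repulsive facet, which an incoming wall in $\foD_2$ would lack) combined with finiteness of the diagram modulo $I$; this is a detail inside Theorem~\ref{thm:key theorem} rather than a structural difference in the argument.
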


This theorem will be proved in \S\ref{subsec:T0 to asymptotic}.

\subsection{Equivalence of $\foD_{(X_{\Sigma},H)}$ and $T_0\foD_{(\widetilde X,\widetilde D)}^1$}
\label{subsec:equivalence}
We will compare the consistent scattering diagrams
$\foD_{(X_{\Sigma},H)}$ and
$T_0\foD^{1}_{(\widetilde X,\widetilde D)}$, the latter defined in 
\eqref{Eq: asymptotic canonical }
and discussed further in Example \ref{exam: T0}.
To make this comparison, recall these scattering diagrams involve a choice
of $r:P\rightarrow M$. 

For the scattering diagram $\foD_{(X_{\Sigma},H)}$, we take
the monoid to be $P'\subseteq M\oplus\bigoplus_{i=1}^s \NN^{s_i}$
with generators $e'_{ij}=(m_i,e_{ij})$ as in Remark \ref{rem:smaller monoid}.

For $T_0\foD^{1}_{(\widetilde X,\widetilde D)}$, we take
$P=\shP^+_0$, which may be described explicitly as follows.
After making a choice
of representative $\varphi_0$ for the MVPL function
$\varphi$ in the star of $0$, 
as promised by Proposition \ref{Prop: varphi0},
we may extend $\varphi_0$ linearly on each cone of $\Sigma$
to obtain a PL function $\varphi_0:M_{\RR}\rightarrow Q^{\gp}_{\RR}$.
Using the description \eqref{Eq: Pplus description}, we may then write
\begin{equation}
\label{eq:P0 plus again}
\shP^+_0=\{(m, \varphi_0(m)+q)\,|\, m\in M, q\in Q\}\subseteq M\oplus Q^{\gp}.
\end{equation}

We may now define a monoid homomorphism
\[
\nu: P' \rightarrow \shP^+_0
\]
by
\[  \nu(e'_{ij})=\left(m_i,\varphi_0(m_i)+F_i-E^j_i\right). \]
Here we view $E^j_i$ as a curve class on $\widetilde X$
under the inclusion $X\hookrightarrow \widetilde X$. Note that this curve class is the class of an exceptional curve of $\mathrm{Bl}_{\widetilde H}:
\widetilde X\rightarrow X_{\widetilde\Sigma}$.

Next, we define $\nu_*:\kk[P'] \rightarrow \kk[\shP^+_0]$
by $\nu_*(z^p)=z^{\nu(p)}$. This allows us to define, for a scattering
diagram $\foD$ with respect to the monoid $P'$,
\begin{equation}
    \label{Eq: nu of scattering diagram}
    \nu(\foD):=\{(\fod,\nu_*(f_{\fod}))\,|\, (\fod,f_{\fod})\in
\foD\},
\end{equation}
a scattering diagram with respect to the monoid $\shP^+_0$. 
It is immediate from definitions that if $\foD$ is consistent,
so is $\nu(\foD)$.

The key comparison result is the following:
\begin{theorem}
\label{thm:key theorem}
The scattering diagram $T_0\foD_{(\widetilde X,\widetilde D)}^1$ is equivalent to 
$\nu(\foD_{(X_{\Sigma},H)})$.
\end{theorem}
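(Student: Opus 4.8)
\textbf{Proof strategy for Theorem \ref{thm:key theorem}.}
The plan is to show that both scattering diagrams are the unique consistent scattering diagram, in the sense of Theorem \ref{thm:higher dim KS}, containing the same initial data after applying $\nu$. First I would identify the initial part: by Theorem \ref{thm:radial directions} the scattering diagram $\foD^1_{(\widetilde X,\widetilde D)}$ is completely determined by its behaviour along radial rays $\rho_s$, and case (3) of that theorem together with Lemma \ref{lem: multiple covers} shows that crossing the discriminant locus $\widetilde\Delta_{\ul\rho}$ has wall-crossing effect governed by the functions $\prod_j (1+t^{E^j_{\ul\rho}}z^{-m_i})$. Localizing to the origin, the incoming walls of $T_0\foD^1_{(\widetilde X,\widetilde D)}$ are exactly the cones $T_0\rho_\infty = \bar\rho$ (for $\rho\in\Sigma$ a codimension-one cone containing some $\rho_i$) carrying these functions. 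Under the identification $\Psi|_{\widetilde B_1}$ and the linear isomorphism $\nu$, these correspond precisely to the widgets of $\foD_{(X_\Sigma,H),\inc}$: the key point is that $\nu(e'_{ij}) = (m_i,\varphi_0(m_i)+F_i-E^j_i)$ is designed so that $\nu_*(1+t_{ij}z^{m_i}) = 1 + t^{F_i - E^j_i} z^{\Upsilon_*(m_i)}$, and by the parallel transport formula \eqref{eq:parallel transport P} (i.e.\ the map $\wp$ of \eqref{Eq: wp def}), this matches the functions appearing in Lemma \ref{lem: multiple covers} after accounting for the monodromy. The weights on the tropical hypersurface $\scrH_{ij}$ ($w_\sigma = D_\sigma\cdot H_{ij}$) match the multiplicities $\kappa^i_{\ul\rho}$ coming from the number of points blown up, via the identification in \S\ref{subsubsec:geometry of strata}.

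Second, I would verify that $T_0\foD^1_{(\widetilde X,\widetilde D)}$, minus its incoming walls, consists only of non-incoming walls. This follows from Lemma \ref{lem:canonical repulsive}: every wall of $\foD^1_{(\widetilde X,\widetilde D)}$ has a repulsive facet, and by \eqref{Eq: walls of canonical} and \eqref{eq:tauout tauv} a non-incoming wall $h(\tau_{\out})$ is swept out by the leg $L_{\out}$ emanating from a bounded cell $h(\tau_v)$, so it cannot be of the form $\fod - \RR_{\ge 0}m_0$. The walls of $\foD^1_{(\widetilde X,\widetilde D)}$ which are incoming are precisely those arising from multiple covers of the exceptional curves — these are the ones analyzed in Lemma \ref{lem: multiple covers}, with support $\rho_\infty$ or $\rho_0$; after localizing to the origin and removing these, what remains is non-incoming. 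This is exactly the structure required to apply the uniqueness half of Theorem \ref{thm:higher dim KS}.

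Third, both diagrams are consistent: $T_0\foD^1_{(\widetilde X,\widetilde D)}$ is consistent by Example \ref{exam: T0} (via Theorem \ref{thm:consistency at deeper strata}), and $\nu(\foD_{(X_\Sigma,H)})$ is consistent because $\foD_{(X_\Sigma,H)} = \Scatter(\foD_{(X_\Sigma,H),\inc})$ is consistent by construction and $\nu$ preserves consistency. Therefore, by the uniqueness statement in Theorem \ref{thm:higher dim KS} applied to the common initial data $\nu(\foD_{(X_\Sigma,H),\inc}) = (T_0\foD^1_{(\widetilde X,\widetilde D)})_{\inc}$ (this equality being what the first step establishes), the two scattering diagrams are equivalent. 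One subtlety to address carefully: Theorem \ref{thm:higher dim KS} is stated for scattering diagrams built from widgets associated to primitive vectors $m_i$, so I need to check that the incoming part of $T_0\foD^1_{(\widetilde X,\widetilde D)}$ genuinely has the widget form, i.e.\ that for fixed $i$ the collection of cones $\bar\rho$ (as $\rho$ ranges over codimension-one cones of $\Sigma$ containing $\rho_i$) together with weights assembles into a tropical hypersurface in $M_\RR/\RR m_i$ satisfying the balancing condition \eqref{Eq:balancing} — but this is exactly the statement that $\scrH_{ij}$ is a tropical hypersurface, which holds because $H_{ij}$ is an effective divisor on the toric variety $D_{\rho_i}$.

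\textbf{Main obstacle.} The hard part will be the bookkeeping in the first step: matching the curve classes and the monodromy-twisted monomials on the nose. Specifically, one must check that the function attached to a non-incoming wall of $T_0\foD^1_{(\widetilde X,\widetilde D)}$, expressed via the splitting $\shP^+_0 = \{(m,\varphi_0(m)+q)\}$ of \eqref{eq:P0 plus again}, transforms under $\nu_*^{-1}$ exactly into a power series in $\prod_{ij}(t_{ij}z^{m_i})^{a_{ij}}$ — and conversely. This requires carefully tracking how $\varphi_0$ (a representative of the MVPL function on $\mathrm{Star}(0)$) interacts with the class $F_i$ and the exceptional classes $E^j_i$ under parallel transport, using Proposition \ref{prop:phi tau}, Lemma \ref{lem:beta recovery}(3), the linear equivalence \eqref{Eq: lin equiv 2}, and the explicit kinks \eqref{eq:local kinks}. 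The cleanest route is probably to observe that $\nu$ is precisely the monoid homomorphism induced by the geometric correspondence between curve classes: $\bar\beta \mapsto \mathrm{Bl}^*_{\widetilde H}(\bar\beta)$ on the toric part and $e'_{ij}\mapsto F_i - E^j_i$ on the exceptional part, reflecting that $\widetilde H_i$ contributes an exceptional divisor fibering over $H_i$, so that the identity $\nu(e'_{ij}) = (m_i, \varphi_0(m_i) + F_i - E^j_i)$ is forced by compatibility with \eqref{eq:Drho dot beta} and the computation of $\varphi_0$ in Proposition \ref{prop:phi tau}.
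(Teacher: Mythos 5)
Your overall strategy matches the paper's: identify the incoming walls of $T_0\foD^1_{(\widetilde X,\widetilde D)}$ with $\nu(\foD_{(X_\Sigma,H),\inc})$, verify that all other walls are non-incoming, and then invoke the uniqueness half of Theorem~\ref{thm:higher dim KS}. However, there are two real errors in the first step. First, you write that the incoming walls of $T_0\foD^1_{(\widetilde X,\widetilde D)}$ are ``the cones $T_0\rho_\infty=\bar\rho$,'' but $\rho_\infty:=\tilde\rho\cap\widetilde B_1$ does not contain the origin (it is the ``far'' codimension-one cell, past the discriminant locus), so it simply does not contribute to the localization \eqref{Eq: asymptotic canonical }; the walls that survive localization come from $\rho_0:=\tilde\rho'\cap\widetilde B_1$, whose tangent wedge $T_0\rho_0$ is the cone $\rho\in\Sigma$. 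Second, and consequently, the attached function of the incoming wall must be the function $f_{y'}$ at $\rho_0$ from Lemma~\ref{lem: multiple covers}, namely $\prod_j(1+t^{F_i-E^j_{\ul\rho}}z^{m_i})$ written as $\prod_j(1+z^{(m_i,\varphi_0(m_i)+F_i-E^j_{\ul\rho})})$ in $\kk[\shP^+_0]$ — not $\prod_j(1+t^{E^j_{\ul\rho}}z^{-m_i})$, which is the function $f_y$ on the far side $\rho_\infty$. You do eventually write $\nu_*(1+t_{ij}z^{m_i})=1+t^{F_i-E^j_i}z^{\cdot}$, which is the correct thing to match against, so the slip is fixable, but as written the identification of the initial data is off by a monodromy.

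Two further remarks. Your ``Main obstacle'' discussion — tracking $\varphi_0$, the class $\bar\beta_{\mathbf A,\sigma}$ via Lemma~\ref{lem:beta recovery}(3), the linear equivalence \eqref{Eq: lin equiv 2}, and the kinks \eqref{eq:local kinks} — is actually the content of the proof of Theorem~\ref{Thm: HDTV} (the passage to the asymptotic diagram), not of this theorem. For Theorem~\ref{thm:key theorem} the required matching is much more modest: one checks that the product of widget wall functions $\prod_{j=1}^{s_i}(1+t_{ij}z^{m_i})^{D_\rho\cdot H_{ij}}$, after applying $\nu_*$, agrees with the wall function of $\foD_1$ on $\rho$, once one notes that for each $j'$ exactly $D_\rho\cdot H_{ij'}$ of the exceptional classes $E^j_{\ul\rho}$ coincide with $E^{j'}_i$ in $N_1(\widetilde X)$. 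Finally, on the non-incomingness of the remaining walls: Lemma~\ref{lem:canonical repulsive} applies to the walls of the canonical diagram itself, while $\foD_2$ consists of walls of a scattering diagram only \emph{equivalent} to it; the paper resolves this by observing that an incoming wall $\fod$ of $\foD_2$ would force $\Psi|_{\widetilde B_1}^{-1}(\RR_{\le 0}m_\fod)$ to cross infinitely many walls of direction $m_\fod$ in the original canonical diagram, contradicting its finiteness modulo the ideal $I$. Your sketch skips this equivalence subtlety, and it should be addressed.
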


Before embarking on the proof, we will make some more
generally useful observations about $T_0\foD_{(\widetilde X,\widetilde D)}^1$.
These statements all follow immediately 
from Theorem \ref{thm:radial directions} and
Lemma \ref{lem: multiple covers}.
\begin{remarks}
\label{rem:T0 observations}
(1) $T_0\foD_{(\widetilde X,\widetilde D)}^1$ is equivalent
to a scattering diagram $\foD_1 \cup \foD_2$
where
\[
\foD_1:=\{(\rho,  
\prod_{j=1}^{\kappa_{\ul{\rho}}^i}(1+
z^{(m_i,\varphi_0(m_i)+F_i-
E^j_{\ul{\rho}})})
\,|\,
\hbox{$1\le i \le s, \ul{\rho}\in\Sigma(\rho_i)$ of codim $1$}\}.
\]
Here, the factor $1+z^{(m_i,\varphi_0(m_i)+F_i-E^j_{\ul\rho})}$
arises simply by rewriting the factor $1+t^{F_i-E^j_{\ul{\rho}}}z^{m_i}$
appearing in Lemma \ref{lem: multiple covers}
as an element of $\kk[\shP^+_0]$, using parallel transport in
$\shP^+$ via \eqref{eq:psi tau sigma}.

Further, using the factorization of Lemma \ref{lem:product uniqueness},
we may assume that each wall $(\fod,f_{\fod})$ of $\foD_2$ has a direction
$m_{\fod}\in M\setminus\{0\}$, i.e., 
$f_{\fod}=\sum_p c_pz^p$ with $r(p)$ negatively proportional to
$m_{\fod}$ for each $p$ occuring in the sum. We also note that for each $\fod\in\foD_2$,
$\fod\subseteq\sigma$ for some $\sigma\in\Sigma$.

(2) $\foD^1_{(\widetilde X,\widetilde D)}$ may be recovered, up to
equivalence, from $\foD_1,\foD_2$, as follows.
\begin{itemize}
\item[(a)] 
$\foD_1$ gives rise to the following walls in $\foD^1_{(\widetilde X,
\widetilde D)}$.
For each $i$ and $\rho_i\subseteq\rho\in\Sigma$ with $\rho$
of codimension one, we have two walls in $\foD^{1}_{(\widetilde X,\widetilde D)}$:
\begin{equation}
\label{eq:slab walls in}
\left(\rho_0,\prod_{j=1}^{\kappa^i_{\ul{\rho}}} (1+t^{F_i-E^j_{\ul\rho}}
z^{m_i})\right)
\end{equation}
and
\begin{equation}
\label{eq:slab walls out}
\left(\rho_{\infty},\prod_{j=1}^{\kappa^i_{\ul{\rho}}} (1+t^{E^j_{\ul\rho}}
z^{-m_i})\right).
\end{equation}
\item[(b)] 
Each wall $(\fod,f_{\fod})\in \foD_2$,
with $f_{\fod}$ written with exponents
in $\shP^+_0$, gives precisely one or two corresponding walls
in $\foD^{1}_{(\widetilde X,\widetilde D)}$ as follows. Let $\sigma\in\Sigma$
be the smallest cone containing $\fod$. Suppose first that
there is no $i$ with $\rho_i\subseteq\sigma$.
Then using the piecewise linear identification
$\Psi|_{\widetilde B_1}:\widetilde B_1\rightarrow M_{\RR}$, $\fod$ can be
considered as a wall in $\widetilde B_1$. 
We use parallel transport in $\shP^+$ to rewrite $f_{\fod}$ with exponents
in $\shP^+_x$ for $x\in\Int(\bar\sigma\cap\fod)$.

Next suppose that $\rho_i\subseteq\sigma$. Then we get two walls with
support $\fod\cap \tilde\sigma'$ and $\fod\cap\tilde\sigma$ respectively.
The wall functions of these two walls come from $f_{\fod}$,
after parallel transport in $\shP^+$ from $\shP_0^+$. Note that
in passing from $\fod\cap\tilde\sigma'$ to $\fod\cap\tilde\sigma$,
one must account for an additional kink, which we shall do explicitly
later.
\end{itemize}
\end{remarks}

\begin{proof}[Proof of Theorem \ref{thm:key theorem}.]
Let us use the decomposition of $T_0\foD_{(\widetilde X,\widetilde D)}^1
=\foD_1\cup\foD_2$, up to equivalence, of Remarks \ref{rem:T0 observations},(1).

Recall from Definition \ref{def:incoming} the notion of an incoming wall.
Note that all walls of $\foD_1$ are incoming.

We first show that no wall of $\foD_2$ is incoming. 
Suppose $(\fod,f_{\fod})\in\foD_2$ was incoming, with $\fod\subseteq
\sigma\in\Sigma$. Let $m_\fod$ be the direction of $\fod$, so that 
$-m_\fod\in\fod$.
Suppose first that $\sigma$ does not contain
$\rho_i$ for any $i$. Then by Remark
\ref{rem:T0 observations}, (2), $\fod'=\Psi|_{\widetilde B_1}^{-1}(\fod)$
is a wall of $\foD^1_{(\widetilde X,\widetilde D)}$, and
necessarily $\fod'$ does not have a repulsive face as $-m_\fod\in\fod$.
However, by Lemma \ref{lem:canonical repulsive},
every wall in the canonical scattering diagram has
a repulsive face. We note, though, that the wall just described is only a wall
in a scattering diagram \emph{equivalent} to the canonical scattering diagram.
Nevertheless, if such a wall were to exist, the ray 
$\Psi|_{\widetilde B_1}^{-1}(\RR_{\le 0}m_\fod)$ would then have to pass through
an infinite sequence of walls, all with direction $m_\fod$, in the original
canonical scattering diagram.
But, as we work over an ideal $I$, the canonical scattering
diagram only contains a finite number of walls, so this can't happen.
Thus we obtain a contradiction.

If instead $\rho_i\subseteq\sigma$, then as in Remark
\ref{rem:T0 observations} (2), we obtain a wall with support
$\Psi_{\widetilde B_1}^{-1}(\fod)\cap (\tilde\sigma \cap\widetilde B_1)$.
The argument is then the same.

Having shown no wall in $\foD_2$ is incoming, it follows from
the uniqueness statement of Theorem \ref{thm:higher dim KS} that
$T_0\foD^1_{(\widetilde X,\widetilde D)}$ is equivalent to $\Scatter(\foD_1)$.
It is thus enough to show that
\begin{equation}
\label{eq:still to be shown}
\foD_1=\nu(\foD_{(X_{\Sigma},H),\mathrm{in}}).
\end{equation}

Let $\rho\in\Sigma$ be codimension one with $\rho_i\subseteq\rho$.
If we take the product of the wall functions
of those walls of $\foD_{(X_{\Sigma},H),\inc}$ with support $\rho$,
we obtain
\[
\prod_{j=1}^{s_i} (1+t_{ij}z^{m_i})^{D_{\rho}\cdot H_{ij}}.
\]
Applying $\nu$ gives
\begin{equation}
\label{eq:ffod2}
\prod_{j=1}^{s_i} (1+z^{(m_i,\varphi_0(m_i)+F_i-E^j_i)})^{D_{\rho}\cdot H_{ij}}.
\end{equation}
Noting that $E^j_{\ul{\rho}}$ represents the same class in
$N_1(\widetilde X)$ as $E^{j'}_i$ for some $j'$, and for a given
$j'$ there are precisely $D_{\rho}\cdot H_{ij'}$ of the $E^j_{\ul{\rho}}$
equivalent to $E^{j'}_i$, we see that the wall function of the
wall of $\foD_1$ with support $\rho$ agrees with \eqref{eq:ffod2}.
This shows \eqref{eq:still to be shown}.
\end{proof}

\subsection{From $T_0\foD^{1}_{(\widetilde X,\widetilde D)}$ to $\foD_{(\widetilde X,\widetilde D)}^{1,\mathrm{as}}$ and the proof of the main theorem}
\label{subsec:T0 to asymptotic}
\begin{proof}[Proof of Theorem \ref{Thm: HDTV}.]
First, Theorem \ref{thm:key theorem} shows that $\foD_{(X_{\Sigma},H)}$
determines $T_0\foD^1_{(\widetilde X, \widetilde D)}$. 
From Remark \ref{rem:T0 observations} (2), $T_0\foD^1_{(\widetilde X,
\widetilde D)}$ determines $\foD^1_{(\widetilde X,\widetilde D)}$,
 which in turn determines $\foD_{(\widetilde X,\widetilde D)}$ and
$\foD^{1,\mathrm{as}}_{(\widetilde X,\widetilde D)}$ as in Definition~\ref{def:asymptotic scattering}.
By Lemma \ref{lem:iota injective} and
Proposition \ref{Prop: Asymptotic equivalence}, 
$\foD_{(\widetilde X,\widetilde D)}^{1,\mathrm{as}}$ determines
$\foD_{(X,D)}$. Thus, we just need to trace through the identifications.

Before starting, we note that for $(\fod,f_{\fod})\in
\foD^1_{(\widetilde X,\widetilde D)}$
the exponent $1/\ind(\mathrm{Cone}(\fod))$ arising in \eqref{eq:cone index}
in fact is always $1$. Indeed, by radiance, $\Lambda_{\mathrm{Cone}(\fod)}$ 
always
contains $(0,1)$, and hence $\tilde p_*:\Lambda_{\mathrm{Cone}(\fod)}\rightarrow
\ZZ$ is surjective. Thus in passing from $\foD^1_{(\widetilde X,
\widetilde D)}$ to $\foD^{1,\mathrm{as}}_{(\widetilde X,\widetilde D)}$,
we do not need to worry about this exponent.

From Remark \ref{rem:T0 observations} (2), there are three types
of walls of $\foD^1_{(\widetilde X,\widetilde D)}$
which contribute to $\foD^{1,\mathrm{as}}_{(\widetilde X,\widetilde D)}$.
Indeed, walls of type \eqref{eq:slab walls in} do not
contribute to the asymptotic scattering diagram: these arise
from walls in $\foD_{(\widetilde X,\widetilde D)}$ with support
$\widetilde\rho'$, and $\dim\widetilde\rho'\cap\widetilde B_0=n-2$.

On the other hand, walls of the type \eqref{eq:slab walls out}
do contribute, and contribute precisely the walls of
$\Upsilon(\foD_{(X_{\Sigma},H),\inc})$ as defined in
\eqref{eq:Upsilon in}. This is the first type of wall which contributes
to the asymptotic scattering diagram.

Next, let $(\fod,f_{\fod})\in\foD_{(X_{\Sigma},H)}$ be a non-incoming
wall. Then $(\fod,\nu_*(f_{\fod}))$ is a wall in
$T_0\foD^1_{(\widetilde X,\widetilde H)}$ and gives rise to
a wall in the asymptotic scattering diagram in two possible ways.
Let $\sigma\in\Sigma$ be the minimal cone containing $\fod$. The two possibilities are that $\rho_i\not\subseteq \sigma$ for any $i$,
or $\rho_i\subseteq\sigma$ for some $i$.
Recall from \eqref{Eq: mu} we have a piecewise linear identification
$\mu:M_{\RR}\rightarrow \widetilde B_0$.

First consider the case that $\rho_i\not\subseteq\sigma$ for any
$i$. In this case, the corresponding wall of the asymptotic scattering
diagram is 
\[
(\mu(\fod),\nu_*(f_{\fod})),
\]
where we now need to keep in mind that $\nu_*(f_{\fod})$ has to
be written, as indicated in Remark \ref{rem:T0 observations} (2),
in terms of parallel transport from $\shP_0^+$ to $\shP_x^+$ for
$x\in\Int(\sigma)$. Let us keep track of the behaviour of
a monomial in $f_{\fod}$.

Under the map $\nu_*$, a monomial of the form
$\prod_{i,j} (t_{ij}z^{m_i})^{a_{ij}}$ in $f_{\fod}$ is mapped to
\[
z^{\left(-m_{\mathbf{A}},\sum_{i,j}a_{ij}(\varphi_0(m_i)+F_i-E^j_i)\right)}.
\]
If we then parallel transport this monomial into the interior
of $\sigma$ and apply $\mu_*=\Upsilon_*$, we get
\[
t^{(d\varphi_0|_{\sigma})(m_{\mathbf{A}}) +\sum_{i,j} a_{ij}(\varphi_0(m_i)+F_i-E^j_i) }z^{-\Upsilon_*(m_{\mathbf{A}})}.
\]

By Lemma \ref{lem:beta recovery}, (3), bearing in mind that 
$m_i\in\rho_i$ so that $\varphi_0(m_i)=(d\varphi_0|_{\rho_i})(m_i)$, 
\[
\bar\beta_{\mathbf{A},\sigma}=(d\varphi_0|_{\sigma})(m_{\mathbf{A}})+
\sum_{i,j}a_{ij}\varphi_0(m_i).
\]
We may now write
\[
t^{(d\varphi_0|_{\sigma})(m_{\mathbf{A}}) +\sum_{ij} a_{ij}(\varphi_0(m_i)+F_i-E^j_i) }z^{-\Upsilon_*(m_{\mathbf{A}})}
=t^{\bar\beta_{\mathbf{A},\sigma}+\sum_{ij} a_{ij}(F_i-E^j_i)}z^{-\Upsilon_*(m_{\mathbf{A}})}.
\]
Here we view $\bar\beta_{\mathbf{A},\sigma}\in N_1(\widetilde X)$ via the inclusion
$X_{\Sigma}\hookrightarrow \widetilde X$ as a component of the central
fibre. Thus we have a curve class
$\bar\beta_{\mathbf{A},\sigma}+\sum_{ij} a_{ij}(F_i-E^j_i)\in N_1(\widetilde X)$.
The intersection number of this curve class
with any irreducible component of $\epsilon_{\mathbf{P}}^{-1}(0)$ is zero,
and hence by Lemma \ref{lem:iota injective},
this curve class lies in $N_1(X)$ (as must be the case by 
Proposition \ref{Prop: Asymptotic equivalence}).
Recalling the definition of $\beta_{\mathbf{A},\sigma}$ from
\eqref{eq:beta A def}, we note that
\begin{equation}
\label{eq:iota beta A sigma}
\iota(\beta_{\mathbf{A},\sigma})=
\bar\beta_{\mathbf{A},\sigma}
+\sum_{ij} a_{ij}(F_i-E_i^j)
\end{equation}
in $N_1(\widetilde X)$.
Indeed, it is enough to intersect both classes with horizontal
divisors on $\widetilde X$, e.g., the closure $\widetilde D_{\rho}$ of the
divisors $D_{\rho}\times\GG_m \subseteq X\times \GG_m \subseteq \widetilde X$
and the exceptional divisors $\widetilde E_{ij}$ over $\widetilde H_{ij}$.
Then noting that
\begin{align*}
\widetilde D_{\rho}\cdot (\bar\beta_{\mathbf{A},\sigma} +
\sum_{ij} a_{ij}(F_i-E_i^j)) = {} & \begin{cases}
D_{\Sigma,\rho}\cdot \bar\beta_{\mathbf{A},\sigma}& \hbox{$\rho\not=\rho_i$
for any $i$}\\
0&\rho=\rho_i
\end{cases}\\
\widetilde E_{k\ell}\cdot(\bar\beta_{\mathbf{A},\sigma}
+\sum_{ij} a_{ij}(F_i-E_i^j)) = {} & a_{k\ell},
\end{align*}
we obtain \eqref{eq:iota beta A sigma}.

Thus the wall of $\foD^{1,\mathrm{as}}_{(\widetilde X,\widetilde D)}$ 
arising from $(\fod, f_{\fod})\in \foD_{(X_{\Sigma},H)}$ is
\[
\big(\Upsilon(\fod), f_{\fod}(t^{\iota(\beta_{\mathbf{A},\sigma})}
z^{-\Upsilon_*(m_{\mathbf A})})\big),
\]
and agrees, up to applying $\iota$ to the curve classes, with 
the corresponding wall in $\Upsilon(\foD_{(X_{\Sigma},H)})$ given
by the definition of this latter scattering diagram.

We next consider the case that $\rho_i\subseteq\sigma$. In this case
Remark \ref{rem:T0 observations} (2) shows we obtain two
walls in $\foD^1_{(\widetilde X,\widetilde D)}$, one
contained in $\widetilde\sigma'$ and one contained in $\widetilde\sigma$.
But only the wall in $\widetilde\sigma$ contributes to
the asymptotic scattering diagram. While a monomial in $f_{\fod}$
of the form $\prod_{i,j} (t_{ij}z^{m_i})^{a_{ij}}$ still
contributes a term of the form
\[
t^{(d\varphi_0|_{\sigma})(m_{\mathbf{A}}) +\sum_{i,j} a_{ij}(\varphi_0(m_i)+F_i-E^j_i) }
z^{-(\Psi|_{\widetilde B_1})^{-1}_*(m_{\mathbf{A}})}
\]
to the corresponding wall in $\widetilde\sigma'$, we still need
to take into account the kink in $\varphi$ to describe the 
corresponding monomial for the wall in $\widetilde\sigma$. We obtain,
following the same argument as in the previous case,
a monomial of the form
\[
t^{\bar\beta_{\mathbf{A},\sigma} +\sum_{i,j} a_{ij}(F_i-E^j_i) 
+\langle n_{\sigma},m_{\mathbf{A}}\rangle F_i}
z^{-(\Psi|_{\widetilde B_1})^{-1}_*(m_{\mathbf{A}})},
\]
where $n_{\sigma}\in N$ is a primitive normal vector to
the facet of $\sigma$ not containing $\rho_i$ and positive on
$\rho_i$.

The support of the corresponding wall in the asymptotic scattering diagram
is still $\Upsilon(\fod)$, and the monomial contribution above
becomes
\[
t^{\bar\beta_{\mathbf{A},\sigma} +\sum_{i,j} a_{ij}(F_i-E^j_i)
+\langle n_{\sigma},m_{\mathbf{A}}\rangle F_i}
z^{-\Upsilon_*(m_{\mathbf{A}})}.
\]
We then find analogously to \eqref{eq:iota beta A sigma}
that
\[
\iota(\beta_{\mathbf{A},\sigma})=
\bar\beta_{\mathbf{A},\sigma} +\sum_{i,j} a_{ij}(F_i-E^j_i)
+\langle n_{\sigma},m_{\mathbf{A}}\rangle F_i.
\]
The only difference with the previous calculation is that
the intersection number of $\widetilde D_{\rho_i}$ with the right-hand side
is $\langle n_{\sigma},m_{\mathbf{A}}\rangle$. However,
this integer is precisely the coefficient of $m_i$ in
the expression of $m_{\mathbf{A}}$ as a linear combination of
generators of $\sigma$. By \eqref{eq:Drho dot beta},
this is also the intersection number of $\widetilde D_{\rho_i}$
with the left-hand side. Thus the argument finishes as in the
first case.
\end{proof}

\section{Example: The blow-up of $\PP^3$ with center two lines}
\label{sec:Examples}
Let $\ell_1 \subset D_{1}$ and $\ell_2\subset D_{2}$ be two general lines in $\PP^3$ as in Example  \ref{Ex: P3 with two lines}. Denote the blow-up of $\PP^3$ with center $\ell_1\cup \ell_2$ by $\Bl_{\ell_1\cup \ell_2}(\PP^3)$, the strict transform of $D$ by $\widetilde D$, similarly the strict transform of $D_i$ by $\widetilde D_i$, and the fibers of the exceptional divisors over $\ell_i$ by $E_i$, for $1\leq i \leq 2$. Recall the walls of $\foD_{(\PP^3,\ell_1\cup \ell_2)}$ are displayed in Table \ref{Table: walls of final HDTV P3}. 
We remark that this example does not precisely satisfy the hypotheses
we placed on $\Sigma$ and $\mathbf{P}$ in 
\S\ref{Sec: Pulling singularities out}, as the rays of the fan
$\Sigma$ for $\PP^3$ corresponding to $D_1$ and $D_2$ are contained in
a common cone. We note this does not affect the construction
of the scattering diagram $\foD_{(X_{\Sigma},H)}$, but the discussions
of \S\ref{Sec: Pulling singularities out} would need to be significantly
modified. If the reader prefers, they may refine $\Sigma$ by performing a 
toric blow-up along $D_1\cap D_2$, which resolves this issue, and
then suitably modify the discussion below. However, this only makes
the geometry more complicated.

The walls of the minimal scattering diagram equivalent to the canonical scattering diagram $\foD_{(\Bl_{\ell_1\cup \ell_2}(\PP^3),\widetilde D)}$ are obtained from $\foD_{(\PP^3,\ell_1\cup \ell_2)}$ by applying the piecewise linear isomorphism in Theorem \ref{Thm: HDTV}.
These are displayed in Table \ref{Table: final walls canonical P3}, where 
\begin{align}
    \label{Eq: x and y}
    \nonumber
x & = z^{e_1} = z^{(1,0,0)},    \\
\nonumber
y & = z^{e_2} = z^{(0,1,0)},
\end{align}
and  $L$ is the curve class corresponding to the strict transform of a 
general line in $\PP^3$.
\begin{table}[ht]
 \begin{tabular}{ll} \hline
 $\fod$ & $f_{\fod}$ \\ \hline
 $\langle e_1,e_2 \rangle,\langle e_1,e_3 \rangle, \langle e_1,e_4 \rangle$ & ~~ $1+t^{E_1}x^{-1}$  \\ 
$\langle e_2,e_1 \rangle,\langle e_2,e_3 \rangle, \langle e_2,e_4 \rangle$ & ~~$1+t^{E_2}y^{-1}$ \\
 $\langle e_3,-e_1 \rangle, \langle e_4,-e_1 \rangle$ & ~~ $1+t^{L-E_1}x$  \\
 $\langle e_3,-e_2 \rangle,\langle e_4,-e_2 \rangle,$ & ~~ $1+t^{L-E_2}y$  \\
  $\langle -e_1,-e_1-e_2 \rangle,\langle -e_2,-e_1-e_2 \rangle,  \langle e_3,-e_1-e_2 \rangle,  \langle e_4,-e_1-e_2 \rangle$ & ~~$1+t^{L-E_1-E_2}xy$ \\
 $\langle e_1,-e_2 \rangle$ & ~~ $1+t^{L-E_2}y+t^{L-E_1-E_2}xy$  \\ 
  $\langle e_2,-e_1 \rangle$ & ~~ $1+t^{L-E_1}x+t^{L-E_1-E_2}xy$  \\ 
\hline
  \end{tabular}
   \caption{Walls of $\foD_{\Bl_{\ell_1\cup \ell_2}(\PP^3),\tilde D)}$}
    \label{Table: final walls canonical P3}
  \end{table}
 We investigate each of these walls of $\foD_{(\Bl_{\ell_1\cup \ell_2}(\PP^3),\widetilde D)}$ and the associated punctured  Gromov-Witten invariants, by using the description of the walls of the canonical scattering diagram provided in \eqref{Eq: walls of canonical}:
\begin{equation}
\nonumber
  \big( h(\tau_{\out}), \exp(k_{\tau}N_{\tilde\tau}t^{\ul{\beta}} z^{-u})\big).
\end{equation}
Here $\ul \beta$ is a curve class in $\Bl_{\ell_1\cup \ell_2}(\PP^3)$ and $\tilde \tau= (\tau, \ul \beta)$ records the type $\tau$ of a punctured map $f:C^{\circ}/W \to \Bl_{\ell_1\cup \ell_2}(\PP^3)$, with one punctured point mapping to $\widetilde D$ defined as in Definition \ref{Def: typr of a punctured curve}.
The cell containing the image of the leg $L_{\mathrm{out}}$ corresponding to the punctured point is denoted by $\tau_{\mathrm{out}}$ as in \S\ref{Subsec: punctured}. The map $h:\Gamma \to \Sigma(\Bl_{\ell_1\cup \ell_2}(\PP^3))$ is the universal family of tropical maps obtained from $f:C^{\circ}/W \to \Bl_{\ell_1\cup \ell_2}(\PP^3)$, and $\kappa_{\tau}$ is defined as in \eqref{Eq: k_tau}. From the associated moduli space in \S\ref{Subsec: moduli spaces} we obtained the description of the punctured log invariants $N_{\tilde\tau}$. Now we are ready to investigate each wall in Table \ref{Table: final walls canonical P3}.
\begin{itemize}
    \item For $(\fod,f_{\fod})= (\langle e_1,e_i \rangle,1+t^{E_1}x^{-1})$, $2\leq i \leq 4:$ 
    \[ \mathrm{log}(f_{\fod}) =  \sum_{k\geq 1}k\frac{(-1)^{k+1}}{k^2}t^{kE_1}x^{-k}. \]
 Consider the moduli space of punctured maps of class $kE_1$ with one punctured point having contact order $k$ with $\widetilde D_1$. Such maps arise as $k:1$ multiple covers of fibers of the exceptional locus over $\ell_1$. In the associated tropical moduli space, there is a one-parameter family of tropical curves $h:\Gamma \to Q_{\tau,\RR}^{\vee}$, where each curve has one vertex with image mapping onto the cone generated by $e_i$, and one leg $L_{\mathrm{out}}$ with direction vector $u=(k,0,0)$ corresponding to the punctured point. In this case we get $h(\tau_{\mathrm{out}})=\langle e_1,e_i \rangle$ as desired. This tropical family defines a type $\tau$ with $Q_{\tau}=\NN$. The corresponding punctured map of type $\tau$ is a multiple cover of the fiber of the exceptional locus over the intersection point $\ell_1\cap D_i$. From \eqref{Eq: k_tau} we obtain $\kappa_{\tau}=k$ and hence 
    \begin{equation}
    \label{Eq: N-tau first row two lines}
        N_{\tilde \tau} = \frac{(-1)^{k+1}}{k^2}.
    \end{equation}
    This gives the same multiple cover formula
as in \cite[Prop 6.1]{GPS}, but in higher dimension, and without recourse to localization
techniques.
  \item For $(\langle e_2,e_i \rangle,1+t^{E_2}y^{-1})$, for $i\in \{1,3,4\}$,  we obtain the multiple cover contribution \eqref{Eq: N-tau first row two lines} for $k:1$ multiple covers of the fiber of the exceptional locus over $\ell_2 \cap D_i$. 
    \item For $(\langle -e_1,e_i \rangle,1+t^{L-E_1}x)$, $3\leq i \leq 4$, we obtain the multiple cover contribution \eqref{Eq: N-tau first row two lines} for $k:1$ multiple covers of strict transforms of lines passing through the points $ D_2\cap D_3\cap D_4$ and $\ell_1 \cap D_i$.
         \item $(\langle -e_2,e_i \rangle,1+t^{L-E_2}x)$, for $3\leq i \leq 4$, we obtain the multiple cover contribution \eqref{Eq: N-tau first row two lines} for $k:1$ multiple covers of strict transforms of lines passing through the points $ D_1\cap D_3\cap D_4$ and $\ell_2\cap D_i$.
   \item For $(\langle -e_i,-e_1-e_2 \rangle,1+t^{L-E_1-E_2}xy)$, $1\leq i \leq 2$, we obtain the multiple cover contribution \eqref{Eq: N-tau first row two lines} for $k:1$ multiple covers of strict transforms of lines passing through the points $D_i \cap D_3 \cap D_4$ and $\ell_j \cap D_i$, where $1\leq j \leq 2$ and $i \neq j$.
   \item $(\langle e_i,-e_1-e_2 \rangle,1+t^{L-E_1-E_2}xy)$, $3\leq i \leq 4$, we obtain the multiple cover contribution \eqref{Eq: N-tau first row two lines} for $k:1$ multiple covers of strict transforms of lines passing through the points $\ell_1 \cap D_i$ and $\ell_2 \cap D_i$.
   \item For $(\fod,f_{\fod}) = (\langle e_1,-e_2 \rangle,1+t^{L-E_2}y+t^{L-E_1-E_2}xy)$ we have
   \begin{equation}
       \label{Eq: log 1+ y+xy}
       \mathrm{log}(f_{\fod}) =  \sum_{k\geq 1}\sum_{\ell= 0}^k k\frac{(-1)^{k+1}}{k^2}{k\choose \ell} t^{k(L-E_2)-\ell E_1}x^{\ell}y^k.
   \end{equation}
   Consider $k:1$ multiple covers of the line of class $L-E_1-E_2$ obtained as the strict transform of the line passing through the points $D_1 \cap D_3 \cap D_4$ and $\ell_2 \cap D_1$. The inverse image of the point of intersection of this line with $\ell_1$ consists of $k$ points on the domain. Attach transversally to $k-\ell$ points among these $k$ points, a copy of a projective line and impose each of these additional projective lines to map to a fiber of the exceptional locus over $\ell_1$, so that the image will be of class $k(L-E_1-E_2)+(k-\ell)E_1$. The choice of the $k-\ell$ points among these $k$ points contributes the coefficient ${k\choose \ell}$ in \eqref{Eq: log 1+ y+xy}, and the $k:1$-multiple cover contribution is $\frac{(-1)^{k+1}}{k^2}$ as in \eqref{Eq: N-tau first row two lines}. The associated tropical family consists of tropical curves where $h(L_{\mathrm{out}})$ has support in $\langle e_1,(-\ell,-k,0) \rangle$ as illustrated on the left hand-side in Figure \ref{Fig: Xaxis}, where the vertex at the origin corresponds to images of the vertices corresponding to the transversally attached projective lines and the other vertex on the $e_1$-axis corresponds to the image of the $k:1$-multiple cover. However, we also have a second family of punctured maps of the same class 
   \[ ~~~~~~  \,\ \,\ \,\  \,\ \,\ \,\ k(L-E_1-E_2)+(k-\ell)E_1 = (k-\ell)(L-E_2)+\ell(L-E_1-E_2) \] 
   whose domain is a chain of three projective lines with images given as follows: the image of the first line is a $(k-\ell)$-fold multiple cover of the strict transform of a line passing through the points $p=D_1\cap D_3 \cap D_4$ and a point in $\ell_2$, the second line gets contracted to $p$, and the third line is an $\ell$-fold cover of the strict transform of the line passing through the points $p$ and $D_1 \cap \ell_2$. In the right hand side of Figure \ref{Fig: Xaxis} the image of the first line corresponds to the vertex at the origin, the image of the second line is the trivalent vertex moving along the $e_2$-axis and the image of the third line corresponds to the vertex moving along the $e_1$-axis. In this case the legs of the one parameter family of tropical curves trace out the wall with support $\langle -e_2,(-\ell,-k,0) \rangle$. By the scattering computation after all cancellations we obtain a wall with support $\langle e_1,-e_2 \rangle$ as desired. This gives us the prediction of the punctured log invariant corresponding to the second family to be equal to $\frac{(-1)^{k}}{k^2}{k\choose \ell}$. The analysis of the last row of Table \ref{Table: final walls canonical P3} is analogous, and obtained by exchanging $x$ and $y$.
\end{itemize}
\begin{figure}
\center{\input{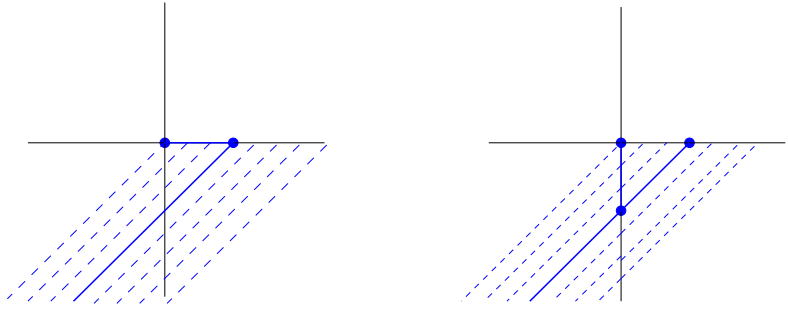}}
\caption{Cancellations of walls}
\label{Fig: Xaxis}
\end{figure}

\begin{remark}
The four walls with attached wall function $1+t^{L-E_1-E_2}xy$
may be viewed as providing additional geometric information.
For each point $x\in D_3\cap D_4$, there is precisely one line
through $x$, $\ell_1$ and $\ell_2$, and so there is a one-dimensional
family of lines through $\ell_1$, $\ell_2$ and $D_3\cap D_4$
parameterized by the intersection point with $D_3\cap D_4$.
We may blow up $D_3\cap D_4$, with exceptional divisor $E\cong
\PP^1\times\PP^1$. This
corresponds to a star subdivision
of the fan for $\PP^3$ along $\RR_{\ge 0}(-e_1-e_2)$. In this
case, the intersection points with $E$ of the strict transforms of the lines 
in the one-dimensional family trace out a curve $C$ in $E$. The four
walls instruct us that $C$ intersects each boundary divisor of $E$
once, and hence $C$ is a curve of class $(1,1)$ in $\PP^1\times\PP^1$. 

\end{remark}

We would like to note that in general, it is not obvious how to recover the punctured Gromov--Witten invariants $N_{\tilde{\tau}}$ of a log Calabi--Yau $(X,D)$ as in \eqref{Eq: blow up} from the data of the toric scattering diagram $\foD_{(X_{\Sigma},H)}$. Indeed, from the main theorem \ref{Thm: HDTV} we can only deduce that the canonical scattering diagram, which encodes the data of $N_{\tilde{\tau}}$, is equivalent to the image of $\foD_{(X_{\Sigma},H)}$ under a PL isomorphism. Thus, we can recover the canonical scattering diagram from $\foD_{(X_{\Sigma},H)}$ only up to equivalence. This enables us in practice to determine explicit equations for the coordinate rings of mirrors to $(X,D)$ in arbitrary dimension, and writing such equations will be focus of future work \cite{ACG}. On the other hand, generally this does not immediately enable us to read off the punctured invariants for arbitrary $(X,D)$, as while defining equivalent scattering diagrams we allow cancellations of walls which have the same support. However, to read off the punctured log Gromov--Witten invariant associated to a wall produced in the scattering algorithm described in the proof of Theorem \ref{thm:higher dim KS}, we need to know how to read off the data of the combinatorial types $\tau$ from the supports of the walls we add. If in the final consistent scattering diagram $\foD_{(X_{\Sigma},H)}$ we have cancellations of walls that are on top of each other, to detect the combinatorial type is not trivial and one may need to do appropriate perturbations of the widgets corresponding to the tropical hypersurfaces forming $H$ to do it. This could be done analogously to the two dimensional case in \cite[\S 1.4]{GPS}.


\bibliographystyle{plain}
\bibliography{bibliography}

\begin{thebibliography}{10}

\bibitem{AbramovichChen}
Dan Abramovich and Qile Chen.
\newblock Stable logarithmic maps to {D}eligne-{F}altings pairs {II}.
\newblock {\em Asian Journal of Mathematics}, 18(3):465--488, 2014.

\bibitem{ACGSII}
Dan Abramovich, Qile Chen, Mark Gross, and Bernd Siebert.
\newblock Punctured logarithmic maps.
\newblock {\em preprint arXiv:2009.07720}.

\bibitem{ACGSI}
Dan Abramovich, Qile Chen, Mark Gross, and Bernd Siebert.
\newblock Decomposition of degenerate {G}romov-{W}itten invariants.
\newblock {\em Compos. Math.}, 156:2020--2075, 2020.

\bibitem{abramovich2016skeletons}
Dan Abramovich, Qile Chen, Steffen Marcus, Martin Ulirsch, and Jonathan Wise.
\newblock Skeletons and fans of logarithmic structures.
\newblock In {\em Nonarchimedean and tropical geometry}, pages 287--336.
  Springer, 2016.

\bibitem{ACMW17}
Dan Abramovich, Qile Chen, Steffen Marcus, and Jonathan Wise.
\newblock Boundedness of the space of stable logarithmic maps.
\newblock {\em J. Eur. Math. Soc. (JEMS)}, 19(9):2783--2809, 2017.

\bibitem{arguz2021flow}
H{\"u}lya Arg{\"u}z and Pierrick Bousseau.
\newblock The flow tree formula for donaldson-thomas invariants of quivers with
  potentials.
\newblock {\em arXiv preprint arXiv:2102.11200}, 2021.

\bibitem{ACG}
H\"ulya Arg\"uz, Tom Coates, and Mark Gross.
\newblock Mirrors to {F}ano manifolds, \emph{in preparation}.

\bibitem{AP}
H\"ulya Arg\"uz and Thomas Prince.
\newblock Real {L}agrangians in {C}alabi-{Y}au threefolds.
\newblock {\em Proceedings of the London Mathematical Society}, 2020.

\bibitem{bousseau2018quantum}
Pierrick Bousseau.
\newblock The quantum tropical vertex.
\newblock {\em Geom. Topol.}, 24(3):1297--1379, 2020.

\bibitem{mandelcheung}
Man-Wai Cheung and Travis Mandel.
\newblock Donaldson-{T}homas invariants from tropical disks.
\newblock {\em Selecta Mathematica}, 26(4):1--46, 2020.

\bibitem{donagi2012weak}
Ron Donagi, Sheldon Katz, and Martijn Wijnholt.
\newblock Weak coupling, degeneration and log {C}alabi-{Y}au spaces.
\newblock {\em preprint arXiv:1212.0553}.

\bibitem{fukayamultivalued}
K~Fukaya.
\newblock Multivalued {M}orse theory, asymptotics analysis and mirror symmetry.
\newblock {\em Available from the web page http://www. math. kyoto-u. ac.
  jp/\~{} fukaya/fukayagrapat. dvi}, 2002.

\bibitem{Goldman}
William Goldman.
\newblock Geometric structures on manifolds.
\newblock \url{http://www.math.umd.edu/~wmg/gstom.pdf}, 2018.

\bibitem{goldman1984radiance}
William Goldman and Morris~W Hirsch.
\newblock The radiance obstruction and parallel forms on affine manifolds.
\newblock {\em Transactions of the American Mathematical Society},
  286(2):629--649, 1984.

\bibitem{gross2015mirror}
Mark Gross, Paul Hacking, and Sean Keel.
\newblock Mirror symmetry for log calabi-yau surfaces i.
\newblock {\em Publications math{\'e}matiques de l'IH{\'E}S}, 122(1):65--168,
  2015.

\bibitem{GHK}
Mark Gross, Paul Hacking, and Sean Keel.
\newblock Mirror symmetry for log {C}alabi-{Y}au surfaces {I}.
\newblock {\em Publications math{\'e}matiques de l'IH{\'E}S}, 122(1):65--168,
  2015.

\bibitem{ghkk}
Mark Gross, Paul Hacking, Sean Keel, and Maxim Kontsevich.
\newblock Canonical bases for cluster algebras.
\newblock {\em Journal of the American Mathematical Society}, 31(2):497--608,
  2018.

\bibitem{GHS}
Mark Gross, Paul Hacking, and Bernd Siebert.
\newblock Theta functions on varieties with effective anti-canonical class.
\newblock preprint arxiv:1601.07081, to appear in Memoirs of the AMS.

\bibitem{GPS}
Mark Gross, Rahul Pandharipande, and Bernd Siebert.
\newblock The tropical vertex.
\newblock {\em Duke Math. J.}, 153(2):297--362, 2010.

\bibitem{GSAssoc}
Mark Gross and Bernd Siebert.
\newblock Intrinsic mirror symmetry.
\newblock preprint arxiv:1909.07649.

\bibitem{GS2}
Mark Gross and Bernd Siebert.
\newblock From real affine geometry to complex geometry.
\newblock {\em Ann. of Math. (2)}, 174(3):1301--1428, 2011.

\bibitem{GSlog}
Mark Gross and Bernd Siebert.
\newblock Logarithmic {G}romov-{W}itten invariants.
\newblock {\em Journal of the American Mathematical Society}, 26(2):451--510,
  2013.

\bibitem{GSICM}
Mark Gross and Bernd Siebert.
\newblock Local mirror symmetry in the tropics.
\newblock In {\em Proceedings of the {I}nternational {C}ongress of
  {M}athematicians---{S}eoul 2014. {V}ol. {II}}, pages 723--744. Kyung Moon Sa,
  Seoul, 2014.

\bibitem{GSUtah}
Mark Gross and Bernd Siebert.
\newblock Intrinsic mirror symmetry and punctured {G}romov-{W}itten invariants.
\newblock In {\em Algebraic geometry: {S}alt {L}ake {C}ity 2015}, volume~97 of
  {\em Proc. Sympos. Pure Math.}, pages 199--230. Amer. Math. Soc., Providence,
  RI, 2018.

\bibitem{GSCanScat}
Mark Gross and Bernd Siebert.
\newblock The canonical wall structure and intrinsic mirror symmetry.
\newblock {\em arXiv preprint arXiv:2105.02502}, 2021.

\bibitem{ionel2003relative}
Eleny-Nicoleta Ionel and Thomas~H Parker.
\newblock Relative {G}romov-{W}itten invariants.
\newblock {\em Annals of Mathematics}, pages 45--96, 2003.

\bibitem{keel2019frobenius}
Sean Keel and Tony~Yue Yu.
\newblock The {F}robenius structure theorem for affine log {C}alabi-{Y}au
  varieties containing a torus.
\newblock {\em preprint arXiv:1908.09861}, 2019.

\bibitem{kollar2016dual}
J{\'a}nos Koll{\'a}r and Chenyang Xu.
\newblock The dual complex of {C}alabi--{Y}au pairs.
\newblock {\em Inventiones mathematicae}, 205(3):527--557, 2016.

\bibitem{KS}
Maxim Kontsevich and Yan Soibelman.
\newblock Affine structures and non-{A}rchimedean analytic spaces.
\newblock In {\em The unity of mathematics}, volume 244 of {\em Progr. Math.},
  pages 321--385. Birkh\"auser Boston, Boston, MA, 2006.

\bibitem{li1998symplectic}
An-Min Li and Yongbin Ruan.
\newblock Symplectic surgery and {G}romov-{W}itten invariants of {C}alabi-{Y}au
  3-folds.
\newblock {\em Invent. Math.}, 145(1):151--218, 2001.

\bibitem{JunLi}
Jun Li.
\newblock Stable morphisms to singular schemes and relative stable morphisms.
\newblock {\em Journal of Differential Geometry}, 57(3):509--578, 2001.

\bibitem{mandelscattering}
Travis Mandel.
\newblock Scattering diagrams, theta functions, and refined tropical curve
  counts.
\newblock {\em preprint arXiv:1503.06183}, 2015.

\bibitem{mandel2019theta}
Travis Mandel.
\newblock Theta bases and log {G}romov-{W}itten invariants of cluster
  varieties.
\newblock {\em preprint arXiv:1903.03042}, 2019.

\bibitem{Manolache}
Cristina Manolache.
\newblock Virtual pull-backs.
\newblock {\em J. Algebraic Geom.}, 21(2):201--245, 2012.

\bibitem{nicaise2019non}
Johannes Nicaise, Chenyang Xu, and Tony~Yue Yu.
\newblock The non-archimedean {SYZ} fibration.
\newblock {\em Compositio Mathematica}, 155(5):953--972, 2019.

\bibitem{Oda}
Tadao Oda.
\newblock {\em Convex bodies and algebraic geometry}, volume~15 of {\em
  Ergebnisse der Mathematik und ihrer Grenzgebiete (3) [Results in Mathematics
  and Related Areas (3)]}.
\newblock Springer-Verlag, Berlin, 1988.
\newblock An introduction to the theory of toric varieties, Translated from the
  Japanese.

\bibitem{ogus2018lectures}
Arthur Ogus.
\newblock {\em Lectures on logarithmic algebraic geometry}, volume 178.
\newblock Cambridge University Press, 2018.

\bibitem{Olsson03}
Martin~C. Olsson.
\newblock Logarithmic geometry and algebraic stacks.
\newblock {\em Ann. Sci. \'{E}cole Norm. Sup. (4)}, 36(5):747--791, 2003.

\bibitem{parker2014tropical}
Brett Parker.
\newblock Tropical enumeration of curves in blowups of the projective plane.
\newblock {\em preprint arXiv:1411.5722}, 2014.

\bibitem{pascaleff2019symplectic}
James Pascaleff.
\newblock On the symplectic cohomology of log {C}alabi--{Y}au surfaces.
\newblock {\em Geometry \& Topology}, 23(6):2701--2792, 2019.

\bibitem{SYZ}
Andrew Strominger, Shing-Tung Yau, and Eric Zaslow.
\newblock Mirror symmetry is {$T$}-duality.
\newblock {\em Nuclear Phys. B}, 479(1-2):243--259, 1996.

\bibitem{takahashi2001log}
Nobuyoshi Takahashi.
\newblock Log mirror symmetry and local mirror symmetry.
\newblock {\em Communications in Mathematical Physics}, 220(2):293--299, 2001.

\bibitem{yu2016enumeration}
Tony~Yue Yu.
\newblock Enumeration of holomorphic cylinders in log {C}alabi--{Y}au surfaces.
  {I}.
\newblock {\em Mathematische Annalen}, 366(3-4):1649--1675, 2016.

\end{thebibliography}

\end{document}